\documentclass{article}

%----------------------------------------
% margins
%----------------------------------------
\setlength{\textwidth}{15.5cm}
\setlength{\textheight}{23cm}
\voffset=-25 mm
\hoffset=-7 mm
%----------------------------------------
\linespread{1.2}

\usepackage[utf8]{inputenc}
\usepackage{verbatim} 
\usepackage{amsmath,amsthm,amsfonts,amssymb}
\usepackage{tikz}
\usepackage{color}
\usepackage{graphicx}
\usepackage{subcaption} 
\usepackage{enumerate}
\usepackage{tabularx}
\usepackage{float}
\usepackage{psvectorian}
\usepackage{lipsum}
\usepackage{fourier-orns}
\usepackage[T1]{fontenc}
\usepackage{currvita}

%------------------
\usepackage{environ}
%---------------------------------------------------	%------------COMMANDS----------------------------

\DeclareMathOperator{\soc}{soc}
\DeclareMathOperator{\Aut}{Aut}

\DeclareMathOperator{\Cos}{Cos}

\DeclareMathOperator{\PSL}{PSL}

\DeclareMathOperator{\GL}{GL}
\DeclareMathOperator{\Sym}{Sym}
\DeclareMathOperator{\Supp}{Supp}

\DeclareMathOperator{\OG}{\mathcal{OG}}

\DeclareMathOperator{\B}{\mathcal{B}}

%------------THEOREMS----------------------------
\theoremstyle{plain}

\newtheorem{Theorem}{Theorem}[section]
\newtheorem{theorem}{Theorem}[section]

\newtheorem{Proposition}[Theorem]{Proposition}
\newtheorem{Lemma}[Theorem]{Lemma}

\newtheorem{Problem}{Problem}
\newtheorem{Construction}{Construction}
\newtheorem{Definition}{Definition}

\theoremstyle{definition}

\newtheorem{Remark}[Theorem]{Remark}

%% ---------------------------------------------------------------
\usepackage{blindtext}

\title{Basic Tetravalent Oriented Graphs with Cyclic Normal Quotients}
\author{Nemanja Poznanovi\'{c}\footnote{University of Melbourne, Australia.  email: {nempoznanovic@gmail.com}} \hspace{1cm} Cheryl E. Praeger\footnote{University of Western Australia, Australia. email:  {cheryl.praeger@uwa.edu.au}}}
\date{June 2022}

\begin{document}

\maketitle
\begin{abstract}
Let $\OG(4)$ denote the family of all graph-group pairs $(\Gamma, G)$ where $\Gamma$ is finite, 4-valent, connected, and $G$-oriented ($G$-half-arc-transitive). 
A subfamily of $\OG(4)$ has recently been identified as `basic' in the sense that all graphs in this family are normal covers of at least one basic member. 
In this paper we provide a description of such basic pairs which have at least one $G$-normal quotient which is  isomorphic to a cycle graph. In doing so, we produce many new infinite families of examples and solve several problems posed in the recent literature on this topic.
This result completes a research project aiming to provide a description of all basic pairs in $\OG(4)$.
\end{abstract}
\section{Introduction}\label{s:intro}

 A finite, simple, undirected graph $\Gamma$ is said to be \textit{$G$-oriented} (or $G$-half-arc-transitive) if some group $G \leq \Aut(\Gamma)$  is transitive on the vertices and edges of $\Gamma$, but is not transitive on the arcs. Equivalently, a graph $\Gamma$ is $G$-oriented if it is $G$-vertex- and $G$-edge-transitive and $G$ preserves some orientation $\Omega$ of the edge set of $\Gamma$.
Every $G$-oriented graph necessarily  has even valency (a result due to W.T. Tutte \cite{tutte1966connectivity}), and since the only valency two examples are disjoint unions of oriented cycles, the smallest `nontrivial' valency a $G$-oriented graph can have is four. 
These tetravalent graphs have been actively studied for several decades now, often in the hope that a good understanding of them may aid in the understanding of $G$-oriented graphs in general. The family $\OG(4)$ of graph-group pairs $(\Gamma,G)$ with $\Gamma$ a connected $G$-oriented graph of valency $4$ also has a special connection with the embedding of graphs into Riemann surfaces. Studies of such embeddings go back to work of Dyck~\cite{Dyck} in 1880 and Tutte \cite{Tutte49}  in 1949, with many recent papers inspired by the ground-breaking work of Jones and Singerman \cite{JS} (see  \cite[Section 2.2]{al2015finite} for a summary). 

A modern framework for studying the family $\OG(4)$ was proposed in \cite{al2015finite} and developed further in \cite{al2017cycle,al2017normal,poznanovic2021four}. This new approach aims to analyse $\OG(4)$ using a normal quotient reduction, a method which has been successfully used to study other families of graphs with prescribed symmetry conditions, see for instance \cite{morris2009strongly, praeger1993nan, praeger1999finite}. The
aim of this approach is to describe the family $\OG(4)$ in terms of graph quotients arising from normal subgroups of the groups contained in this family. 

Given a pair $(\Gamma, G)\in \OG(4)$, and a normal subgroup $N$ of $G$, the \textit{$G$-normal quotient} graph $\Gamma_N$ is defined as follows: the vertices of $\Gamma_N$ are the $N$-orbits on $V\Gamma$, and there is an edge between two vertices in $\Gamma_N$ if and only if there is an edge between vertices from the corresponding $N$-orbits in $\Gamma$ (see Section \ref{secOG4Normal} for details). The group $G$ also induces a group $G_N$ of automorphisms of $\Gamma_N$ so that we obtain another pair $(\Gamma_N, G_N)$. The  important result \cite[Theorem 1.1]{al2015finite} then tells us that  either the pair $(\Gamma_N, G_N) \in \OG(4)$ (and in this case $\Gamma$ is a $G$-normal cover of $\Gamma_N$), or  $\Gamma_N$ isomorphic to one of $K_1, K_2$ or $\mathbf{C}_r$ for some $r\geq 3$. In the latter case we say that the quotient is \textit{degenerate}.

Motivated by this result, we say that a pair $(\Gamma, G) \in \OG(4)$ is \textit{basic} if all of its $G$-normal quotients relative to non-trivial normal subgroups are degenerate. By \cite[Theorem 1.1]{al2015finite}, it follows that every member of $\OG(4)$ is a normal cover of (at least one) basic pair. Thus a good description of the basic pairs of $\OG(4)$ combined with a theory to describe the $G$-normal covers of these basic pairs should lead to a good description of the whole family $\OG(4)$. 

In \cite{al2015finite}, these basic pairs were further divided into three types, and a research program was proposed with the aim of obtaining a description of each of these three types of basic pairs. The three basic types are called quasiprimitive, biquasiprimitive and cycle type, and are specified according to the nature of their (degenerate) $G$-normal quotients, see Table~\ref{TabBasic}. The basic pairs of quasiprimitive and biquasiprimitive types have been successfully described in \cite{al2015finite, poznanovic2021four} using structure theorems for quasiprimitive and biquasiprimitive permutation groups in \cite{praeger1993nan, praeger2003finite}.  

A basic pair $(\Gamma, G) \in \OG(4)$ is of \textit{cycle type} if it has at least one $G$-normal quotient isomorphic to a cycle graph $\mathbf{C}_r$ for some $r\geq 3$.  Of the three basic types, the basic pairs of cycle type are the most difficult to analyse for a number of reasons. 
First, while there are structure theorems available for quasiprimitive and biquasiprimitive permutation groups, no such general theory is available here.
Also, unlike the other basic types, a basic pair $(\Gamma, G)\in\OG(4)$ of cycle type can have various non-isomorphic cyclic normal quotients. These quotients, despite all being cycles, may have different orders and may be $G$-oriented or $G$-unoriented depending on  the $G$-action on the quotient graph. It therefore becomes crucial to consider and to understand how the various cyclic normal quotients of a basic pair relate to each other.

% A pair $(\Gamma, G) \in \OG(4)$ is said to be basic of \textit{quasiprimitive type} if all $G$-normal quotients of $\Gamma$ are isomorphic to $K_1$. This occurs precisely when all non-trivial normal subgroups of $G$ are transitive on the vertices of $\Gamma$. By applying the structure theorem for quasiprimitive permutation groups \cite{praeger1993nan}, a description of the basic pairs of quasiprimitive type was produced in \cite[Theorem 1.3]{al2015finite}.
%
% If the only normal quotients of a basic pair $(\Gamma, G) \in \OG(4)$ are the graphs $K_1$ or $K_2$, and $\Gamma$ has at least one $G$-normal quotient isomorphic to $K_2$, then $(\Gamma, G)$ is said to be basic of \textit{biquasiprimitive type}. (The group $G$ here is biquasiprimitive: it is not quasiprimitive but each nontrivial normal subgroup has at most two orbits.) By generalising some theory from \cite{al2015finite}, and using a novel application of the structure theorem for biquasiprimitive permutation groups, a description of these basic pairs was also obtained in \cite[Theorem 1.1]{poznanovic2021four}. 

% The basic pairs in $\OG(4)$ which are neither of quasiprimitive nor of biquasiprimitive type, and which thus have at least one cyclic normal quotient, are said to be basic of \textit{cycle type}.

In \cite{al2017cycle} the importance of considering so-called `independent' cyclic normal quotients was demonstrated. Two cyclic normal quotients of a pair ($\Gamma, G) \in \OG(4)$ are said to be independent if they are not extendable to a common cyclic normal quotient (see Section \ref{secCyclicNormal} for details). The existence of independent cyclic normal quotients of a pair $(\Gamma, G) \in \OG(4)$ proves to be restrictive enough to allow for a structural description of  such pairs. Each such pair $(\Gamma,G)$ was shown in \cite[Theorem 2]{al2017cycle} to have a particular normal quotient $(\overline{\Gamma}, \overline{G}) \in \OG(4)$ with $\overline{\Gamma}$ lying in one of six infinite families given in \cite[Table  1]{al2017cycle}. The underlying unoriented graphs appearing in these families are all either direct products of two cycles $\mathbf{C}_r \times \mathbf{C}_s$ for some $r,s\geq 3$, or are induced subgraphs or standard double covers of these direct product graphs. Moreover, if $(\Gamma, G)$ belongs to one of these  families then the group $G$ is metacyclic and the vertex stabiliser  $G_\alpha$ has order 2.
All  pairs of this type  have at least one cyclic normal quotient which is $G$-unoriented.  The basic pairs $(\Gamma, G)\in \OG(4)$ of cycle type, having independent cyclic quotients are determined in the upcoming article \cite{independent}.

The main result of this paper builds on these results by providing a description of the basic pairs $(\Gamma, G)\in \OG(4)$ of cycle type which \textit{do not} have independent cyclic normal quotients. Our main result is stated below.  Note that the graphs $\overline{\mathbf{C}}_r(2, t)$ appearing in the statement of Theorem  \ref{CycleMainTheorem} are defined in Definition \ref{defhighlyarc} and described in detail in \cite{praeger1989highly}.

\begin{theorem}\label{CycleMainTheorem}
	Suppose $(\Gamma, G) \in \OG(4)$ is basic of cycle type and does not have independent cyclic normal quotients. Then one of the following holds.
	\begin{enumerate}[1.]
       	\item All cyclic normal quotients of $(\Gamma, G)$ are $G$-oriented and either 
       		\begin{enumerate}[(a)]
        \item $\soc(G)$ is a $2$-group and $\Gamma$ is isomorphic to one of the graphs $\overline{\mathbf{C}}_r(2, t)$  for some $r\geq 3$ and $t \geq 1$; 
		\item or $G$ has a unique minimal normal subgroup $N \cong T^k$, where $k\geq 1$  and where $T$ is a simple group, $T\ncong \mathbb{Z}_2$, and  $\Gamma_N \cong \mathbf{C}_r$ for some $r \geq 3$. In this case
					\begin{enumerate}
						\item $k\leq r2^r$ if $N$ is  nonabelian; or
						\item $k \leq r$, if $N$ is abelian.
					\end{enumerate}
				\end{enumerate}
			\item All cyclic normal quotients of $(\Gamma, G)$ are $G$-unoriented and $G$ has a unique minimal normal subgroup $N \cong T^k$, where $k\geq 1$ and where $T$ is a simple group and  $\Gamma_N \cong \mathbf{C}_r$ for some $r \geq 3$. In this case
			\begin{enumerate}[i.]
			\item $k\leq 2r$ if $N$ is nonabelian; or
			\item $k \leq r-1$, if $N$ is abelian.
			\end{enumerate}
    \end{enumerate}
\end{theorem}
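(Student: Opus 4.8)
The plan is to run a normal-quotient analysis of $G$ driven by two inputs: the degeneracy of \emph{every} normal quotient (because $(\Gamma,G)$ is basic) and the rigidity forced by the absence of independent cyclic normal quotients.

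\textbf{Step 1: the oriented/unoriented dichotomy.} Let $N_0$ be the intersection of all normal subgroups $M\trianglelefteq G$ with $\Gamma_M$ a cycle. Using the no-independence hypothesis — any two cyclic normal quotients admit a common cyclic cover among the normal quotients — together with finiteness of the lattice of normal subgroups, I would show $\Gamma_{N_0}$ is itself a cycle $\mathbf{C}_r$ and is the finest cyclic normal quotient, so every cyclic normal quotient is a quotient of it. Let $\hat N_0$ be the kernel of $G$ on $\Gamma_{N_0}$; then $G/\hat N_0\le\Aut(\mathbf{C}_r)\cong D_{2r}$ is vertex- and edge-transitive on $\mathbf{C}_r$, hence $G/\hat N_0\cong\mathbb{Z}_r$ or $D_{2r}$, and each cyclic normal quotient corresponds to a quotient of $G/\hat N_0$ that is again vertex- and edge-transitive on a cycle. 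Since a cyclic group has only cyclic quotients, while $\mathbb{Z}_s$ with $s\ge3$ is never a quotient of $D_{2r}$, the type (oriented $\Leftrightarrow$ the acting quotient is cyclic) is the same for all cyclic normal quotients. This produces the split into Parts 1 and 2.

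\textbf{Step 2: the relevant minimal normal subgroup.} Because $\Gamma$ has valency $4$, neither $K_1$ nor $K_2$ covers a cycle; hence if a normal subgroup has a cyclic quotient so does any minimal normal subgroup it contains, so $G$ has a minimal normal subgroup $N\cong T^k$ with $\Gamma_N\cong\mathbf{C}_r$. If $G$ had two such, $N_1$ and $N_2$, then $N_1\cap N_2=1$ would leave $\Gamma_{N_1},\Gamma_{N_2}$ with no common cyclic cover, i.e. independent — a contradiction. So $N$ is the unique minimal normal subgroup with a cyclic quotient. For any other minimal normal subgroup $L$ we have $L\cap N=1$ and $\Gamma_L\in\{K_1,K_2\}$; a short argument using $|V\Gamma|\ge5$, that $G_\alpha$ is a $2$-group (indeed $G_\alpha\cong\mathbb{Z}_2$ for basic pairs of cycle type), and that a bipartition of $V\Gamma$ is incompatible with a cyclic quotient of odd length (through which the two halves would have to split evenly, and a $2$-element orbit leaves too few fibre vertices to carry valency $4$) shows $L$ must be a $2$-group, forcing $G$ — hence $\soc(G)$ — to be a $2$-group. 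Thus either $G$ has a unique minimal normal subgroup $N$ with $\Gamma_N\cong\mathbf{C}_r$, or (this occurs in Part 1) $\soc(G)$ is a $2$-group: in the latter case the $G$-action is a $2$-group extension of the rotation group $\mathbb{Z}_r$ of $\mathbf{C}_r$, and matching the local structure identifies $\Gamma$ with one of the graphs $\overline{\mathbf{C}}_r(2,t)$ of \cite{praeger1989highly}, which is case 1(a).

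\textbf{Step 3: bounding $k$, and the main obstacle.} Assume now $N\cong T^k$ is the unique minimal normal subgroup, with $\Gamma_N\cong\mathbf{C}_r$ and $\hat N$ the kernel of $G$ on $\Gamma_N$. Conjugation makes $G$ permute the $k$ simple direct factors of $N$, transitively (else a proper subproduct would be normal in $G$), so the image of $G$ in $\Sym(k)$ is a transitive quotient of $G/N$, whence $k\le|G/N|$. Here $|G/N|=|G/\hat N|\cdot|\hat N/N|$, with $|G/\hat N|=r$ in Part 1 and $2r$ in Part 2, and $\hat N/N\cong\hat N_\alpha/N_\alpha$ a section of the $2$-group $G_\alpha$; embedding $\hat N$ faithfully into the product of its $r$ actions on the $N$-orbits then bounds $|\hat N/N|$ by a power of $|G_\alpha|$, giving $k\le r2^r$ in Part 1, while a more careful count (in which the full dihedral group already exhausts the available symmetry) gives $k\le2r$ in Part 2. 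When $N\cong\mathbb{Z}_p^k$ is abelian, $N$ acts semiregularly on its orbits and their kernels $K_1,\dots,K_r$ are $\mathbb{F}_p$-subspaces with $\bigcap_iK_i=0$, cyclically permuted by $G$; the codimension inequality $k\le\sum_i\operatorname{codim}K_i$, together with $4$-valence (forcing each orbit to have size $p$, and the reflection in Part 2 to absorb one further dimension), yields $k\le r$ in Part 1 and $k\le r-1$ in Part 2. I expect the genuinely hard step to be case 1(a): reconstructing $\Gamma$ from the $2$-group $G$-action over $\mathbf{C}_r$ and showing it is \emph{exactly} some $\overline{\mathbf{C}}_r(2,t)$ requires a precise combinatorial identification rather than a soft argument; a secondary difficulty is pinning down the exact power of $2$ entering $|\hat N/N|$ (hence the constants $2^r$ versus $2r$), with the matching sharpening of the linear-algebra count needed for $k\le r$ and $k\le r-1$ in the abelian cases.
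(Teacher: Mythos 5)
Your high-level architecture (oriented/unoriented dichotomy, then unique minimal normal subgroup versus a $2$-group exception, then bounding $k$) matches the paper's, and Step 1 is essentially correct. But there are genuine gaps at the two places where the real work happens.

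First, in Step 2 your uniqueness argument is wrong in the oriented case. Independence of two cyclic normal quotients $\Gamma_{N_1},\Gamma_{N_2}$ is defined via the kernels $\tilde{N_1}\cap\tilde{N_2}$, not via $N_1\cap N_2$. When the quotients are $G$-oriented, $G_\alpha\le\tilde{N_1}\cap\tilde{N_2}$, so this intersection is nontrivial and two distinct minimal normal subgroups with cyclic quotients do \emph{not} produce independent quotients. This is precisely where case 1(a) comes from: the paper first shows (Lemmas \ref{notTrans}, \ref{3orbs}) that \emph{every} minimal normal subgroup has at least three orbits — so your scenario of an extra minimal normal subgroup $L$ with $\Gamma_L\in\{K_1,K_2\}$ never occurs — and then shows (Lemma \ref{OrientedMinimal}) that if two distinct minimal normal subgroups exist, the counting $|M_1M_2|=|\Delta|^2|(M_1)_\alpha||(M_2)_\alpha|$ dividing $|\tilde{N}|=|\Delta||G_\alpha|$ forces $\tilde{N}$ to be a $2$-group. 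Your route to 1(a) is built on a configuration that cannot arise, and your claim that $G_\alpha\cong\mathbb{Z}_2$ for all basic pairs of cycle type is false: for oriented-cycle type $|G_\alpha|=2^s$ is unbounded (Table \ref{cycletypestable}), which is exactly why the second gap below is serious.

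Second, the bounds in Step 3 do not follow from the arguments you sketch. For the oriented nonabelian case, $k\le|G/N|\le r\cdot|G_\alpha|=r\cdot 2^s$ with $s$ unbounded, so nothing like $k\le r2^r$ comes out; the paper's Theorem \ref{OrientedNonabelianBound} has to show that the \emph{image} of $G_\alpha$ in $\Sym(k)$ is generated by at most $r$ of the conjugate involutions $\varphi^{\sigma^{-i}}$, via a Frattini-subgroup argument — this is the heart of the constant $2^r$ and is absent from your sketch. For the abelian cases your codimension inequality is incoherent: $N\cong\mathbb{Z}_p^k$ with $p$ odd is semiregular (as $G_\alpha$ is a $2$-group), so its orbits have size $p^k$ (not $p$) and the kernels of $N$ on its orbits are all trivial, making $\sum_i\operatorname{codim}K_i=rk$ vacuous. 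The paper instead proves $k\le r$ by producing a common eigenvector $w$ of $G_\alpha\cong\mathbb{Z}_2^s$ on $\mathbb{F}_p^k$ and spanning $V$ by $\{w,w^\sigma,\dots,w^{\sigma^{r-1}}\}$ (Theorem \ref{OrientedAbelianBound}), and proves $k\le\phi(r)\le r-1$ in the unoriented case by classifying the degrees of irreducible $\mathbb{F}_p$-representations of $D_r$ (Lemmas \ref{cyclicrep}, \ref{dihedralrep}); neither reduces to "the reflection absorbs one dimension."
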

We note that Theorem \ref{CycleMainTheorem} is analogous to theorems \cite[Theorem 1.3]{al2015finite} and \cite[Theorem 1.1]{poznanovic2021four} which describe the basic pairs of quasiprimitive and biquasiprimitive types, respectfully. 
For basic pairs $(\Gamma, G)\in\OG(4)$ of these two types the group $G$  has a unique minimal normal subgroup $N \cong T^k$ where $T$ is a simple group and $k \leq 2$ if $G$ is quasiprimitive, and $k\leq 8$ if $G$ is biquasiprimitive. Notice that, for all basic pairs $(\Gamma, G) \in \OG(4)$ considered in Theorem \ref{CycleMainTheorem}, apart from the graphs  $\overline{\mathbf{C}}_r(2,t)$, $G$ also has a unique minimal normal subgroup isomorphic to $T^k$, with $T$ a simple group and $k\geq 1$. The value of $k$ here can be arbitrarily large, but since in the proofs we choose $r$ to be the length of the (unique) largest cyclic normal quotient,  it follows from Theorem~\ref{CycleMainTheorem} that $k$ is bounded above by a function of the length $r$ of the largest cyclic normal quotient of $\Gamma$. 
%\textcolor{red}{
%CP: could we make a remark about the socle of a basic group $G$ being $T^k$ for some simple group $T$ and k being at most $2$ for qp, $8$ for biqp, and $k$ may be arbitrarily large for cycle type (refer to our examples), but is always bounded by a function of the length of some cyclic normal quotient. - ?? is this true for $C_r(2,t)$? and for those with independent normal quotients? If it's not clear at this point then perhaps remember to do this in the note on basic with indep 
%cyclic quotients.}

The proof of Theorem \ref{CycleMainTheorem} is given in Section \ref{secCycleProof} and relies on theory developed over the next several sections. In Section \ref{secCyclicNormal} we obtain some basic results concerning basic pairs of cycle type. We then separately analyse those basic pairs which have no independent cyclic normal quotients and whose only cyclic normal quotients are oriented (in Section \ref{secOri}) and unoriented (in Section \ref{secUnori}). 

In Section \ref{secConst} we will provide several interesting constructions for infinite families of basic pairs of cycle type. For each of the cases 1(b).i, 1(b).ii, 2.i, and 2.ii of Theorem \ref{CycleMainTheorem}, there is a corresponding construction in Section \ref{secConst} which shows that the  upper bound on the integer $k$ is tight. Notably, Construction \ref{NonabelianUnorientedConst2r} which is given in Section \ref{secConst} also provides an affirmative answer to a problem posed in \cite[Problem 1]{al2017cycle}, which asks if the number of unoriented cyclic normal quotients that a basic pair $(\Gamma,G)$ can have can be unboundedly large, see Theorem~\ref{thm:problemsolved}. We note also that Construction \ref{NonabelianOrientedConst24} for case  1(b).i of Theorem \ref{CycleMainTheorem} gives examples only for the case $r=3$, and we ask in Problem \ref{prob1}  whether or not the bound $k\leq r2^r$ in Theorem \ref{CycleMainTheorem}, case 1(b).i, is tight for $r\geq4$.

\section{Preliminaries}
All graphs in this paper are simple and finite. Given a graph $\Gamma$ we will always let $V\Gamma$ and $E\Gamma$ denote the sets of vertices and edges of $\Gamma$ respectively. Given a vertex $\alpha \in V\Gamma$ we will let $\Gamma(\alpha)$ denote the set of neighbours of $\alpha$. 
An \textit{arc} of a graph $\Gamma$ is an ordered pair of adjacent vertices. We will let $A\Gamma$ denote the set of arcs of $\Gamma$. Given an arc $(\alpha, \beta) \in A\Gamma$, we will call $(\beta,\alpha)$ its \textit{reverse arc}. In particular, each edge $\{\alpha,\beta\} \in E\Gamma$ has two arcs associated with it, namely $(\alpha,\beta)$ and $(\beta,\alpha)$. 

Given a group $G$ acting on a set $X$, we will always let $G^X$ denote the subgroup of $\Sym(X)$ induced by the group $G$. For an element $x \in X$, the \textit{point-stabiliser} $G_{x}$ in $G$ of $x$ is a subgroup of $G$ defined by
$G_{x}:=\{g \in G : x^g =x\}$. For a subset $\Delta \subseteq X$, the \textit{setwise stabiliser}  in $G$ of $\Delta$ is the subgroup $G_\Delta := \{g \in G : \Delta^g = \Delta\}$, where $\Delta^g = \{x^g : x \in \Delta\}$. 
Given an element $x \in X$, the set $x^G := \{x^g : g \in G\}$ is called a \textit{$G$-orbit}. The $G$-orbits form a partition of the set $X$.
A permutation group $G^X$ is said to be \textit{transitive} if all elements of $X$ lie in a single $G$-orbit, is said to be 
\textit{semiregular} if only the identity element of $G$ fixes a point in $X$, and is said to be \textit{regular} if it is semiregular and transitive. 

Given a group $G \leq \Aut(\Gamma)$, we say that $\Gamma$ is \textit{$G$-vertex-transitive}, \textit{$G$-edge-transitive}, or \textit{$G$-arc-transitive} if $G$ is transitive on $V\Gamma, E\Gamma$, or $A\Gamma$ respectively. Given a group $G$ acting on a graph $\Gamma$, and a vertex $\alpha \in V\Gamma$, the \textit{vertex stabiliser} $G_\alpha$ in $G$ of $\alpha$ is $G_\alpha := \{g\in G : \alpha^g = \alpha\}$, this is the point-stabiliser of $\alpha$ in $G^{V\Gamma}$.
We say that a  $G$-vertex-transitive graph $\Gamma$ is \textit{$G$-vertex-imprimitive} if $G^{V\Gamma}$ is imprimitive, and that $\Gamma$ is \textit{$G$-vertex-primitive} otherwise. 

For a detailed overview of these and other basic concepts of permutation group theory we refer the reader to \cite{praeger2018permutation}, and for other basic graph-theoretic concepts, please refer to \cite{godsil2013algebraic}.

\subsection{G-Oriented Graphs}\label{ssecBasic}

It is easy to see that a $G$-arc-transitive graph with no isolated vertices is necessarily both $G$-vertex- and $G$-edge-transitive. The converse however does not necessarily hold. As mentioned in the introduction, we will say that a graph $\Gamma$ is \textit{$G$-oriented}, with respect to a group $G\leq \Aut(\Gamma)$, if $G$ is transitive on $V\Gamma$ and $E\Gamma$ but is not transitive on $A\Gamma$.

In the literature, such graphs are also called \textit{$G$-half-arc-transitive}, or are said to admit \textit{a half-transitive $G$-action}. %Note that we will often say a graph $\Gamma$ is `$G$-oriented' without reference to a specific group $G$, that is, we will say a graph $\Gamma$ is $G$-oriented just if there exists a group $G_0$ for which $\Gamma$ is $G_0$-oriented.
Suppose now that $\Gamma$ is a $G$-oriented graph and let $\Omega$ be a $G$-orbit on $A\Gamma$ (the arc set of $\Gamma$). Since $\Gamma$ is $G$-edge-transitive,  we know that one arc associated with each edge must lie in $\Omega$.   On the other hand, since $G$ is not transitive on $A\Gamma$, there must be another arc-orbit $\Omega'$ with the same property. Thus, $G$ will have exactly two (paired) orbits $\Omega$ and $\Omega'$ on $A\Gamma$, with exactly one arc associated with each edge contained in $\Omega$, and all reverse arcs contained in $\Omega'$.

It follows that if $\Gamma$ is a $G$-oriented graph, then no automorphism $g\in G$ may reverse an edge of $\Gamma$. Because of this, $\Gamma$ admits a natural $G$-invariant orientation of its edge set; simply take an arc-orbit, say $\Omega$, and orient an edge $\{x,y\}\in E\Gamma$ from $x$ to $y$ if and only if $(x,y) \in \Omega$. In particular, $\Omega$ can be seen as an orientation of $\Gamma$.

Technically speaking, a $G$-oriented graph $\Gamma$ as defined above is  `$G$-orientable' rather than $G$-oriented, as the orientation $\Omega$ has not been chosen yet. However this orientation is determined by the pair $(\Gamma, G)$ up to possibly reversing the orientation of all edges. Our notation suppresses the orientation $\Omega$, however when we need to refer to a fixed orientation of a given $G$-oriented graph $\Gamma$ we will specify this explicitly. The term $G$-oriented graph was suggested by B. D. McKay and has been used, for example in \cite{al2015finite,al2017cycle,al2017normal,poznanovic2021four,poznanovic2019biquasiprimitive}.

The discussion above suggests another (equivalent) approach to studying these graphs. Consider a transitive permutation group $G$ on a set $X$ and notice that the action of $G$ on $X$ induces a natural action on the set $X\times X$, namely $(x,y)^g =(x^g, y^g)$. The $G$-orbits in this induced action are called $G$-orbitals, and we say that an orbital $\Omega$ is \textit{self-paired} if whenever $(x,y) \in \Omega$, we also have $(y,x) \in \Omega$. If we take any $G$-orbital $\Omega$ which is \textit{not} self-paired, we may define a graph $\Gamma_\Omega$ with vertex set $X$ and with an edge $\{x,y\}$ if and only if $(x,y) \in \Omega$. If we orient all edges from $x$ to $y$ if  and only if $(x,y) \in \Omega$ then $\Gamma_\Omega$ will be a $G$-vertex- and $G$-edge-transitive graph and $G$ will preserve this edge orientation. All $G$-oriented graphs arise in this way. 

Finally note that we may also view $G$-oriented graphs as $G$-arc-transitive directed graphs, though it is preferable to view them as undirected graphs with an orientation induced by the group since a single graph $\Gamma$ can have many orientations induced by different groups. Still, we may always construct a $G$-oriented graph by taking the underlying graph of a $G$-arc-transitive directed graph. 

Since $G$-oriented graphs are vertex-transitive, they are necessarily regular. Moreover, all $G$-oriented graphs have even valency  \cite{tutte1966connectivity}. Hence for each even integer $m$ we define $\OG(m)$ to be the family of all graph-group pairs $(\Gamma, G)$ where $\Gamma$ is connected, $m$-valent and $G$-oriented. Note that we restrict our attention to connected graphs only since all connected components of a vertex-transitive graph are isomorphic. Further let $\mathcal{O G}:=\bigcup_{m \text { even }} \mathcal{O} \mathcal{G}(m)$ denote the family of all pairs $(\Gamma, G)$ where $\Gamma$ is  connected and $G$-oriented. Occasionally we will also need to discuss $G$-arc-transitive graphs. For each integer $k\geq 1$, let $\mathcal{A G}(k)$ denote the set of graph-group pairs $(\Sigma, H)$ where $\Sigma$ is a connected, $H$-arc-transitive graph of valency $k$. Also let $\mathcal{A G}:=\bigcup_{k \geqslant 1} \mathcal{A} \mathcal{G}(k)$.

Since the only finite, connected, 2-valent graphs are cycles, it is easy to see that these oriented cycles completely characterize the family $\OG(2)$. As mentioned earlier, the primary concern of this paper is the family $\OG(4)$.

%The family $\OG(2)$ is very easy to understand but plays an important role. For $r \geq 3$, let $\mathbf{C}_r $ denote the $r$-cycle. This graph is arc-transitive since its full automorphism group is the dihedral group $D_r$, and this group is transitive on the arcs of $\mathbf{C}_r $. However, if we consider the cyclic subgroup $C_r\leq D_r$ of order $r$ acting regularly on the vertices (and edges) of $\mathbf{C}_r $, we see that $(\mathbf{C}_r ,C_r)\in \OG(2)$, with $C_r$ preserving an obvious orientation of the edges. 

\subsection{Vertex Stabilisers in $\OG(4)$}\label{secOG4stabilisers}

 Given a $G$-oriented graph $\Gamma$ with a fixed $G$-invariant edge-orientation, we will say that a vertex $\beta$ is an \textit{in-neighbour} of a vertex $\alpha$ if there is an edge $\{\alpha,\beta\}\in E\Gamma$ oriented from $\beta$ to $\alpha$, and we will say that $\beta$ is an \textit{out-neighbour} of $\alpha$ if there is an edge $\{\alpha,\beta\}\in E\Gamma$ oriented from $\alpha$ to $\beta$.
We will let $\Gamma_{in}(\alpha)$ and $\Gamma_{out}(\alpha)$ denote the sets of in- and out-neighbours of $\alpha$ respectively with respect to the fixed edge-orientation. In any $G$-invariant edge-orientation of a $G$-oriented graph, exactly half of the edges incident to any vertex $\alpha$  will be oriented from $\alpha$ to a neighbour, and the other half oriented in the opposite direction. Hence $\Gamma_{in}(\alpha)$ and $\Gamma_{out}(\alpha)$ partition $\Gamma(\alpha)$, and each contains half of the elements of $\Gamma(\alpha)$. Moreover, each of $\Gamma_{in}(\alpha)$ and $\Gamma_{out}(\alpha)$ is an orbit of the stabiliser $G_\alpha$ acting on $\Gamma(\alpha)$. 

%Suppose now that $(\Gamma, G) \in \OG(4)$. As discussed earlier, this implies that the group $G$ has two orbits on the arc set of $\Gamma$ and these two orbits are paired (every arc $(u,v)$ in one orbit will have its reverse arc $(v,u)$ in the other orbit.) Either of these two $G$-orbits on the arc set of $\Gamma$ naturally give rise to a $G$-invariant orientation of the edges of $\Gamma$.

An \textit{oriented $s$-arc} of a $G$-oriented graph is a sequence of vertices $(\alpha_0, \alpha_1,\dots,\alpha_s)$ of $\Gamma$, such that for each $i \in \{0,\dots,s-1\}$, $\alpha_i$ and $\alpha_{i+1}$ are adjacent, and each edge $\{\alpha_i, \alpha_{i+1}\}$ is oriented from $\alpha_i$ to $\alpha_{i+1}$.
By the above discussion, given a pair $(\Gamma, G) \in \OG(4)$, the stabiliser $G_\alpha$ of a vertex $\alpha \in V\Gamma$  will  have two orbits of length 2 on $\Gamma(\alpha)$. The following proposition gives us some other important information about such vertex stabilisers.
For a proof, see  for instance the first part of the proof of \cite[Lemma 6.2]{al2015finite}. 

    \begin{Proposition}\label{localprops}
	Let $(\Gamma, G) \in \OG(4)$ and let $\alpha  \in V\Gamma$. Let $s\geq 1$ be the largest integer such that $G$ acts transitively on the oriented $s$-arcs of $\Gamma$. Then the following hold:
	\begin{enumerate}[(i)]
	    \item   $G$ acts regularly on the set of oriented $s$-arcs of $\Gamma$, and
	    \item the stabiliser $G_\alpha $ is a group of order $2^s$ and is generated by $s$ involutions.
	\end{enumerate}
	\end{Proposition}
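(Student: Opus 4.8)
The idea is to analyze the action of $G_\alpha$ on oriented $s$-arcs starting at $\alpha$ and to exploit the crucial fact, noted just before the statement, that in a pair $(\Gamma,G)\in\OG(4)$ the stabiliser $G_\alpha$ has exactly two orbits of length $2$ on $\Gamma(\alpha)$: the set $\Gamma_{in}(\alpha)$ and the set $\Gamma_{out}(\alpha)$. The plan is as follows. First I would fix a $G$-invariant orientation and observe that an oriented $s$-arc $(\alpha_0,\dots,\alpha_s)$ has the property that for each $i$ the vertex $\alpha_{i+1}$ lies in $\Gamma_{out}(\alpha_i)$, a set of size $2$ on which $G_{\alpha_i}$ acts transitively (indeed as the full symmetric group of degree $2$, since $|\Gamma_{out}(\alpha_i)|=2$). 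This gives, by a standard counting/orbit argument, that the number of oriented $s$-arcs issuing from $\alpha$ is exactly $2^s$, and that $G$ is transitive on oriented $s$-arcs starting at $\alpha$ (using $G$-edge-transitivity to get from $\alpha$ to any of its out-neighbours, then induction along the arc).

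Next I would prove (i): since $G$ is transitive on oriented $s$-arcs and there are $2^s$ of them starting at $\alpha$, while $G_\alpha$ stabilises $\alpha$, the number of oriented $s$-arcs equals $|V\Gamma|\cdot 2^s$ and $|G| = |V\Gamma|\cdot |G_\alpha|$; transitivity on oriented $s$-arcs forces $|G_\alpha|\geq 2^s$. For the reverse inequality — and this is where the maximality of $s$ is essential — I would argue that the pointwise stabiliser in $G_\alpha$ of a fixed oriented $s$-arc $(\alpha=\alpha_0,\alpha_1,\dots,\alpha_s)$ must be trivial: if some nontrivial $g\in G_\alpha$ fixed this whole $s$-arc, then because $\Gamma_{out}(\alpha_s)$ has size $2$ and is a $G_{\alpha_s}$-orbit, either $g$ fixes an out-neighbour $\alpha_{s+1}$ of $\alpha_s$ (extending to an oriented $(s+1)$-arc fixed by $g$), or $g$ swaps the two out-neighbours of $\alpha_s$; iterating and using connectivity of $\Gamma$ together with the fact that $\Gamma_{in}$ and $\Gamma_{out}$ are the two $G_\alpha$-orbits at each vertex, one shows $g$ is forced to act nontrivially but in a way that contradicts either the maximality of $s$ (by producing an oriented $(s+1)$-arc on which $G$ is still transitive) or the orientation-preservation of $G$. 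Combining, $G_\alpha$ acts semiregularly, hence regularly, on the set of oriented $s$-arcs starting at $\alpha$, giving $|G_\alpha| = 2^s$ and part (i). The step I expect to be the main obstacle is precisely this semiregularity argument: making rigorous the claim that a nonidentity element fixing a maximal oriented $s$-arc cannot exist, which requires carefully tracking how $G_{\alpha_i}$ permutes $\Gamma_{out}(\alpha_i)$ and invoking connectivity to propagate the fixed-point information through the whole graph.

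Finally, for the generator statement in (ii), I would produce the $s$ involutions explicitly. Fix the oriented $s$-arc $(\alpha=\alpha_0,\alpha_1,\dots,\alpha_s)$. For each $i\in\{1,\dots,s\}$, by transitivity of $G$ on oriented $i$-arcs (which holds for all $i\le s$) there is an element $g_i\in G$ mapping $(\alpha_0,\dots,\alpha_i)$ to $(\alpha_0,\dots,\alpha_{i-1},\alpha_i')$, where $\alpha_i'$ is the other out-neighbour of $\alpha_{i-1}$; since $g_i$ fixes $\alpha_0$ it lies in $G_\alpha$, and one checks (replacing $g_i$ by a suitable power, using that the relevant induced permutation has order $2$ and that $|G_\alpha|=2^s$ is a $2$-group) that $g_i$ can be taken to be an involution. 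A standard argument — the $g_i$ realise all $2^s$ "switching patterns" along the arc, and $G_\alpha$ is regular on oriented $s$-arcs of order $2^s$ — shows that $\langle g_1,\dots,g_s\rangle = G_\alpha$. This part is routine once (i) is in hand, so I do not anticipate difficulty there; the crux of the whole proof remains the semiregularity step flagged above.
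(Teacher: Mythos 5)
The paper does not actually prove this proposition: it cites the first part of the proof of \cite[Lemma 6.2]{al2015finite} (although the generation-by-involutions construction is reproduced later, inside the proof of Theorem \ref{OrientedNonabelianBound}). So the comparison has to be against that standard argument, and your outline follows the same skeleton: count the $2^s$ oriented $s$-arcs issuing from each vertex, deduce $|G_\alpha|\geq 2^s$ from transitivity, prove semiregularity, and then manufacture the involutions. Two steps, however, are not sound as written.

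First, the semiregularity step, which you correctly flag as the crux, is left as a gesture, and the way you describe its conclusion suggests a wrong target. The clean argument is: if the pointwise stabiliser $H$ of $(\alpha_0,\dots,\alpha_s)$ contained an element swapping the two out-neighbours of $\alpha_s$, then $G$ would be transitive on oriented $(s+1)$-arcs (map the initial $s$-subarcs of any two $(s+1)$-arcs onto the fixed one, then correct the last vertex), contradicting maximality of $s$; the symmetric argument for in-neighbours of $\alpha_0$ shows $H$ fixes those too. Then, since for any vertex $v$ the sets $\Gamma_{in}(v)$ and $\Gamma_{out}(v)$ are $G_v$-invariant of size $2$, fixing one point of such a set forces fixing both; sliding the $s$-arc ``window'' forwards and backwards along oriented walks shows $H$ fixes every vertex of the connected graph, whence $H=1$ by faithfulness. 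There is no contradiction with ``orientation-preservation'' to be extracted; the endpoint is simply that a nontrivial element cannot fix everything. You need to write out this propagation explicitly (including why every vertex is reached by such window shifts).

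Second, in part (ii) the move ``replace $g_i$ by a suitable power to get an involution'' fails. If $g_i$ fixes $(\alpha_0,\dots,\alpha_{i-1})$ and moves $\alpha_i$, then $g_i^2$ fixes $\alpha_i$, so every involution in $\langle g_i\rangle$ (being an even power when $|g_i|\geq 4$) lies in $G_{\alpha_0,\dots,\alpha_i}$ and cannot witness the index-$2$ drop you need. The correct construction is the one the paper uses in the proof of Theorem \ref{OrientedNonabelianBound}: regularity gives $|G_{\alpha_0,\dots,\alpha_{s-1}}|=2$, so its generator $h_1$ is an involution; taking $g$ with $(\alpha_0,\dots,\alpha_s)^g=(\alpha_1,\dots,\alpha_{s+1})$ and setting $h_i:=h_{i-1}^{g^{-1}}$ produces involutions with $G_{\alpha_0,\dots,\alpha_{s-i}}=\langle h_1,\dots,h_i\rangle$, and hence $G_\alpha=\langle h_1,\dots,h_s\rangle$. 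With these two repairs your plan becomes the standard proof.
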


By Proposition \ref{localprops} we know that if $(\Gamma, G) \in \OG(4)$ then the  stabiliser $G_\alpha$ of a vertex $\alpha \in V\Gamma$ is a 2-group and is generated by involutions. In particular, these vertex stabilisers will always contain a subgroup isomorphic to $\mathbb{Z}_2$ and there are many interesting examples of pairs $(\Gamma, G) \in \OG(4)$ with $G_\alpha \cong \mathbb{Z}_2$. 
On the other hand, these vertex stabilisers may be arbitrarily large. In fact, the vertex stabilisers of pairs in $\OG(4)$ have been an active area of research for decades, see for example \cite{marusic2001point,potovcnik2010vertex,spiga2016order}. In particular, see \cite{spiga2019constructing,xia2019tetravalent} for some recent developments.

\subsection{Normal Quotients in $\OG(4)$}\label{secOG4Normal} 
Suppose that $\Gamma$ is a $G$-vertex-transitive graph and let $N$ be a nontrivial normal subgroup of $G$.
Let $\B_N$ be the partition of $V\Gamma$ into the $N$-orbits, that is, $\B_N = \{(\alpha^N)^g : g \in G\}$ where $\alpha$ is an arbitrary fixed vertex of $\Gamma$, and $\alpha^N$ denotes the $N$-orbit containing $\alpha$. In this case, $\B_N$ is a $G$-invariant partition of $V\Gamma$, so we may define the quotient graph $\Gamma_N := \Gamma_{\B_N}$ as described in Section~\ref{s:intro}. The graph $\Gamma_N$ defined in this way is  called the \textit{$G$-normal quotient} of $\Gamma$ with respect to $N$, and the group $G$ induces a group $G_N$ of automorphisms of $\Gamma_N$. It was shown in \cite{al2015finite}, that taking a normal quotient of a pair $(\Gamma, G) \in \OG(m)$ for $m\geq 4$, produces a connected $G$-vertex- and $G$-edge-transitive graph $\Gamma_N$ whose valency divides the valency of $\Gamma$.

Suppose now that $(\Gamma, G) \in \OG(4)$ and that $N$ is a normal subgroup of $G$. 
Specifically, $G_N = G/K$, where $K$ is the kernel of the $G$-action on $\Gamma_N$. By definition, $N\leq K$ and since $K$ fixes all $N$-orbits setwise, it follows that the $K$-orbits are the same as the $N$-orbits, so $\Gamma_K=\Gamma_N$. However, $K$ may be strictly larger than $N$.  

It was shown in \cite[Theorem 1.1]{al2015finite} 
that for any $(\Gamma, G) \in \mathcal{O} \mathcal{G}(4)$, and any nontrivial normal subgroup $N$ of $G$, either $\left(\Gamma_{N}, G_{N}\right)$ is also in $\mathcal{O} \mathcal{G}(4)$, or $\Gamma_{N}$ is isomorphic to $K_{1}$, $K_{2}$ or a cycle $\mathbf{C}_r$, for some $r \geq 3$. 
Note that if $(\Gamma_N, G_N)$ is itself a member of $\OG(4)$, that is, if $\Gamma_N$ is a 4-valent $G_N$-oriented graph, then $\Gamma$ is said to be a \textit{$G$-normal cover of $\Gamma_N$}.

Following the authors of \cite{al2015finite}, we  say that a pair $(\Gamma_N, G_N)$ is \textit{degenerate} if $\Gamma_N$ is isomorphic to one of $K_1$, $K_2$ or $\mathbf{C}_r$ for some $r \geq 3$. We also define a pair ($\Gamma, G) \in \OG(4)$ to be \textit{basic} if $(\Gamma_N, G_N)$ is degenerate relative to every nontrivial normal subgroup $N$ of $G$. 

As mentioned earlier, the basic pairs can be divided into types (quasiprimitive, biquasiprimitive and cycle type), depending on the possibilities for their normal quotient graphs.
Table \ref{TabBasic} gives a summary of these three types of basic pairs in $\OG(4)$. For more information and results on each of these basic types we refer the reader to the articles cited in the Reference column.

\begin{table}[H]
\centering
\resizebox{\linewidth}{!}{%
	\begin{tabular}{ l l l l }
		\hline
		Basic Type & Possible $\Gamma_N$ for $1\neq N \lhd G$ &  Conditions on $G$-action on $V\Gamma$ & Reference\\ 
		\hline
		Quasiprimitive & $K_1$ only & quasiprimitive & \cite{al2015finite} \\  
		Biquasiprimitive & $K_1$ and $K_2$ only & biquasiprimitive & \cite{poznanovic2019biquasiprimitive,poznanovic2021four} \\
		Cycle & At least one $\mathbf{C}_r $ $(r\geq 3)$ & at least one quotient action $D_{r}$ or $C_r$ & \cite{al2017cycle,al2017normal} \\
		\hline
	\end{tabular}}\caption{Types of Basic Pairs $(\Gamma, G) \in \OG(4)$.}\label{TabBasic}
\end{table}

\section{Cyclic Normal Quotients}\label{secCyclicNormal}
We will now begin working towards a proof of Theorem \ref{CycleMainTheorem} by investigating the basic pairs of cycle type. Throughout this paper, whenever we have a pair $(\Gamma, G) \in \OG(4)$ with $\Gamma_N$ a cyclic normal quotient, we will always let $\tilde{N}$ denote the normal subgroup of $G$ (containing $N$) which fixes the $N$-orbits setwise, that is, $\tilde{N}$ will always refer to the kernel of the $G$-action on the set of $N$-orbits on $V\Gamma$. Since $N \leq \tilde{N}$ and these groups have the same orbits on the vertices, it will always be the case that $\Gamma_N = \Gamma_{\tilde{N}}$.
 
  If $\Gamma_N$ is a cyclic normal quotient of $\Gamma$ then $\tilde{G} = G/\tilde{N}$ is isomorphic to either $C_r$ or $D_r$ (for $r\geq3$) \cite[Theorem 1.1]{al2015finite}, where $D_r$ denotes the dihedral group of order $2r$.
 If the normal quotient $(\Gamma_N, G/\tilde{N}$) of a pair $(\Gamma, G) \in \OG(4)$ is isomorphic to $(\mathbf{C}_r, C_r)\in \OG(2)$ for some $r\geq 3$, then we will say that the quotient is \textit{$G$-oriented} (or simply that it is oriented). If $(\Gamma_N, G/\tilde{N}$) is isomorphic to $(\mathbf{C}_r, D_r)\in \mathcal{AG}(2)$  then we will say that the quotient is \textit{$G$-unoriented} (or simply that it is unoriented).
 
 Of course, if $\Gamma_N$ is a $G$-oriented cyclic normal quotient, then the stabiliser $G_\alpha$ of any vertex will fix the $N$-orbit containing $\alpha$, and so $G_\alpha \leq \tilde{N}$. On the other hand, if $\Gamma_N$ is $G$-unoriented then any vertex $\alpha$ will have one out-neighbour (and one in-neighbour) in each of the two adjacent $N$-orbits, hence any non-identity automorphism which fixes $\alpha$ must swap at least two $N$-orbits, in particular $G_\alpha \cap \tilde{N} = 1$. 

 The following result on vertex stabilisers of pairs $(\Gamma, G) \in \OG(4)$ was first given in \textup{\cite[Lemma 2.1]{al2017cycle}}. We include its full statement here as it will be used heavily throughout this paper.
\begin{Lemma}\label{basics}
	Let $(\Gamma, G) \in \OG(4)$ have a cyclic normal quotient $\Gamma_N$, and let $\tilde{N}$ be the kernel of the $G$-action on the set of $N$-orbits on vertices. Let $\alpha \in V\Gamma$.
	\begin{enumerate}[(a)]
	\item If $\Gamma_N$ is $G$-unoriented, then $\tilde{N} = N$ is semiregular on $V\Gamma$, and $|G_\alpha| = 2$.
	\item If $\Gamma_N$ is $G$-oriented, then $\tilde{N}$ contains $G_\alpha$.
	\end{enumerate}
	Moreover, if $M$ is a normal subgroup of $G$ with at least three orbits such that $G_\alpha < M$,
	then the normal quotient $\Gamma_M$ is a $G$-oriented cycle.
\end{Lemma}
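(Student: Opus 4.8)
The plan is to handle the three assertions in turn, exploiting throughout that $\tilde{G} := G/\tilde{N}$ acts faithfully on $\Gamma_N \cong \mathbf{C}_r$ and is therefore isomorphic to $C_r$ or $D_r$, and that the image of $G_\alpha$ in $\tilde{G}$ fixes the vertex $\alpha^N$ of $\mathbf{C}_r$ (because $G_\alpha$ fixes $\alpha$, hence fixes its $N$-orbit). For part (b), when $\Gamma_N$ is oriented we have $\tilde{G} \cong C_r$, which acts regularly on the $r$ vertices of $\mathbf{C}_r$; thus the stabiliser of $\alpha^N$ in $\tilde{G}$ is trivial, the image of $G_\alpha$ lies in this stabiliser, and so $G_\alpha \le \tilde{N}$. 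This is essentially the observation already recorded in the paragraph preceding the Lemma.

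For part (a), when $\Gamma_N$ is unoriented the preceding paragraph gives that $\alpha$ has exactly one out-neighbour and one in-neighbour in each of its two adjacent blocks. The first step is to prove $G_\alpha \cap \tilde{N} = 1$. If $g \in G_\alpha \cap \tilde{N}$ then $g$ fixes every block setwise and preserves the orientation; since $\alpha$ has a unique out-neighbour and a unique in-neighbour in each adjacent block, $g$ must fix all four neighbours of $\alpha$. Rerunning this argument at every vertex fixed by $g$ and using connectedness of $\Gamma$ forces $g = 1$. Hence $\tilde{N}_\alpha = \tilde{N} \cap G_\alpha = 1$; since $\tilde{N} \lhd G$ and $G$ is vertex-transitive, all point stabilisers of $\tilde{N}$ are conjugate and thus trivial, so $\tilde{N}$ is semiregular. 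As $N \le \tilde{N}$ share the same orbits and $N_\alpha \le \tilde{N}_\alpha = 1$, both are semiregular with equal orbit lengths, whence $N = \tilde{N}$. Finally $G_\alpha$ embeds into $\tilde{G} \cong D_r$ and lands in the stabiliser of $\alpha^N$, a subgroup of order $2$, so $|G_\alpha| \le 2$; combined with $|G_\alpha| = 2^s$, $s \ge 1$, from Proposition~\ref{localprops}, this yields $|G_\alpha| = 2$.

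For the \emph{Moreover} statement, let $M \lhd G$ have at least three orbits with $G_\alpha < M$, put $\Delta := \alpha^M$, and let $\tilde{M}$ be the kernel of the $G$-action on the $M$-orbits. The crucial computation is that the setwise stabiliser $G_\Delta$ equals $M$: since $M \lhd G$ every element of $G_\alpha$ fixes $\Delta$ setwise, so $G_\alpha \le G_\Delta$; because $M$ is transitive on $\Delta$ we get $G_\Delta = M G_\alpha$, and as $G_\alpha \le M$ this is just $M$. Therefore the stabiliser of the vertex $\Delta$ in $G_M = G/\tilde{M}$ is $G_\Delta \tilde{M}/\tilde{M} = M\tilde{M}/\tilde{M} = 1$ (using $M \le \tilde{M}$), so $G_M$ is semiregular, and being transitive (vertex-transitivity descends to $\Gamma_M$) it is regular on $V\Gamma_M$. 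A regular group cannot be the group of an $\OG(4)$ pair, as Proposition~\ref{localprops} would then force a nontrivial vertex stabiliser; so by the quotient dichotomy of \cite[Theorem 1.1]{al2015finite} together with $|V\Gamma_M| \ge 3$ we obtain $\Gamma_M \cong \mathbf{C}_r$. Regularity gives $|G_M| = |V\Gamma_M| = r$, forcing $G/\tilde{M} \cong C_r$ rather than $D_r$ (of order $2r$), so $\Gamma_M$ is a $G$-oriented cycle.

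I expect the main obstacle to be the triviality $G_\alpha \cap \tilde{N} = 1$ in part (a): it relies on correctly using the ``one in- and one out-neighbour per adjacent block'' structure of the unoriented quotient, a connectivity propagation, and careful bookkeeping between $N$, $\tilde{N}$ and their common orbits when concluding $N = \tilde{N}$. The remaining deductions are short once the point stabilisers of $C_r$ and $D_r$ acting on $\mathbf{C}_r$ are identified and Proposition~\ref{localprops} is invoked.
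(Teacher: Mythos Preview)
The paper does not prove this lemma: it quotes the statement and attributes it to \cite[Lemma 2.1]{al2017cycle}, so there is no in-paper argument to compare against. Your proof is correct and self-contained. Part (b) is the one-line consequence of $G/\tilde N\cong C_r$ having trivial point stabilisers; part (a) correctly derives $G_\alpha\cap\tilde N=1$ from the one-in/one-out neighbour structure in each adjacent block together with connectedness, then gets semiregularity, $N=\tilde N$ from equal orbits plus semiregularity, and $|G_\alpha|=2$ by combining the embedding into a point stabiliser of $D_r$ with Proposition~\ref{localprops}. For the ``Moreover'' clause your computation $G_\Delta=MG_\alpha=M$ (hence $M=\tilde M$ and $G_M$ regular on $V\Gamma_M$) is exactly the right mechanism, and the conclusion via \cite[Theorem 1.1]{al2015finite} that $\Gamma_M\cong\mathbf{C}_r$ with $G_M\cong C_r$ is sound.
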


\begin{figure}[H]
	\begin{subfigure}{0.45\textwidth}
		\centering
		  \tikzstyle{every node}= [draw=none]
		\begin{tikzpicture}[scale =0.8]
		\node (max) at (0,2) {$G$};
		\node (b) at (0,1) {$\tilde{N}$};
		\node (d) at (-1,0) {$N$};
		\node (f) at (1,0) {$G_\alpha$};
		\node (min) at (0,-1) {$N_\alpha$};
		\node (minmin) at (0,-2) {$\{1\}$};
		\draw (minmin) -- (min) -- (d)  (max) -- (b) -- (f) (min) -- (f) (d) -- (b);
		\draw[preaction={draw=white, -,line width=6pt}] ;
		\end{tikzpicture}\subcaption{$\Gamma_N \cong \mathbf{C}_r$, $G$-oriented}
	\end{subfigure}
	\begin{subfigure}{0.45\textwidth}
		\centering
		\tikzstyle{every node}= [draw=none]
		\begin{tikzpicture}[scale =0.8]
		
		\node (b) at (0,2) {$G$};
		\node (d) at (-1,0) {$\tilde{N} = N$};
		\node (f) at (1,0) {$G_\alpha$};
		\node (min) at (0,-2) {$\{1\}$};
		
		\draw (min) -- (d)   (b) -- (f) (min) -- (f) (d) -- (b);
		\draw[preaction={draw=white, -,line width=6pt}] ;
		\end{tikzpicture}\subcaption{$\Gamma_N \cong \mathbf{C}_r$, $G$-unoriented}
	\end{subfigure}
	\caption{Subgroup lattice showing structure of $G$, depending on whether $\Gamma_N$ is $G$-oriented or $G$-unoriented.}
\end{figure}
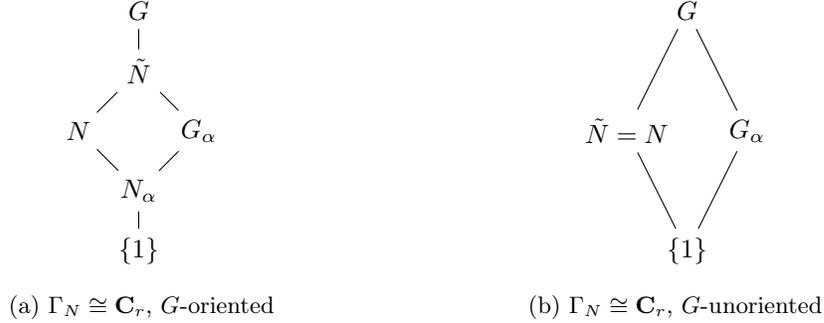

Since even a basic pair $(\Gamma, G)\in \OG(4)$ can have many different cyclic normal quotients, it is important to consider how these various quotients relate to each other. We will say that two cyclic   normal quotients $\Gamma_M$ and $\Gamma_N$ of $(\Gamma, G)\in \OG(4)$ are \textit{independent} if the normal quotient $\Gamma_K$, where $K= \tilde{M}\cap\tilde{N}$, is not a cycle. Note that if $(\Gamma, G)\in\OG(4)$ is basic of cycle type and $\Gamma_M$ and $\Gamma_N$ are independent cyclic normal quotients of $\Gamma$, then $K = \tilde{N}\cap\tilde{M} = 1$.

Hence there are two kinds of basic pairs of cycle type,  namely those with independent cyclic normal quotients and those without. A description of the pairs $(\Gamma, G)\in\OG(4)$ with independent cyclic quotients is given in \cite[Theorem 2]{al2017cycle}, and the basic pairs of independent-cycle type are determined in \cite{independent}. The objective of this paper is to describe the basic pairs which do not have independent cyclic normal quotients. 
\smallskip

Suppose now that $(\Gamma, G)\in \OG(4)$ is basic of cycle type and does not have independent cyclic normal quotients. Then for any two  cyclic normal quotients $\Gamma_M$ and $\Gamma_N$ of $\Gamma$, the quotient graph $\Gamma_K$, where $K= \tilde{M}\cap\tilde{N}$, must also be a cycle admitting an action of $G/K$.
Now $\tilde{N}/K$ is a normal subgroup of $G/K$, and moreover there is a $G$-invariant bijection between the $\tilde{N}$-orbits on the vertices of $\Gamma$, and the $(\tilde{N}/K)$-orbits on the vertices of $\Gamma_K$. Furthermore, this bijection induces a graph isomorphism, implying that $\Gamma_N =  \Gamma_{\tilde{N}} \cong (\Gamma_K)_{\tilde{N}/K}$ admitting a natural $G$-action, and the same is true of $\Gamma_M$. 
Thus since $\Gamma_K$ is also a cyclic normal quotient of $(\Gamma, G)$,  $\Gamma_M$ and $\Gamma_N$ are either both $G$-oriented or both $G$-unoriented cycles depending on whether $\Gamma_K$ is $G$-oriented or not. 

By the discussion above it follows that if $(\Gamma, G)$ is basic of cycle type  and does not have independent cyclic quotients then either all of its cyclic normal quotients are $G$-oriented, or all of its cyclic normal quotients are $G$-unoriented.
We may thus divide the basic pairs of cycle type into three distinct classes as follows. We will say that a basic pair $(\Gamma, G) \in \OG(4)$ with cyclic normal quotients is 
\begin{enumerate}
	\item basic of \textit{oriented-cycle type} if it does not have independent cyclic normal quotients \textbf{and} all of its cyclic normal quotients are $G$-oriented,
	\item basic of \textit{unoriented-cycle type} if it does not have independent cyclic normal quotients \textbf{and} all of its cyclic normal quotients are $G$-unoriented,
	\item basic of \textit{independent-cycle type} if it has independent cyclic normal quotients.	
\end{enumerate}

Since the basic pairs of independent-cycle type must necessarily have a $G$-unoriented cyclic normal quotient, see \cite[Theorem 2]{al2017cycle}, Lemma \ref{basics} ensures that if $(\Gamma, G)$ is  basic of unoriented-cycle type or of independent-cycle type then $|G_\alpha| =2$, and so the basic pairs of cycle type with larger vertex stabilisers must have only $G$-oriented quotients. The discussion above is summarized in Table \ref{cycletypestable}. This table also points to relevant sections where each of these classes is studied.

\begin{table}[H]
	\centering
	\resizebox{\linewidth}{!}{%
	\begin{tabular}{ l l l l }
		\hline
		Cycle Type & $G$-action on $\Gamma_N \cong \mathbf{C}_r$ &  $|G_\alpha|$ & Reference \\ 
		\hline
		Oriented-cycle type & $C_r$ on all cyclic quotients & unbounded & Section \ref{secOri}\\  
		Unoriented-cycle type & $D_r$ on all cyclic quotients & 2  & Section \ref{secUnori} \\
		Independent-cycle type & At least one cyclic quotient with $D_r$ action & 2 & \cite{al2017cycle} and \cite{independent}\\
		\hline
	\end{tabular}}\caption{Subdivision of basic pairs $(\Gamma, G) \in \OG(4)$ of cycle type.}\label{cycletypestable}
\end{table}
	
	\subsection{Describing the Socle}

	Our aim is to analyse the structure of $\soc(G)$ where $(\Gamma, G) \in \OG(4)$ is basic of cycle type. First note that if every minimal normal subgroup of $G$ has two or less orbits on $V\Gamma$, then every normal subgroup of $G$ will have at most two orbits. Thus every $G$-normal quotient of $\Gamma$ will be isomorphic to $K_1$ or $K_2$. Hence basic pairs of cycle type must have at least one minimal normal subgroup $N$ which has at least three orbits on $V\Gamma$. In fact, we will show that if $(\Gamma,G)$ is basic of cycle type then every minimal normal subgroup of $G$ has at least three orbits on $V\Gamma$. The next two lemmas establish this result.

\begin{Lemma}\label{notTrans}
	If $(\Gamma, G)\in \OG(4)$ is basic of cycle type then no minimal normal subgroup of $G$ is transitive on $V\Gamma$.
\end{Lemma}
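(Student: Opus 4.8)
The plan is to argue by contradiction: suppose $N$ is a minimal normal subgroup of $G$ that is transitive on $V\Gamma$. The immediate consequence is that $N$ has one orbit, so $\tilde N = G$ (the kernel of the $G$-action on $N$-orbits is all of $G$ since there is only one orbit), and hence $G = N G_\alpha$ with $G_\alpha$ a $2$-group by Proposition~\ref{localprops}. First I would use the fact that $(\Gamma,G)$ is basic of cycle type: there must exist \emph{some} nontrivial normal subgroup $M$ of $G$ with $\Gamma_M \cong \mathbf{C}_r$ for some $r\geq 3$, so $\tilde M$ has exactly $r\geq 3$ orbits and $G/\tilde M \cong C_r$ or $D_r$. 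The strategy is to play $N$ off against $\tilde M$: since $N$ is transitive while $\tilde M$ is intransitive (it has $r \geq 3$ orbits), $N \not\leq \tilde M$, so by minimality of $N$ we get $N \cap \tilde M = 1$ and $N$ embeds into $G/\tilde M$, which is $C_r$ or $D_r$.

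Next I would extract structural information from this embedding. Because $N$ is a minimal normal subgroup it is characteristically simple, so $N \cong T^k$ for a simple group $T$. An embedding of $T^k$ into the metacyclic-ish group $C_r$ or $D_r$ forces $T$ to be cyclic of prime order and $k$ small; in particular $N$ is abelian, $N \cong \mathbb{Z}_p^k$, and since $N$ is transitive on $V\Gamma$ and normal in the vertex-transitive group $G$, $N$ is in fact \emph{regular} on $V\Gamma$ (a normal transitive subgroup whose point stabiliser $N_\alpha$ is normalised by the transitive group $G$, combined with $N_\alpha \leq G_\alpha$ and the structure from Lemma~\ref{basics}, should pin $N_\alpha = 1$). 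So $|V\Gamma| = p^k$ and $G = N \rtimes G_\alpha$ with $G_\alpha$ acting faithfully on $N$ (faithfully because $\Gamma$ is connected and $G$-oriented, so the kernel of this action, being normal and centralised appropriately, must act trivially on $\Gamma(\alpha)$ hence trivially everywhere).

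The heart of the argument is then a local/connectivity contradiction. With $N \cong \mathbb{Z}_p^k$ regular, identify $V\Gamma$ with $N$ and write $\Gamma(\alpha)$ for the connection set; since $\Gamma$ is $4$-valent $G$-oriented with $\Gamma_{out}(\alpha)$ and $\Gamma_{in}(\alpha)$ each an orbit of $G_\alpha$ of size $2$, the connection set is $\{a, a', -a, -a'\}$ (or similar), and $G_\alpha \leq \GL_k(p)$ must preserve $\{a,a'\}$ setwise while $\Gamma$ must be connected, i.e.\ $\langle a, a'\rangle = N$, forcing $k \leq 2$. Now I would invoke the classification of basic pairs: a pair with $N$ regular abelian and $k\le 2$ is handled by the quasiprimitive/earlier theory, or directly — one shows such $(\Gamma,G)$ has a cyclic $G$-normal quotient $\Gamma_L$ for $L$ a suitable $G_\alpha$-invariant subgroup of $N$ (e.g.\ $L = \langle a - a'\rangle$ or a hyperplane), and crucially one must exhibit a normal subgroup producing a quotient that is \emph{not} degenerate, contradicting basicness; alternatively one checks directly that the only candidates are $\mathbf{C}_r \times \mathbf{C}_r$-type graphs which have \emph{independent} cyclic quotients or are not basic. \textbf{The main obstacle} I anticipate is precisely this last step: ruling out the small-rank regular-abelian configurations cleanly, since $p=2$ with $k=2$ (giving $|V\Gamma|=4$, $\Gamma = K_{4,4}$ minus a perfect matching or $\mathbf{C}_4$-like graphs) and odd $p$ behave differently, and one needs to show none of them is simultaneously basic, of cycle type, and has $N$ transitive — most likely by showing any such $\Gamma$ either fails to be $G$-oriented at valency $4$, or admits a further non-degenerate normal quotient. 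I expect the authors route around this by a short argument using Lemma~\ref{basics}'s final clause (if $G_\alpha < M \lhd G$ with $\geq 3$ orbits then $\Gamma_M$ is an oriented cycle) applied to $M = N$ itself or to $N G_\alpha = G$, immediately yielding that $G$ would have to be (a quotient giving) a cycle, contradicting $G_\alpha$ transitive reasoning — i.e.\ the cleanest contradiction is that a transitive $N$ forces $G/\tilde N$ trivial while cycle type demands a $C_r$ or $D_r$ quotient, and these are incompatible once one shows every cyclic quotient's kernel must \emph{contain} a conjugate-closed trace of $N$.
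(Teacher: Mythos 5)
Your opening reduction is sound and genuinely different from the paper's: you intersect the transitive minimal normal subgroup (your $N$) with the kernel $\tilde M$ of a cyclic normal quotient, get $N\cap\tilde M=1$ by minimality, and embed $N$ as a normal subgroup of $G/\tilde M\cong C_r$ or $D_r$, forcing $N\cong\mathbb{Z}_p^k$ abelian of small rank and hence regular. All of that is correct. The genuine gap is that you never derive the final contradiction. You flag ``ruling out the small-rank regular-abelian configurations'' as the main obstacle and then offer only speculative routes, each of which is problematic: invoking ``the classification of basic pairs'' would be circular, since Lemma~\ref{notTrans} is part of the machinery that establishes that classification; the worry about $\mathbf{C}_r\times\mathbf{C}_r$-type graphs (and about $|V\Gamma|=4$ configurations) is misplaced, because $|V\Gamma|=|N|=p^k$ with $k\le2$ is nothing like $r^2$ vertices and there is no $4$-valent simple graph on $4$ vertices at all; and your guess at the authors' route (the final clause of Lemma~\ref{basics} applied with $M=N$) does not apply, since that clause needs $G_\alpha<M$, which fails when $N$ is semiregular.

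The gap is closable from exactly the position you reached, and more easily than you feared. A normal characteristically simple subgroup of $C_r$ or $D_r$ is $\mathbb{Z}_p$ for a prime $p$ dividing $r$, or $\mathbb{Z}_2^2$ (only when $r=4$); since $N$ is regular, $|V\Gamma|=|N|$, and a connected $4$-valent simple graph has at least $5$ vertices, so $N\cong\mathbb{Z}_p$ with $|V\Gamma|=p\ge5$. Then $G=N\rtimes G_\alpha$ with $G_\alpha$ embedding in the cyclic group $\Aut(\mathbb{Z}_p)$; by Proposition~\ref{localprops}, $G_\alpha$ is a $2$-group generated by involutions, so $|G_\alpha|\le2$, and $G_\alpha\ne1$ because $|E\Gamma|=2p>p$ rules out edge-transitivity of $N$ alone. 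Hence $G_\alpha$ is generated by the unique involution $x\mapsto-x$ of $\Aut(\mathbb{Z}_p)$, whose orbits on $N\setminus\{0\}$ are the pairs $\{x,-x\}$; since $\Gamma_{out}(0)$ is such an orbit and $\Gamma_{in}(0)=-\Gamma_{out}(0)$, the in- and out-neighbour sets coincide, contradicting $4$-valency. For comparison, the paper's proof avoids any local analysis: it splits on whether the cyclic quotient is $G$-oriented (deriving that the kernel of that quotient equals $G_\alpha$, absurd because that kernel contains a normal subgroup with orbits of size greater than $1$) or $G$-unoriented (deriving that the transitive minimal normal subgroup is isomorphic to $D_r$, which is not characteristically simple). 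Your route works, but as submitted it is incomplete at its decisive step.
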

\begin{proof}
	If $(\Gamma, G)\in \OG(4)$ is a basic pair of cycle type then $G$ has at least one minimal normal subgroup, say $N$, such that $\Gamma_N \cong \mathbf{C}_r$ with $r\geq 3$.
	Suppose now that $G$ also contains a transitive minimal normal subgroup $M$, and hence that $G= MG_\alpha$ for some $\alpha \in V\Gamma$. The existence of $N$ implies that $G$ is not simple and hence $M $ is a proper subgroup of $ G$. We now consider the normal subgroup $\tilde{N}$ containing $N$. Since $M$ is transitive and $\tilde{N}$ has $r$ orbits,  $M$ cannot be contained in $\tilde{N}$. Thus $\tilde{N}\cap M \neq M$ and so $\tilde{N}\cap M = 1$ by the minimality of $M$.
	
	If the graph $\Gamma_N$ is $G$-oriented, then $G_\alpha\leq \tilde{N}$.
	In this case, $M_\alpha = G_\alpha \cap M \leq \tilde{N}\cap M = 1$ so $M$ is semiregular.
	Therefore we have $G = M\rtimes G_\alpha$ and so $G_\alpha \leq \tilde{N} \cong \tilde{N}M/M \leq G/M\cong G_\alpha$.
	But this implies that $\tilde{N} = G_\alpha$, a contradiction.
	
	Hence the graph  $\Gamma_N$ is $G$-unoriented, and so  $|G_\alpha| = 2$ by Lemma \ref{basics}. In this case,  $M\cap G_\alpha = 1$, since  $G= MG_\alpha$ and $G \neq M$. In particular, $|G:M| =2$. But then $M\cap \tilde{N} = 1$ implies that $G = M\tilde{N} = M \times \tilde{N}$. Therefore $M \cong G/\tilde{N} \cong D_r$ for some $r\geq 3$, and since $D_r $ is not  characteristically simple  this contradicts the minimality of $M$.
	Hence $G$ contains no transitive minimal normal subgroups.
\end{proof}

\begin{Lemma}\label{3orbs}
	If $(\Gamma, G)\in \OG(4)$ is basic of cycle type then every minimal normal subgroup of $G$ has at least three orbits on $V\Gamma$.
\end{Lemma}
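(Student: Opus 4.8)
The goal is to show that every minimal normal subgroup of $G$ has at least three orbits on $V\Gamma$, given that $(\Gamma,G)$ is basic of cycle type. By Lemma~\ref{notTrans} we already know no minimal normal subgroup is transitive, so the only thing left to rule out is that some minimal normal subgroup $M$ has exactly two orbits on $V\Gamma$.

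My plan is to argue by contradiction: suppose $M$ is a minimal normal subgroup with exactly two orbits on $V\Gamma$. As in the proof of Lemma~\ref{notTrans}, the hypothesis that $(\Gamma,G)$ is basic of cycle type gives us a minimal normal subgroup $N$ with $\Gamma_N\cong\mathbf{C}_r$, $r\geq 3$, and the associated kernel $\tilde N$ has $r$ orbits. Since $M$ has only two orbits while $\tilde N$ has $r\geq 3$, the subgroup $M$ cannot be contained in $\tilde N$; by minimality of $M$ this forces $M\cap\tilde N=1$. The index $|G:M\tilde N|$ divides... more usefully, because $M$ has two orbits, the normal quotient $\Gamma_M$ is one of $K_1,K_2,\mathbf{C}_r$ by the $\OG(4)$ dichotomy, and two orbits forces $\Gamma_M\cong K_2$, so $G/\tilde M\cong\mathbb{Z}_2$ and $|G:\tilde M|=2$ where $\tilde M$ is the kernel of the action on the two $M$-orbits.

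The key step is then to play off $M$, $\tilde N$, and $\tilde M$ against each other, separately in the $G$-oriented and $G$-unoriented cases for $\Gamma_N$, exactly mirroring the case split in Lemma~\ref{notTrans}. In the $G$-unoriented case, Lemma~\ref{basics} gives $|G_\alpha|=2$ and $\tilde N=N$ semiregular; since $M\cap\tilde N=1$ and $|G:\tilde M|=2$ one expects to pin down $M$ as a complement to $\tilde N$ of the form $G/\tilde N$, and $G/\tilde N\cong D_r$ (or a subgroup of it) is not characteristically simple once $r\geq 3$, contradicting minimality of $M$ — with an extra check needed because $M$ has two orbits rather than being transitive, so one should look at $M$ intersected with $\tilde M$ and show $M\tilde N=\tilde M$ or $G$. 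In the $G$-oriented case, $G_\alpha\leq\tilde N$, so $M_\alpha=M\cap G_\alpha\leq M\cap\tilde N=1$, hence $M$ is semiregular with exactly two orbits, so $|M|=|V\Gamma|/2$ and $G=M\tilde M$-type relations let one conclude $M\cong \tilde M/\tilde N$-ish quotients; again one derives that $M$ is forced to be isomorphic to something non-characteristically-simple or to have order contradicting $|G_\alpha|$, as in the transitive case. I would also need the observation that if $M$ has two orbits then $\tilde M$ (its kernel) has two orbits too, so $\Gamma_{\tilde M}\cong K_2$ and $G/\tilde M\cong\mathbb{Z}_2$.

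The main obstacle I anticipate is handling the interaction between "two orbits" and semiregularity cleanly: unlike the transitive case, a minimal normal subgroup with two orbits need not act regularly on each orbit, and one must be careful about whether $M$ acts faithfully and how $M$ sits relative to both $\tilde N$ and $\tilde M$. In particular, the delicate point is showing that $M\tilde N$ is either all of $G$ or equals $\tilde M$, and then transferring the non-characteristic-simplicity of the dihedral (or cyclic) quotient $G/\tilde N$ down to $M$ via the isomorphism $M\cong M\tilde N/\tilde N$; one has to make sure this image is nontrivial and normal and really does inherit a proper nontrivial characteristic subgroup (e.g.\ the rotation subgroup of $D_r$, nontrivial since $r\geq 3$, or a proper subgroup of a cyclic group of composite order, with the prime-order cyclic case excluded because then $\Gamma_N$ would not be a cycle of length $\geq 3$ with the required structure). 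Once that bookkeeping is done, both cases close exactly as in Lemma~\ref{notTrans}.
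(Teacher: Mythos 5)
There is a genuine gap in your plan, and it sits exactly where you try to ``close as in Lemma~\ref{notTrans}''. When $M$ has two orbits you correctly get $M\cap\tilde N=1$ and $M\cong M\tilde N/\tilde N\trianglelefteq G/\tilde N$, but the conclusion this yields is that $M$ is (isomorphic to) a minimal normal subgroup of $C_r$ or $D_r$, i.e.\ $M\cong\mathbb{Z}_p$ for some prime $p$ dividing $r$. Unlike the transitive case, where one lands on $M\cong D_r$ (not characteristically simple) or on $\tilde N=G_\alpha$, here $\mathbb{Z}_p$ \emph{is} characteristically simple, so there is no contradiction with the minimality of $M$ to be had. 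Your parenthetical hope that ``the prime-order cyclic case is excluded because then $\Gamma_N$ would not be a cycle of length $\geq 3$'' is not substantiated and is not true as stated; nothing about $\Gamma_N\cong\mathbf{C}_r$ rules out $M\cong\mathbb{Z}_p$ on group-theoretic grounds alone. Likewise, your hoped-for identity $G=M\times\tilde N$ with $M\cong G/\tilde N\cong D_r$ fails when $M$ is intransitive: $M\tilde N$ need not equal $G$, and the paper only uses that $M\tilde N/\tilde N$ is minimal normal in $G/\tilde N$.

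What actually closes the argument in the paper is a combinatorial step you do not have. The two $M$-orbits $\Sigma,\Sigma'$ form a $G$-invariant bipartition with no edges inside either part, and $|\Sigma|>2$ because $\Gamma$ is $4$-valent. From $M\cong\mathbb{Z}_p$ one gets $|\Sigma|=p$ odd, then $r\mid|V\Gamma|=2p$ together with $r\geq3$ forces $r=p$ and every $N$-orbit to have exactly two vertices. Since $|\Sigma|=p$ is odd, no $N$-orbit can lie inside $\Sigma$, so each $N$-orbit meets $\Sigma$ and $\Sigma'$ in one vertex apiece; then, because each $N$-orbit is adjacent to only two others in the cycle $\Gamma_N$ and $\Gamma$ is $4$-valent, a vertex of $\Sigma$ must be adjacent to \emph{both} vertices of a neighbouring $N$-orbit, producing an edge inside $\Sigma$ --- the contradiction. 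Without some version of this counting and edge-location argument, your proof cannot be completed along the lines you describe.
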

\begin{proof}
	
	By Lemma \ref{notTrans} it suffices to show that $G$ cannot have a minimal normal subgroup with two orbits on $V\Gamma$. Let $N$ be a minimal normal subgroup of $G$ such that $\Gamma_N \cong \mathbf{C}_r$ for some $r\geq 3$.
	
	Suppose now for the sake of a contradiction that $G$ has a minimal normal subgroup $M$ with two orbits on $V\Gamma$. Let  $\{\Sigma, \Sigma'\}$ denote the set of $M$-orbits and note that these two $M$-orbits form a $G$-invariant bipartition of $V\Gamma$ such that each part contains no edges, see for example \cite[Proposition 3.1]{al2015finite}. Notice in particular that $|\Sigma| >2$ since $|V\Gamma| = 2|\Sigma| >4$ as $\Gamma$ is 4-valent. As $M$ has two orbits on $V\Gamma$, it cannot be contained in $\tilde{N}$,  so $\tilde{N}\cap M \neq M$ and hence $\tilde{N}\cap M=1$ and $M\tilde{N} = M\times \tilde{N}$ by the minimality of $M$.
	
	This implies that  $M \cong M\tilde{N}/\tilde{N}$ which is normal in $G/\tilde{N}$. Furthermore, $M\tilde{N} / \tilde{N}$ must be minimal normal in $G/ \tilde{N}$ for if it were not then there would be a normal subgroup $X$ of $G$ such that $\tilde{N} < X < M\tilde{N}$.
 % , that is, $\tilde{N} \lneq X \lneq M\tilde{N}$. 
    Each such normal subgroup $X$ has the form $X=Y\times \tilde{N}$ where $1<Y<M$ and $Y = X \cap M \unlhd G$, contradicting the minimality of $M$. 
 % However, since $M$ is minimal normal in $G$ we either have that $M \cap X = M$ in which case $X$ contains $M$ and hence $X = M\tilde{N}$, or $M \cap X = 1$ in which case $X = \tilde{N}$. Thus no such subgroup $X$ can exist and $M\tilde{N}/\tilde{N}$ is minimal normal in $G/\tilde{N}$.
	
	Hence $M\tilde{N} / \tilde{N} \cong M$ is a minimal normal subgroup of  $G/\tilde{N}$, and since $G/\tilde{N}$ is isomorphic to either $C_r$ or $D_r$ depending on whether $\Gamma_N$ is $G$-oriented or $G$-unoriented, we deduce that $M \cong \mathbb{Z}_p$ for some prime $p$ which divides $r$. Moreover, since $|\Sigma|$ divides $|M| = p$,  we have that $|M| = |\Sigma| = p$ and in particular $p$ is odd since $|\Sigma| >2$.
	Notice further that the number of $N$-orbits $r$ divides $|V\Gamma| = 2|\Sigma| = 2p$. So since $r \geq 3$ and each $N$-orbit contains at least two vertices, it follows that $r = p$ and hence every $N$-orbit contains exactly two vertices.
	
	Now since $|\Sigma|= p$ is odd and each $N$-orbit has length 2, we know that $\Sigma$ cannot be a union of $N$-orbits, and this implies that all $N$-orbits intersect each of $\Sigma$ and $\Sigma'$ in exactly one vertex. Let $\Delta_1 = \{\alpha, \alpha'\}$ and $\Delta_2 = \{\beta, \beta'\}$ be adjacent $N$-orbits in $\Gamma_N$, with $\alpha, \beta \in \Sigma$ and $\alpha'$, $\beta' \in \Sigma'$. Since each $N$-orbit is adjacent to exactly two other $N$-orbits in $\Gamma_N$ and $\Gamma$ is 4-valent, it follows that $\alpha$ is adjacent to both $\beta$ and $\beta'$. However the fact that  $\{\alpha,\beta\}\subset \Sigma$ and $\{\alpha,\beta\}$ is an edge of $\Gamma$ contradicts the fact that $\Sigma$ should contain no edges of $\Gamma$.  
	
	Hence no minimal normal subgroup of $G$ can have two vertex-orbits.
\end{proof}

We will now consider the structure of $\soc(G)$ in the two separate cases where $(\Gamma, G)$ is basic of oriented-cycle type (Section \ref{secOri}) and basic of unoriented-cycle type (Section \ref{secUnori}). The combined results obtained in these sections will provide a proof of Theorem \ref{CycleMainTheorem}.

\section{Oriented-Cycle Type}\label{secOri}
Recall that a basic pair $(\Gamma, G) \in \OG(4)$ with cyclic normal quotients is basic of oriented-cycle type if it has no independent cyclic normal quotients and all of its cyclic normal quotients are $G$-oriented.
In this section we will show that if $(\Gamma, G)$ is such a basic pair then this usually implies that $G$ has a unique minimal normal subgroup $N$, where $\Gamma_N$ is a $G$-oriented cycle, and all normal quotients of $\Gamma$ are simply quotients of the maximal cyclic quotient $\Gamma_N \cong \mathbf{C}_r$ for some $r \geq 3$. 

The possible exceptions to this rule are members of a well-understood family of graphs, which includes the lexicographic products $\mathbf{C}_r[2.K_1]$ for $r \geq 3$. These lexicographic product  graphs are described in detail in \cite[Construction 2.3]{al2017cycle} along with the maximal subgroup $G \leq \Aut(\mathbf{C}_r[2.K_1])$ for which $\Gamma = \mathbf{C}_r[2.K_1]$ is $G$-oriented. By \cite[Lemma 3.5]{al2015finite} and \cite[Lemma 2.4]{al2017cycle} such pairs $(\Gamma, G) \in \OG(4)$ are basic of oriented-cycle type. 

We will need to consider a larger family of  graphs discussed in   \cite{praeger1989highly}.  We briefly describe them here for completeness.

\begin{Definition}\label{defhighlyarc}
Let $v \geqslant 2, r \geqslant 3$ and $s \geqslant 1$ be integers. Define $\mathbf{C}_{r}(v, s)$ to be the digraph with vertex set $\mathbb{Z}_{r} \times \mathbb{Z}_{v}^{s}$ with $((i, \mathbf{x}),(j, \mathbf{y}))$ an edge, for $\mathbf{x}=\left(x_{1}, \ldots, x_{s}\right)$ and
	$\mathbf{y}=\left(y_{1}, \ldots, y_{s}\right) \in \mathbb{Z}_{v}^{s},$ if and only if $j=i+1$ and $\mathbf{y}=\left(y_{1}, x_{1}, \ldots, x_{s-1}\right)$.
\end{Definition}
\begin{Remark}
This definition comes from \cite[Definition 2.6]{praeger1989highly}. 	
 In what follows we will always  let $\overline{\mathbf{C}_{r}}(v, s)$ denote the underlying graph of  $\mathbf{C}_{r}(v, s)$.
 As mentioned in \cite[Remark 2.7]{praeger1989highly}, the graph $\overline{\mathbf{C}_{r}}(v, s)$ is isomorphic to the graph $\mathbf{C}(v, r, s)$ defined and studied in \cite{praegerxu1989}.
\end{Remark}

Note that by \cite[Theorem 2.8]{praeger1989highly}, the  digraphs $\mathbf{C}_r(v,s)$ are connected with valency $v$ and automorphism group $S_v \wr C_r$. When $v=2$ and a group $G$ acts transitively on the arcs of $\mathbf{C}_r(v,s)$, it follows that the underlying graph $\overline{\mathbf{C}}_r(v,s)$ is 4-valent and $G$-oriented. Hence we will be concerned with graphs of the form $\overline{\mathbf{C}}_r(2,s)$ for $r \geq 3$ and $s \geq 1$. Note the special case $\overline{\mathbf{C}}_r(2,1) \cong \mathbf{C}_r[2.K_1]$.

Our first  aim is to show that if  $(\Gamma, G)\in \OG(4)$ is basic of oriented-cycle type then $G$ has a unique minimal normal subgroup except possibly if $\Gamma  \cong \overline{\mathbf{C}}_r(2, s)$ for some $r\geq 3$ and $ s \geq 1$.  We begin with the following result concerning all basic pairs of this type. As usual, for $N$ a normal subgroup of a group $G$, we will let $\tilde{N}$ refer to the kernel of the $G$-action on the set of $N$-orbits on $V\Gamma$. We will use this notation consistently in this subsection.
\begin{Lemma}\label{OrSoc1}
	Suppose $(\Gamma, G)\in \OG(4)$ is basic of oriented-cycle type. Let $r\geq 3$ be the largest integer such that $\Gamma_N \cong \mathbf{C}_r$ for some normal subgroup $N$ of $G$.
	Then the following hold.
	\begin{enumerate}[(a)]
	    \item 	Every minimal normal subgroup of $G$ is contained in $\tilde{N}$.
	    \item If $L$ is a nontrivial normal subgroup of $G$ contained in $\tilde{N}$ then the $L$-orbits are equal to the $N$-orbits.
	\end{enumerate}
In particular, the orbits of every minimal normal subgroup of $G$ coincide with the $N$-orbits.
\end{Lemma}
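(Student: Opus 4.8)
The plan is to prove parts (a) and (b) together, exploiting the maximality of $r$ and the fact that $(\Gamma,G)$ is basic of oriented-cycle type, so that \emph{every} cyclic normal quotient is $G$-oriented and no two cyclic quotients are independent. First I would handle part (a). Let $M$ be a minimal normal subgroup of $G$. By Lemma \ref{3orbs}, $M$ has at least three orbits on $V\Gamma$, so the normal subgroup $\tilde M$ also has at least three orbits, and hence $\Gamma_M = \Gamma_{\tilde M}$ is not $K_1$ or $K_2$; since $(\Gamma,G)$ is basic, $\Gamma_M \cong \mathbf{C}_s$ for some $s\geq 3$, and since we are in the oriented-cycle case this quotient is $G$-oriented. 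Now consider $K = \tilde N \cap \tilde M$. Because $(\Gamma,G)$ has no independent cyclic normal quotients, $\Gamma_K$ is again a cycle; moreover (as in the discussion preceding the three-way subdivision in Section \ref{secCyclicNormal}) $\Gamma_N$ and $\Gamma_M$ are both quotients of $\Gamma_K$, so the number of $K$-orbits is at least $\max(r,s)$, in fact divisible by a cycle length that is a common multiple. By the maximality of $r$, the cycle $\Gamma_K$ has length exactly $r$, which forces $s\le r$ and, more importantly, $\Gamma_K = \Gamma_N$ up to the natural identification, i.e. the $K$-orbits equal the $N$-orbits. In particular $\tilde M \supseteq K$ has $N$-orbits as unions of its own orbits; but $\tilde M$ has $s$ orbits and $K$ has $r\ge s$ orbits with $K\le \tilde M$, so actually $s=r$ and the $\tilde M$-orbits coincide with the $N$-orbits, whence $M\le \tilde M = \tilde N$ (using $\Gamma_M=\Gamma_{\tilde M}$ and that $\tilde N$ is the kernel of the action on the $N$-orbits). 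This gives (a).

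For part (b), let $L$ be any nontrivial normal subgroup of $G$ with $L\le \tilde N$. Since $L$ is nontrivial, it contains a minimal normal subgroup $M$ of $G$, and by part (a) the $M$-orbits coincide with the $N$-orbits. The $L$-orbits are unions of $M$-orbits and are refined by the $\tilde N$-orbits (since $L\le \tilde N$), but the $\tilde N$-orbits are exactly the $N$-orbits (by definition of $\tilde N$ and $\Gamma_N=\Gamma_{\tilde N}$); as the $M$-orbits already equal the $N$-orbits, the $L$-orbits — squeezed between the $M$-orbits and the $\tilde N$-orbits, both equal to the $N$-orbits — must also equal the $N$-orbits. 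The final sentence of the lemma is then immediate: a minimal normal subgroup is in particular a nontrivial normal subgroup contained in $\tilde N$ by (a), so (b) applies.

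The main obstacle I anticipate is the bookkeeping around the claim ``$\Gamma_K$ a cycle of length $\le r$ with $\Gamma_N,\Gamma_M$ both quotients $\Rightarrow$ $K$-orbits $=$ $N$-orbits''. One has to be careful that ``$\Gamma_M$ is a quotient of $\Gamma_K$'' really does mean the $M$-orbits are coarser than (unions of) $K$-orbits, and to rule out the possibility that $\Gamma_K$ is a longer cycle than both — which is exactly where maximality of $r$ is used, but it requires knowing the common refinement of two cycle partitions that are not independent is itself a cyclic partition whose length is a common multiple of $r$ and $s$, hence $\ge r$, forcing equality. I would lean on the paragraph in Section \ref{secCyclicNormal} establishing $\Gamma_N\cong(\Gamma_K)_{\tilde N/K}$ to make this rigorous, and on Lemma \ref{basics} (the ``moreover'' clause) if needed to confirm that the relevant quotients are genuinely $G$-oriented cycles. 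Everything else is routine orbit-containment reasoning.
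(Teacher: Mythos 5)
There is a genuine gap in your proof of part (a), at the step ``but $\tilde M$ has $s$ orbits and $K$ has $r\ge s$ orbits with $K\le \tilde M$, so actually $s=r$''. Everything up to the identification of the $K$-orbits with the $N$-orbits is fine (and matches the spirit of the paper), but from $K\le\tilde M$ you only learn that each $\tilde M$-orbit is a \emph{union} of $K$-orbits, i.e.\ that $s$ divides $r$; nothing you have said excludes $s<r$, in which case each $M$-orbit would be a union of several $N$-orbits and $M$ would \emph{not} lie in $\tilde N$. The assertion $s=r$ is in fact equivalent to the statement being proved, so the argument is circular at exactly the critical point. The paper closes this gap with a counting argument your proposal never touches: assume $M\not\le\tilde N$, so $M\cap\tilde N=1$ by minimality of $M$; since $\Gamma_N$ is $G$-oriented, $G_\alpha\le\tilde N$ by Lemma \ref{basics}, whence $M_\alpha=1$ and $M$ is semiregular; since $\Gamma_M$ is also $G$-oriented, $G_\alpha\le\tilde M$, so $G_\alpha\le K=\tilde M\cap\tilde N$ and $M\cap K=1$. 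Then $MK\le\tilde M$ gives $|M|\,|K| = |\alpha^M|\cdot|G_\alpha|\cdot|\alpha^K| \le |\tilde M| = |\alpha^M|\cdot|G_\alpha|$, forcing $|\alpha^K|=1$ and hence $K=1$ (a normal subgroup of a vertex-transitive group fixing a vertex is trivial), contradicting $1\neq G_\alpha\le K$. These ingredients --- orientedness forcing $G_\alpha$ into both kernels, semiregularity of $M$, and the order comparison inside $\tilde M$ --- are the essential content of part (a) and are absent from your argument.

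A secondary problem: your proof of part (b) invokes ``by part (a) the $M$-orbits coincide with the $N$-orbits'', but part (a) as stated only gives $M\le\tilde N$; the coincidence of orbits is the ``in particular'' conclusion of the whole lemma, which itself requires part (b). This circularity is easily repaired, because part (b) has a direct proof that does not use part (a) at all (and this is what the paper does): if $1\ne L\unlhd G$ with $L\le\tilde N$, then the $L$-orbits refine the $N$-orbits, so $L$ has at least $r\ge3$ orbits; since $(\Gamma,G)$ is basic, $\Gamma_L\cong\mathbf{C}_{r'}$ with $r'\ge r$, and maximality of $r$ gives $r'=r$, so the refinement has the same number of parts and must be an equality of partitions.
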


\begin{proof}
	\begin{comment}
 take a minimal normal subgroup $M$ of $ G$. By Lemma \ref{3orbs}, $\Gamma_M \cong \mathbf{C}_s$, for $s\leq r$. If $M$ is not contained in $\tilde{N}$ then $M \cap \tilde {N} = 1$, and since $G_\alpha \leq \tilde{N}$ it follows that $M$ is semiregular. Now consider $\tilde{M}\cap \tilde{N}$. Since this is a normal subgroup of $G$ contained in $\tilde {N}$ it must have at least $r$ orbits on $V\Gamma$. But the maximality of $r$ then implies that $\tilde{M}\cap \tilde{N}$ has exactly $r$ orbits. Also, since the vertex stabiliser $G_\alpha$ is contained in both $\tilde{M}$ and $\tilde{N}$ it follows that $G_\alpha \leq \tilde{M}\cap \tilde{N}$. This implies that $|\tilde{M}\cap \tilde{N}| = |\tilde {N}|$ and so $\tilde{M}\cap \tilde{N} = \tilde {N}$, hence $\tilde{N} \leq \tilde{M}$.
	
	Now it follows that $MN \leq \tilde{M}$, so if we take $\alpha^M$ to be an $M$-orbit and $\alpha^{\tilde{N}}$ to be a $\tilde{N}$-orbit, then since $M\cap \tilde{N} = 1$ and $M_\alpha$ = 1 we have that $|M\tilde{N}| = |M||\tilde{N}| =  |\alpha^M|\cdot|G_\alpha|\cdot|\alpha^{\tilde{N}}| \leq |\tilde{M}| = |\alpha^M|\cdot |G_\alpha|$. Hence $|\alpha^{\tilde{N}}| = 1$, a contradiction. Therefore $M \cap \tilde {N} \neq  1$ and $M \leq \tilde{N}$.
	
\end{comment}
			For part (a) take a minimal normal subgroup $M$ of $G$. By Lemma \ref{3orbs}, $\Gamma_M \cong \mathbf{C}_s$, with  $3\leq s\leq r$. Suppose that $M$ is not contained in $\tilde{N}$ so that $M \cap \tilde {N} = 1$. Since $\Gamma_N$ is $G$-oriented, it follows by Lemma \ref{basics} that $G_\alpha \leq \tilde{N}$ for $\alpha \in V\Gamma$. 
            Hence $M_\alpha=1$ for all $\alpha \in V\Gamma$. 
            Now consider $K = \tilde{M}\cap \tilde{N}$. Since $\Gamma_M$ is a $G$-oriented cycle, it follows again by Lemma \ref{basics} that $G_\alpha \leq \tilde{M}$. Hence $G_\alpha \leq \tilde{M}\cap \tilde{N} = K$, so $K \neq 1$ and therefore $\Gamma_K$ is a $G$-oriented cycle. Furthermore, we have $G_\alpha \leq K$, and since $M \cap \tilde {N} = 1$ we have $M\cap K = M \cap (\tilde{M}\cap \tilde{N})= 1$.
		
		Now  $MK \leq \tilde{M}$, so if we take $\alpha^M$ to be the $M$-orbit containing a vertex $\alpha$, and $\alpha^{K}$ to be the $K$-orbit containing $\alpha$, then using the facts that $M\cap K = 1$ and $M_\alpha$ = 1 we have that $|MK| = |M||K| =  |\alpha^M|\cdot|G_\alpha|\cdot|\alpha^{K}| \leq |\tilde{M}| = |\alpha^M|\cdot |G_\alpha|$. Hence $|\alpha^K| = 1$, a contradiction. Therefore $M \cap \tilde {N} \neq  1$ and $M \leq \tilde{N}$ as required. 
		
		For part (b) notice that if $L$ is a  normal subgroup of $G$ contained in $\tilde N$, it follows that the $L$-orbits are contained in the $\tilde N$-orbits. 	On the other hand, since $(\Gamma, G) $ is basic of oriented-cycle type, the maximality of $r$ implies that the $L$-orbits are equal to the $\tilde N $-orbits. 
\end{proof}

Thus every minimal normal subgroup of $G$ must be contained in a single normal subgroup $\tilde{N}$ where $\Gamma_N$ is the largest cyclic normal quotient of $\Gamma$. The next result shows that $G$ has a unique minimal normal subgroup unless possibly if $\tilde{N}$ is a 2-group.

\begin{Lemma}\label{OrientedMinimal}
	Suppose $(\Gamma, G)\in \OG(4)$ is basic of oriented-cycle type. Let $r\geq 3$ be the largest integer such that $\Gamma_N \cong \mathbf{C}_r$ for some normal subgroup $N$ of $G$. Then either $G$ has a unique minimal normal subgroup or $\tilde{N}$ is a $2$-group.
\end{Lemma}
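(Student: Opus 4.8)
The plan is to argue by contradiction: suppose $G$ has two distinct minimal normal subgroups and show that $\tilde{N}$ must then be a $2$-group. By Lemma~\ref{OrSoc1}, every minimal normal subgroup is contained in $\tilde{N}$ and has the same orbits as $N$, namely $r$ orbits of equal length $|V\Gamma|/r$. So suppose $M_1, M_2$ are distinct minimal normal subgroups of $G$. By minimality $M_1 \cap M_2 = 1$, hence $M_1 M_2 = M_1 \times M_2 \leq \tilde{N}$, and this product also has exactly $r$ orbits by Lemma~\ref{OrSoc1}(b). Since $\Gamma_N$ is $G$-oriented, Lemma~\ref{basics} gives $G_\alpha \leq \tilde{N}$, and because $M_1 \cap M_2 = 1$ at least one of the $M_i$, say $M_1$, meets $G_\alpha$ trivially (in fact I expect to show $M_i \cap G_\alpha = 1$ for each $i$ whose orbits are all of $N$-orbit size, using $(M_1 M_2)_\alpha = G_\alpha$ together with the orbit-length computation $|M_1 M_2| = |\alpha^{M_1 M_2}|\cdot|(M_1M_2)_\alpha|$). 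Then $M_1$ is semiregular on $V\Gamma$, so $|M_1| = |V\Gamma|/r = |\tilde{N}|/|G_\alpha|$, which forces $\tilde{N} = M_1 \times G_\alpha$ (comparing orders, since $M_1 \leq \tilde{N}$ and $M_1 \cap G_\alpha = 1$).

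From here the goal is to pin down the isomorphism type of $M_1$ and of $G_\alpha$. Because $\tilde{N} = M_1 \times G_\alpha$ and $G_\alpha$ is characteristic... actually $G_\alpha$ need not be characteristic in $\tilde N$, so instead I would exploit the $G$-action: $M_2 \leq \tilde{N} = M_1 \times G_\alpha$, and projecting $M_2$ onto the $G_\alpha$-factor, combined with $M_2 \cap M_1 = 1$, shows $M_2$ embeds into $G_\alpha$, which by Proposition~\ref{localprops} is a $2$-group. Hence $M_2$ is an elementary abelian $2$-group (a minimal normal subgroup that is a $2$-group must be $\cong \mathbb{Z}_2^d$). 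Now I want to conclude $M_1$ is also a $2$-group. Here I would use that $\tilde{G} = G/\tilde{N}$ is cyclic $C_r$ (oriented case), so $\tilde{N}$ contains the derived subgroup considerations are limited; more usefully, $G$ acts on $M_1$ by conjugation with $M_1 = C_{\tilde N}(M_2)$'s complement-type behavior. The cleanest route: since $M_2 \leq G_\alpha$-part and $G$ permutes the minimal normal subgroups, if $M_1$ were not a $2$-group it would have a nontrivial Sylow $p$-subgroup for odd $p$; I would then locate a characteristic subgroup of $\tilde N$ — e.g. $O_{p}(\tilde N)$ or the odd part $O_{2'}(\tilde N)$ — which is normal in $G$, contains $M_1$'s odd part but is disjoint from $G_\alpha$ (a $2$-group) and from $M_2$, and derive either a non-degenerate normal quotient or a contradiction with basic-ness / with $r$ being largest.

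The main obstacle I anticipate is exactly this last step: showing $M_1$ itself must be a $2$-group (equivalently, that $\tilde{N}$ is a $2$-group once two distinct minimal normal subgroups exist). Having $M_2 \cong \mathbb{Z}_2^d \leq G_\alpha$ is straightforward, but ruling out an odd-order part of $M_1$ requires care, because a priori $M_1$ could be any characteristically simple group with the right orbit structure. I expect the resolution to hinge on the interplay between three facts: (i) $O_{2'}(\tilde N)$ is characteristic in $\tilde N$ hence normal in $G$; (ii) it intersects $G_\alpha$ trivially (as $G_\alpha$ is a $2$-group) so it is semiregular with $\geq r$ orbits, forcing it to have exactly the $N$-orbits by maximality of $r$; (iii) therefore $O_{2'}(\tilde N)$ is regular on each $N$-orbit, of order $|V\Gamma|/r = |M_1|$, so $O_{2'}(\tilde N) = M_1$ (if nontrivial) — but then $M_2 \cap M_1 = 1$ would force $M_2 \leq G_\alpha$ with $\tilde N = M_1 \times G_\alpha$, and one checks whether this configuration can support the $G$-oriented cycle structure; I suspect it cannot unless $M_1$ is trivial, i.e. unless the odd part vanishes and $\tilde N$ is a $2$-group. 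I would also keep in view the easier sub-case where $G_\alpha = 1$ is impossible (vertex stabilisers are always nontrivial here since $|\Gamma|$ is $4$-valent oriented), and the sub-case $|G_\alpha| = 2$, where the bookkeeping is most transparent.
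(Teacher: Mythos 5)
Your proposal sets up the right ingredients but does not close the argument, and the route you sketch contains unjustified steps. First, the claim that ``at least one of the $M_i$ meets $G_\alpha$ trivially'' does not follow from $M_1\cap M_2=1$: one only gets $(M_1)_\alpha\cap (M_2)_\alpha=1$, and both stabilisers could be nontrivial once $|G_\alpha|\geq 4$. Second, even granting semiregularity of $M_1$, comparing orders gives only $\tilde{N}=M_1G_\alpha$ (a product of subgroups with $M_1$ normal), not $\tilde{N}=M_1\times G_\alpha$; the embedding $M_2\hookrightarrow \tilde{N}/M_1\cong G_\alpha$ can be salvaged without the direct product, but it only tells you that $M_2$ is an elementary abelian $2$-group, not that $\tilde{N}$ is a $2$-group. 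Third, and most importantly, the step you yourself flag as the main obstacle --- ruling out an odd part of $M_1$ --- is never carried out: ``I suspect it cannot'' is not an argument, and the identification $O_{2'}(\tilde{N})=M_1$ is also unjustified (there is no reason a minimal normal subgroup should coincide with that characteristic subgroup). As written, the proof is incomplete.

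The resolution is much simpler than you anticipate: the counting you already set up finishes the proof with no structural analysis of $M_1$ or $M_2$ whatsoever, and this is exactly what the paper does. Since the $M_1$-, $M_2$- and $\tilde{N}$-orbits all coincide with the $N$-orbits (Lemma~\ref{OrSoc1}), write $\Delta$ for such an orbit; then $|\tilde{N}|=|\Delta||G_\alpha|$, $|M_i|=|\Delta||(M_i)_\alpha|$ with each $(M_i)_\alpha$ a $2$-group, and $|M_1M_2|=|M_1||M_2|=|\Delta|^2|(M_1)_\alpha||(M_2)_\alpha|$ divides $|\tilde{N}|$. Cancelling one factor of $|\Delta|$ shows that $|\Delta||(M_1)_\alpha||(M_2)_\alpha|$ divides $|G_\alpha|$, a power of $2$; hence $|\Delta|$ is a power of $2$, and therefore so is $|\tilde{N}|=|\Delta||G_\alpha|$. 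There is no need to decide which $M_i$ is semiregular, nor to determine the isomorphism type of either minimal normal subgroup.
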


\begin{proof}
	Consider a minimal normal subgroup $M$ of $G$. By Lemma \ref{OrSoc1}, $M$ is  contained in $\tilde{N}$ and the $M$-orbits coincide with the $\tilde{N}$-orbits.

	Suppose that $G$ has more than one minimal normal subgroup, and let $M_1$ and $M_2$ be any two distinct such subgroups. By Lemma \ref{OrSoc1} these subgroups are both contained in $\tilde{N}$ and the $M_1$-, $M_2$-,  and $\tilde{N}$-orbits are all equal.
	Let $\Delta$ be an $\tilde{N}$-orbit on $V\Gamma$. If we consider a vertex $\alpha \in \Delta$, then since by Lemma \ref{basics} we have $G_\alpha \leq \tilde{N}$, we know that $|\tilde{N}| = |\Delta||G_\alpha|$. 
	Moreover $|M_1|= |\Delta||(M_1)_{\alpha}|$ and $|M_2|= |\Delta||(M_2)_{\alpha}|$, and since $G_\alpha$ is a 2-group, both $(M_1)_{\alpha}$ and  $(M_2)_{\alpha}$ are 2-groups.
	
	Now $M_1M_2 \leq \tilde{N}$, and since $M_1\cap M_2 = 1$ we have $|M_1M_2| = |M_1||M_2| = |\Delta|^2|(M_1)_{\alpha}|\cdot|(M_2)_{\alpha}|$ divides $|\tilde{N}| = |\Delta||G_\alpha|$. Hence  $|\Delta||(M_1)_{\alpha}||(M_2)_{\alpha}|$ divides $|G_\alpha|$ and since $|\Delta|>1$, we get that $|\Delta|$ is a power of 2.
	Then since $|\tilde{N}| = |\Delta||G_\alpha|$, it follows that $\tilde{N}$ is a 2-group.
	% From this it follows that both $|M_1|$ and $|M_2|$ are 2-groups and since they are characteristically simple, they must both be elementary abelian 2-groups.
\end{proof}

We will now show that if $\tilde{N}$ (as defined Lemma \ref{OrientedMinimal}) is a 2-group, then $\Gamma$ is isomorphic to the underlying graph of one of the digraphs in Definition \ref{defhighlyarc}.

\begin{Lemma}\label{Oriented2group}
	Suppose $(\Gamma, G)\in \OG(4)$ is basic of oriented-cycle type with $|G_\alpha| = 2^s$ for $s \geq 1$. Let $r\geq 3$ be the largest integer such that $\Gamma_N \cong \mathbf{C}_r$ for some normal subgroup $N$ of $G$. If $\tilde{N}  $ is a $2$-group then $\Gamma \cong  \overline{\mathbf{C}}_r(2, t)$ where $1 \leq t \leq r-s$ . \end{Lemma}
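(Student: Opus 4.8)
The plan is to determine the structure of $\tilde N$ and of the $\tilde N$-orbit quotient, and then to recognise $\Gamma$ among the graphs of Definition \ref{defhighlyarc}. Throughout, by Lemma \ref{basics} (oriented case) we have $G_\alpha\le\tilde N$ and $G/\tilde N\cong C_r$; write $\Delta_0,\dots,\Delta_{r-1}$ for the $\tilde N$-orbits, cyclically labelled so the orientation runs $\Delta_i\to\Delta_{i+1}$, and set $m=|\Delta_i|$. First I would show that \textbf{$\tilde N$ is elementary abelian}. The Frattini subgroup $\Phi(\tilde N)$ is characteristic in $\tilde N$, hence normal in $G$; if $\Phi(\tilde N)\ne1$ then by Lemma \ref{OrSoc1}(b) its orbits coincide with the $\tilde N$-orbits, so $\tilde N=\Phi(\tilde N)G_\alpha$ for $\alpha\in\Delta_0$, and since $\Phi(\tilde N)$ consists of non-generators this forces $\tilde N=\langle G_\alpha\rangle=G_\alpha$; but then $\tilde N$ fixes $\alpha$, and as $\tilde N\lhd G$ with $\Gamma$ being $G$-vertex-transitive, $\tilde N$ fixes every vertex, so $\tilde N=1$, contradicting $1\ne G_\alpha\le\tilde N$. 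Hence $\Phi(\tilde N)=1$ and $\tilde N$ is elementary abelian. Writing $K_i$ for the pointwise stabiliser in $\tilde N$ of $\Delta_i$, it follows that $\tilde N/K_i$ acts regularly (being abelian, transitive and faithful) on $\Delta_i$; hence $m$ is a power of $2$, say $m=2^t$ with $t\ge1$ (note $m\ge2$, else $\tilde N=1$), $|K_i|=|\tilde N|/m=2^s$, and $G_\beta=\tilde N_\beta=K_i$ for $\beta\in\Delta_i$; in particular $K_0=G_\alpha$.

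Next I would extract the \textbf{``window'' structure} of the $K_i$. Along an oriented $s$-arc $\alpha_0\to\cdots\to\alpha_s$ with $\alpha_i\in\Delta_i$ the above gives $G_{\alpha_0\cdots\alpha_k}=K_0\cap\cdots\cap K_k$; since $G$ is transitive on oriented $k$-arcs for $k\le s$ (every oriented $k$-arc extends to an oriented $s$-arc) and there are $|V\Gamma|\,2^k$ of them, this group has order $|G_\alpha|/2^k=2^{s-k}$. Thus $|K_i\cap K_{i+1}|=2^{s-1}$, any $s$ consecutive $K_i$ meet in a group of order $2$, and any $s+1$ consecutive $K_i$ meet trivially. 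Using $K_i\lhd\tilde N$ and the transitivity of $\tilde N$ on $\Delta_{i+1}$, the $K_i$-orbits on $\Delta_{i+1}$ form a $\tilde N$-invariant partition into equal parts; $4$-valency forces each part to have size $\le2$ (for $x\in K_i$, $\beta^x\in\bigcap_{\gamma\in\Gamma_{in}(\beta)}\Gamma_{out}(\gamma)$), and since $K_i\not\le K_{i+1}$ the parts have size exactly $2$; unwinding the in-/out-neighbourhood relations then shows the bipartite graph between $\Delta_i$ and $\Delta_{i+1}$ is a disjoint union of $2^{t-1}$ copies of $K_{2,2}$. This is precisely the local structure of $\mathbf{C}_r(2,t)$ with $v=2$: coordinatising each $\Delta_i$ consistently around the $r$-cycle by shift-register coordinates (via oriented $t$-walks and the $K_{2,2}$-matchings; legitimate since $t<r$, see below) and comparing with Definition \ref{defhighlyarc} yields $\Gamma\cong\overline{\mathbf{C}}_r(2,t)$. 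Equivalently, these structural facts verify the hypotheses of the recognition results for these graphs in \cite{praeger1989highly, praegerxu1989}.

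For the \textbf{bound on $t$}: for each $i$ let $b_i$ be the involution generating the order-$2$ subgroup $K_i\cap K_{i+1}\cap\cdots\cap K_{i+s-1}$ of $\tilde N$. Conjugation by a generator of $G/\tilde N$ permutes the $K_i$ cyclically, hence sends $b_i$ to $b_{i+1}$; also $b_i\notin K_{i-1}$, since otherwise $b_i$ would lie in an intersection of $s+1$ consecutive $K_j$'s and be trivial. From this one checks inductively that, for each $j$, the involutions $b_{j-s+1},\dots,b_j$ are linearly independent over $\mathbb{F}_2$ and so span the $s$-dimensional space $K_j$. Hence $\tilde N=\sum_jK_j=\langle b_0,\dots,b_{r-1}\rangle$ has dimension at most $r$; since $\dim\tilde N=\log_2(m\cdot2^s)=t+s$, we get $t\le r-s$. (Alternatively, once $\Gamma\cong\overline{\mathbf{C}}_r(2,t)$ is known, $G$ preserves the digraph orientation of $\mathbf{C}_r(2,t)$ and hence embeds in $\Aut(\mathbf{C}_r(2,t))=S_2\wr C_r$ of order $2^rr$, so $r\,2^{t+s}=|V\Gamma|\,|G_\alpha|=|G|\le2^rr$.)

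The step I expect to be the main obstacle is the explicit recognition $\Gamma\cong\overline{\mathbf{C}}_r(2,t)$: the local data (the cyclic block system, the elementary abelian kernel $\tilde N$ with its window subgroups $K_i$, and the $K_{2,2}$-matchings between consecutive blocks) is completely determined, but assembling it into an actual graph isomorphism requires a coherent choice of shift-register coordinates all the way around the $r$-cycle, and one must check that the resulting graph depends only on $r$ and $t$ and not on the particular extension $1\to\tilde N\to G\to C_r\to1$. Invoking the established classification of the graphs $\overline{\mathbf{C}}_r(2,s)$ from \cite{praeger1989highly} (or \cite{praegerxu1989}) is the cleanest way to discharge this.
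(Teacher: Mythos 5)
Your proposal is correct and, at the decisive step, lands on the same tool as the paper: the recognition of $\Gamma$ as $\overline{\mathbf{C}}_r(2,t)$ is obtained in the paper by applying \cite[Theorem 2.9]{praeger1989highly} to the orbital digraph, once $\tilde N$ is known to be an abelian normal subgroup that is not semiregular (it is not semiregular because $1\neq G_\alpha\le\tilde N$). You correctly identify that your hands-on coordinatisation sketch is the weak link and that citing this classification discharges it, so there is no real gap. Where you genuinely differ is in the two surrounding steps. First, your proof that $\tilde N$ is elementary abelian via $\Phi(\tilde N)$ (characteristic, hence normal in $G$; if nontrivial its orbits coincide with the $\tilde N$-orbits by Lemma \ref{OrSoc1}(b), whence $\tilde N=\Phi(\tilde N)G_\alpha=\langle G_\alpha\rangle=G_\alpha$, forcing $\tilde N=1$) is shorter than the paper's, which instead takes a minimal normal subgroup $H\le Z(\tilde N)\cap\soc(G)$, uses $[H,G_\alpha]=1$ to show $G_\alpha$ is the kernel of $\tilde N$ on $\Delta$, and embeds $\tilde N$ into $H^{\Delta_1}\times\cdots\times H^{\Delta_r}$; your argument buys brevity, the paper's buys the explicit normality of $G_\alpha$ in $\tilde N$ which it reuses later. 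Second, for the parameters: the cited theorem gives $1\le t\le r'-s$ with $r'=|G:G^*|$, $G^*=\langle G_\alpha^g:g\in G\rangle$, and the paper proves $r'=r$ by showing $G^*=\tilde N$ (again via Lemma \ref{OrSoc1}(b) and an order count). Your window-subgroup analysis is a self-contained alternative that proves the same identity in the form $\tilde N=\langle K_0,\dots,K_{r-1}\rangle$ and extracts $t+s\le r$ directly from an $\mathbb{F}_2$-dimension count; it is correct, though two points deserve explicit mention: you need $s<r$ for the windows $\{K_i,\dots,K_{i+s-1}\}$ not to wrap (this follows because the pointwise stabiliser of an oriented $(r-1)$-arc lies in $\bigcap_iK_i=1$ yet has order $2^{s-r+1}$ by regularity on oriented $s$-arcs), and the linear-independence induction should be run off the \emph{maximal} index $i_1$ in a putative relation, testing against $K_{i_1-1}$, since $b_{i_1}\notin K_{i_1-1}$ while $b_i\in K_{i_1-1}$ for all $i$ with $i_1-s+1\le i<i_1$.
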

	\begin{proof}
	Under these assumptions  $G = \tilde{N} \langle \sigma \rangle$ for some $\sigma \in G\backslash \tilde{N}$ where $\sigma^r \in \tilde{N}$. By Lemma \ref{OrSoc1} we know that all minimal normal subgroups of $G$ are contained in $\tilde{N}$. In particular, $\soc(G)$ is contained in $\tilde{N}$ and hence $\soc(G) \cong \mathbb{Z}_2^k$ for some $k\geq 1$ since $\tilde{N}$ is a 2-group.
	
	Let $Z(\tilde N)$ denote the centre of $\tilde N$. Since this is a nontrivial characteristic subgroup of $\tilde N$, it follows that $Z(\tilde N)$ is  a nontrivial normal subgroup of $G$. 
	It follows that $L := \soc(G) \cap Z(\tilde N)$ is a nontrivial normal subgroup of $G$. Now let $H$ be any minimal normal subgroup of $G$ contained in $L$. In particular, $H \cong \mathbb{Z}_2^\ell$ for some $\ell \geq 1$.  Since $H$ is a  normal subgroup of $G$ contained in $\tilde N$ it follows that the $H$-orbits are equal to the $\tilde N$-orbits by Lemma \ref{OrSoc1}.
	
	Now take a vertex $\alpha \in V\Gamma$ and let $\Delta$ denote the $\tilde{N}$-orbit containing $\alpha$.  Since $\Gamma_N$ is an oriented cycle,   $G_\alpha \leq \tilde{N}$ by Lemma	\ref{basics}. Now take $\beta \in \Delta$ and $h \in H$ such that $\alpha^h = \beta$. 
	Notice that since $H \leq Z(\tilde N)$ it follows that $[H, G_\alpha] = 1$. In particular, $G_\alpha = G_\alpha^h = G_{\alpha^h} = G_\beta$, and since $\Delta$ is an $H$-orbit, it follows that $G_\alpha$ fixes $\Delta$ pointwise. It follows that $G_\alpha$ is equal to the kernel of the $\tilde{N}$-action on $\Delta$ and so $G_\alpha $ is normal in $\tilde N$. 
	
Hence we may write $\tilde{N} = HG_\alpha$. By the previous paragraph we know that $\tilde{N}^\Delta$ is regular and in fact $\tilde{N}^\Delta \cong H^\Delta$ is elementary abelian. Now let $\{\Delta_1,
	\dots, \Delta_r\}$ denote the set of $N$-orbits on $V\Gamma$. By \cite[Theorem 5.5]{praeger2018permutation} we may view $G = \tilde{N}\langle \sigma \rangle$ as a subgroup of $\tilde{N}^\Delta \wr C_r$ with $\tilde{N}$ isomorphic to a subgroup of $\tilde{N}^{\Delta_1}\times \cdots \times \tilde{N}^{\Delta_r} \cong H^{\Delta_1}\times \cdots \times H^{\Delta_r}$. In particular, $\tilde{N}$ is elementary abelian and so $G_\alpha \cong \mathbb{Z}_2^s$ for some $s \geq 1$. Thus $\tilde{N}$ is an elementary abelian 2-group which does not act semiregularly on $V\Gamma$.

%	As $G$ is solvable, \cite[Lemma 6]{potovcnik2017smallest}  tells us that $G_\alpha$ is abelian.
%	Hence $G_\alpha \cong \mathbb{Z}_2^s$ for some $s \geq 1$, and $\tilde{N}$ is generated by commuting involutions. Thus $\tilde{N}$ is an elementary abelian 2-group which does not act semiregularly on $V\Gamma$. 
	
	Now consider one of the two paired $G$-arc-orbits on $\Gamma$ and take the orbital digraph $\Gamma'$ corresponding to this orbit so that  $\Gamma$ is the underlying graph of $\Gamma'$. Since $|G_\alpha| = 2^s$, we know that $G$ is transitive on the oriented $s$-arcs of $\Gamma$ by Proposition \ref{localprops} and hence $\Gamma'$ is $(G,s)$-arc-transitive with $\tilde{N}$ an abelian normal subgroup of $G$ which does not act semiregularly on $V\Gamma$. 
	We may thus apply \cite[Theorem 2.9]{praeger1989highly} directly to get that $\Gamma \cong \overline{\mathbf{C}}_{r'}(2, t)$ where $r' = |G: G^*|$ for $G^* = \langle G_\alpha^g : g \in G\rangle$ and $1\leq t \leq r' -s$. 
	
	 Left to show is that $r = r'$. We do this by showing that $G^* = \tilde{N}$. First note that since $\tilde{N}$ normalises $G_\alpha$ (as we showed above) we find that $G^* = \langle G_\alpha, G_\alpha^{\sigma}, \dots, {G_\alpha}^{\sigma^{r-1}}\rangle$. In particular, since $G_\alpha^{\sigma^i} \leq \tilde{N}^{\sigma^{i}} = \tilde{N}$ for each $i$, it follows that $G^* \leq \tilde{N}$.	On the other hand, since $G^*$ is a normal subgroup of $G$ contained in $\tilde{N}$, it follows by the maximality of $r$ that the $G^*$-orbits are equal to the $\tilde{N}$-orbits. Hence $|G^*| = |\Delta||G_\alpha| = |\tilde{N}|$. Thus $G^* = \tilde {N}$ and this completes the proof. 
	\end{proof}

  In summary, Lemmas \ref{OrSoc1} - \ref{Oriented2group} give the following result concerning basic pairs of oriented-cycle type.
	\begin{Proposition}\label{OrientedTheorem}
		Suppose that $(\Gamma, G) \in \OG(4)$ is basic of oriented-cycle type. Then either
		\begin{enumerate}[(a)]
			\item $\Gamma \cong \overline{\mathbf{C}}_r(2, t)$ as in Definition $\ref{defhighlyarc}$, for some $r\geq 3$ and $t \geq 1$, or
			\item $G$ has a unique minimal normal subgroup $N \cong T^k$ for some simple group $T \ncong \mathbb{Z}_2$, and $k\geq 1$. 
		\end{enumerate}
	\end{Proposition}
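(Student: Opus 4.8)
\textbf{Proof proposal for Proposition \ref{OrientedTheorem}.}

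The plan is to assemble this as an immediate consequence of the three preceding lemmas, organised by a dichotomy on whether the kernel $\tilde N$ (associated to the largest cyclic normal quotient $\Gamma_N \cong \mathbf{C}_r$) is a $2$-group. First I would fix notation exactly as in Lemmas \ref{OrSoc1}--\ref{Oriented2group}: let $r\ge 3$ be the largest integer for which $\Gamma_N\cong \mathbf{C}_r$ for some normal subgroup $N$ of $G$, and let $\tilde N$ be the kernel of the $G$-action on the set of $N$-orbits on $V\Gamma$. Lemma \ref{OrientedMinimal} then gives two cases: either $G$ has a unique minimal normal subgroup, or $\tilde N$ is a $2$-group.

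In the first case, let $N$ denote the unique minimal normal subgroup of $G$. Since $G$ is finite, $N$ is characteristically simple, so $N\cong T^k$ for some simple group $T$ and some $k\ge 1$; this gives conclusion (b), and it only remains to rule out $T\cong \mathbb{Z}_2$. Here I would invoke Lemma \ref{Oriented2group} (or rather its proof): if $N\cong \mathbb{Z}_2^k$ then, being the unique minimal normal subgroup, $N\le \tilde N$, and $Z(\tilde N)$ being a nontrivial characteristic subgroup of $\tilde N$ forces $N\le Z(\tilde N)$; running the argument of Lemma \ref{Oriented2group} shows $\tilde N$ is then an elementary abelian $2$-group, so we are in fact in the second case and conclusion (a) holds. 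So whenever conclusion (b) genuinely applies we may take $T\ncong\mathbb{Z}_2$. (Alternatively, one simply observes that the two cases of the Proposition are not claimed to be mutually exclusive, and that the hypothesis $T\ncong\mathbb{Z}_2$ in (b) is exactly what lets us throw any $2$-group $\tilde N$ example into (a).)

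In the second case, $\tilde N$ is a $2$-group, and then Lemma \ref{Oriented2group} applies directly: writing $|G_\alpha|=2^s$ with $s\ge 1$ (legitimate by Proposition \ref{localprops}), we obtain $\Gamma\cong \overline{\mathbf{C}}_r(2,t)$ for some $t$ with $1\le t\le r-s$, and in particular $t\ge 1$, which is conclusion (a).

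I expect the only delicate point to be the bookkeeping around $T\cong\mathbb{Z}_2$: one must be careful that a pair with $\tilde N$ a $2$-group and $G$ having a \emph{unique} minimal normal subgroup $\cong\mathbb{Z}_2^k$ is correctly placed in branch (a) rather than branch (b), since otherwise the clause $T\ncong\mathbb{Z}_2$ in (b) would be violated. Everything else is a transcription of Lemmas \ref{OrSoc1}, \ref{OrientedMinimal}, \ref{Oriented2group} and carries no new content; the substantive work was already done in proving those lemmas, in particular the appeal to \cite[Theorem 2.9]{praeger1989highly} inside Lemma \ref{Oriented2group}.
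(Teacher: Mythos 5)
Your overall structure is the paper's: Lemma \ref{OrientedMinimal} gives the dichotomy (unique minimal normal subgroup, or $\tilde N$ a $2$-group), the second branch is handled by a direct appeal to Lemma \ref{Oriented2group}, and the only work left is to show that a unique minimal normal subgroup $N\cong\mathbb{Z}_2^k$ still lands you in case (a). But your argument for that last step has a genuine gap: you invoke ``$Z(\tilde N)$ being a nontrivial characteristic subgroup of $\tilde N$'' to force $N\le Z(\tilde N)$, yet at this point you do not know that $\tilde N$ is a $2$-group (that is precisely what you are trying to prove), and for a general finite group $\tilde N$ the centre can be trivial. The nontriviality of $Z(\tilde N)$ in the proof of Lemma \ref{Oriented2group} comes exactly from the standing hypothesis there that $\tilde N$ is a nontrivial $2$-group, so you cannot transplant that step. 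Your parenthetical fallback does not rescue this either: if $T\cong\mathbb{Z}_2$ and you have not shown $\tilde N$ is a $2$-group, then neither branch (a) nor branch (b) is available, and the disjunction would simply fail.

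The paper closes this case with a one-line counting argument that you should substitute in. By Lemma \ref{OrSoc1}, the orbits of the (unique) minimal normal subgroup $N$ coincide with the $\tilde N$-orbits, so an $\tilde N$-orbit $\Delta$ has length dividing $|N|=2^k$, hence a power of $2$. Since $\Gamma_N$ is $G$-oriented, $G_\alpha\le\tilde N$ and $\tilde N$ is transitive on $\Delta$ with point stabiliser $G_\alpha$, so $|\tilde N|=|\Delta||G_\alpha|$, which is a power of $2$ because $G_\alpha$ is a $2$-group by Proposition \ref{localprops}. Thus $\tilde N$ is a $2$-group after all, Lemma \ref{Oriented2group} applies, and conclusion (a) holds; so whenever (b) is the operative conclusion one may take $T\ncong\mathbb{Z}_2$. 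With that replacement your proof agrees with the paper's.
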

\begin{proof} 
	Lemmas \ref{OrientedMinimal} and \ref{Oriented2group}  together imply that either $G$ has a unique minimal normal subgroup or that $\Gamma \cong \overline{\mathbf{C}}_r(2, t)$, for some $r\geq 3$ and $t \geq 1$. If $G$ has a unique minimal normal subgroup $N \cong T^k$ with $T\cong \mathbb{Z}_2$ and $k\geq 1$, then any $N$-orbit $\Delta$ has length a 2-power. Hence we get that $|\tilde{N}| = |\Delta||G_\alpha|$ is a 2-power so $\tilde{N}$ is a 2-group.  Lemma \ref{Oriented2group} then implies that $\Gamma \cong \overline{\mathbf{C}}_r(2, t)$, for some $r\geq 3$ and $t \geq 1$. Thus we may assume that  $T\ncong \mathbb{Z}_2$ in case (b).
\end{proof}
	Notice that Cases (a) and (b) of Proposition \ref{OrientedTheorem} correspond directly to the Cases (a) and (b) of  Theorem \ref{CycleMainTheorem} Case 1.
	For the remainder of this section we will analyse those pairs $(\Gamma, G) \in \OG(4)$  which are basic of oriented-cycle type as described in  case (b) of Proposition \ref{OrientedTheorem}.  We will separately consider the cases where $N = \soc(G)$ is nonabelian or abelian over the next two subsections. In both cases we are able to bound the integer $k$ by a function of $|\Gamma_N| = r \geq 3. $

    Next we give an important lemma concerning pairs described by Case (b) of Proposition \ref{OrientedTheorem}.
    \begin{Lemma}\label{OrientedFaithful}
Suppose that  $(\Gamma, G)$, $N$ and $k$ are as in Proposition $\ref{OrientedTheorem}(b)$. Then $\tilde{N} = NG_\alpha$ and $\tilde{N}$ is faithful on $\Delta$. 
\end{Lemma}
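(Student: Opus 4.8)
The plan is to establish the two assertions separately. The identity $\tilde{N} = NG_\alpha$ I would prove by an order count. Since $N \unlhd G$, the set $NG_\alpha$ is a subgroup of $G$, and it lies inside $\tilde{N}$ because $N \leq \tilde{N}$ and $G_\alpha \leq \tilde{N}$ (the latter by Lemma \ref{basics}(b), as $\Gamma_N$ is $G$-oriented). Both $N$ and $\tilde{N}$ act transitively on the orbit $\Delta$, with point stabilisers $N_\alpha = N\cap G_\alpha$ and $\tilde{N}_\alpha = G_\alpha$, so $|N| = |\Delta|\,|N_\alpha|$ and $|\tilde{N}| = |\Delta|\,|G_\alpha|$. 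Hence $|NG_\alpha| = |N|\,|G_\alpha|/|N\cap G_\alpha| = |N|\,|G_\alpha|/|N_\alpha| = |\Delta|\,|G_\alpha| = |\tilde{N}|$, and since $NG_\alpha \leq \tilde{N}$ this forces $NG_\alpha = \tilde{N}$.

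For the faithfulness claim, let $K$ denote the kernel of the action of $\tilde{N}$ on $\Delta$; the goal is to show $K = 1$. As $K$ fixes $\alpha \in \Delta$, it lies in $G_\alpha$ and is therefore a $2$-group by Proposition \ref{localprops}. I would next observe that $N \cap K = 1$: since $K \unlhd \tilde{N}$, the group $N \cap K$ is normal in $N$ and is a $2$-group; when $N$ is nonabelian, a normal subgroup of $N \cong T^k$ is a direct product of some of the (nonabelian) factors $T$ and so cannot be a nontrivial $2$-group, while when $N$ is abelian we have $N \cong \mathbb{Z}_p^k$ with $p$ odd (because $T \ncong \mathbb{Z}_2$), which has no nontrivial $2$-subgroup. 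With $N, K \unlhd \tilde{N}$ and $N \cap K = 1$ we get $[N,K] = 1$, hence $K \leq G_\alpha \cap C_{\tilde{N}}(N) = C_{G_\alpha}(N)$, so it suffices to prove $C_{G_\alpha}(N) = 1$.

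Here I would split into two cases, and the abelian case is the one I expect to be the main obstacle. If $N$ is nonabelian, then $C_G(N)$ is a normal subgroup of $G$ with $C_G(N) \cap N = Z(N) = 1$; since $N$ is the unique minimal normal subgroup of $G$, this forces $C_G(N) = 1$, hence $K = 1$. If $N$ is abelian then $N \leq C_G(N)$, so one cannot conclude $C_G(N) = 1$ directly, and a finer argument is needed. In this case, by the first part $\tilde{N} = N \rtimes G_\alpha$ (note $N_\alpha = N \cap G_\alpha$ is a $2$-group inside the $p$-group $N$, hence trivial), so $C_G(N) \cap \tilde{N} = C_{\tilde{N}}(N) = N \times C_{G_\alpha}(N)$. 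Since $|N|$ is odd and $C_{G_\alpha}(N)$ is a $2$-group, $C_{G_\alpha}(N)$ is the Sylow $2$-subgroup of $C_{\tilde{N}}(N)$ and is normal in it (being a direct factor), hence characteristic; as $C_G(N) \cap \tilde{N} \unlhd G$, it follows that $C_{G_\alpha}(N) \unlhd G$. But $C_{G_\alpha}(N) \leq G_\alpha$ meets $N$ trivially, so by uniqueness of the minimal normal subgroup $C_{G_\alpha}(N) = 1$, and therefore $K = 1$, completing the proof.
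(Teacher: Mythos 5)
Your proof is correct, and the first assertion is handled exactly as in the paper: both arguments amount to the observation that $N$ and $\tilde N$ are transitive on $\Delta$ with $\tilde N_\alpha = G_\alpha$, forcing $\tilde N = NG_\alpha$. For faithfulness the underlying contradiction is also the same one the paper uses --- a nontrivial normal $2$-subgroup of $G$ would yield a minimal normal subgroup that is a $2$-group, contradicting Proposition \ref{OrientedTheorem}(b) --- but you reach it by a longer route. The paper simply notes that $K$, being a normal $2$-subgroup of $\tilde N$, lies in $O_2(\tilde N)$, which is characteristic in $\tilde N$ and hence normal in $G$; this disposes of both the abelian and nonabelian cases in one stroke and makes the computation $N\cap K=1$ essentially superfluous. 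You instead push $K$ into $C_{G_\alpha}(N)$ via $[N,K]=1$ and then split cases: the nonabelian case is immediate from $C_G(N)=1$, while the abelian case requires your observation that $C_{G_\alpha}(N)$ is the unique (hence characteristic) Sylow $2$-subgroup of $C_{\tilde N}(N)=N\times C_{G_\alpha}(N)\unlhd G$. All of these steps check out (in particular $N\cap G_\alpha=1$ when $N$ is an odd $p$-group, so $\tilde N=N\rtimes G_\alpha$, and a normal Sylow subgroup of a normal subgroup is indeed normal in $G$), and your version has the minor virtue of identifying $K$ inside $C_{G_\alpha}(N)$ explicitly; but the $O_2(\tilde N)$ argument is the more economical one, and you may want to note that it renders your case distinction unnecessary.
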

\begin{proof}
%	Of course we know that $G/\tilde{N} \cong C_r$ so there exists some element $g \in G\backslash \tilde{N}$ with $\langle \tilde{N}, g \rangle = G$ and $g^r \in \tilde{N}$. Thus $\tilde{N} = \langle N, G_\alpha,   g^r \rangle$. 
Under these assumptions  $G = \tilde{N} \langle \sigma \rangle$ for some $\sigma \in G\backslash \tilde{N}$ where $\sigma^r \in \tilde{N}$. Consider an $N$-orbit $\Delta$ containing a vertex $\alpha$. Since  $\tilde{N}$ contains $G_\alpha$ by Lemma \ref{basics}, and since both $\tilde{N}$ and $N$ are transitive on $\Delta$  and $N \leq \tilde{N}$ it follows that $\tilde{N} = NG_\alpha$. 

We will now show that $\tilde{N}$ is faithful on $\Delta$. 
	Let $K$ be the kernel of the action of $\tilde{N}$ on $\Delta$. Then $K$ is contained in $G_\alpha$ implying that $K$ is a 2-group. Now since $N \cap K$ is also a 2-group which is normal in $N = T^k$ with $|T|\neq 2$, it follows that $N\cap K = 1$. 
	
If $K \neq 1$ then let $L:= $ O$_2(\tilde{N})$ denote the intersection of all Sylow 2-subgroups of $\tilde{N}$, and notice that $L$ contains $K \neq 1$. Since $L$ is characteristic in $\tilde{N}$, it follows that $L$ is normal in $G$. 
But then $G$ contains a nontrivial normal 2-subgroup and hence a minimal normal subgroup which is a 2-group. This contradicts Proposition \ref{OrientedTheorem}(b), so $K = 1$ and  $\tilde{N}$ is faithful on $\Delta$.
\end{proof}

We conclude this section by stating and proving an important fact regarding vertex stabilisers in pairs $(\Gamma, G)\in\OG(4)$  of oriented-cycle type. This fact follows from a much more general result given for instance in \cite[Lemma 6]{potovcnik2017smallest}. The original proof of this result relies on a more general normal quotient machinery than the one which we have been using. For completeness, we therefore add a proof which only involves normal quotients as defined in this paper.

\begin{Lemma}\label{Abelianstabiliser}
    Suppose that  $(\Gamma, G)$, $N$ and $k$ are as in Proposition $\ref{OrientedTheorem}(b)$ with $\Gamma_N \cong \mathbf{C}_r$ and $r\geq 3$. If $N$ is semiregular on $V\Gamma$ then $G_\alpha \cong \mathbb{Z}_2^s$ where $s\leq r$. 
    \end{Lemma}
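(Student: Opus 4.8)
The plan is to exploit the fact that $N$ is semiregular on $V\Gamma$ to force the vertex stabiliser $G_\alpha$ to act in an abelian way, and then combine this with Proposition~\ref{localprops}, which tells us $G_\alpha$ is a $2$-group of order $2^s$. First I would take a vertex $\alpha$ lying in the $N$-orbit $\Delta$. By Lemma~\ref{OrientedFaithful} we have $\tilde{N} = NG_\alpha$ with $\tilde{N}$ faithful on $\Delta$, and since $N$ is semiregular it is in fact regular on $\Delta$; thus $\tilde{N}^\Delta = N^\Delta G_\alpha^\Delta$ with $N^\Delta$ a regular normal subgroup of the faithful group $\tilde{N}^\Delta \cong \tilde N$, and $G_\alpha^\Delta \cong G_\alpha$ a point stabiliser in this faithful action. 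So $\tilde N$ is (isomorphic to) a transitive group on $\Delta$ with regular normal subgroup $N\cong T^k$ and stabiliser $G_\alpha$; in particular $\tilde N = N \rtimes G_\alpha$ and $G_\alpha$ acts on $N$ by conjugation.

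The key step is to show this conjugation action of $G_\alpha$ on $N$ is trivial, i.e.\ $G_\alpha$ centralises $N$, which (since $\tilde N = N\rtimes G_\alpha$ is faithful and $N$ is its own centraliser-complement issues aside) would force $\tilde N = N \times G_\alpha$ and hence $G_\alpha$ abelian; being a $2$-group generated by $s$ involutions, an abelian such group is $\cong \mathbb{Z}_2^s$, giving the conclusion with $s\le r$ coming from $|\tilde N| = |\Delta|\,|G_\alpha| = |N|\,|G_\alpha|$ together with the embedding $G \le \tilde N^\Delta \wr C_r$ (Lemma~\ref{OrSoc1}-style argument) — more precisely, as in the proof of Lemma~\ref{Oriented2group}, each generating involution of $G_\alpha$ must act nontrivially on at least one of the $r$ blocks/coordinates, and a counting or support argument bounds $s$ by $r$. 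To get that $G_\alpha$ centralises $N$: I would use that $G_\alpha$ is a $2$-group while if $N = T^k$ is nonabelian then $T$ is a nonabelian simple group (so $|T|$ is divisible by an odd prime and $\mathrm{Out}(T)$-considerations apply), and if $N$ is abelian then $N \cong \mathbb{Z}_p^k$ for an odd prime $p$ (by Lemma~\ref{3orbs} plus Proposition~\ref{OrientedTheorem}(b) ruling out $p=2$). A clean uniform route is: any normal subgroup of $\tilde N$ that is a $2$-group is trivial (this is exactly what the $O_2(\tilde N)=1$ argument in Lemma~\ref{OrientedFaithful} gives, since $O_2(\tilde N)$ is characteristic in $\tilde N \lhd G$ and a nontrivial one would yield a $2$-group minimal normal subgroup of $G$, contradicting Proposition~\ref{OrientedTheorem}(b)); hence $C_{\tilde N}(N) \cap G_\alpha$ — wait, rather I would argue that $[G_\alpha, N]$, being contained in $N$, and $C_N(G_\alpha)$ interplay so that if $G_\alpha$ acted nontrivially we could extract a normal $2$-subgroup.

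More concretely, the cleanest argument: $C_{\tilde N}(N)$ is normal in $G$ (as $N \lhd G$), and $C_{\tilde N}(N) \cap N = Z(N) = 1$ if $N$ is nonabelian, so $C_{\tilde N}(N)$ embeds in $\tilde N / N \cong G_\alpha$, a $2$-group; but $C_{\tilde N}(N)$ normal in $G$ being a nontrivial $2$-group contradicts Proposition~\ref{OrientedTheorem}(b), so $C_{\tilde N}(N) = 1$, meaning $\tilde N$ embeds in $\mathrm{Aut}(N)$. Then $G_\alpha \cong \tilde N/N$ embeds in $\mathrm{Out}(N)$... which for $N = T^k$ nonabelian is $\mathrm{Out}(T) \wr S_k$ — this does not immediately give abelian. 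So the honest main obstacle is precisely pinning down why $G_\alpha$ must be elementary abelian rather than merely a $2$-group: I expect one must use semiregularity of $N$ more forcefully, namely that $G_\alpha$ fixes $\Delta$ setwise with $N$ regular on $\Delta$, so $G_\alpha$ is a \emph{complement} stabiliser and one can invoke a result on affine-type or product-type actions — most likely the intended proof mimics Lemma~\ref{Oriented2group}: embed $G \le \tilde N^\Delta \wr C_r$, observe $G_\alpha$ projects into the base group $(\tilde N^\Delta)^r / (\text{something})$, and since $G_\alpha^\Delta$ is a point-stabiliser in a group with regular normal subgroup $N$, conjugation gives $G_\alpha \hookrightarrow \prod_{i} (\text{stab in } i\text{th coordinate})$, and then the regularity of $N$ on each coordinate block forces each coordinate-stabiliser to act on that block as a subgroup of $\mathrm{Aut}(N)_1$-fixing-a-point which, combined with $G_\alpha$ being a $2$-group of exponent... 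Given the reference to \cite[Lemma 6]{potovcnik2017smallest}, I would ultimately follow its skeleton adapted to normal quotients: the commutator $[G_\alpha, G_\alpha]$ lies in $\tilde N$, is normalised by $\sigma$, is a $2$-group, and an $O_2$-argument across the $r$ blocks shows it is trivial. The step I expect to be the genuine crux, and where I would spend the most care, is establishing $[G_\alpha, G_\alpha] = 1$ (equivalently that the $2$-group $G_\alpha$ cannot have a nontrivial derived subgroup inside the semiregular-over-$N$ structure), after which $G_\alpha \cong \mathbb{Z}_2^s$ is automatic from Proposition~\ref{localprops}, and $s \le r$ follows from the wreath-product embedding together with the observation that the $s$ defining involutions have disjoint-enough ``supports'' among the $r$ coordinates.
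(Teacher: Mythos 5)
There is a genuine gap: the step you identify as the key one --- showing that the conjugation action of $G_\alpha$ on $N$ is trivial --- is not just unproved but false. By Lemma \ref{OrientedFaithful}, $\tilde N=NG_\alpha$ acts faithfully on the orbit $\Delta$, and semiregularity makes $N$ regular on $\Delta$. The centraliser in $\Sym(\Delta)$ of a transitive subgroup is semiregular, so any subgroup of $\tilde N$ centralising $N$ that fixes the point $\alpha$ must be trivial; since $|G_\alpha|=2^s\geq 2$, the group $G_\alpha$ cannot centralise $N$. You partly notice this dead end and retreat to $G_\alpha\hookrightarrow\mathrm{Out}(N)$, which (as you concede) does not force $G_\alpha$ abelian, and the closing paragraph --- ``follow the skeleton of \cite[Lemma 6]{potovcnik2017smallest}'' with an unspecified $O_2$-argument and a ``disjoint-enough supports'' count --- never actually establishes either that $[G_\alpha,G_\alpha]=1$ or that $s\leq r$. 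So both conclusions of the lemma remain unproved in your write-up.

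For comparison, the paper's proof avoids the group-theoretic structure of $\tilde N$ entirely and argues combinatorially on arcs. Fixing the orientation, semiregularity of $N$ forces the two out-arcs at each vertex of $\Delta_j$ into two distinct $N$-arc-orbits, so $N$ has exactly $2r$ orbits on arcs, grouped in pairs indexed by the $r$ edges of $\Gamma_N$. The group $\tilde N=NG_\alpha$ permutes these $2r$ orbits preserving the $r$ pairs, so the image of $\tilde N$ in this action lies in $S_2^{\,r}\cong\mathbb{Z}_2^r$; a connectivity argument (an element of $G_\alpha$ fixing both $N$-arc-orbits at $\alpha$ fixes both out-neighbours, hence by induction every vertex) shows the kernel is exactly $N$. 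Hence $G_\alpha\cong\tilde N/N$ embeds in $\mathbb{Z}_2^r$, giving both the elementary abelian structure and the bound $s\leq r$ in one stroke. If you want to salvage your approach, this arc-orbit action is the mechanism that simultaneously kills $[G_\alpha,G_\alpha]$ and bounds $s$; no centralising or $O_2$ argument is needed.
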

\begin{proof}
	Fix a $G$-invariant orientation of the edges of $\Gamma$. Let $\{\Delta_0,\dots, \Delta_{r-1}\}$ denote the $N$-orbits on $V\Gamma$ and assume that all edges of $\Gamma$ are oriented from  $\Delta_i$ to $\Delta_{i+1}$ for $i \in \mathbb{Z}_r$ and $r\geq 3$. Take a vertex $\alpha \in \Delta_0$. 
	
	We will now consider the set of $N$-orbits on the arcs in this orientation of  $\Gamma$. Call this set $X$. Our first claim is that $|X| = 2r$. 
	Let $\beta$ and $\beta'$ be the two out-neighbours of $\alpha$ and note that both of these vertices are  contained in the adjacent $N$-orbit $\Delta_1$.  Since $N$ acts semiregularly on $V\Gamma$ it follows that the arcs $(\alpha, \beta)$ and $(\alpha , \beta')$ lie in different $N$-arc-orbits say $\Omega_1$ and $\Omega_2$. Furthermore since $\Delta_0$ is an $N$-orbit, it is clear that for any $\alpha' \in \Delta_0$,  one of the two arcs emanating from $\alpha'$ will lie in $\Omega_1$ and the other will lie in $\Omega_2$.  Thus all arcs from $\Delta_0$ to $\Delta_1$ will lie in one of the two $N$-arc-orbits $\Omega_1$ or $\Omega_2$. 
	The same reasoning tells us that if we take any two adjacent $N$-orbits $\Delta_j$ and $\Delta_{j+1}$ for $j \in \mathbb{Z}_r$, then all arcs from $\Delta_j$ to $\Delta_{j+1}$ will lie in one of two $N$-arc-orbits, say $\Omega_{2j+1}$ and $\Omega_{2j+2}$.  We can therefore see that $N$ has 2 arc orbits for each edge of $\Gamma_N = \mathbf{C}_r$. Hence $N$ has $2r$ orbits on the arcs of $\Gamma$ and $|X| =2r$. 
	
	Now $N$ is a normal subgroup of $\tilde{N}$, so we may consider the action of $\tilde{N}$ on $X$ where $\tilde{N} = NG_\alpha$ by Lemma \ref{OrientedFaithful}. Since $N$ is semireglar, we have $N\cap G_\alpha=1$ and $G_\alpha\cong \tilde{N}/N$.  First notice that each $\tilde{N}$-orbit on $X$ has size 2. To see this, notice that if we take the two adjacent $N$-orbits  $\Delta_0$ and $\Delta_1$, then all arcs from $\Delta_0$ to $\Delta_1$ lie in a single $\tilde{N}$-orbit.
	This is because any element $g \in G_\alpha \leq \tilde{N}$ which fixes $\alpha$ and swaps $\beta$ and $\beta'$, also swaps the  $N$-arc-orbits $\Omega_1$ and $\Omega_2$. The same is true for the arcs between any two adjacent $N$-orbits $\Delta_i$ and $\Delta_{i+1}$.  In other words, $\tilde{N}$ has exactly $r$ orbits on the arcs of $\Gamma$ and each one is a union of two $N$-arc-orbits on $\Gamma$. Thus each $\tilde{N}$-orbit on $X$ has size 2. 
	
	Let $K$ be the kernel of the action of $\tilde{N}$ on $X$. We claim that $K = N$. Of course we know that $N \leq K$ since each element of $X$ is an $N$-orbit. Hence $K = NK_\alpha$. Now consider an element  $g \in K_\alpha$. Since $g$ fixes $\alpha$ and also fixes (setwise) both of the arc-orbits $\Omega_1$ and $\Omega_2$, it follows that $g$ fixes both of the out-neighbours $\beta$ and $\beta'$ of $\alpha$. The same reasoning then implies that $g$ fixes pointwise the out-neighbours of $\beta$ and $\beta'$ and by induction and using the fact that $\Gamma$ is connected it follows that $g$ fixes all vertices of $\Gamma$, so $g=1$. Thus $K=N$. 
	
 Now we have that $\tilde{N} / N \cong G_\alpha$  acts faithfully on $X$. Moreover since each $\tilde{N} / N$-orbit on $X$ has size 2. It follows that $G_\alpha \cong  \tilde{N} / N$ is a subgroup of $S_2^r \cong \mathbb{Z}_2^r$.  
\end{proof}

\subsection{Oriented-Cycle Type: nonabelian socle}

We begin by analysing the case where Proposition \ref{OrientedTheorem}(b) holds and $N \cong T^k$ for some nonabelian simple group $T$ and $k \geq 1$. Our aim is to consider the possible values for the integer $k$, the number of simple direct factors of $N$. 
Our first result in this direction distinguishes two important cases concerning the structure of the unique nonabelian minimal normal subgroup  $N$ of $G$. 

\begin{Lemma}\label{OrientedNonabelianSoc1}
	Suppose that  $(\Gamma, G)$, $N, T$ and $k$ are as in Proposition $\ref{OrientedTheorem}(b)$ with $\Gamma_N \cong \mathbf{C}_r$, $r \geq 3$, and $N$ nonabelian. Then either
	\begin{enumerate}[(a)]
		\item $N$ is the unique minimal normal subgroup of $\tilde{N}$, or
		\item $\tilde{N}$ has $d$ minimal normal subgroups for some $d$ dividing  $r$ with $d>1$. Each of these subgroups is isomorphic to $T^\ell$ for some $\ell \geq 1$, and $N$ is the direct product of these subgroups. In particular,  $k = d\ell$ where $\ell \geq 1$ and $d$ divides $r$.
	\end{enumerate}
In either case all minimal normal subgroups of $\tilde{N}$ lie in $N$.
\end{Lemma}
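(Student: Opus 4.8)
The plan is to analyse the minimal normal subgroups of $\tilde N$ using the fact that $\tilde N = NG_\alpha$ (Lemma \ref{OrientedFaithful}) together with the constraint that $G$ acts on $\tilde N$ by conjugation with the quotient $G/\tilde N$ cyclic of order $r$. First I would recall that $N\cong T^k$ with $T$ nonabelian simple is the unique minimal normal subgroup of $G$, and that since $G_\alpha$ is a $2$-group and $T\ncong\mathbb Z_2$, any minimal normal subgroup $R$ of $\tilde N$ that is a $2$-group would be centralised-or-normalised into a nontrivial normal $2$-subgroup of $G$ (via $O_2(\tilde N)$, exactly as in the proof of Lemma \ref{OrientedFaithful}), contradicting the uniqueness of $N$. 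So I would first argue that no minimal normal subgroup of $\tilde N$ is a $2$-group; more strongly, I would show every minimal normal subgroup $R$ of $\tilde N$ is nonabelian and satisfies $R\le N$. The inclusion $R\le N$ should follow because $C_{\tilde N}(N)$ is a normal subgroup of $G$ (it is characteristic in $\tilde N$, being the centraliser of the characteristic subgroup $N$, since $N = E(\tilde N)$ or at least $N$ is characteristic as the product of the nonabelian composition factors' "layer"); if $R\not\le N$ then $R\cap N = 1$ forces $R\le C_{\tilde N}(N)$, and then $R$ together with its $G$-conjugates generates a normal subgroup of $G$ meeting $N$ trivially, contradicting uniqueness of $N$ — unless $R$ itself is central-ish, handled by the $O_2$ argument. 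This is the step I'd expect to be slightly delicate and worth writing carefully.

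Next, with all minimal normal subgroups of $\tilde N$ inside $N\cong T^k$, I would invoke the standard structure of $T^k$: its minimal normal subgroups (as a $\tilde N$-group, i.e.\ normal in $\tilde N$) correspond to the orbits of the permutation action of $\tilde N$ (equivalently of $G$, since $\tilde N$ and $G$ have the same orbits here because the relevant action factors appropriately) on the $k$ simple direct factors $\{T_1,\dots,T_k\}$ of $N$. Each such orbit of size $m$ contributes a minimal normal subgroup isomorphic to $T^m$. So the number $d$ of minimal normal subgroups of $\tilde N$ equals the number of orbits of $\tilde N$ on the factor set, and $N$ is the direct product of these $d$ subgroups. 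If $d=1$ we are in case (a). If $d>1$, I would then use the action of $\sigma$ (a generator of $G$ modulo $\tilde N$, with $\sigma^r\in\tilde N$) which permutes these $d$ minimal normal subgroups of $\tilde N$ — because it normalises $\tilde N$ and hence permutes its minimal normal subgroups — and does so transitively, since $N=\soc(G)$ is the unique minimal normal subgroup of $G$, so $G$ (hence $\langle\sigma\rangle$ acting on the $d$ blocks) must act transitively on the set of $d$ summands (otherwise a proper sub-product would be $G$-invariant and minimal normal, contradicting uniqueness). A transitive cyclic action of $G/\tilde N\cong C_r$ on a $d$-element set forces $d\mid r$, and transitivity forces all $d$ summands isomorphic, say each $\cong T^\ell$, whence $k=d\ell$. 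That gives case (b).

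The main obstacle I anticipate is the bookkeeping in the first paragraph: cleanly establishing that every minimal normal subgroup of $\tilde N$ lies in $N$ and is nonabelian, i.e.\ ruling out both abelian minimal normal subgroups of $\tilde N$ (handled by the $O_2(\tilde N)$ / characteristic-subgroup trick already used in Lemma \ref{OrientedFaithful}, noting such a subgroup would be elementary abelian $p$-group, and if $p$ is odd it generates with its conjugates a normal subgroup of $G$ disjoint from $N$, again contradicting uniqueness) and nonabelian minimal normal subgroups of $\tilde N$ not contained in $N$ (handled via $C_{\tilde N}(N)\lhd G$). Once that containment is in hand, the rest is the routine theory of how a group acts on the direct factors of a characteristically-simple-power, plus the elementary observation that a transitive cyclic group on $d$ points has $d$ dividing its order. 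I would present the two cases (a)/(b) as the dichotomy "$d=1$" versus "$d>1$", and conclude with the final sentence of the statement, which is immediate from the containment established in the first paragraph.
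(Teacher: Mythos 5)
Your proposal is correct and follows essentially the same route as the paper's proof: both rest on the observation that $C_G(N)=1$ (so any minimal normal subgroup of $\tilde N$ meeting $N$ trivially would centralise $N$ and hence vanish, forcing all of them into $N$), and on the fact that the cyclic quotient $G/\tilde N\cong C_r$ permutes the resulting $d$ pairwise-disjoint summands transitively, whence $d\mid r$ and $N\cong T^{d\ell}$. One parenthetical aside is false, though unused: $\tilde N$ and $G$ do \emph{not} have the same orbits on the $k$ simple direct factors of $N$ (in case (b) $G$ is transitive while $\tilde N$ has $d>1$ orbits); your actual argument correctly works with the $\tilde N$-orbits and only then invokes the transitive $\sigma$-action on the $d$ summands, and likewise the $O_2(\tilde N)$ detour is unnecessary once the implication $R\cap N=1\Rightarrow R\le C_{\tilde N}(N)\le C_G(N)=1$ is in hand.
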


\begin{proof}
	The group $N$ in Proposition \ref{OrientedTheorem}(b) is the unique minimal normal subgroup of $G$. Now $C_G(N)$ is normal in $G$, and $N\cap C_G(N)=1$ since $N$ is a nonabelian minimal normal subgroup. If $C_G(N)\ne1$ then $C_G(N)$ contains a minimal normal subgroup of $G$ distinct from $N$, which is a contradiction. Thus $C_G(N)=1$.
	
	Suppose first that $N$ is also a minimal normal subgroup of $\tilde{N}$. If $N$ is the only minimal normal subgroup of $\tilde{N}$, then part (a) holds. Suppose that this is not the case and that $M$ is a minimal normal subgroup of $\tilde{N}$ distinct from $N$. Then $M \leq C_{\tilde{N}}(N) \leq C_G(N)$, which is a contradiction since $C_G(N)=1$.  
	
	Suppose instead  that $N$ is not a minimal normal subgroup of $\tilde{N}$. Since $N$ is normal in $\tilde{N}$, this implies that $N$ has a subgroup $K$ which is minimal normal in $\tilde{N}$.
	Then since $K$ is also normal in $N$, it follows that $K = T^\ell$ where $T$ is the nonabelian simple group in the statement, and $\ell \geq 1$.  
	If $K$ were normal in $G$ then we would have $K=N$ (since $N$ is minimal normal in $G$) and hence $N$ would be minimal normal in $\tilde{N}$, a contradiction. Thus $K$ is not normal in $G$.
	
	 Recall  that $G/\tilde{N}$ is cyclic. Hence there exists an element $g \in G\backslash \tilde{N}$  such that $\langle \tilde{N}, g \rangle = G$. For $g$ with this property, we get $K^g \leq N^g=N \leq \tilde{N}$, and so $K^g$ is also a minimal normal subgroup of $\tilde{N}$ contained in $N$.
	
	Note that $g^r\in\tilde{N}$ and hence $g^r$ normalises $K$, and consider  $C = \{ K, K^g, \dots, K^{g^{r-1}}\}$, a set of minimal normal subgroups of $\tilde{N}$. The group $G = \langle \tilde{N}, g\rangle$ acts on $C$ by conjugation. Since $\tilde{N}$ normalises each $K^{g^{i}}\in C$, it acts trivially on $C$, and conjugation by $g$ cyclically permutes these subgroups transitively, with $g^r\in\tilde{N}$ acting trivially. It follows that $d:=|C|$ divides $r$.
	Note however that $d>1$, since otherwise $K$ would be normalised by $\langle \tilde{N}, g \rangle = G$, which is not the case.
	
	Now let $K_0$ = $\langle C \rangle =  \langle K , K^g, \dots , K^{g^{d-1}}\rangle = K \times \cdots \times K^{g^{d-1}} \cong T^{d\ell}$, and note that $K_0\leq N$ since each $K^{g^{i}}\leq N$. Since $G = \langle \tilde{N}, g \rangle$ is transitive on the $d\ell$ simple direct factors of $K_0$, it follows that $K_0$ is a minimal normal subgroup of $G$ and hence must be equal to $N$. Thus $N \cong T^{d\ell}$ with $\ell \geq 1$, $d>1$, $d$ dividing $r$, and so (b) holds.
	
	Finally, if $\tilde{N}$ has another minimal normal subgroup $M$ (distinct from the $K^{g^i}$) which is not contained in $N$, then $M$ commutes with each $K^{g^i}$ and so $M$ commutes with $N$. But this implies that $M \leq C_{\tilde{N}}(N) \leq C_G(N) = 1$. A contradiction, so all minimal normal subgroups of $\tilde{N}$ lie in $N$. 
	% Hence $N \leq K_0$ and so $N$ is normal in $K_0$. But $K_0 \cong (P^\ell)^d$ for some $d$ dividing $r$. The fact that $N$ is a normal subgroup of $K_0$ then implies that $P\cong T$, so $K_0 \cong T^{\ell d}$ where $\ell d \geq k$. Finally, since $\langle N, g\rangle  = G$ is transitive on the simple direct factors of $K_0$ it follows that $K_0 = N$. 
\end{proof}
Our next goal is to provide an upper bound on the value of $k$ in Proposition \ref{OrientedTheorem}(b) when $N$ is nonabelian. As $N$ is the unique minimal normal subgroup of $G$, we may view $G$ as a subgroup of $\Aut(N) \cong \Aut(T) \wr \Sym(k)$, and identify $N \cong T^k$ with the group of inner automorphisms Inn$(N)$.

 Since $\tilde{N} = NG_\alpha$ with $N$ acting trivially on the set of $k$ direct factors, in both of the cases (a) and (b) of Lemma \ref{OrientedNonabelianSoc1}, there is a relationship between the value $k$ and the size of the vertex stabiliser $|G_\alpha|$.
We will use these ideas in the following proof which gives an upper bound for $k$. Note that this is a generalisation of an argument first used in $\cite{al2015finite}$ and then developed further in \cite{poznanovic2021four} to obtain analogous results for certain quasiprimitive and biquasiprimitive basic pairs.

%More specifically, if we take an oriented $s$-arc $(\alpha, v_1, \dots, v_s)$ and take an automorphism  $g \in G$ which takes the oriented $s$-arc $(v_0,v_1,...,v_s)$ to the oriented $s$-arc $(v_1, v_2,...,v_s, v_{s+1})$, where $v_{s+1}$ is some out-neighbour of $v_s$, then for each $2 \leq i \leq s$,  $h_i = h^{g^{-1}}_{i-1}$  and $\langle h_1\rangle$ is the pointwise stabiliser $G_{\alpha,\dots ,v_{s-1}} \cong C_2$.

\begin{Theorem}\label{OrientedNonabelianBound}
	Suppose that  $(\Gamma, G)$, $N$ and $k$ are as in Proposition $\ref{OrientedTheorem}(b)$ with $\Gamma_N \cong \mathbf{C}_r$ and $N$ nonabelian. Then either
	\begin{enumerate}[(a)]
		\item  $N$ is a minimal normal subgroup of $\tilde{N}$ and $k$ divides $2^r$; or
		\item  $N$ is not a minimal normal subgroup of $\tilde{N}$,  $k = d\ell$ for some divisor $d$ of $r$ with $d>1$, and $\ell$ divides $2^r$.
		
	\end{enumerate}
	In particular, $k$ divides $2^r$ in case (a), and $k$ divides $r2^r$ in case (b).
\end{Theorem}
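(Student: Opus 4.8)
The plan is to exploit the key structural fact, already established in Lemma~\ref{OrientedFaithful}, that $\tilde N = NG_\alpha$ acts \emph{faithfully} on a single $N$-orbit $\Delta$, together with Lemma~\ref{OrientedNonabelianSoc1} which pins down how the minimal normal subgroups of $\tilde N$ sit inside $N$. Since $N\cong T^k$ with $T$ nonabelian simple, $N$ acts transitively on $\Delta$ with $N_\alpha$ a subgroup of the $2$-group $G_\alpha$, and the whole argument will consist of bounding the number of simple direct factors of $N$ in terms of $|G_\alpha|\le 2^s$ where $s$ is as in Proposition~\ref{localprops}; but since $G_\alpha$ need not be all of $\tilde N/N$ we will actually want a bound in terms of $|\tilde N : N|$, which is a $2$-group quotient because $\tilde N=NG_\alpha$ and $G_\alpha$ is a $2$-group, whence $|\tilde N : N|$ divides $|G_\alpha|$ and hence divides $2^s$; the theorem's stated bound $2^r$ presumably comes from combining $s\le r$ (Lemma~\ref{Abelianstabiliser} in the semiregular case, or a variant) with this.

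First I would treat case (a), where $N$ is the unique minimal normal subgroup of $\tilde N$. Then $\tilde N$ acts (faithfully, on $\Delta$) with a \emph{homogeneous} socle $T^k$, and conjugation by $\tilde N$ permutes the $k$ simple factors of $N$; since $N$ itself centralises this permutation action, the image of this permutation action is a quotient of $\tilde N/N$, hence a $2$-group, hence has order dividing $2^s\le 2^r$. On the other hand the factors of $N$ are permuted \emph{transitively} by $\tilde N$ — indeed $N$ is minimal normal in $\tilde N$, so $\tilde N$ acts transitively on its $k$ simple factors — so $k$ divides the order of that permutation group, which divides $2^r$. This gives $k\mid 2^r$, as claimed. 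Second, for case (b), Lemma~\ref{OrientedNonabelianSoc1}(b) already tells us $k=d\ell$ with $d\mid r$, $d>1$, and each of the $d$ minimal normal subgroups $K^{g^i}$ of $\tilde N$ isomorphic to $T^\ell$; the same argument applied to one of these minimal normal subgroups $K=T^\ell$ of $\tilde N$ — which is minimal normal in $\tilde N$ and whose $\ell$ factors are permuted transitively by $\tilde N$ with $K$ (indeed $N$) centralising the action — shows $\ell$ divides the order of a $2$-group quotient of $\tilde N/N$, hence $\ell\mid 2^r$. Combining, $k=d\ell$ divides $r\cdot 2^r$, finishing the proof.

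The one delicate point — and the step I expect to be the main obstacle — is getting the exponent right, i.e.\ justifying that the relevant permutation-group order divides $2^r$ and not merely $2^s$ for the $s$ of the specific pair (which a priori could exceed $r$). I would handle this by a careful count: $\tilde N = NG_\alpha$, so $\tilde N/N \cong G_\alpha/N_\alpha$, and we need $|G_\alpha : N_\alpha|$, equivalently the order of the $2$-group by which $\tilde N$ acts on the set of simple factors, to divide $2^r$. Here one uses that $\tilde N$ acting faithfully on $\Delta$ with $N$ regular-or-semiregular on $\Delta$ and with $\Gamma_N\cong\mathbf{C}_r$ forces $|\tilde N:N|\mid 2^r$ — in the semiregular subcase this is exactly Lemma~\ref{Abelianstabiliser} (giving $G_\alpha\cong\mathbb{Z}_2^s$, $s\le r$, so $|\tilde N:N|=|G_\alpha|\le 2^r$), and in general one argues via the embedding of $G=\tilde N\langle\sigma\rangle$ into $\tilde N^\Delta\wr C_r$ (as in the proof of Lemma~\ref{Oriented2group}, citing \cite[Theorem~5.5]{praeger2018permutation}) that the "twist" available to $G_\alpha$ is bounded by the $r$ blocks. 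Once that numerical bound $|\tilde N:N|\mid 2^r$ is in hand, the rest is the bookkeeping in the two paragraphs above: the factor-permutation image is a quotient of $\tilde N/N$, transitivity on $k$ (resp.\ $\ell$) factors forces $k\mid 2^r$ (resp.\ $\ell\mid 2^r$), and $k=d\ell$ with $d\mid r$ yields $k\mid r2^r$ in case (b).
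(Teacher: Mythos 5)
There is a genuine gap, and it sits exactly where you flag it: the reduction from $2^s$ to $2^r$. Your argument correctly reduces both cases to showing that the permutation image of $\tilde N$ on the simple direct factors of $N$ (a transitive $2$-group quotient of $\tilde N/N\cong G_\alpha/N_\alpha$) has order dividing $2^r$, but your proposed justification of that bound does not work. Lemma~\ref{Abelianstabiliser} requires $N$ to be semiregular on $V\Gamma$; for nonabelian $N\cong T^k$ this is not automatic ($N_\alpha$ can be a nontrivial $2$-group), and more importantly nothing rules out $s>r$ here --- the paper's proof explicitly treats $s>r$ as a live case. The embedding $G\leq \tilde N^{\Delta}\wr C_r$ from the proof of Lemma~\ref{Oriented2group} only says that $\tilde N$ embeds in $(\tilde N^{\Delta})^r$; it places no bound on $|\tilde N:N|$ (in Lemma~\ref{Oriented2group} it was useful only because $\tilde N^\Delta$ was known to be elementary abelian and regular there). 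So the assertion $|\tilde N:N|\mid 2^r$ is not established by anything you cite, and the paper never proves it either.

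The paper's proof avoids this claim entirely. It writes $G\leq\Aut(T)\wr\Sym(k)$, lets $\pi$ be the projection to $\Sym(k)$, and studies $P:=(\tilde N)\pi=(G_\alpha)\pi=\langle\varphi,\varphi^{\sigma^{-1}},\dots,\varphi^{\sigma^{-(s-1)}}\rangle$, where $\varphi=(h_1)\pi$ for the generating involutions $h_i=h_{i-1}^{g^{-1}}$ of $G_\alpha$ and $\sigma=(g)\pi$. When $s>r$ it does \emph{not} bound $|P|$ by $|\tilde N/N|$; instead it observes that $\varphi$ lies in no proper $\sigma$-invariant subgroup of $P$, that $(G)\pi/P$ is cyclic of order $i_0$ dividing $r$, and then runs a Frattini-subgroup argument (with $L=\Phi(P)$ and $J=\langle L,\varphi\rangle\unlhd P$) to show that $P$ is already generated by at most $c\leq i_0\leq r$ of the conjugates $\varphi^{\sigma^{-i}}$, whence $|P|$ divides $2^c\mid 2^r$; transitivity (resp.\ the $d$ orbits of length $\ell$) then gives $k\mid 2^r$ (resp.\ $\ell\mid 2^r$). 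That Frattini argument is the real content of the theorem, and it is the piece your proposal is missing. The surrounding bookkeeping in your write-up (transitivity of $\tilde N$ on the factors in case (a), reuse of the argument for $K\cong T^\ell$ in case (b), and $k=d\ell$ with $d\mid r$ giving $k\mid r2^r$) matches the paper and is fine.
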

\begin{proof}
The two cases (a) and (b) here directly correspond to the two cases (a) and (b) of Lemma \ref{OrientedNonabelianSoc1}. In particular, all we need to show is that in case (a) of Lemma \ref{OrientedNonabelianSoc1}, $k$ divides $2^r$ while in case (b) of Lemma \ref{OrientedNonabelianSoc1}, $\ell$ divides $2^r$.
	
	Consider $G$ as a subgroup of $\Aut(N) \cong \Aut(T) \wr \Sym(k)$, and identify $N \cong T^k$ with the group of inner automorphisms Inn$(N)$. Let $s\geq1$ be the largest integer such that $G$ acts transitively, and hence regularly (by Proposition \ref{localprops}), on the oriented $s$-arcs of $\Gamma$. Consider a vertex $\alpha:= \alpha_0$ of $\Gamma$ and let $(\alpha_0,\alpha_1,...,\alpha_s)$ be an oriented $s$-arc starting at $\alpha$. Suppose now that the pointwise stabiliser $ G_{{\alpha_0},...,{\alpha_{s-1}}} $ of order 2 is generated by some element $h_1$, that is, $G_{{\alpha_0},...,{\alpha_{s-1}}} = \langle h_1 \rangle \cong C_2$.
	
	Now let $g \in G$ be an automorphism of $\Gamma$ taking the oriented $s$-arc $(\alpha_0,\alpha_1,...,\alpha_s)$ to the oriented $s$-arc $(\alpha_1, \alpha_2,...,\alpha_s, \alpha_{s+1})$ where $\alpha_{s+1}$ is some out-neighbour of $\alpha_s$. Since $\Gamma_N$ is an oriented cycle, this implies that $g$ induces a rotation of the quotient graph $\Gamma_N$. Hence $\langle \tilde{N}, g \rangle$ is vertex-transitive and $g$ cyclically permutes the $N$-orbits on $V\Gamma$. Moreover $\tilde{N}g$ generates $G/\tilde{N} \cong C_r$ and $\langle \tilde N, g \rangle  = G$ since $G_\alpha \leq \tilde{N}$ by Lemma \ref{basics}.
	
	For $2 \leq i \leq s$, define $h_i := h^{g^{-1}}_{i-1}$. It is clear that for each $i \leq s$ we have $$ G_{{\alpha_0},...,{\alpha_{s-i}}} = \langle h_1, ..., h_i\rangle.$$
	Thus $|G_\alpha| = 2^s$ and $G_\alpha = \langle h_1, \dots, h_s\rangle$ where each $h_i$ is an involution. 
	
	We may write the automorphisms $h_1, g \in G$ as elements of $\Aut(N) \cong \Aut(T) \wr \Sym(k)$, so that $h_1 = f\varphi$ and $g = f'\sigma$ where $f, f' \in \Aut(T)^k$ and $\varphi, \sigma \in \Sym(k)$. (In case (b), $\varphi, \sigma \in \Sym(\ell) \wr \Sym(d)$ with $\varphi \in \Sym(\ell)^d$.) In either case, $h_1^2 =1$ implies that $\varphi^2 =1$.
	
	Now let $\pi$ denote the projection map $\pi: \Aut(N) \rightarrow \Sym(k)$%\footnote{In case (b) $\Sym(\ell) \wr \Sym(2)$.}
	, so that $(h_1)\pi = \varphi$ and $(g)\pi = \sigma$, and let $P := (\tilde{N})\pi = (NG_{\alpha})\pi = (G_{\alpha})\pi$. Note that $P$ is a 2-group since $G_{\alpha}$ is a 2-group, and further  $$P = (G_{\alpha})\pi = \langle h_1,h_2,..., h_s\rangle\pi = \langle \varphi, \varphi^{\sigma^{-1}},..., \varphi^{\sigma^{-(s-1)}}\rangle.$$
	Moreover, in case (a), $P$ is transitive implying that $k$ divides $|P|$, while in case (b), $P$ has $d$ orbits of length $\ell$, so $\ell$ divides $|P|$. Note that $|P|$  divides  $|G_\alpha| = 2^s$ since $P$ is a quotient of $G_\alpha$. Hence $k$ divides $2^s$ in case (a) and $\ell$ divides $2^s$ in case (b). If $s \leq r$ then  $k$ divides $2^r$ in case (a) and $\ell$ divides $2^r$ in case (b) and the result holds. We will henceforth assume that $s>r$.
	
	First, notice that $\varphi$ is not contained in any proper $\sigma$-invariant subgroup of $P$. For suppose that it were, and take a proper $\sigma$-invariant subgroup $\overline{P}$ of $P$ containing $\varphi$. Since $\overline{P}$ is $\sigma$-invariant it follows that $\varphi^{\sigma^{-i}} \in \overline{P}$ for all $i \in \mathbb{Z}$, implying that $P\leq \overline{P}$ and hence that $P = \overline{P}$, a contradiction.
	
	Now consider the index $i_0 := |(G)\pi:P|  \geq 1$. Note that $i_0$ divides $|G:\tilde{N}| = r$ and $P$ is a normal subgroup of $(G)\pi$. Moreover in case (b), $(G)\pi$ is transitive while $P$ has $d$ orbits of length $\ell$ and these $P$-orbits form a $(G)\pi$-invariant partition of $\{1,\dots ,k\}$. Hence in case (b), the group $(G)\pi / P $ of order $i_0$ acts transitively on the set of these $d$ $P$-orbits and so $d$ divides $i_0$. Thus we have $d$ $ | $ $i_0 $ $|$ $ r$.
	
	Suppose first that $i_0 = 1$ and hence that $P = (G)\pi$. Note that this cannot occur in case (b) as this implies that $d = 1$. Note also that  $\sigma\in (G)\pi=P$ in case (b). Suppose that $\langle \varphi \rangle$ is a proper subgroup of $P$ and let $M$ be a maximal subgroup of $P$ containing $\langle \varphi \rangle$. Since $P$ is a 2-group it follows that $M$ is normal in $P$ and hence must be $\sigma$-invariant. Recall however that $\varphi$ cannot be contained in any proper $\sigma$-invariant subgroup of $P$. It follows that $P = \langle \varphi \rangle$ with order at most 2. Therefore  $k \leq 2$ and hence $k$ divides $2^r$.
	
	Suppose on the other hand that $1 < i_0 \leq r$ where  $d$ $ | $ $i_0 $ $|$ $ r$. In this case, since $\langle P, \sigma\rangle = (G)\pi$,  we have $\sigma\in (G)\pi \backslash P$, and moreover $(G)\pi/P \cong C_{i_0}$ with  $\sigma^{i_0}\in P$.
	%, and $P\ne 1$ since $r/i_0>1$.
	%the order of $\varphi$ is 2 and $\sigma\in (G)\pi \backslash P$. Moreover since $\langle P, \sigma\rangle = (G)\pi$,  we have $\pi(G)/P \cong C_{i_0}$ with  $\sigma^{i_0}\in P$.
	
	Now let $L := \Phi(P)$, the Frattini subgroup of $P$ and note that $P/L$ is elementary abelian. Then $L$ is $\sigma$-invariant since $\sigma$ normalises $P$, so $L$ does not contain $\varphi$. Let $J := \langle L, \varphi \rangle$ and note that conjugation by $\sigma^{-1}$ in $(G)\pi$ maps $J$ to $J^{\sigma^{-1}} = \langle L, \varphi^{\sigma^{-1}} \rangle$. Since $P/L$ is elementary abelian, it follows that $J$ is normal in $P$. In particular, since $\sigma^{i_0} \in P$, conjugation by $\sigma^{-i_0}$ fixes $J$.
	
	Now consider the set $S = \{J, J^{\sigma^{-1}},\dots, J^{\sigma^{-(i_{0}-1)}}\}$ of subgroups of $P$. Conjugation  by $\sigma^{-1}$ cyclically permutes the elements of the set $S$ and hence $|S| = c$ where $c$ divides $i_0$.  Furthermore, each generator $\varphi^{\sigma^{j}}$ of $P$ (with $-(s-1)\leq j \leq 0$), lies in some $J ^{\sigma^{-i}}$ with $0\leq i < i_0$. In particular $\varphi$ lies in some $J ^{\sigma^{-i}}$. Thus $\langle S \rangle$ is a $\sigma$-invariant subgroup of $P$ containing $\varphi$, and hence $P = \langle S \rangle = \langle L , \varphi, \varphi^{\sigma^{-1}}, \dots \varphi^{\sigma^{-c}} \rangle = \langle \varphi, \varphi^{\sigma^{-1}}, \dots \varphi^{\sigma^{-c}} \rangle$ where the last equality holds since $L$ is the Frattini subgroup.
	
	Hence  $P = \langle \varphi,\dots, \varphi^{\sigma^{-c}} \rangle = \langle h_1,\dots, h_c \rangle \pi$, and since $\langle h_1,\dots, h_c \rangle$ has order $2^c$ it follows that $|P|$ divides $2^c$. Hence in case (a), $P = (\tilde{N})\pi$ is transitive in $\Sym(k)$ and so $k$ divides $2^c$, while in  case (b), $P$ has $d$ orbits of length $\ell$, so $\ell$ divides $2^c$.  In either case, $c \leq i_0 \leq r$ and so either $k$ divides $2^r$ in case (a), or $\ell$ divides $2^r$ in case (b).
\end{proof}

\subsection{Oriented-Cycle Type: abelian socle}
Suppose now that $(\Gamma, G)\in \OG(4)$ is basic of oriented-cycle type as described by Proposition \ref{OrientedTheorem}(b) with $N \cong T^k$ for an abelian simple group $T\ncong\mathbb{Z}_2$ and $k \geq 1$. 
 We will now obtain the bound on $k$ for pairs of this type as given in Theorem \ref{CycleMainTheorem}.

%Now since $G$ is solvable, and since $G_\alpha$ is generated by involutions. \textcolor{red}{\cite[Lemma 6]{potovcnik2017smallest}} tells us that $G_\alpha$ is abelian. Hence  we know that $G = NG_\alpha\langle\sigma\rangle \cong (\mathbb{Z}_p^k \rtimes \mathbb{Z}_2^s)\mathbb{Z}_r$, for some $r \geq 3$, $s\geq1$. We will now show that $k \leq r$.  
\begin{Theorem}\label{OrientedAbelianBound}
	Suppose that  $(\Gamma, G)$, $N, T$ and $k$ are as in Proposition $\ref{OrientedTheorem}(b)$ with $\Gamma_N \cong \mathbf{C}_r$ for $r \geq 3$ and $N$ abelian. Then $k\leq r$. 
\end{Theorem}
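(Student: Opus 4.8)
The plan is to realise $N$ itself as a subgroup generated by $r$ explicitly described elements — one associated to each edge of the cycle $\Gamma_N\cong\mathbf{C}_r$ — and then to invoke the minimality of $N$ together with the fact that $N$ is elementary abelian to read off $k\le r$.

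First I would fix the set-up. Since $N$ is an abelian minimal normal subgroup of $G$ it is elementary abelian; as $N\cong T^k$ with $T$ abelian simple and $T\ncong\mathbb{Z}_2$ we get $N\cong\mathbb{Z}_p^k$ for an odd prime $p$, and we regard $N$ as an $\mathbb{F}_p$-vector space of dimension $k$. Let $\Delta_0,\dots,\Delta_{r-1}$ be the $N$-orbits on $V\Gamma$, indexed cyclically so that $\Gamma_N\cong\mathbf{C}_r$ with $\Delta_i$ adjacent to $\Delta_{i\pm1}$ (indices mod $r$). For $\alpha\in\Delta_i$ the stabiliser $N_\alpha$ is contained in $G_\alpha$, which is a $2$-group by Proposition~\ref{localprops}, and is also a subgroup of the odd-order group $N$; hence $N_\alpha=1$ and $N$ is regular on every $\Delta_i$. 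Fixing a $G$-invariant orientation of $\Gamma$ and relabelling if necessary, the quotient $\Gamma_N$ being $G$-oriented forces the two out-neighbours of any vertex of $\Delta_i$ to lie in $\Delta_{i+1}$.

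Now, to each arc $e=(\Delta_i,\Delta_{i+1})$ of $\mathbf{C}_r$ I would attach an element $c_e\in N$ as follows: if $\alpha\in\Delta_i$ has out-neighbours $\beta,\beta'\in\Delta_{i+1}$, then by regularity of $N$ on $\Delta_{i+1}$ there is a unique $n\in N$ with $\beta'=\beta^n$, and we set $c_e:=n$. If $\alpha^m$ is any other vertex of $\Delta_i$ ($m\in N$), its out-neighbours are $\beta^m$ and $(\beta')^m=(\beta^{c_e})^m=\beta^{mc_e}=(\beta^m)^{c_e}$, using that $N$ is abelian; hence $c_e$ does not depend on the chosen $\alpha$, and it is determined by $e$ up to replacing it by $c_e^{-1}$ (the ambiguity being which neighbour is called $\beta$). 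Note $c_e\ne1$ since $\beta\ne\beta'$. Put $M:=\langle c_e : e \text{ an arc of } \mathbf{C}_r\rangle\le N$. For $g\in G$, since $G$ preserves the orientation of $\Gamma$ the induced map permutes the arcs of $\mathbf{C}_r$, and carrying the ordered out-neighbour pair of $\alpha\in\Delta_i$ to that of $\alpha^g$ gives $g^{-1}c_eg=c_{g(e)}^{\pm1}$, whence $g^{-1}\langle c_e\rangle g=\langle c_{g(e)}\rangle$ and therefore $g^{-1}Mg=M$. Thus $M$ is a normal subgroup of $G$ with $1\ne M\le N$; since $N$ is a minimal normal subgroup of $G$, we conclude $M=N$. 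As $\mathbf{C}_r$ has exactly $r$ arcs, $N$ is generated by $r$ elements, and being $\cong\mathbb{Z}_p^k$ this forces $k=\dim_{\mathbb{F}_p}N\le r$.

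The delicate, but essentially routine, part is the two bookkeeping claims — that $c_e$ is a genuine invariant of the arc $e$, and that $g^{-1}c_eg=c_{g(e)}^{\pm1}$; neither is a real obstacle once the base-vertex independence displayed above is in place, and the conceptual content, that a $G$-normal subgroup of $N$ generated by these $r$ "local difference" elements must be all of $N$, then finishes the proof. This is the abelian counterpart of the projection-to-$\Sym(k)$ argument used in Theorem~\ref{OrientedNonabelianBound}, the much stronger bound $k\le r$ reflecting that here $r$ generators of $N$ bound its $\mathbb{F}_p$-dimension directly; notably the argument uses only that $G_\alpha$ is a $2$-group.
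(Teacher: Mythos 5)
Your proof is correct, but it takes a genuinely different route from the paper's. The paper argues linearly: after establishing that $N$ is semiregular, that $\tilde N = N\rtimes G_\alpha$, and that $G_\alpha\cong\mathbb{Z}_2^s$ with $s\le r$ (Lemma \ref{Abelianstabiliser}), it uses transitivity of $N_G(G_\alpha)$ on the fixed points of $G_\alpha$ to choose a rotation $\sigma$ normalising $G_\alpha$, regards $N$ as an irreducible $\mathbb{F}_p$-module for $G$, builds a common eigenvector $w$ (with eigenvalues $\pm1$) for the commuting involutions generating $G_\alpha$, and shows that the span of $\{w,w^\sigma,\dots,w^{\sigma^{r-1}}\}$ is $G$-invariant, so that minimality of $N$ gives $k=\dim N\le r$. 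Your argument replaces all of this with a combinatorial device: the $r$ ``difference'' elements $c_e$ attached to the oriented edges of $\Gamma_N$, whose independence of the base vertex rests precisely on commutativity of $N$, and whose $G$-invariance as a set of cyclic subgroups forces $\langle c_e : e\rangle=N$ by minimality; then $r$ generators of $\mathbb{Z}_p^k$ give $k\le r$ at once. Both proofs ultimately exploit the same principle — a nontrivial normal subgroup of $G$ inside $N$ generated by $r$ naturally defined elements must equal $N$ — but yours needs neither Lemma \ref{Abelianstabiliser} nor any module theory, only semiregularity of $N$ and the fact that a $G$-oriented quotient places both out-neighbours of a vertex in a single adjacent $N$-orbit (which holds because $G_\alpha\le\tilde N$ fixes every $N$-orbit while acting transitively on $\Gamma_{out}(\alpha)$ — worth stating explicitly). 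The paper's eigenvector computation yields extra structural information about how $G_\alpha$ acts on $N$, which is invisible in your approach, but as a proof of the bound $k\le r$ yours is shorter and more elementary.
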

\begin{proof}
    Consider some vertex $\alpha$ contained in an $N$-orbit $\Delta$. First note that since $N$ is a $p$-group, with $p$ odd, and $G_\alpha$ is a 2-group, it follows that $N_\alpha = 1$ and $N$ is semiregular with $r$ orbits of length $p^k$ on $V\Gamma$. Hence $\tilde{N} = N \rtimes G_\alpha$ by Lemma \ref{OrientedFaithful}, and $G_\alpha \cong \mathbb{Z}_2^s$ with $s \leq r$ by Lemma \ref{Abelianstabiliser}. 

     Since each $N$-orbit contains $p^k$ vertices and since $G_\alpha$ is a 2-group, it follows that $G_\alpha$ fixes at least one vertex in each $N$-orbit. Now we know that the normaliser $N_G(G_\alpha)$ is transitive on the fixed-point set of $G_\alpha$ (see for instance \cite[Corollary 2.24]{praeger2018permutation}) and so  $N_G(G_\alpha)$ acts transitively on the set of $N$-orbits. Hence $N_G(G_\alpha)$ is transitive on the quotient graph $\Gamma_N$.
	
	We may therefore take some $\sigma\in N_G(G_\alpha)$ such that $\sigma$ acts transitively as a rotation of $\Gamma_N$. Now $H = \langle N, G_\alpha, \sigma \rangle \leq G$ is transitive on $V\Gamma$, and since $H$ contains $G_\alpha$, it follows that $H = G$. We thus have $G = \langle N, G_\alpha, \sigma \rangle  = \langle \tilde{N} ,\sigma \rangle$ with $\sigma \in N_G(G_\alpha)$ and $\sigma^r \in \tilde{N}$. 
	
	Let $\{\varphi_1, \dots, \varphi_s\}$ be a set of generators of $G_\alpha$ so that $G_\alpha = \langle \varphi_1, \dots, \varphi_s : \varphi_i^2 = 1, [\varphi_i, \varphi_j] =1, i \neq j \rangle \cong \mathbb{Z}_2^s.$
	We will now view $N$ as a $k$-dimensional vector space $V$ over $\mathbb{Z}_p$. Then the conjugation action of $G$ on $N$ gives a group homomorphism $\rho : G \rightarrow \GL_k(p)$, where $\rho(G)$ is irreducible (since $N$ is a minimal normal subgroup of $G$). Also, $N \leq $ Ker$\rho$, so $\rho(G) = \rho(\langle G_\alpha, \sigma \rangle)$. From now on we will suppress the symbol $\rho$ in the notation.
	
	%Since $G/N$ acts by conjugation on $N \cong \mathbb{Z}_p^k$, we may view $N$ as a $k$-dimensional vector space $V$ over $\mathbb{Z}_p$ and view $G/N \cong  G_\alpha\langle \sigma \rangle =\langle \varphi_1, \dots, \varphi_s, \sigma \rangle$ as a subgroup of $\Aut(N) = GL_k(p)$. Moreover, since $N$ is minimal normal in $G$ it follows that $G/N$ acts irreducibly on $V$.

	Take an arbitrary nonzero vector $v_0 \in V= \mathbb{Z}_p^k$ and consider the vector $v_0 + v_0^{\varphi_1}$. If $v_0 + v_0^{\varphi_1} = 0$ then $v_0^{\varphi_1} = -v_0$ so $v_0$ is an eigenvector of $\varphi_1$ with eigenvalue $-1$. On the other hand, if $v_0 + v_0^{\varphi_1} \neq 0$ then $v_0 + v_0^{\varphi_1}$ is an eigenvector of $\varphi_1$ with eigenvalue 1. Hence we may define a vector $v_1 \in V$ as
	$$ v_1 := \begin{cases}
	&v_0, \hbox{ \hspace{13.3mm}      if  }  v_0 + v_0^{\varphi_1} = 0 \\
	&v_0 + v_0^{\varphi_1}, \hbox{\hspace{5mm}  if  }v_0 + v_0^{\varphi_1}\neq 0,
	\end{cases}$$
	and $v_1$ will be an eigenvector of $\varphi_1$ with eigenvalue $\pm 1$. 
	
	Now for $1 < i \leq s$ we  define a vector $v_i$ as follows 
	$$ v_i := \begin{cases}
	&v_{i-1}, \hbox{  \hspace{14.3mm}    if  }  v_{i-1} + v_{i-1}^{\varphi_i} = 0 \\
	& v_{i-1} + v_{i-1}^{\varphi_i}, \hbox{\hspace{5mm}  if  } v_{i-1} + v_{i-1}^{\varphi_i} \neq 0.
	\end{cases}$$
	We claim that $v_i$ is a common eigenvector for all elements in the set $\{\varphi_1, \dots, \varphi_i\}$ having an eigenvalue of $\pm 1$ for each $\varphi_j$ where $1 \leq j \leq i$. We prove this by induction on $i$.
	
	First, we have already shown that $v_1$ is an eigenvector of $\varphi_1$ with eigenvalue $\pm 1$. Now assume that $i \geq 2$ and that $v_{i-1}$ is an eigenvector for all elements in $\{\varphi_1, \dots, \varphi_{i-1}\}$ with  eigenvalues $\pm 1$ for each $\varphi_j$ where $1 \leq j < i$ . First, if  $v_{i-1} + v_{i-1}^{\varphi_i} = 0$, then $v_{i-1}^{\varphi_i} = -v_{i-1}$, and in this case $v_i = v_{i-1}$ is an eigenvector of $\varphi_i$ with eigenvalue $-1$, and by induction $v_i$ is an eigenvector for each $\varphi_j$ (where $j<i$) with eigenvalue $\pm1$. So the inductive hypothesis is proved in this case. Suppose now that  $v_{i-1} + v_{i-1}^{\varphi_i} \neq 0$. Then $v_i = v_{i-1} + v_{i-1}^{\varphi_i}$ and $v_i$ is clearly an eigenvector of $\varphi_i$ with eigenvalue $1$. By induction, for any positive integer $ j < i$, we know that $v_{i-1}^{\varphi_j} = \varepsilon v_{i-1}$ (for some $\varepsilon = \pm1$)  and so  
	$$
    v_i^{\varphi_j} = (v_{i-1} + v_{i-1}^{\varphi_i})^{\varphi_j} = v_{i-1}^{\varphi_j} + v_{i-1}^{\varphi_i\varphi_j} = v_{i-1}^{\varphi_j} + v_{i-1}^{\varphi_j\varphi_i} = \varepsilon v_{i-1} + \varepsilon v_{i-1}^{\varphi_i} = \varepsilon( v_{i-1} + v_{i-1}^{\varphi_{i}}) = \varepsilon v_i,
    $$
	which implies that $v_i$ is also an eigenvector of each of $\varphi_1, \dots ,\varphi_{i-1}$ having an eigenvalue of $\pm 1$ for each.  This  proves the result. Hence the vector $w:= v_s$ defined as above will be a common eigenvector for  $\{\varphi_1, \dots , \varphi_s\}$ and so $G_\alpha$ preserves the 1-dimensional subspace of $V$ spanned by $w$. Also since $\sigma \in N_G(G_\alpha),$ we know that $\langle w^{\sigma^i}\rangle$ is $G_\alpha$-invariant, and in fact the set $\{ w^{\sigma^i}, -w^{\sigma^i}\}$ is $G_\alpha$-invariant, for each $i$.
	
	Now consider the set $S = \{ w, w^\sigma, \dots,  w^{\sigma^{r-1}}\}$. We claim that the span of this set is preserved under conjugation by  $G$. Since $N$ centralises $S$ and $G_\alpha$ preserves each $\langle w^{\sigma^i} \rangle$, this span is preserved under conjugation by $\tilde{N} = N\rtimes G_\alpha$.
	% To see this, take any $m \in \tilde{N}$ and notice that for any $w^{\sigma^i}\in S$ we have 
	%	\begin{align*}
	%w^{\sigma^{i}m} &= w^{{m^{\sigma^{-i}}}\sigma^{i}} \\&= w^{nh\sigma^{i}} \hspace{20mm}\hbox{ (for some $n\in N$ and $h\in G_\alpha$),}\\ &= w^{h\sigma^i} \hspace{22mm}\hbox{ (since $N$ acts trivially on $V$),}\\ &= \pm {w^{\sigma^i}}.
	%	\end{align*}
	Moreover, it is easy to see that conjugation by $\sigma$ simply maps each $w^{\sigma^i} \in S$ to $w^{\sigma^{i+1}}$ for $i \in [0,r-2]$. Furthermore,  $\sigma^r \in \tilde{N} = N \rtimes G_\alpha$, so we have $\sigma^r = nh$ for some $n\in N$ and $h\in G_\alpha$. It follows that $w^{\sigma^r} = w^{h} = \pm w$. Thus $\sigma$ maps  $w^{\sigma^{r-1}}$ to $w^{\sigma^r} = \pm w$ which again lies in the span of $S$. Hence $\sigma$ also preserves the subspace spanned by $S$.

	%To see this, take an element $g \in G$ and note that $g = n h \sigma^i $ for some $n \in N$, $h \in G_\alpha$ and $i \in [1,r]$. Now taking some $w^{\sigma^j} \in S$ we get 
%	\begin{align*}
%	w^{\sigma^{j}g} &= w^{\sigma^jnh\sigma^i}\\ &= w^{n^{\sigma^{-j}}h^{\sigma^{-j}}\sigma^{i+j}}\\
%	& = w^{n^{\sigma^{-j}}n'h'\sigma^{i+j}} \hspace{10mm} \hbox{(since $h^{\sigma^{-j}} = n'h' \in \tilde{N}= NG_\alpha,$ for some  $n' \in N, h' \in G_\alpha)$}\\
%	& = w^{h'\sigma^{i+j}} \hspace{19.5mm} \hbox{(since $n^{\sigma^{-j}}n'$ acts trivially on $V$)}\\
%	& = \pm w^{\sigma^{i+j}}. \hspace{18.5mm} \hbox{(since $w^{h'} = \pm w$)}
%	\end{align*}
	
	Thus span$\{w, w^\sigma, \dots, w^{\sigma^{r-1}}\}$ is preserved under conjugation by $\langle \tilde{N}, \sigma \rangle = G$. So by the minimality of $N$ we have $V = $ span$\{w, w^\sigma, \dots, w^{\sigma^{r-1}}\}$, and so  dim$(V) = k \leq r$. 
\end{proof}

We have now shown that if $(\Gamma, G) \in \OG(4)$ is basic of oriented-cycle type then one of the subcases of  Case 1 of Theorem \ref{CycleMainTheorem} holds.
\section{Unoriented-Cycle Type}\label{secUnori}
We now turn our attention to the basic pairs of unoriented-cycle type.  Recall that a basic pair $(\Gamma, G) \in \OG(4)$ with cyclic normal quotients is of unoriented-cycle type if it does not have independent cyclic normal quotients and all of its cyclic normal quotients are $G$-unoriented. All such pairs will have a vertex stabiliser of order 2 by Lemma \ref{basics}. We will now show that for such a pair $(\Gamma, G)$, the group $G$ will always have a
unique minimal normal subgroup.

\begin{Proposition}\label{unorientedUnique}
	Suppose $(\Gamma, G)\in \OG(4)$ is basic of unoriented-cycle type. Let $r\geq 3$ be the largest number of orbits of any nontrivial normal subgroup of $G$.
	Then $G$ has a unique minimal normal subgroup $N \cong T^k$ for some simple group $T$ and $k\geq 1$, and  $\Gamma_N \cong \mathbf{C}_r$. 
\end{Proposition}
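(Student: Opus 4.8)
The plan is to first establish the easy part---that $G$ has a unique minimal normal subgroup $N$---and then pin down its isomorphism type and the fact that $\Gamma_N\cong\mathbf C_r$. For the uniqueness, suppose for contradiction that $G$ has two distinct minimal normal subgroups $M_1,M_2$. Since $(\Gamma,G)$ is of unoriented-cycle type, Lemma \ref{basics} gives $|G_\alpha|=2$ and $N=\tilde N$ is semiregular for any normal subgroup $N$ with $\Gamma_N$ a cyclic quotient; more importantly, for \emph{any} cyclic normal quotient the kernel $\tilde N$ equals $N$ itself and is semiregular. By Lemma \ref{3orbs}, every minimal normal subgroup of $G$ has at least three orbits on $V\Gamma$, hence $\Gamma_{M_i}\cong\mathbf C_{r_i}$ for some $r_i\geq 3$, and each $M_i$ is semiregular with $\widetilde{M_i}=M_i$. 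Since $(\Gamma,G)$ has no independent cyclic normal quotients, $K:=M_1\cap M_2 = \widetilde{M_1}\cap\widetilde{M_2}$ must give a cyclic quotient $\Gamma_K$; but $M_1\cap M_2=1$ by minimality, and $\Gamma_1 = \Gamma$ is $4$-valent, not a cycle---contradiction. Hence $G$ has a unique minimal normal subgroup $N$, and since $N$ is characteristically simple it is isomorphic to $T^k$ for some simple group $T$ and $k\geq 1$.

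Next I would show $\Gamma_N\cong\mathbf C_r$ where $r$ is the largest number of orbits of any nontrivial normal subgroup of $G$. By Lemma \ref{3orbs}, $\Gamma_N\cong\mathbf C_s$ for some $s\geq 3$, and $N=\tilde N$ is semiregular with $|G_\alpha|=2$. Let $L$ be any nontrivial normal subgroup of $G$ with the maximal number $r$ of orbits; then $L$ contains a minimal normal subgroup of $G$, which must be $N$ by uniqueness, so $N\leq L$. It follows that the $L$-orbits refine the $N$-orbits, so $L$ has at most $s$ orbits, giving $r\leq s$. Conversely $N$ itself is a nontrivial normal subgroup with $s$ orbits, so $s\leq r$; hence $s=r$ and $\Gamma_N\cong\mathbf C_r$.

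The one point that requires a little care---and which I expect to be the main (though modest) obstacle---is verifying that the hypothesis "no independent cyclic normal quotients" really does force $\Gamma_{M_1\cap M_2}$ to be a cycle when $M_1,M_2$ are distinct minimal normal subgroups. One must confirm that both $\Gamma_{M_1}$ and $\Gamma_{M_2}$ genuinely are cyclic normal quotients (true by Lemma \ref{3orbs}, since a normal subgroup with at least three orbits on a basic pair yields a degenerate quotient that is neither $K_1$ nor $K_2$), and that $\widetilde{M_i}=M_i$ in the unoriented setting (true by Lemma \ref{basics}(a), since all cyclic quotients here are $G$-unoriented). Then the definition of independence applies directly: $K=\widetilde{M_1}\cap\widetilde{M_2}=M_1\cap M_2=1$, so $\Gamma_K=\Gamma_1=\Gamma$ would have to be a cycle, which is absurd as $\Gamma$ has valency $4$. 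Everything else is routine orbit-counting and the standard fact that a minimal normal subgroup of a finite group is characteristically simple, hence a power of a simple group.
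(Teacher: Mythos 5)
Your proof is correct and follows essentially the same route as the paper: the crucial step in both is that for $G$-unoriented cyclic quotients one has $\tilde{M} = M$ (Lemma \ref{basics}(a)), so two distinct minimal normal subgroups $M_1, M_2$ would give $\tilde{M}_1\cap\tilde{M}_2 = M_1\cap M_2 = 1$ and hence independent cyclic normal quotients, contradicting the unoriented-cycle-type hypothesis. The only differences are organisational: the paper first shows that the maximal-orbit normal subgroup $N$ is itself minimal normal via semiregularity and then identifies every minimal normal subgroup with it, whereas you prove uniqueness abstractly and recover $\Gamma_N\cong\mathbf{C}_r$ by an orbit count at the end (a small slip there: with $N\leq L$ it is the $N$-orbits that refine the $L$-orbits, not the reverse, though your conclusion that $L$ has at most as many orbits as $N$ is the correct one).
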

\begin{proof}
	Let $N$ be a normal subgroup of $G$ with $r$ orbits. Then $\Gamma_N \cong \mathbf{C}_r$ is a $G$-unoriented cycle by assumption and $N= \tilde{N}$ is semiregular on $V\Gamma$ by Lemma \ref{basics}. Now any nontrivial normal subgroup $N_0$ of $G$ contained in $N$ must have $r$ orbits by the  maximality of $r$. Hence $N_0$ is transitive on each $N$-orbit, and $N = N_0$ since $N$ is semiregular. Thus $N$ is a minimal normal subgroup of $G$.
	
	Now suppose that $M$ is an arbitrary minimal normal subgroup of $G$. Then by Lemma \ref{3orbs}, $\Gamma_M \cong \mathbf{C}_s$ with $s \geq 3$, and so by the maximality of $r$, we have $3 \leq s \leq r$. Moreover, $\Gamma_M$ is $G$-unoriented and $\tilde{M} = M$ by Lemma \ref{basics}. Since $(\Gamma, G)$ is of unoriented-cycle type, it does not have independent cyclic quotients. Thus $\Gamma_{\tilde{N}\cap \tilde{M}}$ is a cyclic normal quotient, and in particular $\tilde{N}\cap \tilde{M} = N\cap M \neq 1$. Hence $N = M$, and $N$ is the unique minimal normal subgroup of $G$.
\end{proof}

We will now focus on the parameters $k$ and $r$ of Proposition \ref{unorientedUnique}. As we did in the oriented case, we will bound $k$ by a function of $r$. Once again we obtain two different bounds depending on whether $N$ is abelian or nonabelian. We will deal with the nonabelian case (the simpler of the two) first. 
\subsection{Unoriented-Cycle Type: nonabelian socle}
The following result gives a bound on the number of simple direct factors of a nonabelian minimal normal subgroup $N$ of a group $G$ where $(\Gamma, G)\in\OG(4)$ is basic of unoriented-cycle type. 

\begin{Theorem}\label{UnorientedNonabelianBound}
	Let $(\Gamma, G), N, k$, and $r$ be as in Proposition $\ref{unorientedUnique}$ and suppose that $N$ is nonabelian. Then  $k$ divides $2r$, and in particular, $k\leq 2r$. 
\end{Theorem}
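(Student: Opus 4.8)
The plan is to mimic the structure of the proof of Theorem~\ref{OrientedNonabelianBound}, but in the easier setting where $|G_\alpha|=2$ (by Lemma~\ref{basics}, since $(\Gamma,G)$ is of unoriented-cycle type) and $\tilde N = N$ is semiregular. Since $N$ is the unique minimal normal subgroup of $G$ and is nonabelian, we have $C_G(N)=1$, so we may embed $G \leq \Aut(N) \cong \Aut(T)\wr \Sym(k)$, identifying $N\cong T^k$ with $\operatorname{Inn}(N)$. Let $\pi:\Aut(N)\to\Sym(k)$ be the projection onto the top group; then $N$ lies in $\ker\pi$, so $(G)\pi = (G/N)\pi'$ in the obvious sense, and $(G)\pi$ is a transitive subgroup of $\Sym(k)$ because $N$ is a minimal normal subgroup (the $G$-conjugation action permutes the $k$ simple direct factors transitively).

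First I would analyse $(G)\pi$. Since $\Gamma_N\cong\mathbf{C}_r$ is $G$-unoriented, $G/N = G/\tilde N \cong D_r$, the dihedral group of order $2r$. Hence $(G)\pi$ is a transitive permutation group of degree $k$ which is a quotient of $D_r$; in particular $(G)\pi$ is either cyclic of order dividing $r$ or dihedral of order dividing $2r$. In either case a transitive action forces $k$ to divide $|(G)\pi|$, which divides $2r$. That already gives $k\mid 2r$ — but I should check whether the dihedral quotient can actually act transitively of degree equal to its full order $2r$, or whether (as in the oriented case) one must be more careful. The point is that $G/N\cong D_r$ is generated by the image of $\tilde N$-coset representatives: concretely $G = N\langle a, b\rangle$ where $a$ maps to a rotation of order $r$ in $D_r$ and $b$ to a reflection (an involution); since $G_\alpha$ has order $2$, $b$ can be taken to be a generator of some vertex stabiliser, i.e.\ an involution. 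Then $(G)\pi = \langle \bar a, \bar b\rangle$ with $\bar a^r = 1$ and $\bar b$ an involution normalising $\langle\bar a\rangle$, so $(G)\pi$ is a homomorphic image of $D_r$, of order dividing $2r$, and transitivity of degree $k$ gives $k \mid |(G)\pi| \mid 2r$, hence $k\le 2r$.

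The main subtlety — and the step I expect to be the real obstacle — is justifying that $(G)\pi$ really is transitive of the asserted type, i.e.\ controlling how $N$ sits inside $\tilde N=N$ versus the whole of $G$. Here it is cleaner than in Theorem~\ref{OrientedNonabelianBound} because $\tilde N = N$ exactly (no extra vertex-stabiliser contribution, as $G_\alpha\cap\tilde N = 1$ by Lemma~\ref{basics}(a)), so there is no case split analogous to cases (a)/(b) of Lemma~\ref{OrientedNonabelianSoc1}: $N$ is automatically a minimal normal subgroup of $\tilde N = N$. Thus the subtlety is only in checking that the generator $b$ of $G_\alpha$ together with a rotation-lift $a$ genuinely generate $G$ modulo $N$ and that their images generate a transitive subgroup of $\Sym(k)$ — which follows from minimality of $N$ as the unique minimal normal subgroup, since the $G$-orbit on the set of $k$ simple factors must be a block system for $G$ and cannot be refined without contradicting minimality, forcing a single orbit of length $k$. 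Assembling these observations: $k = |\{1,\dots,k\}|$ divides $|(G)\pi|$, and $|(G)\pi|$ divides $|G/N| = |D_r| = 2r$, so $k\mid 2r$ and in particular $k\le 2r$, as claimed.
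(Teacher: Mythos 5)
Your proposal is correct and follows essentially the same route as the paper: embed $G$ in $\Aut(N)\cong\Aut(T)\wr\Sym(k)$ using $C_G(N)=1$, observe that $G/N\cong D_r$ acts transitively by conjugation on the $k$ simple direct factors because $N$ is a minimal normal subgroup, and conclude $k\mid 2r$. The extra caution about generators of $G$ modulo $N$ and the cyclic-versus-dihedral case distinction for $(G)\pi$ is unnecessary, since transitivity of the $G/N$-action on the $k$ factors is the standard consequence of minimality that the paper invokes directly.
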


\begin{proof}
		Let $(\Gamma, G)$ be as in the statement of the theorem, and suppose that the unique minimal normal subgroup $N$ of $G$ is nonabelian. So $N=T^k$ with $T$ a nonabelian simple group and $k\geq1$. Since $N = \soc(G)$ and $C_G(N) = 1$, we can  view $G$ as a subgroup of $\Aut(N) = \Aut(T) \wr \Sym(k)$, identifying $N$ with the subgroup Inn$(N) \leq \Aut(N)$ of inner automorphisms. Now $D_r \cong G/N \lesssim $ Out$(N)$, and $G/N$ acts transitively by conjugation on the $k$ simple direct factors of $N$. Hence $k$ divides $|D_r|=2r$ and $k \leq  2r$. 
\end{proof}

\subsection{Unoriented-Cycle Type: abelian socle}

Suppose now that $(\Gamma, G)\in \OG(4)$ is basic of unoriented-cycle type, with a unique minimal normal subgroup $N = T^k$ where $T= \mathbb{Z}_p$ of some prime $p$, and $k \geq 1$. Suppose that $\Gamma_N \cong \mathbf{C}_r$ for some $r\geq 3$. In this case we have $G = N \langle \sigma, \varphi\rangle$ where $\sigma ^r  \in N$ and $\langle \varphi \rangle = G_\alpha \cong \mathbb{Z}_2$. Moreover, $G/N \cong D_r$ and hence $\sigma^\varphi = \sigma^{-1}$ modulo $N$, so 
$\sigma^\varphi \in N\sigma^{-1}$. Since $N$ is minimal normal in $G$, we know that $G$, and in particular, $G/N\cong D_r$ acts irreducibly on $N= \mathbb{Z}_p^k$. Our goal is to determine the degrees of the (possibly unfaithful) irreducible representations of $D_r$ on the vector space $\mathbb{F}_p^k$ and hence determine the possible values of integers $r$, $p$, and $k$ for which such pairs $(\Gamma, G)$ exist. 

To obtain our bound we will first need to develop some theory regarding group representations. We will provide our bound on $k$ in Lemma \ref{UnorAffineBound}. 

We begin with a standard result relating irreducible elements of $\GL(\mathbb{F}_p^k)$ to irreducible polynomials in $\mathbb{F}_p[X]$ of degree $k$. For details on the companion matrix construction and the rational canonical form used in the following lemma we refer the reader to \cite[Chapter 11]{hartley1970rings}. 

\begin{Lemma}\label{irredcorr} Let $V = \mathbb{F}_p^k$ and let $\sigma\in \GL(V)$. If $\langle \sigma \rangle$ acts irreducibly on $V$ then $\sigma$ is similar to the companion matrix of some irreducible non-constant monic polynomial $f \in \mathbb{F}_p[X]$ of degree $k$. 

	Conversely, the companion matrix of any irreducible non-constant monic polynomial $f \in \mathbb{F}_p[X]$ of degree $k$ will be an element $\sigma \in \GL(V)$ such that $\langle \sigma \rangle$ acts irreducibly on $V$.
\end{Lemma}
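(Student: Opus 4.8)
The plan is to reduce both directions to the standard theory of the rational canonical form over the field $\mathbb{F}_p$, exploiting the correspondence between $\langle\sigma\rangle$-submodules of $V$ and factors of the minimal/characteristic polynomial of $\sigma$. For the first direction, I would regard $V = \mathbb{F}_p^k$ as an $\mathbb{F}_p[X]$-module via $X\cdot v := v^\sigma$ (equivalently, $v\sigma$). The $\langle\sigma\rangle$-invariant subspaces of $V$ are then precisely the $\mathbb{F}_p[X]$-submodules, so irreducibility of $\langle\sigma\rangle$ on $V$ says $V$ is a simple $\mathbb{F}_p[X]$-module. Since $\mathbb{F}_p[X]$ is a principal ideal domain, a simple cyclic module has the form $\mathbb{F}_p[X]/(f)$ with $f$ irreducible, and this module is nonzero and finite-dimensional of dimension $\deg f$; matching dimensions gives $\deg f = k$. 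Picking the basis $1, X, X^2, \dots, X^{k-1}$ (mod $f$) exhibits the action of $X$, i.e.\ of $\sigma$, as the companion matrix $C(f)$, so $\sigma$ is similar to $C(f)$. Here $f$ is the minimal polynomial of $\sigma$ on $V$, and irreducibility forces it to equal the characteristic polynomial; $f$ is non-constant since $k\ge 1$, and monic by the usual normalisation of the companion matrix / minimal polynomial.

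For the converse, I would take $f\in\mathbb{F}_p[X]$ irreducible, non-constant, monic of degree $k$, and let $\sigma = C(f)\in \GL_k(p)$ be its companion matrix (it lies in $\GL$ because $\det C(f) = \pm f(0) \neq 0$, as $f$ irreducible and non-constant has nonzero constant term). The module structure on $V = \mathbb{F}_p^k$ induced by $\sigma$ is then $\mathbb{F}_p[X]/(f)$, which is a field (hence a simple module over itself and over $\mathbb{F}_p[X]$) precisely because $f$ is irreducible. Its only $\mathbb{F}_p[X]$-submodules are $0$ and the whole module, which translates back to: the only $\langle\sigma\rangle$-invariant subspaces of $V$ are $0$ and $V$. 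Thus $\langle\sigma\rangle$ acts irreducibly on $V$.

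I do not expect a serious obstacle here — this is a packaging of classical module theory, and the reference to \cite[Chapter 11]{hartley1970rings} supplies the companion-matrix and rational-canonical-form machinery. The only point requiring a little care is the identification of $\langle\sigma\rangle$-invariant subspaces with $\mathbb{F}_p[X]$-submodules (so that group-theoretic irreducibility matches ring-theoretic simplicity), and the dimension count $\dim_{\mathbb{F}_p}\mathbb{F}_p[X]/(f) = \deg f$ to pin down $\deg f = k$; both are routine. A secondary bookkeeping point is to note explicitly that $f$ being non-constant is equivalent to $k\ge 1$, so the hypothesis $\sigma\in\GL(V)$ with $V\neq 0$ is exactly what rules out the degenerate case.
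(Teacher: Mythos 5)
Your proposal is correct and follows essentially the same route as the paper: both directions reduce to the rational canonical form and the identification of $\langle\sigma\rangle$-invariant subspaces with $\mathbb{F}_p[X]$-submodules, with the forward direction forcing a single companion-matrix block and the converse amounting to the simplicity of $\mathbb{F}_p[X]/(f)$ (which the paper verifies by an explicit cyclic-vector/annihilator computation rather than by quoting that it is a field). The only quibble, shared with the paper's own statement, is the degenerate case $f=X$, $k=1$, where the companion matrix is not invertible; this never arises in the applications, where $f$ divides $X^r-1$.
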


\begin{proof}
	Suppose first that $\langle \sigma \rangle$ acts irreducibly on $V$. In this case, the minimal polynomial $m_\sigma(X)$ of $\sigma$ is irreducible over $\mathbb{F}_p$ since the existence of a proper divisor $f \in \mathbb{F}_p[X]$ of $m_\sigma(X)$ would imply the existence of a proper nontrivial $\sigma$-invariant subspace of $V$, namely $\ker(f(\sigma))$, which would contradict the irreducibility of $\sigma$. 
	By \cite[Theorem 11.17]{hartley1970rings}, $\sigma$ is similar to some matrix $Q_\sigma = C(f_1) \oplus \cdots \oplus C(f_d)$ where each $C(f_i)$ is the companion matrix of some non-constant monic polynomial $f_i \in \mathbb{F}_p[X]$ (this is the rational canonical form). Since $\sigma$ acts irreducibly on $V$, it follows that $d = 1$, and so $\sigma$ is similar to the companion matrix $C(f_1)$ of some polynomial $f_1$. In particular the characteristic and minimal polynomials of $C(f_1)$ (and hence also of $\sigma$), are both equal to $f_1$. Thus $m_\sigma(X)= f_1$ is irreducible of degree $k$, and $\sigma$ is similar to the companion matrix of $f_1$. 
	
	%Now let $d$ be the degree of $m_\sigma(X)$, take a nonzero $v\in V$, and notice that $W = $ span$\{v, v^\sigma, \dots, v^{\sigma^{d-1}}\}$ is a $\sigma$-invariant subspace of $V$ and moreover, $1 \leq $ dim $W \leq d$. If $d<k$ then $W$ is a proper nontrivial subspace of $V$, contradicting irreducibility of $\sigma$. On the other hand if $d>k$ then $\{v, v^\sigma, \dots, v^{\sigma^{d-1}}\}$ must be linearly dependent so there is a non-zero polynomial $\rho(X) \in \mathbb{F}_p[X]$ of degree less than $d$  such that $\rho(\sigma)(v) = 0$. Suppose that $p(X)$ is such a polynomial of minimal degree, and write $m_\sigma(X) = p(X)q(X) + r(X)$ where $q(X), r(X) \in \mathbb{F}_p[X]$ and deg $r(X) <$ deg $p(X)$. Then $0(v) = m_\sigma(\sigma)(v) = \rho(\sigma)q(\sigma)(v) + r(\sigma)(v)$. Which implies that $r = 0$ and hence that $p(X)$ divides $m_\sigma(X)$ contradicting irreducibility of $m_\sigma(X)$. Hence $d=k$ as claimed.
	
	Conversely, consider an irreducible non-constant monic polynomial $f \in \mathbb{F}_p[X]$ of degree $k$ and let $\sigma := C(f)$ denote the companion matrix of $f$. Then the minimal polynomial $m_\sigma(X)$ of $\sigma$ is equal to $f$. To see that $\langle \sigma \rangle $ acts irreducibly on $V$ we simply show that for any non-zero $v\in V$, the subspace $W:= \mathbb{F}_p[\sigma](v) = \{z(\sigma)(v) : z(X) \in \mathbb{F}_p[X]\}$ of $V$ is equal to $V$.
	
	%Then the characteristic polynomial of $\sigma$ is equal to $\rho(X)$ and hence by irreducibility, the minimal polynomial $m_\sigma(X)$ is also equal to $\rho(X)$. To see that $\langle \sigma \rangle $ acts irreducibly on $V$ we simply show that for any non-zero $v\in V$, the subspace $W:= \mathbb{F}_p[\sigma](v) = \{p(\sigma)(v) : p(X) \in \mathbb{F}_p[X]\}$ of $V$ is equal to $V$.
	%
	To this end, take an arbitrary non-zero $v\in V$, and let $m(X) \in \mathbb{F}_p[X]$  be a polynomial of minimal degree such that $m(\sigma)(v) = 0$. Then writing $m_\sigma(X) = m(X)q(X) + r(X)$ where $q(X), r(X) \in \mathbb{F}_p[X]$ and deg $r(X) <$ deg $m(X)$, we see that $0 = m_\sigma(\sigma)(v) = m(\sigma)q(\sigma)(v) +r(\sigma)(v)$, and so $r(X) = 0$ by minimality of the degree of $m(X)$. Hence $m(X) | m_\sigma(X)$, which implies that $m(X) = m_\sigma(X)$ since $m_\sigma(X)$ is irreducible.
	Therefore if we define a linear map $\psi: \mathbb{F}_p[X] \rightarrow W$ by $z(X) \mapsto z(\sigma)(v)$, we see that $\ker \psi = \{z(X) \in \mathbb{F}_p[X] : z(\sigma)(v) = 0\} = \{z(X) \in \mathbb{F}_p[X] : m_\sigma(X)$ $|$  $z(X)\} = \langle m_\sigma(X) \rangle$, where $\langle m_\sigma(X) \rangle$ is the ideal generated by $m_\sigma(X)$. Hence, as vector spaces,  $\mathbb{F}_p[X] /\langle m_\sigma(X)\rangle \cong W$ and since the dimension of $\mathbb{F}_p[X] /\langle m_\sigma(X)\rangle$ is $k$, it follows that $W$ has dimension $k$ and hence must be equal to $V$. Hence $\langle \sigma \rangle$ acts irreducibly on $V$.
\end{proof}

Our aim is to determine the possible degrees of the irreducible representations of the dihedral group $D_r$. As an intermediate step, we will first determine the irreducible representations of the cyclic group $C_r$. In particular we will show that $C_r$ has a unique irreducible representation of degree $\phi(r)$ where $\phi(r)$ is the Euler totient function. This will be a crucial result towards  bounding  $k$.  

The field $F := \mathbb{F}_{p^k}$ can be thought of as a vector space of dimension $k$ over $\mathbb{F}_p$. Multiplication by a primitive root $\alpha \in \mathbb{F}_{p^k}^*$ then gives an element $\sigma \in \GL_k(p)$ of order $p^k -1$. This element $\sigma\in \GL_k(p)$, and sometimes also the subgroup it generates, is known as a \textit{Singer cycle}. The next result can be found in Huppert's book \cite[Satz II.7.3 p.187]{huppert1967I}.

\begin{Lemma}\label{Huppert}
	Let $G = \GL_k(p)$ and let $\sigma \in G$ act irreducibly on the vector space $\mathbb{F}_p^k$. Then the centraliser $C_G(\sigma)$ is a Singer cycle. 
\end{Lemma}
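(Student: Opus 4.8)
The plan is to exploit the standard correspondence between an irreducible action of $\langle\sigma\rangle$ on $\mathbb{F}_p^k$ and the field structure it induces, already recorded in Lemma \ref{irredcorr}. First I would observe that, since $\sigma\in\GL_k(p)$ acts irreducibly, by Lemma \ref{irredcorr} the minimal polynomial $f$ of $\sigma$ is an irreducible monic polynomial of degree $k$ over $\mathbb{F}_p$. Hence the subalgebra $E:=\mathbb{F}_p[\sigma]\subseteq \mathrm{Mat}_k(\mathbb{F}_p)$ generated by $\sigma$ is isomorphic to $\mathbb{F}_p[X]/(f)\cong\mathbb{F}_{p^k}$, a field of order $p^k$. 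Moreover, the irreducibility of the action means that $V=\mathbb{F}_p^k$, regarded as an $E$-module, is a faithful module over this field $E$, hence a (necessarily one-dimensional) $E$-vector space; so $V\cong E$ as $E$-modules.

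Next I would identify $C_G(\sigma)$ with $\mathrm{End}_E(V)^\times$. The key point is that an $\mathbb{F}_p$-linear map $g\colon V\to V$ commutes with $\sigma$ if and only if it commutes with every element of $E=\mathbb{F}_p[\sigma]$, i.e.\ if and only if $g$ is $E$-linear. Thus $C_{\mathrm{Mat}_k(\mathbb{F}_p)}(\sigma)=\mathrm{End}_E(V)$, and since $V$ is one-dimensional over $E\cong\mathbb{F}_{p^k}$ we get $\mathrm{End}_E(V)\cong \mathbb{F}_{p^k}$, so that $C_G(\sigma)=\mathrm{End}_E(V)^\times\cong \mathbb{F}_{p^k}^\times$, a cyclic group of order $p^k-1$. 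Finally I would note that $E^\times$ itself acts on $V$ by scalar multiplication (after the identification $V\cong E$), that this action is exactly multiplication by elements of $\mathbb{F}_{p^k}^\times$ including a generator (a primitive root $\alpha$), and that $\sigma\in E^\times\subseteq C_G(\sigma)$; hence $C_G(\sigma)$ is precisely the cyclic group generated by multiplication by a primitive root of $\mathbb{F}_{p^k}$, which is by definition a Singer cycle. This matches the definition of Singer cycle given just before the statement.

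The main obstacle, such as it is, is purely bookkeeping: one must be careful to distinguish the ambient matrix ring $\mathrm{Mat}_k(\mathbb{F}_p)$, the subfield $E=\mathbb{F}_p[\sigma]$ it contains, and the identification $V\cong E$ as $E$-modules, and to check that "commutes with $\sigma$" upgrades to "is $E$-linear" (this is immediate since $E$ is generated by $\sigma$ as an $\mathbb{F}_p$-algebra). There is no hard analytic or combinatorial content; the result is essentially a restatement of Schur's lemma together with the fact that a finite division ring is a field (Wedderburn), although here one can avoid Wedderburn entirely because $E$ is visibly commutative. I would present this as a short self-contained argument rather than merely citing \cite[Satz II.7.3]{huppert1967I}, since all the ingredients are already in place in the excerpt.
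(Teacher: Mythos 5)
Your argument is correct. Note, however, that the paper does not prove this lemma at all: it simply cites \cite[Satz II.7.3]{huppert1967I}, so there is no in-paper proof to compare against. What you supply is the standard self-contained argument: irreducibility plus Lemma \ref{irredcorr} forces the minimal polynomial of $\sigma$ to be irreducible of degree $k$, so $E=\mathbb{F}_p[\sigma]\cong\mathbb{F}_{p^k}$, the module $V$ is one-dimensional over $E$, and the centraliser of $\sigma$ in $\mathrm{Mat}_k(\mathbb{F}_p)$ is $\mathrm{End}_E(V)\cong E$, whose unit group is the full multiplicative group $\mathbb{F}_{p^k}^\times$ acting by multiplication, i.e.\ a Singer cycle. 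All the steps check out, including the upgrade from ``commutes with $\sigma$'' to ``$E$-linear'' (since $E$ is generated by $\sigma$ as an $\mathbb{F}_p$-algebra) and the observation that Wedderburn is not needed because $\mathrm{End}_E(V)$ is visibly the commutative field $E$. The only benefit of the citation route is brevity; your version makes the paper self-contained at essentially no cost, and it correctly yields the slightly stronger fact (used implicitly right after the lemma in the paper) that $|\sigma|$ divides $p^k-1$.
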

Note that Lemma~\ref{Huppert} immediately implies that the order of an irreducible element $\sigma\in \GL_k(p)$ divides $p^k-1$. In what follows we will also make use of the following elementary result. A proof can be found in \cite[p.36, Example 95]{santos2007number}.

\begin{Lemma}\label{gcdlemma}
    Let $a, m$ and $n$ be positive integers with $a\neq 1$. Then $$\gcd(a^{m}-1, a^n-1) = a^{\gcd(m,n)}-1.$$
\end{Lemma}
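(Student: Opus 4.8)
Lemma \ref{gcdlemma} states: for positive integers $a, m, n$ with $a \neq 1$, $\gcd(a^m - 1, a^n - 1) = a^{\gcd(m,n)} - 1$.

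The plan is to prove the identity by establishing divisibility in both directions. Write $d := \gcd(m,n)$ and $g := \gcd(a^m-1, a^n-1)$; since $a \geq 2$, both $a^d - 1$ and $g$ are positive integers, so it suffices to show that $a^d - 1 \mid g$ and $g \mid a^d - 1$.

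For the first divisibility I would use the elementary factorisation: whenever $d \mid N$ one has $X^N - 1 = (X^d - 1)(X^{N-d} + X^{N-2d} + \cdots + X^d + 1)$ in $\mathbb{Z}[X]$, so substituting $X = a$ gives $a^d - 1 \mid a^N - 1$. Applying this with $N = m$ and with $N = n$ shows that $a^d - 1$ divides both $a^m - 1$ and $a^n - 1$, hence divides their greatest common divisor $g$.

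For the reverse divisibility I would pass to arithmetic modulo $g$. First note that $\gcd(a, g) = 1$, since a common prime divisor of $a$ and $g$ would divide $a^m - 1$ and also $a^m$, hence would divide $1$. Thus $a$ is invertible modulo $g$, and we may let $e$ be the multiplicative order of $a$ modulo $g$. Since $g \mid a^m - 1$ and $g \mid a^n - 1$, we have $a^m \equiv a^n \equiv 1 \pmod{g}$, so $e$ divides both $m$ and $n$, and therefore $e \mid \gcd(m,n) = d$. Hence $a^d \equiv 1 \pmod{g}$, i.e. $g \mid a^d - 1$. Combining the two divisibilities with positivity yields $g = a^d - 1$, as claimed.

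There is essentially no obstacle here; the only point needing a word of care is the well-definedness of the order $e$, which is why the coprimality $\gcd(a, g) = 1$ is verified first. If one preferred to avoid multiplicative orders entirely, an equally short alternative is a Euclidean-style reduction: for $m > n$ the identity $a^m - 1 = a^{m-n}(a^n - 1) + (a^{m-n} - 1)$ gives $\gcd(a^m - 1, a^n - 1) = \gcd(a^{m-n} - 1, a^n - 1)$, and iterating this simply runs the Euclidean algorithm on the exponent pair $(m,n)$, terminating with $\gcd(a^{\gcd(m,n)} - 1,\, a^0 - 1) = a^{\gcd(m,n)} - 1$. I would present the order-theoretic argument as the main proof, as it is the more conceptual of the two.
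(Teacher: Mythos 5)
Your proof is correct and complete: the factorisation argument gives $a^{\gcd(m,n)}-1 \mid \gcd(a^m-1,a^n-1)$, the coprimality of $a$ with $g$ is properly justified before invoking the multiplicative order, and the order-divides-exponent step cleanly yields the reverse divisibility; the Euclidean alternative you sketch is also valid. Note that the paper itself does not prove this lemma -- it simply cites \cite[p.36, Example 95]{santos2007number} -- so there is no in-paper argument to compare against; your write-up supplies a self-contained proof where the paper offers only a reference.
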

We will now use these facts to prove the following.

\begin{Lemma}\label{cyclicrep}
	Let $V = \mathbb{F}_p^k$ and let $(\rho, V)$ be a faithful irreducible representation of the cyclic group $C_{r}$ for $r\geq 2$. Let $\phi$ denote the Euler totient function. Then the following hold: 
	\begin{enumerate}[(i)]
		\item $k$ is the smallest integer such that $r$ $ |$ $ p^k -1$, in particular $\gcd(p,r) = 1$,
		\item $k $ divides $\phi(r)$,
		\item there is a unique irreducible $C_{r}$-representation of degree $\phi(r)$. 
	\end{enumerate}
\end{Lemma}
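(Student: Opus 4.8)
The plan is to work over the algebraic closure and use the structure of a Singer cycle. First, since $(\rho, V)$ is faithful, $\rho(C_r) = \langle \sigma \rangle$ is a cyclic subgroup of $\GL_k(p)$ of order $r$ acting irreducibly on $V = \mathbb{F}_p^k$. By Lemma \ref{irredcorr}, $\sigma$ is similar to the companion matrix of an irreducible monic $f \in \mathbb{F}_p[X]$ of degree $k$; equivalently $V$ carries the structure of the field $\mathbb{F}_{p^k}$ with $\sigma$ acting by multiplication by a root $\lambda$ of $f$, and $\lambda$ is a primitive $r$-th root of unity in $\mathbb{F}_{p^k}$. For part (i), the existence of an element of multiplicative order $r$ in $\mathbb{F}_{p^k}^*$ forces $r \mid p^k - 1$ (hence $\gcd(p,r)=1$); and $k$ is minimal with this property, because if $r \mid p^m - 1$ for some $m < k$ then $\lambda \in \mathbb{F}_{p^m}$, so the $\mathbb{F}_p$-subspace $\mathbb{F}_{p^m} \subseteq \mathbb{F}_{p^k}$ would be a proper nonzero $\sigma$-invariant subspace, contradicting irreducibility. (Alternatively one reads this minimality directly off Lemma \ref{Huppert} and Lemma \ref{gcdlemma}: if $r$ divided both $p^k-1$ and $p^m-1$ it would divide $p^{\gcd(m,k)}-1$.)

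For part (ii), $k$ is by part (i) the multiplicative order of $p$ modulo $r$, i.e. the order of the image of $p$ in the group $(\mathbb{Z}/r\mathbb{Z})^*$; since that group has order $\phi(r)$, Lagrange's theorem gives $k \mid \phi(r)$. For part (iii), I would argue as follows. Any irreducible $C_r$-representation of degree $\phi(r)$ over $\mathbb{F}_p$ is, by the above, realized by multiplication by some primitive $r$-th root of unity $\mu$ in $\mathbb{F}_{p^{\phi(r)}}$; by part (i) applied with $k = \phi(r)$ this also forces $\phi(r)$ to be the order of $p$ mod $r$. Two such representations, given by primitive roots $\mu_1, \mu_2$, are equivalent iff $\mu_2$ is a Galois conjugate of $\mu_1$ over $\mathbb{F}_p$, i.e. $\mu_2 = \mu_1^{p^j}$ for some $j$. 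The primitive $r$-th roots of unity in $\overline{\mathbb{F}_p}$ form a single orbit of size $\phi(r)$ under $\mu \mapsto \mu^a$, $a \in (\mathbb{Z}/r\mathbb{Z})^*$, and the Frobenius acts on this set as multiplication by $p$, an element of order $\phi(r)$; hence the Frobenius orbit on the primitive roots is all of them. Therefore all choices of $\mu$ give equivalent representations, so there is a unique irreducible $C_r$-representation of degree $\phi(r)$ up to equivalence.

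The one point requiring care — and the main obstacle — is part (iii): making precise that "degree $\phi(r)$ irreducible $\Rightarrow$ the cyclotomic polynomial $\Phi_r$ is irreducible over $\mathbb{F}_p$" and that this irreducibility is exactly what collapses the Galois orbits into one. Concretely: an irreducible representation of degree $k$ corresponds (by Lemma \ref{irredcorr} and the identification of $\sigma$ with multiplication by a primitive $r$-th root of unity $\lambda$) to the minimal polynomial of $\lambda$ over $\mathbb{F}_p$, which is an irreducible factor of $\Phi_r(X)$ of degree $k$ = order of $p$ mod $r$; all irreducible factors of $\Phi_r$ have this same degree, and there are $\phi(r)/k$ of them, permuted transitively by $\mu\mapsto\mu^a$. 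So a degree-$\phi(r)$ irreducible exists iff $\Phi_r$ is itself irreducible over $\mathbb{F}_p$ (equivalently $k=\phi(r)$), and in that case it is the unique one. I would phrase the final step either via this factorization-of-$\Phi_r$ count or, more slickly, via the Frobenius-orbit argument sketched above; both reduce to the elementary fact that the order of $p$ in $(\mathbb{Z}/r\mathbb{Z})^*$ equals the common degree of the irreducible factors of $\Phi_r$ over $\mathbb{F}_p$. Lemma \ref{gcdlemma} is used only to streamline the minimality argument in part (i).
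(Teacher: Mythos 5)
Your proposal is correct and follows essentially the same route as the paper: identify $V$ with $\mathbb{F}_{p^k}$ so that $\sigma$ acts by multiplication by a primitive $r$-th root of unity, read (i) and (ii) off the multiplicative order of $p$ modulo $r$, and obtain (iii) by showing there is exactly one degree-$\phi(r)$ irreducible factor of $X^r-1$ (the paper cites Lidl--Niederreiter, Theorem 3.5, where you run the Frobenius-orbit argument inline, but these are the same computation). The only point to tidy is in your primary minimality argument for (i): if $r \mid p^m-1$ with $m<k$, the field $\mathbb{F}_{p^m}$ need not be contained in $\mathbb{F}_{p^k}$ when $m \nmid k$, so one should place $\lambda$ in $\mathbb{F}_{p^{\gcd(m,k)}} \subseteq \mathbb{F}_{p^k}$ instead --- which is exactly what your parenthetical appeal to Lemma \ref{gcdlemma} (and the paper's own proof) accomplishes.
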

\begin{proof}
	Suppose that $\rho(C_r) = \langle \sigma \rangle \leq \GL(V)$, where $\sigma$ is an irreducible element of order $r$ and degree $k$.
     We may identify $V$ with the field $\mathbb{F}_{p^k}$, and apply Lemma \ref{Huppert} so that $\langle \sigma \rangle$ acts as multiplication by some element $\alpha \in \mathbb{F}_{p^k}^*$, and  $r$ divides $p^k-1$. In particular,  gcd$(p,r)=1$. 
	
	By Lemma \ref{gcdlemma}, for integers $m$ and $ n$ we have $\gcd(p^m-1, p^n-1) = p^u -1 $, where $u = \gcd(m,n)$. Thus if  $i$ is the smallest integer such that $r$ divides $p^i-1$, then since $r$ divides $p^k-1$, it follows that $r$ divides $p^u-1$ where $u = \gcd(i,k)$. Minimality of $i$ then implies that $i = u$ and hence that $i$ divides $k$. Thus $p^i-1$ divides $p^k-1$. 
	But if $r$ divides $p^i-1$ with $i <k$, then $\alpha$ lies in the proper subfield $\mathbb{F}_{p^i}$ of $\mathbb{F}_{p^k}$ contradicting the fact that $C_r$ is irreducible. Hence  $r$ does not divide $p^i-1$ for any $i<k$ and (i) holds.

%	If $r$ divides $p^i-1$ for any $i <k$, then since $r$ divides $p^k-1$, it follows that $p^i-1$ divides $p^k-1$ and hence that $i$ divides $k$. 
%	Now since $C_r$ is irreducible, $\alpha$ is not contained in any proper subfield  of $\mathbb{F}_{p^k}$. So $r$ does not divide $p^i-1$ for any $i$ dividing $k$. Of course, if $p^i-1$ divides $p^k-1$, then $i$ divides $k$, so this implies that $r$ does not divide $p^i-1$ for any $i <k$ and (i) holds. 		
%	Let $\rho(C_{r}) = \langle \sigma \rangle $ for some $\sigma \in $ GL($V$), and let $m_\sigma(X)$ be the minimal polynomial of $\sigma$. By Lemma \ref{irredcorr},  $m_\sigma(X)$ has degree $k$. Since the order of $\sigma$ is $r$ we know that $m_\sigma(X) $ $|$ $X^{r} - 1 $ and hence that $ k \leq r$. Furthermore since $m_\sigma(X)$ is an irreducible polynomial of degree $k$ over $\mathbb{F}_p$ it follows that the splitting field of $m_\sigma(X)$ has order $p^k$ and hence that $m_\sigma(X)$ $| $ $X^{p^k-1} -1$ where $k$ is the smallest power of $p$ for which this is true. So the order of $\sigma$ divides $p^k -1$ and hence we have $r $ $|$ $ p^k-1$ as claimed. Of course, this cannot happen if $\gcd(p, r) = p$, so $\gcd(p,r) = 1$. 
	
	For (ii), note that since $p$ and $r$ are coprime, we have $p^{\phi(r)} \equiv 1$ mod $r$. Now $r$ divides $p^u-1$ where $u =$ gcd$(k,\phi(r))$. By (i), we then get that  $u = k$ and hence $k$ divides $\phi(r)$.

	For (iii), consider again an irreducible element  $\sigma \in  \GL(V)$ of order $r$ and degree $k$. By the first part of Lemma \ref{irredcorr}, $\sigma$ is similar to the companion matrix of some irreducible polynomial $m_\sigma(X)\in \mathbb{F}_p[X]$ of degree $k$, and moreover $m_\sigma(X)$ is the minimal polynomial of $\sigma$. Furthermore since $\sigma^r= 1$, it follows that $m_\sigma(X)$ divides $X^r-1$. 
	
	Thus the number of faithful irreducible representations of $C_r$ of degree $k$ is equal to the number of irreducible monic polynomials  $f(X) \in \mathbb{F}_p[X]$ of degree $k$
	 which are divisors of $X^{r}-1$. 
	 By  \cite[Theorem 3.5]{lidl1997finite}, the number of such irreducible polynomials of degree $\phi(r)$ is equal to $\phi(r)/\phi(r) = 1$. It follows that there is a unique faithful irreducible representation of $C_r$ of degree $\phi(r)$.  
\end{proof}

We  now determine the degrees of the faithful irreducible representations of the dihedral group $D_r$ on $V = \mathbb{F}_p^k$. As before, we identify $V$ with $\mathbb{F}_{p^k}$. Then an irreducible element $\sigma \in \GL(V)$ corresponds to multiplication by some element $\alpha \in \mathbb{F}_{p^k}^*$,  and since the order of $\sigma$, and hence the order of $\alpha$, does not divide $p^i-1$ for any $i < k$ (by Lemma \ref{cyclicrep}(i)), it follows that $\alpha$ does not lie in any proper subfield of $\mathbb{F}_{p^k}$. 

\begin{Lemma}\label{dihedralrep}
	Let $V = \mathbb{F}_p^k$ and let $(\rho, V)$ be a faithful irreducible representation of the dihedral group $D_{r}$ for $r\geq 3$.
	Then one of the following holds
	\begin{enumerate}[(i)]
		\item $k$ is the smallest integer such that $r $ $ |$ $p^k-1$; or
		\item $k = 2\ell$ where $\ell$ is the smallest integer such that $r $ $ |$ $p^\ell-1$.
	\end{enumerate}
	Moreover, in case (ii) we have $\ell \leq \phi(r) /2$.
\end{Lemma}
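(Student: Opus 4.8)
The plan is to restrict the representation $\rho$ to the cyclic subgroup $C_r=\langle\sigma\rangle$ of $D_r$ and read off the $C_r$-module structure of $V$. First I would fix an irreducible $\mathbb{F}_pC_r$-submodule $W\le V$ and check that $C_r$ acts faithfully on $W$: the kernel $C$ of this action is a subgroup of the cyclic group $C_r$, hence characteristic in $C_r$ and therefore normal in $D_r$; being normal, $C$ also fixes $W^\varphi:=\varphi(W)$ pointwise, so it fixes the nonzero $D_r$-submodule $W+W^\varphi=V$ pointwise, whence $C=1$ by faithfulness of $\rho$. Lemma~\ref{cyclicrep}(i) then applies to the faithful irreducible $C_r$-module $W$, giving $\gcd(p,r)=1$ and $\dim W=\ell$, where $\ell$ is the least integer with $r\mid p^\ell-1$.

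Because $\gcd(p,r)=1$, Maschke's theorem makes $V|_{C_r}$ semisimple, and $W^\varphi$ is again an irreducible $C_r$-submodule, of the same dimension $\ell$. Both $W+W^\varphi$ and $W\cap W^\varphi$ are $D_r$-invariant (the latter since $(W\cap W^\varphi)^\varphi=W^\varphi\cap W$), so the irreducibility of $V$ forces either $W=W^\varphi$ — in which case $W$ is itself a $D_r$-submodule, so $V=W$ and $k=\dim W=\ell$, which is conclusion (i) — or $W\cap W^\varphi=0$ and $W+W^\varphi=V$, so $V=W\oplus W^\varphi$ and $k=2\ell$, which is conclusion (ii).

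It remains to prove $\ell\le\phi(r)/2$ in case (ii). Since $\ell\mid\phi(r)$ by Lemma~\ref{cyclicrep}(ii), it suffices to rule out $\ell=\phi(r)$. If $\ell=\phi(r)$ then $\mathbb{F}_{p^\ell}$ already contains a primitive $r$-th root of unity, so $\mathbb{F}_{p^\ell}=\mathbb{F}_p(\mu_r)$ and the injection $\mathrm{Gal}(\mathbb{F}_{p^\ell}/\mathbb{F}_p)\hookrightarrow\mathrm{Aut}(\mu_r)\cong(\mathbb{Z}/r\mathbb{Z})^\times$ is onto by order count; in particular inversion $\zeta\mapsto\zeta^{-1}$ is a power of the Frobenius. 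Writing $W\cong\mathbb{F}_{p^\ell}$ with $\sigma$ acting as multiplication by a primitive $r$-th root of unity $\alpha$, and noting that $\sigma$ acts on $W^\varphi$ as multiplication by $\alpha^{-1}$ (because $\sigma^\varphi=\sigma^{-1}$), this shows $\alpha^{-1}$ is a Galois conjugate of $\alpha$, i.e. $W\cong W^\varphi$ as $C_r$-modules; this isomorphism also follows directly from the uniqueness statement Lemma~\ref{cyclicrep}(iii). Finally I would derive a contradiction by exhibiting a proper $D_r$-submodule of $V=W\oplus W^\varphi$: identifying $W^\varphi$ with $W$ via $\varphi$, a subspace of the form $\{\,w+\psi(w):w\in W\,\}$ is $\sigma$-invariant exactly when $\psi$ is an $\mathbb{F}_p$-linear intertwiner of the $\alpha$- and $\alpha^{-1}$-actions, and is in addition $\varphi$-invariant exactly when $\psi^2=\mathrm{id}$. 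Such a $\psi$ can be found because the invertible intertwiners form a coset of $\mathrm{End}_{C_r}(W)^\times\cong\mathbb{F}_{p^\ell}^\times$, and after fixing one such $\psi_0$ the equation $\psi^2=\mathrm{id}$ reduces to hitting the prescribed element $(\psi_0^2)^{-1}\in\mathbb{F}_{p^{\ell/2}}^\times$ by the surjective norm $N_{\mathbb{F}_{p^\ell}/\mathbb{F}_{p^{\ell/2}}}$ — here $\ell$ is even since inversion has order $2$ in the Galois group. The resulting proper $D_r$-submodule contradicts the $D_r$-irreducibility of $V$, so $\ell\ne\phi(r)$ and hence $\ell\le\phi(r)/2$. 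The one genuinely delicate step is this last one — producing the involutory intertwiner $\psi$, equivalently showing that the induced module $W\oplus W^\varphi$ splits once $W\cong W^\varphi$; everything preceding it is routine module theory over $\mathbb{F}_pC_r$.
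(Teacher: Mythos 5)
Your proof is correct, and its first two paragraphs follow the same route as the paper: restrict to $H=\langle\sigma\rangle$, take an irreducible $H$-submodule $W$, and use the $D_r$-invariance of $W+W^\varphi$ and $W\cap W^\varphi$ to force either $V=W$ (case (i)) or $V=W\oplus W^\varphi$ (case (ii)); you are in fact slightly more careful than the paper in verifying that $H$ acts faithfully on $W$ before invoking Lemma~\ref{cyclicrep}. The genuine difference is in how you exclude $\ell=\phi(r)$. The paper embeds $G$ into $\Gamma{\rm L}_1(p^\ell)\wr S_2$, writes $\sigma=(\alpha,\alpha)$ and $\varphi=(x,x^{-1})(12)$ with $x=x_0\xi$, computes that $\xi$ must be the Frobenius power of order two, and then exhibits the fixed $\mathbb{F}_{p^\ell}$-line $\langle(x_0^{-1},1)\rangle$ by direct calculation. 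You instead argue module-theoretically: Schur's lemma makes the invertible intertwiners $W_\alpha\to W_{\alpha^{-1}}$ a single orbit under $\mathbb{F}_{p^\ell}^\times$, and the involutivity condition on the graph subspace $\{w+\psi(w)\}$ becomes a norm equation over $\mathbb{F}_{p^{\ell/2}}$, solvable since the finite-field norm is surjective. The two arguments produce the same invariant subspace in different coordinates; yours is essentially the Clifford-theoretic fact that the induced module splits once $W\cong W^\varphi$, and it avoids the paper's explicit coordinate computation (in fact the semilinear intertwiner $\psi_0\colon w\mapsto w^{p^{\ell/2}}$ already satisfies $\psi_0^2=\mathrm{id}$, so the norm step, while correct, could be bypassed entirely). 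Both proofs use the hypothesis $\ell=\phi(r)$ at the same point, namely to guarantee that inversion on the $r$-th roots of unity is realized by a Galois automorphism, whence $W\cong W^\varphi$ and $\ell$ is even.
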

\begin{proof}
	Let $G := D_r = \langle \sigma,  \varphi : \sigma^r = \varphi^2 = 1,$ $  \sigma^\varphi = \sigma^{-1}\rangle$ and let $H$ be the index 2 cyclic subgroup $H := \langle \sigma \rangle \leq D_{r}$. Let $W$ be an $H$-invariant subspace of $V$ of maximal dimension on which $H$ acts irreducibly. If $W=V$ then $\rho$ restricts to an irreducible representation of $H$ and by Lemma \ref{cyclicrep}, case (i) holds. 
	
	Suppose instead that $W$ is a proper subspace of $V$. We will show that (ii) holds. Again, by Lemma \ref{cyclicrep} the dimension of $W$ is $\ell$, where $\ell$ is the smallest integer such that $r $ divides $p^\ell-1$. In this case taking $\varphi \in G \backslash H$ we see that $W + W^\varphi$ is a nontrivial $G$-invariant subspace of $V$ and hence $W + W^\varphi=V$ as $D_r$ is irreducible. Moreover,  $W \neq W^\varphi$ since $W\neq V$. Now $W \cap W^\varphi$ is $G$-invariant since it is $H$-invariant by the definition of $W$, and it is $\varphi$-invariant since $\varphi^2 =1$. Thus $W \cap W^\varphi$ must be trivial, so $V = W \oplus W^\varphi$ and $k = 2\ell$ as claimed. In this case both $W$ and $W^\varphi$ are irreducible $\mathbb{F}_p$ $H$-modules of dimension $\ell$. 
	
	We now prove the final claim. By Lemma~\ref{cyclicrep}(ii), $\ell$ divides $\phi(r)$, so the final claim holds if $\ell < \phi(r)$. Suppose for a contradiction that $\ell=\phi(r)$. Then, by Lemma~\ref{cyclicrep}(iii),  $W$ and $W^\varphi$ are isomorphic $\mathbb{F}_{p^\ell}$ $H$-modules. Also, since $r\geq3$, it follows that $\ell=\phi(r)>1$. We showed in the previous paragraph that $\varphi \in G$ interchanges the two subspaces $W$ and $W^\varphi$, and so it follows that $G$ is an imprimitive linear group $G \leq \GL(W) \wr S_2 \leq \GL(V)$. Since $H$ is irreducible, by Lemma \ref{Huppert} we can identify $H$ with a subgroup of $\GL_1(p^\ell) \cong \mathbb{F}_{p^\ell}^*$, and identify $W$ with $\mathbb{F}_{p^\ell}$, such that $H$ acts on $W$ by multiplication.
	
	With this identification, we may write any element $v \in V$ as $v = (w_1, w_2^\varphi)$ where $w_1,w_2 \in W = \mathbb{F}_{p^\ell}$, so that $W = \{(w, 0): w \in W\}$ and $W^\varphi = \{(0,w^\varphi): w \in W\}$. The action of $\varphi \in G$ interchanges $W$ and $W^\varphi$ via  $(w,0) \longleftrightarrow (0, w^\varphi)$, and the element $\sigma$ maps $(w,0)$ to $(w^\sigma, 0)$ and $(0,w^\varphi)$ to $(0,w^{\varphi\sigma}) = (0, (w^{\sigma^{-1}})^\varphi)$, 
	noting that $\varphi\sigma\varphi = \sigma^{-1}$. % in $G$ and acts this way on both $W$ and $W^\varphi$.
	The restrictions $\sigma|_W$ and $\sigma|_{W^{\varphi}}$ are both elements of $\GL_1(p^\ell)$, so $\sigma \in \langle \sigma|_W \rangle \times \langle \sigma|_{W^{\varphi}}\rangle = \GL_1(p^\ell)^2$. Therefore we have $G = \langle \sigma, \varphi \rangle  \leq (\langle \sigma|_W \rangle \times \langle \sigma|_{W^{\varphi}}\rangle)\cdot \langle \varphi \rangle \leq \Gamma$L$_1(p^\ell) \wr S_2$ where $\Gamma$L$_1(p^\ell)$ is the full group preserving the $\mathbb{F}_{p^\ell}$ structure on $W$, namely $\Gamma$L$_1(p^\ell) = \mathbb{F}_{p^\ell}^* \rtimes \Aut(\mathbb{F}_{p^\ell})$. 
	We next identify  $\sigma$ and $\varphi$  as elements of $\Gamma$L$_1(p^\ell) \wr S_2$. Now $\sigma|_W$ and $\sigma|_{W^{\varphi}}$ act  by multiplication by some scalars $\alpha$ and $\alpha'$ of $\mathbb{F}_{p^\ell}^*$, respectively, and as $W$ and $W^\varphi$ are isomorphic $\mathbb{F}_{p^\ell}$ $H$-modules, we must have $\alpha = \alpha'$, so that  $\sigma = (\alpha, \alpha) \in \Gamma$L$_1(p^\ell)^2$. Also, the element $\varphi \in G$ may be written as $\varphi = (x, y)(12)$ with $x, y \in \Gamma$L$_1(p^\ell)$ and $(12) \in S_2$, and since $1 = \varphi^2 = ((x, y)(12))^2 = (x, y)(12)(x, y)(12) = (xy,yx)$, it follows that $y = x^{-1}$ so that $\varphi = (x,x^{-1})(12)$, for some $x \in \Gamma$L$_1(p^\ell)$.
	
	%As noted above, $W$ and $W^\varphi$ are isomorphic $\mathbb{F}_{p^\ell}$ $H$-modules, and hence $\alpha = \alpha'$, and $\sigma = (\alpha, \alpha) \in \Gamma$L$_1(p^\ell)^2$. 
    Note that in this representation of $\sigma$ we are identifying the element $\alpha \in \mathbb{F}_{p^\ell}^*$ with the operation of `multiplication by $\alpha$' in $W$. Note also that  by Lemma \ref{Huppert} and the discussion immediately preceding this theorem, $\alpha$ lies in no proper subfield of $\mathbb{F}_{p^\ell}$. Since $\varphi\sigma \varphi = \sigma^{-1}$, we have 
    $$
    (\alpha^{-1},\alpha^{-1}) = \sigma^{-1} = \varphi\sigma\varphi = (x,x^{-1})(12)(\alpha, \alpha )(x,x^{-1})(12) = (x\alpha x^{-1}, x^{-1}\alpha x),
    $$ 
    and hence $\alpha = x^{-1}\alpha^{-1} x$. That is, $x \in \Gamma$L$_1(p^\ell)$ inverts $\alpha \in \mathbb{F}_{p^\ell}^*$.
	Since $x \in \Gamma$L$_1(p^\ell)$, we may write  $x = x_0\xi$ where $x_0 \in \mathbb{F}_{p^\ell}^*$ and $\xi \in \Aut(\mathbb{F}_{p^\ell})$, with $\xi: \alpha \mapsto \alpha^{p^i}$ for some $i$ such that $0\leq i < \ell-1$ (which we now determine). Since conjugation by $x$ inverts $\alpha$, and since $\alpha$ commutes with $x_0$ (because $x_0,\alpha \in \mathbb{F}_{p^\ell}^*$), we have 
    $$
    \alpha^{p^i} = \alpha^\xi = \xi^{-1}\alpha\xi = \xi^{-1}x_0^{-1}\alpha x_0\xi = x^{-1}\alpha x = \alpha^{-1}.
    $$ 
    Hence $\alpha^{p^i+1} = 1$. If $i=0$ then $\alpha^2=1$, so $\alpha = \pm 1$ lies in the prime subfield of $\mathbb{F}_{p^\ell}$ (which is a proper subfield since $\ell>1$), and we have a contradiction. Hence $1\leq i < \ell-1$. Now  $\alpha^{p^i+1} = 1$ implies that $\alpha^{p^{2i}-1}=1$, and we also have $\alpha^{p^\ell-1} = 1$ since   $\alpha \in \mathbb{F}_{p^{\ell}}$. Hence  $\alpha^y=1$ where $y=\gcd(p^\ell-1, p^{2i}-1)$. By Lemma \ref{gcdlemma}, 
    $y=p^j-1$ where $j = \gcd(2i,\ell)$, and it follows that $\alpha$ lies in the subfield $\mathbb{F}_{p^{j}}$ of $\mathbb{F}_{p^\ell}$. Since $\alpha$ lies in no proper subfield, this implies that $j=\ell$, so  $\ell$ divides $2i$. Since $i<\ell-1$, we conclude that $\ell=2i$, and so $\xi$ has order $2$. Thus 
    $$
    \varphi = (x,x^{-1})(12) = (x_0\xi, \xi x_0^{-1})(12) = (x_0, (x_0^{-1})^{p^i})(\xi,\xi)(12).
    $$ 
    Note that $(\xi,\xi)$ simply acts as $\xi$ on all of $V$ and so $(\xi,\xi)$ and $(12)$ commute. Hence 
    $$
    \varphi = (x_0, (x_0^{-1})^{p^i})(12)(\xi,\xi) \in (\GL_1(p^\ell) \wr S_2)\cdot\langle (\xi,\xi)\rangle \leq \GL_2(p^\ell)\cdot2,
    $$ 
    where the last part holds since $\xi$ has order 2.
	
	We now show that $V$ has a proper $G$-invariant subspace, contradicting the irreducibility of $G$. The number of 1-dimensional  $\mathbb{F}_{p^\ell}$-subspaces of $V$ is $p^\ell +1$, and these subspaces are permuted by $G = \langle \sigma, \varphi \rangle$. Furthermore, these subspaces are those of the form $W_\beta = \langle (\beta, 1)\rangle$, for each $\beta \in \mathbb{F}_{p^\ell}$, together with the subspace $\langle (1,0) \rangle$. Since $\sigma$ acts by scalar multiplication it fixes each of these subspaces setwise.  If we then consider the action of $\varphi$ on the subspace $W_{x_0^{-1}} = \langle (x_0^{-1}, 1)\rangle$, where $x_0$ is defined as above, we have
	$$
    (x_0^{-1},1)^\varphi = (x_0^{-1}, 1)^{(x_0, (x_0^{-1})^{p^i})(\xi,\xi)(12)} = (1, (x_0^{-1})^{p^i})^{(\xi,\xi)(12)} =  (1, x_0^{-1})^{(12)} = (x_0^{-1}, 1), 
    $$
	noting that for the third equality we used the fact that $2i = \ell$. Therefore the subspace $W_{x_0^{-1}}$ is fixed by $G$, contradicting the fact that $G$ is irreducible on $V$.  Thus we cannot have $\ell=\phi(r)$, and the proof is complete.
\end{proof}

Finally, we use the above results to describe the basic pairs of unoriented-cycle type which have an abelian socle.

\begin{Theorem}\label{UnorAffineBound}
Let $(\Gamma, G), N, k$, and $r$ be as in Proposition $\ref{unorientedUnique}$ and suppose that $N$ is abelian.	Then one of the following holds
	\begin{enumerate}[(i)]
		\item $k$ is the smallest integer such that $r_0 $ $ |$ $p^k-1$  where $r_0$ is some divisor of $r$; or
		\item $k = 2\ell$ where $\ell$ is the smallest integer such that $r_0 $ $ |$ $p^\ell-1$  where $r_0$ is some divisor of $r$.
	\end{enumerate}
	
	In particular, the largest value for $k$ in either case is $\phi(r)\leq r-1$. 
\end{Theorem}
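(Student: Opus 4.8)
The plan is to pass to the faithful image of the conjugation action of $G/N\cong D_r$ on $N\cong\mathbb{F}_p^k$ and then quote Lemma~\ref{dihedralrep}. By Proposition~\ref{unorientedUnique} the group $N$ is abelian, say $N\cong\mathbb{Z}_p^k$, and, as noted just before Lemma~\ref{irredcorr}, the action of $G/N\cong D_r$ on $N$ is irreducible because $N$ is a minimal normal subgroup of $G$. Let $\bar M\trianglelefteq D_r$ be the kernel of this action and set $\bar G:=D_r/\bar M$, so that $\bar G$ acts faithfully and irreducibly on $\mathbb{F}_p^k$. The first real step is to classify $\bar M$: the complement of $\langle\sigma\rangle$ in $D_r$ consists entirely of reflections, so either $\bar M\le\langle\sigma\rangle$, whence $\bar M=\langle\sigma^{r_0}\rangle$ for some divisor $r_0$ of $r$ and $\bar G\cong D_{r_0}$ (reading $D_1$ as $\mathbb{Z}_2$ and $D_2$ as $\mathbb{Z}_2\times\mathbb{Z}_2$), or $\bar M$ contains a reflection, and then a short computation using $\sigma^\varphi=\sigma^{-1}$ shows that $\bar M$ is either all of $D_r$ or (when $r$ is even) one of the two dihedral subgroups of index~$2$, so that $|\bar G|\le 2$. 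Thus, in every case, either $|\bar G|\le 4$ or $\bar G\cong D_{r_0}$ for some divisor $r_0$ of $r$ with $r_0\ge 3$.

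Next I would dispose of the case $|\bar G|\le 4$, showing that it forces $k=1$. If $p=2$ this is immediate: a nontrivial $2$-group has no nontrivial irreducible $\mathbb{F}_2$-module, contradicting faithfulness. If $p$ is odd, then $\bar G$ is an abelian $2$-group, so it has exponent dividing $2$, which divides $p-1$; hence every element of $\bar G$ is diagonalisable over $\mathbb{F}_p$ with eigenvalues in $\{\pm1\}$, a commuting family of such matrices is simultaneously diagonalisable, and irreducibility of $\mathbb{F}_p^k$ then forces $k=1$. Taking $r_0=1$, this $k=1$ is an instance of alternative~(i). In the remaining case $\bar G\cong D_{r_0}$ with $r_0\mid r$ and $r_0\ge 3$, I would apply Lemma~\ref{dihedralrep} verbatim with $r_0$ in place of $r$; this produces exactly the two alternatives (i) and (ii) of the theorem, together with the refinement $\ell\le\phi(r_0)/2$ in case~(ii).

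For the concluding bound I would chain the numerical estimates. In case~(i), $k$ is the multiplicative order of $p$ modulo $r_0$, which divides $\phi(r_0)$ by Lemma~\ref{cyclicrep}(ii) applied to the cyclic subgroup $\langle\sigma\rangle\cong C_{r_0}$ of $\bar G$, so $k\le\phi(r_0)$; in case~(ii), $k=2\ell\le\phi(r_0)$ by the refinement above. Since $r_0\mid r$ implies $\phi(r_0)\mid\phi(r)$, and $\phi(r)\le r-1$ for $r\ge 3$, we conclude $k\le\phi(r_0)\le\phi(r)\le r-1$ in every case. The only part requiring genuine care is the bookkeeping of the first paragraph — enumerating the normal subgroups of $D_r$ and checking the degenerate small cases $r_0\in\{1,2\}$ and $p=2$ — since all the substantive representation-theoretic work has already been carried out in Lemmas~\ref{dihedralrep} and~\ref{cyclicrep}; there is no deeper obstacle beyond this assembly.
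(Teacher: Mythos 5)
Your proposal is correct and follows essentially the same route as the paper: reduce to the faithful irreducible action of the quotient of $D_r$ by the kernel of the conjugation action, observe that this quotient is either very small or isomorphic to $D_{r_0}$ for some divisor $r_0$ of $r$, invoke Lemma \ref{dihedralrep} (with Lemma \ref{cyclicrep} supplying the divisibility $k \mid \phi(r_0) \mid \phi(r) \leq r-1$), and chain the estimates. Your explicit handling of the degenerate quotients of order at most $4$ (where Lemma \ref{dihedralrep}'s hypothesis $r \geq 3$ fails) by simultaneous diagonalisation is a slightly more careful version of the paper's appeal to Lemma \ref{cyclicrep} in the cyclic-image case, but it is not a different argument.
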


\begin{proof}
	By Proposition $\ref{unorientedUnique}$, $G$ has a unique minimal normal subgroup $N = \mathbb{Z}_p^k$ and  $G/N \cong D_r$. Then the conjugation action of $G$ on $N$ induces a homomorphism $\rho : G/N \rightarrow \Aut(N) =  \GL_k(p)$ such that $\rho (G/N)$ is isomorphic to either $C_{r_0}$ or $D_{r_0}$ for some divisor $r_0$ of $r$ and ${\rm Ker}(\rho)$ contains $N$ (possibly as a proper subgroup). The minimality of $N$ in $G$ implies that $\rho (G/N)$ is an irreducible subgroup of $\GL_k(p)$.
    If $\rho(G/N) \cong C_{r_0}$ then by Lemma \ref{cyclicrep}, case (i) holds. On the other hand, if  $\rho(G/N) \cong D_{r_0}$ then by Lemma \ref{dihedralrep} exactly one of (i) or (ii) holds. 

    In either case, $r_0$ is coprime to $p$, so $p^{\phi(r_0)} \equiv 1 \pmod{r_0}$. Hence in case (i), $k $ $|$ $ \phi(r_0)$ $ | $ $\phi(r) \leq r-1$, while in  case (ii), $\ell $ $|$. Further, in case (ii), we have
    $\ell \leq \phi(r_0) /2$ by Lemma \ref{dihedralrep},  and so  $k = 2\ell \leq \phi(r_0)\leq \phi(r) \leq r-1$. 
    	%we may assume that $\rho(G/N) = \langle \sigma, \varphi : \sigma^{r_0} = \varphi ^2 = 1, \sigma^\varphi = \sigma^{-1}\rangle$. Now there are two possibilities. If the index two subgroup $\langle \sigma \rangle \leq \rho(G/N)$ acts irreducibly on $V$ then again (i) holds by Lemma \ref{cyclicrep}. On the other hand, if $\langle \sigma \rangle$ does not act irreducibly on $V$ then we may let $W$ be the largest subspace of $V$ on which $\sigma$ acts irreducibly. In this case, $V = W + W^\varphi$ and this must be a direct sum since $V \neq W$ and $V$ is irreducible, hence $V = W \oplus W^\varphi$. Hence $k = 2\ell$. 
	%For the last claim notice that if $V = W + W^\varphi$ where $W, W^\varphi$ are $\langle \sigma \rangle$ irreducible $C_{r_0}$-representations, then $W$ and $W^\varphi$ cannot be isomorphic as $C_{r_0}$-representations as this would imply that $\langle w + w^\varphi \rangle$ is a $G$-invariant subspace of $V$ where $0 \neq w \in W$. In particular, in case (ii) $\ell \leq \phi(r_0) /2$ which implies that $k = 2\ell \leq \phi(r_0) \leq \phi(r) \leq r-1$.
\end{proof}
\section{Proof of Theorem \ref{CycleMainTheorem}}\label{secCycleProof}
We combine the various results from the previous two sections to prove Theorem \ref{CycleMainTheorem}. 

\begin{proof}[Proof of Theorem $\ref{CycleMainTheorem}$]
    Suppose that $(\Gamma, G) \in \OG(4)$ is basic of cycle type and does not have independent cyclic normal quotients. It follows from the discussion in Section \ref{secCyclicNormal} that $(\Gamma, G)$ is either basic of oriented-cycle type, or basic of unoriented-cycle type.
    Suppose first that $(\Gamma, G)$ is basic of oriented-cycle type. Then by Proposition \ref{OrientedTheorem} either subcase 1(a) holds, or the first assertion of subcase 1(b) holds with unique minimal normal subgroup $N$. Moreover, in the latter case, then $k\leq r2^r$ if $N$ is nonabelian, by Theorem \ref{OrientedNonabelianBound}, and $k\leq r$ if $N$ is abelian, by Theorem \ref{OrientedAbelianBound}.
    Now suppose that $(\Gamma, G)$ is basic of unoriented-cycle type. Then by Proposition \ref{unorientedUnique}, the first assertion of Case 2 holds  with unique minimal normal subgroup $N$. Also, either $N$ is nonabelian and $k\leq 2r$, by Theorem \ref{UnorientedNonabelianBound}, or $N$ is abelian and $k \leq r-1$, by  Theorem \ref{UnorAffineBound}. 
    This completes the proof.
  \end{proof}  
    %% ----------------------------------------------------------------
    
\section{Constructions of basic pairs }\label{secConst}
In this final section we provide several constructions of infinite families of  basic pairs of cycle type which do not have independent cyclic normal quotients. Our aim is to show that the various bounds on the 
integer $k$ given in Theorem \ref{CycleMainTheorem} are tight by exhibiting  examples of appropriate pairs ($\Gamma, G) \in \OG(4)$ which attain these bounds. 
In the previous sections we analysed the basic pairs of oriented-cycle type and the basic pairs of unoriented-cycle type. 
For each of these two types we saw that the socle could be either abelian or nonabelian and we obtained two  bounds on the number of simple direct factors of the socle according to this distinction. This gives a total of four different types of basic pairs, namely those corresponding to cases 1(b).i, 1(b).ii, 2.i, and 2.ii. of Theorem \ref{CycleMainTheorem}. We will provide constructions of each of these four types. A summary of these constructions along with properties of the values $k$ and $r$ is given in Table \ref{ConstructionTable}. 

\begin{table}[H] 
	\centering
	%\resizebox{\linewidth}{!}{%
	\begin{tabular}{ l l l l }
		\hline
		$(\Gamma, G)$ Cycle Type & $N = \soc(G)$ &  Construction $\# $ & Properties of $(k,r)$ \\ 
		\hline
		Unoriented & Abelian  & Construction \ref{ConstAffUnoriented} &    $r$ odd prime,  $k = r-1$\\
		Oriented & Abelian & Construction \ref{ConstAffOriented} &  $r \geq 3$,  $k = r$ \\
	
		Unoriented & Nonabelian & Construction \ref{NonabelianUnorientedConst2r} &  $r \geq 3$, $k = 2r$ \\  
		Oriented & Nonabelian  & Construction \ref{NonabelianOrientedConst24} &  $r = 3$, $k = r2^r = 24$  \\
			\hline
	\end{tabular}
\caption{Constructions of basic pairs: $k := $ number of simple direct factors of $N$, $r := |\Gamma_N|$.}\label{ConstructionTable}
\end{table}

%We will give four constructions of basic pairs $(\Gamma, G)\in\OG(4)$. These correspond to the two types of basic pairs discussed in the previous chapter (oriented-cycle type and unoriented-cycle type) and for each, the two types of socles of of such basic pairs (abelian and nonabelian). 
%A summary of the constructions in this chapter is provided in Table \ref{ConstructionTable}. 

Of our four constructions, three provide an infinite family of nonisomorphic basic pairs ($\Gamma, G) \in \OG(4)$ which attain the maximum possible value of $k$ (the number of simple direct factors of $\soc(G)$) for arbitrarily large values of $r$ (where $r$ is the order of the largest cyclic normal quotient). Construction \ref{NonabelianOrientedConst24} however provides an infinite family of basic pairs which attain the maximum possible value of $k$, but only in the case where $r=3$ (and hence $k = r2^r = 24)$. This shows that the bound on $k$  is tight when $r=3$ and we believe this will be true for larger values of $r$ also.

\begin{Problem}\label{prob1}
Decide whether or not the bound $k\leq r2^r$ of Theorem \ref{CycleMainTheorem}, case 1(b).i, is tight when $r\geq 4$.
\end{Problem}

Another interesting question arises here. In \cite{al2017cycle} the question of how many cyclic normal quotients a basic pair can have was considered and it was shown that there is no bound on this number \cite[Theorem 1]{al2017cycle}. Specifically, it was shown that there is no bound on the number of oriented cyclic normal quotients that a basic pair can have. It was therefore asked in  \cite[Problem 1]{al2017cycle}  if the number of unoriented cyclic normal quotients that a basic pair $(\Gamma,G)$ can be arbitrarily large. Our Construction \ref{NonabelianUnorientedConst2r}  shows that there is no bound on this number either, and hence provides an affirmative answer to \cite[Problem 1]{al2017cycle}.

Before giving our constructions, we discuss some theory on coset graphs in $\OG(4)$ and minimal normal subgroups of finite groups which we will use in this section.

\subsection{Coset Graphs in $\OG(4)$}\label{ssCoset}
Given a group $G$, a proper subgroup $S$, and an element $g\in G\backslash S$, the \textit{coset graph} $\Gamma = $ Cos$(G,S,g)$ is the undirected graph having vertex set $V\Gamma = \{Sx : x \in G\}$, and a pair  $\{Sx, Sy\}$ is an edge if and only if $xy^{-1}$ or $yx^{-1} \in SgS$. 
The group $G$, acting by right multiplication on $V\Gamma$, induces a vertex-transitive and edge-transitive group of automorphisms of $\Gamma$. Moreover, every graph which admits a group of automorphisms which is transitive on vertices and edges can be constructed in this way. 

Thus, for every pair $(\Gamma, G) \in \OG(4)$ there exists an $S \leq G$, and a $g\in G$ such that $\Gamma \cong $ Cos$(G, S, g)$. Specifically, for a vertex $\alpha \in V\Gamma$, take $S:= G_\alpha$ and take $g$ to be an element of $G$ mapping $\alpha$ to one of its neighbours; note that $\alpha^{g^2} \neq \alpha$ as otherwise $g$ would reverse the arc $(\alpha,\alpha^g)$ and $G$ would be arc-transitive. 
 Lemma \ref{CosetOriented} below gives specific conditions as to when a graph-group pair $(\Gamma, G)$ constructed in this way is a member of $\OG(4)$. This result was outlined in \cite[Section 5]{al2015finite}, and a good in-depth breakdown is given in \cite[Section 2]{li2004primitive}. In summary, conditions (i)-(iv) of Lemma \ref{CosetOriented} ensure that: (i) $G$ is faithful on $V\Gamma$; (ii) $G$ preserves an orientation of $E\Gamma$; (iii) $\Gamma$ is 4-valent; and (iv) $\Gamma$ is connected.  

\begin{Lemma}\label{CosetOriented}
	Suppose that $\Gamma = $ \emph{Cos}$(G, S, g)$ for some group $G$, some proper subgroup $S < G$ and some element $g \in G$. Then $(\Gamma, G) \in \OG(4)$ if the following four properties hold:
	\begin{enumerate}[]
	\item 	\emph{ (i)} $S$ is core-free in $G$, \hspace{.2cm} 	\emph{ (ii)} $g^{-1} \notin SgS$, \hspace{.2cm} 	\emph{(iii) }$|S: S\cap S^g| = 2$,   \hspace{.2cm} 	\emph{(iv)} $\langle S, g\rangle = G$.
	\end{enumerate}
\end{Lemma}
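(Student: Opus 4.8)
The plan is to unwind the four hypotheses one at a time, matching each to the property it is meant to guarantee, exactly as indicated in the sentence preceding the statement. Write $\alpha$ for the vertex $S\in V\Gamma$, and recall that $G$ acts on $V\Gamma=\{Sx:x\in G\}$ by right multiplication, transitively, with $G_\alpha=S$. First I would dispose of (i) and (iv). The kernel of this action is $\bigcap_{x\in G}S^{x}$, the core of $S$ in $G$, which is trivial by (i); hence $G\le\Aut(\Gamma)$ and $\Gamma$ is $G$-vertex-transitive. For connectedness I would show that the connected component of $\alpha$ is precisely $\{Sx:x\in\langle S,g\rangle\}$: the inclusion ``$\subseteq$'' is immediate since every neighbour $Sy$ of a vertex $Sx$ has $y\in Sg^{\pm1}Sx$, and conversely, for any $s\in S$ and $\varepsilon=\pm1$, the vertex $Sg^{\varepsilon}sx$ is a neighbour of $Ssx=Sx$ (because $(g^{\varepsilon}sx)(sx)^{-1}=g^{\varepsilon}\in Sg^{\varepsilon}S$), so iteration reaches every $Sx$ with $x\in\langle S,g\rangle$. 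Thus $\Gamma$ is connected iff $\langle S,g\rangle=G$, which is (iv). The existence of a $G$-invariant orientation of $E\Gamma$ then needs no extra work: by the discussion in Section~\ref{ssecBasic}, a connected graph that is $G$-vertex- and $G$-edge-transitive but not $G$-arc-transitive is automatically $G$-oriented, so it remains only to establish valency $4$, $G$-edge-transitivity, and the failure of $G$-arc-transitivity.

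Next I would carry out the local analysis at $\alpha$. By definition $\Gamma(\alpha)=\{Sy:y\in SgS\cup Sg^{-1}S\}$. Condition (iii) forces $g\notin S$ (otherwise $SgS=S$ is a single coset), so there are no loops at $\alpha$; moreover the right cosets $Sgs$ ($s\in S$) are in bijection with the cosets $(S\cap S^{g})s$ of $S\cap S^{g}$ in $S$, so $SgS$ is a union of $|S:S\cap S^{g}|=2$ right cosets of $S$. Since inversion is a bijection $SgS\to Sg^{-1}S$, the double coset $Sg^{-1}S$ has the same size and is likewise a union of $2$ right cosets of $S$. Condition (ii), i.e.\ $g^{-1}\notin SgS$, is equivalent to $SgS\cap Sg^{-1}S=\emptyset$, so $\Gamma(\alpha)$ has exactly $2+2=4$ elements, and by vertex-transitivity $\Gamma$ is $4$-valent. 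Write $\Omega^{+}=\{Sy:y\in SgS\}$ and $\Omega^{-}=\{Sy:y\in Sg^{-1}S\}$, two disjoint $2$-element subsets of $\Gamma(\alpha)$.

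For the remaining two properties: since right multiplication by an element of $S$ maps $SgS$ into itself, $G_\alpha=S$ preserves $\Omega^{+}$ and $\Omega^{-}$, and a stabiliser computation (the stabiliser in $S$ of the coset $Sg$ is $S\cap S^{g}$, of index $2$) shows $S$ is transitive on each of $\Omega^{+}$ and $\Omega^{-}$. On the other hand, right multiplication by $g^{-1}$ sends the edge $\{S,Sg\}$ to $\{Sg^{-1},S\}$, fusing an edge from $\alpha$ into $\Omega^{+}$ with an edge from $\alpha$ into $\Omega^{-}$; together with $S$-transitivity within each type, this places all four edges at $\alpha$ in a single $G$-orbit, and since every edge of $\Gamma$ is $G$-equivalent to an edge at $\alpha$ (translate an endpoint to $\alpha$), $\Gamma$ is $G$-edge-transitive. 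But $\{\Omega^{+},\Omega^{-}\}$ is a $G_\alpha$-invariant partition of $\Gamma(\alpha)$ into two nonempty blocks, so $G_\alpha$ is intransitive on $\Gamma(\alpha)$, whence $G$ is not arc-transitive. Assembling the pieces: $\Gamma$ is connected and $4$-valent, and $G\le\Aut(\Gamma)$ is vertex- and edge-transitive but not arc-transitive, so $(\Gamma,G)\in\OG(4)$.

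The main obstacle here is bookkeeping rather than depth: one must get the coset counting inside $SgS$ and $Sg^{-1}S$ exactly right and, more conceptually, keep straight the complementary roles of (ii) and (iii). Condition (iii) is what makes $\Gamma$ $4$-valent and $G_\alpha$ transitive on each neighbour-type $\Omega^{\pm}$, whereas (ii) is precisely what keeps $\Omega^{+}$ and $\Omega^{-}$ disjoint, so that the $G_\alpha$-orbit partition of $\Gamma(\alpha)$ is nontrivial (which kills arc-transitivity) while $g^{-1}$ still merges the two orbits into one edge-orbit. It is this interplay --- intransitive on arcs yet transitive on edges --- that produces the orientation, so I would make that intermediate point explicit rather than try to write down the orientation directly.
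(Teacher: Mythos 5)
Your proof is correct and fills in, step by step, exactly the outline the paper gives for this lemma (the paper does not prove it, but cites \cite{al2015finite} and \cite{li2004primitive} and summarises which hypothesis guarantees which property: faithfulness, orientation, valency, connectedness); your local analysis at $\alpha=S$, the double-coset count via $|S:S\cap S^g|$, and the edge- versus arc-transitivity argument are the standard ones. Your observation that (ii) is also needed alongside (iii) to get valency $4$ (it keeps $SgS$ and $Sg^{-1}S$ disjoint) is a correct refinement of the paper's one-line attribution of roles.
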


All graphs constructed in this section are described as coset graphs of certain groups. We will thus heavily rely on Lemma \ref{CosetOriented}.
Since we will also need to show that pairs constructed in this way are basic of cycle type, we will often use the following result. 

\begin{Lemma}\label{CosetCycleType}
	Suppose that $(\Gamma, G) \in \OG(4)$, with $\Gamma = $ \emph{Cos}$(G, S, g)$ where $S < G$ and $g \in G$. Suppose further that $G$ has a unique minimal normal subgroup $N$. Then
	
	\begin{enumerate}[(a)]
	\item $(\Gamma, G)$ is basic of oriented-cycle type if there are two distinct $SN$-cosets in $G$, say  $SNx$ and $SNy$ with $x,y \in G\backslash SN$, such that, $SgS \subset SNx$, and $Sg^{-1}S \subset SNy$.
	\item $(\Gamma, G)$ is basic of unoriented-cycle type if there are two distinct $SN$-cosets in $G$, say  $SNx$ and $SNy$ with $x,y \in G\backslash SN$, such that 
	each of $SNx$ and $SNy$ contains half of the elements of $SgS$ and half of the elements of $Sg^{-1}S$.
		\end{enumerate}
\end{Lemma}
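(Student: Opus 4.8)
The plan is to verify the hypotheses of Lemma~\ref{CosetOriented} are already in place (we are given $(\Gamma,G)\in\OG(4)$), and then to compute the $G$-normal quotient $\Gamma_N$ directly from the coset graph description, showing in case (a) that $\Gamma_N$ is a $G$-oriented cycle and in case (b) that it is a $G$-unoriented cycle. Since $N$ is the unique minimal normal subgroup of $G$, every nontrivial normal subgroup of $G$ contains $N$, so to conclude that $(\Gamma,G)$ is basic it suffices to show that each normal quotient $\Gamma_M$ with $M\unlhd G$ nontrivial is degenerate; and since $\Gamma_N$ is a cycle and $\Gamma_M$ is a further quotient of $\Gamma_N$ (because $N\leq M$, the $N$-orbits refine the $M$-orbits), $\Gamma_M$ is a quotient of a cycle, hence $K_1$, $K_2$, or a cycle. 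So the whole argument reduces to identifying $\Gamma_N$.

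First I would describe the $N$-orbits on $V\Gamma=\{Sx:x\in G\}$: the orbit of $Sx$ under right multiplication by $N$ is $\{Sxn:n\in N\}=\{Sy:y\in Nx\text{ after absorbing }S\}$, which is precisely the set of cosets of $S$ contained in the subset $SNx$ of $G$. Since $SN$ is a subgroup of $G$ (as $N\unlhd G$), the $N$-orbits on $V\Gamma$ are in bijection with the right cosets of $SN$ in $G$, so $\Gamma_N$ has $r:=|G:SN|$ vertices, and $G$ acts on these as it acts on $(SN)\backslash G$ by right multiplication, i.e. $G_N\cong G/\mathrm{core}_G(SN)$ acting transitively. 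Next I would determine adjacency in $\Gamma_N$: the $N$-orbit $[SNx]$ is adjacent to $[SNy]$ iff some $Sx'\subseteq SNx$ is adjacent to some $Sy'\subseteq SNy$, i.e. iff $x'y'^{-1}\in SgS\cup Sg^{-1}S$ for some such $x',y'$, equivalently iff $SNx\cdot (SNy)^{-1}$ meets $SgS\cup Sg^{-1}S$, i.e. iff $SNxy^{-1}N$ (a single $SN$–$N$ double coset, which collapses to the $SN$-coset $SNxy^{-1}$ times $N$ on the right — more precisely $SgS\cup Sg^{-1}S$ meets $SNxy^{-1}$ as a subset of $G/$... I will phrase this as: the edge set of $\Gamma_N$ corresponds to the set of $SN$-cosets meeting $SgS\cup Sg^{-1}S$). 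In case (a), by hypothesis $SgS\subseteq SNx$ and $Sg^{-1}S\subseteq SNy$ with $SNx\neq SNy$ and neither equal to $SN$; so from the base vertex $[SN]$ there is exactly one out-neighbour (via $SgS$, giving $[SNx]$) and one in-neighbour (via $Sg^{-1}S$, giving $[SNy]$), and these are distinct. By vertex-transitivity every vertex of $\Gamma_N$ has exactly one in- and one out-neighbour for the induced orientation, so $\Gamma_N$ is a disjoint union of oriented cycles; connectedness of $\Gamma$ forces $\Gamma_N$ connected, hence $\Gamma_N\cong\mathbf{C}_r$ with $G_N$ acting as $C_r$ — a $G$-oriented cycle. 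In case (b), each of $SNx,SNy$ contains half of $SgS$ and half of $Sg^{-1}S$, so from $[SN]$ the neighbours via $SgS\cup Sg^{-1}S$ are exactly $[SNx]$ and $[SNy]$, each reached by both an ``out'' and an ``in'' arc; thus $\Gamma_N$ is 2-valent with $G$ reversing edges, so $\Gamma_N\cong\mathbf{C}_r$ with $G_N$ acting as $D_r$ — a $G$-unoriented cycle.

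Finally, to pin down the ``type'' label I would check $r\geq 3$: since $(\Gamma,G)\in\OG(4)$ and $\Gamma_N$ is a normal quotient that is a cycle, it cannot be $K_1$ or $K_2$ under these hypotheses (in case (a) the existence of two distinct nontrivial $SN$-cosets $SNx\neq SNy$ outside $SN$ forces $|G:SN|\geq 3$; in case (b) likewise $SNx\neq SNy$ and neither is $SN$, so $r\geq 3$). Then in case (a) all cyclic normal quotients are $G$-oriented (any cyclic $\Gamma_M$ is a quotient of the $G$-oriented cycle $\Gamma_N$, hence itself $G$-oriented, and there can be no independent pair since all $\tilde M\supseteq N\neq 1$), so $(\Gamma,G)$ is basic of oriented-cycle type; in case (b) the quotient $\Gamma_N$ is $G$-unoriented, giving $|G_\alpha|=2$ by Lemma~\ref{basics}, and again no independent cyclic quotients exist, so $(\Gamma,G)$ is basic of unoriented-cycle type.

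I expect the main obstacle to be the bookkeeping in the adjacency computation for $\Gamma_N$: one must argue carefully that ``$[SNx]$ adjacent to $[SNy]$ in $\Gamma_N$'' is equivalent to ``$SgS\cup Sg^{-1}S$ meets the $SN$-coset $SNxy^{-1}$'' (using $SN\unlhd$-ness to move $N$ through $S$ and the fact that $SgS$ is a union of left $S$-cosets), and, in case (b), that ``half of $SgS$ and half of $Sg^{-1}S$ in each of $SNx,SNy$'' really does yield exactly one in-arc and one out-arc from $[SN]$ to each of the two neighbours, so that the induced $G$-action on the 2-valent graph $\Gamma_N$ is $D_r$ rather than $C_r$. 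Everything else is a routine application of Lemma~\ref{basics} and the uniqueness of $N$.
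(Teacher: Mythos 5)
Your proposal is correct and follows essentially the same route as the paper: identify the $N$-orbits with the right $SN$-cosets, observe that the hypotheses force the neighbours of the base vertex $S$ (the $S$-cosets in $SgS\cup Sg^{-1}S$) to lie in exactly the two orbits $[SNx]$ and $[SNy]$ so that $\Gamma_N$ is a cycle, invoke uniqueness of $N$ for basicness, and distinguish the oriented and unoriented cases according to whether the vertex stabiliser $S$ (which is transitive on the two out-neighbours) fixes or swaps the two adjacent orbits. Your extra remarks verifying $r\geq 3$ and that all cyclic quotients inherit the orientation type of $\Gamma_N$ with no independent pairs are points the paper leaves implicit, but they are consistent with its argument.
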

\begin{proof}
	The $N$-orbit in $V\Gamma$ containing the vertex $S \in V\Gamma$ is the set of all right $S$-cosets contained in the subgroup $SN$ of $G$. More generally (since $N\unlhd G$), for each $x\in G$, the $N$-orbit  containing the vertex $Sx$ is the set of right $S$-cosets contained in the $SN$-coset $SNx = SxN$. Thus the set of $N$-orbits in $V\Gamma$ is in bijection with the set of right $SN$-cosets in $G$ (which forms a partition of $G$ invariant under the right multiplication action of $G$). 
 
 	 By construction, the vertex $S$ has two out-neighbours which are right $S$-cosets contained in $SgS$, and two in-neighbours which are right $S$-cosets contained in $Sg^{-1}S$. If either of the conditions $(a)$ or $(b)$ holds then these neighbours are all contained in exactly two $SN$-cosets, $SNx$ and $SNy$, for some $x, y \in G\backslash SN$. Moreover each vertex $Sn \in SN$ will also only have neighbours in $SNx$ and $SNy$. Hence $\Gamma_N$ will be 2-valent and so $\Gamma_N$ will be a cycle. So since by assumption $N$ is the unique minimal normal subgroup of $G$, and since $\Gamma_N$ is a cycle, it follows that $(\Gamma, G)$ is basic of cycle type.
	
	Note that multiplication by an element $s \in S$ fixes the vertex $S$ and fixes or swaps its two out-neighbours (both contained in $SgS$), and fixes or swaps its two in-neighbours (both contained in $Sg^{-1}S$). 
	Therefore if $(a)$ holds then  multiplication by an element of $S$ will fix each of $SN$, $SNx$ and $SNy$ setwise and hence fix all vertices of $\Gamma_N$. In this case $\Gamma_N$ is a $G$-oriented cycle. 
	On the other hand, if $(b)$ holds then since $(\Gamma, G)\in \OG(4)$ there is an element $s\in S$ which interchanges the two out-neighbours of $S$. These out-neighbours are both contained in $SgS$ with one contained in $SNx$ and the other in $SNy$.
	Multiplication by this element $s \in S$ therefore fixes the $N$-orbit $SN$, and swaps its two adjacent $N$-orbits in $\Gamma_N$, $SNx$ and $SNy$. Thus the quotient graph $\Gamma_N$ is a $G$-unoriented cycle.
\end{proof}
    
 \subsection{Minimal Normal  Subgroups of Finite Groups}\label{secMinNormal}
Recall that in this paper all groups are assumed to be finite, and hence each group will have at least one minimal normal subgroup.
% A nontrivial normal subgroup $N$ of a group $G$ is said to be a \textit{minimal normal subgroup} of $G$ if $N$ is the only nontrivial normal subgroup of $G$ contained in $N$. All nontrivial finite groups contain at least one minimal normal subgroup. 
%
Each minimal normal subgroup $N$ of a finite group $G$ is characteristically simple, and hence is isomorphic to a direct product of isomorphic simple groups \cite[Lemma 3.14]{praeger2018permutation}.  Any two distinct minimal normal subgroups must intersect trivially and centralise each other \cite[Lemma 3.4]{praeger2018permutation}. Moreover, for a nonabelian characteristically simple normal subgroup $N$ of a group $G$, where $N = T_1 \times \cdots \times T_k$ and the $T_i$ are nonabelian simple groups, the group $G$ acts by conjugation on the set $\{T_1,\dots, T_k\}$, and is transitive on this set if and only if $N$ is a minimal normal subgroup of $G$ \cite[Corollary 4.16]{praeger2018permutation}.

The \textit{socle} of a group $G$, denoted $\soc(G)$ is the subgroup generated by all of the minimal normal subgroups of $G$. Since all minimal normal subgroups of $G$ intersect trivially and commute, it follows that $\soc(G)$ can be written as a direct product $\prod_i N_i$ where the $N_i$ are minimal normal subgroups of $G$. Thus $\soc(G)$ can be written as a direct product of finite simple groups. 

%In our constructions in the following chapters we will often want to show that a group $N = T^k$ where $T$ is a nonabelian simple group, is generated by some subset $C$, that is $\langle C \rangle = N$ for some $C \subseteq N$. Note that the group $N$ will often be the socle of some larger group $G$. We will now set up some general theory which will be useful for doing this.

Let $G = G_1 \times \cdots \times G_k$ be a group with direct product decomposition $S = \{G_1, \dots, G_k\}$, and for each $k$ such that $1\leq i\leq k$, let $\pi_i$ denote the projection map $\pi_i: G \rightarrow G_i$ given by $(g_1, \dots, g_k) \mapsto g_i$. We say that a nontrivial subgroup $H$ is a \textit{strip} of $G$ if for each $i$, either $\pi_i(H)  = 1$ or $\pi_i(H)\cong H$. The \textit{support} of a strip $H$ is defined as Supp$(H) := \{G_i: \pi_i(H) \neq 1\}$. We say that a strip $H$ is \textit{nontrivial} if $|\Supp (H)| >1$, and  is \textit{full} if $\pi_i(H) = G_i$ for all $G_i \in \Supp(H)$; and two strips are \emph{disjoint} if their supports are disjoint.  Details regarding the above concepts can be found in \cite[Section 4.4]{praeger2018permutation}. Further, a subgroup $H \leq G$ is $S$-\textit{subdirect} if $\pi_i(H) = G_i$ for all $i \in \{1,\dots, k\}$. If the decomposition $S$ is clear from the context, we simply say that $H$ is a subdirect subgroup of $G$.
 The following useful result describes subdirect subgroups of finite nonabelian characteristically simple groups.

 % Let $G = G_1 \times \cdots \times G_k$ be a direct product with fixed indexing and with $G$ finite. For each integer $i$ in the closed interval $[1, k]$, let $\pi_i$ denote the projection map $\pi_i: G \rightarrow G_i$ given by $(g_1, \dots, g_k) \mapsto g_i$. We say that a nontrivial subgroup $H$ is a \textit{strip} of $G$ if for each integer $i \in [1,k]$, we have $\pi_i(H)  = 1$ or $\pi_i(H)\cong H$. The \textit{support} of a strip $H$ is defined as Supp$(H) := \{G_i: \pi_i(H) \neq 1\}$. We say that a strip $H$ is \textit{nontrivial} if $|\Supp (H)| >1$ and that it is \textit{full} if $\pi_i(H) = G_i$ for all $G_i \in \Supp(H)$. Finally, we say that two strips $H$ and $L$ are disjoint if their supports are disjoint.  Details regarding the above concepts can be found in \cite[Section 4.4]{praeger2018permutation}. 
 
 % Given a group $G = G_1 \times \cdots \times G_k$ with direct product decomposition $S = \{G_1, \dots, G_k\}$,  we say that a subgroup $H \leq G$ is $S$-\textit{subdirect} if $\pi_i(H) = G_i$ for all $i \in \{1,\dots, k\}$. If there is no risk of confusion, we will simply say that $H$ is a subdirect subgroup of $G$.
 % The following useful result describes subdirect subgroups of finite nonabelian characteristically simple groups.

\begin{Proposition}\emph{\textbf{(Scott's Lemma) }}\emph{\cite[Theorem 4.15 (iii)]{praeger2018permutation}}\label{strips}
Let $R = \{1,\dots,k\}$ where $k \geq 1$, and let $N= \prod_{i \in R} T_i$ where the $T_i$ are isomorphic finite nonabelian simple groups.	
 % Let $k$ be a positive integer and let $N$ be a finite direct product $N = T_1 \times \cdots \times T_k$, where the $T_i$ are isomorphic finite nonabelian simple groups. 
 If $H$ is a subdirect subgroup of $N$ then $H$ is a direct product of pairwise disjoint full strips of $N$.
\end{Proposition}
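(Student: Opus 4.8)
The plan is to prove the statement by induction on $k=|R|$, peeling off one coordinate at a time. Throughout I would write $\hat T_i\le N$ for the $i$-th direct factor, $\pi_i\colon N\to T_i$ for the coordinate projection, and $\pi_S$ for the projection onto $\prod_{i\in S}T_i$. The base case $k=1$ is immediate, since then $H=N=T_1$ is itself a full strip. As a preliminary I would record the two-factor (Goursat-type) dichotomy: if $H_0\le T_i\times T_j$ is subdirect with $T_i,T_j$ nonabelian simple, then either $H_0=T_i\times T_j$, or $H_0=\{(t,\theta(t)):t\in T_i\}$ for an isomorphism $\theta\colon T_i\to T_j$ (a full strip of support $\{i,j\}$). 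Indeed $H_0\cap\hat T_i\unlhd H_0$ maps isomorphically under $\pi_i$ onto a normal subgroup of $T_i$, hence is $1$ or $\hat T_i$; comparing $|H_0|=|T_i|\,|H_0\cap\hat T_j|=|T_j|\,|H_0\cap\hat T_i|$ in the two cases, and using that a surjection out of a nonabelian simple group onto a simple group is an isomorphism, gives the claim.

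For the inductive step with $k\ge2$, fix coordinate $1$ and put $K:=\ker(\pi_1|_H)=H\cap\prod_{j\ge2}\hat T_j\unlhd H$, so $H/K\cong T_1$ is simple. Exactly as in the two-factor argument, $R_1:=H\cap\hat T_1$ is normal in $T_1$, hence $R_1=1$ or $R_1=\hat T_1$. If $R_1=\hat T_1$ then $\hat T_1\le H$, a short order count gives $H=\hat T_1\times H'$ with $H':=H\cap\prod_{j\ge2}\hat T_j$ subdirect in $\prod_{j\ge2}T_j$, and the inductive hypothesis applied to $H'$, together with the full strip $\hat T_1$ (support $\{1\}$, disjoint from the supports of the strips of $H'$), completes this case.

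So assume $R_1=1$. Since $\pi_j(K)\unlhd T_j$ for $j\ge2$, each $\pi_j(K)$ is $1$ or $T_j$, so $K$ is subdirect in $\prod_{j\in\Supp(K)}T_j$, a product of fewer than $k$ factors; by induction $K$ is a direct product of pairwise disjoint full strips $K=K_1\times\cdots\times K_m$, and these $K_a$ are precisely the minimal normal subgroups of $K$. I would then study $C:=C_H(K)\unlhd H$. First $Z(K)=1$, because each $\pi_j(Z(K))$ is a normal abelian subgroup of the nonabelian simple $T_j$ (hence trivial) and $\pi_1(Z(K))\le\pi_1(K)=1$; thus $C\cap K=1$, and $CK/K\unlhd H/K\cong T_1$ forces either $CK=H$ or $C\le K$ (whence $C=1$). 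If $CK=H$, then $H=C\times K$ with $C\cong T_1$; since $C\unlhd H\le N$ each $\pi_j(C)$ is normal in $T_j$, and for $j\in\Supp(K)$ it centralises $\pi_j(K)=T_j$ hence is trivial, so $C$ is a full strip with $\Supp(C)$ disjoint from $\Supp(K)$, and subdirectness of $H$ forces $\Supp(C)\sqcup\Supp(K)=\{1,\dots,k\}$; thus $H=C\times K$ is a direct product of pairwise disjoint full strips, as required.

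The remaining, genuinely delicate case is $C_H(K)=1$, where $H$ embeds in $\Aut(K)=\Aut(T)\wr\Sym(m)$ with $K=\operatorname{Inn}(K)$, so $H/K\cong T_1$ embeds in $\operatorname{Out}(T)\wr\Sym(m)$; as $\operatorname{Out}(T)$ is solvable (Schreier's conjecture, via the classification), $T_1$ embeds in $\Sym(m)$, and the strips $K_1,\dots,K_m$ are permuted non-trivially by $H$. Here I would take an $H$-orbit $\Omega$ on $\{K_1,\dots,K_m\}$ of smallest size, set $M:=\prod_{K_a\in\Omega}K_a\unlhd H$ and $J:=\bigcup_{K_a\in\Omega}\Supp(K_a)$, noting $1\notin J$ and (choosing $\Omega$ of minimal size) $J\subsetneq\{1,\dots,k\}$. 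Since $H\le N$ normalises every $\prod_{i\in S}\hat T_i$, the groups $\pi_J(H)$ and $\pi_{J'}(H)$ ($J':=\{1,\dots,k\}\setminus J$) are subdirect in products of fewer than $k$ simple factors, hence by induction are each direct products of pairwise disjoint full strips; moreover $M\cong T^{|\Omega|}$ is normal in $\pi_J(H)$, so it is a product of some of the strips of $\pi_J(H)$, and a short support count shows that either $\pi_J(H)=\prod_{j\in J}\hat T_j\le H$ (allowing one to split off $\prod_{j\in J}\hat T_j$ and induct on $J'$), or $\pi_J(H)$ has fewer strips than $|J|$, so that $H\le\pi_J(H)\times\pi_{J'}(H)$ is subdirect in a product of strictly fewer than $k$ simple ``block'' groups; the inductive hypothesis then applies, and composing a full strip of the block product with the full strips comprising each block yields a full strip of $N$, giving the decomposition of $H$. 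I expect this last step to carry essentially all of the difficulty: one must check that passing to the block groups genuinely lowers the induction parameter and that the strip decomposition transfers back inside $N$, and one must organise the bookkeeping of the permutation action of $H$ on the strips of $K$; everything else reduces to elementary manipulation of normal subgroups of products of nonabelian simple groups via the two-factor dichotomy and the centreless-ness of such products.
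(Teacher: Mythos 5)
The paper offers no proof of this proposition --- it is quoted as Scott's Lemma directly from \cite[Theorem 4.15(iii)]{praeger2018permutation} --- so the comparison is with the standard argument rather than with anything in the text. Your induction on $k$ is correct as far as I can check, but it is a genuinely different and considerably longer route. The standard proof sets $N_i:=\ker(\pi_i|_H)$, observes that each $N_i$ is a maximal normal subgroup of $H$ with nonabelian simple quotient $H/N_i\cong T$, declares $i\sim j$ iff $N_i=N_j$, and shows in one step that $H$ is the internal direct product of the subgroups $\pi_J(H)$ over the $\sim$-classes $J$, each a full strip; no induction, no case analysis, no deep input. Two remarks on your version. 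The appeal to Schreier's conjecture (hence the classification) in the subcase $C_H(K)=1$ is dispensable: your orbit argument goes through for an arbitrary $H$-orbit $\Omega$ on $\{K_1,\dots,K_m\}$ whether or not the induced permutation action is faithful, so you never need $T_1\hookrightarrow \Sym(m)$ (and that subcase is in fact vacuous a posteriori, since in the final decomposition the strip covering coordinate $1$ centralises $K$). Also, the pivotal unstated point there is that $M$ is not merely a product of \emph{some} strips of $\pi_J(H)$ but equals $\pi_J(H)$: the strips of $\pi_J(H)$ have pairwise disjoint supports with union $J=\Supp(M)$, so the sub-collection forming $M$ must be the whole collection. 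Once $M=\pi_J(H)\le H$ is noted, you can split $H=M\times\bigl(H\cap\prod_{j\in J'}\hat T_j\bigr)$ and induct on $J'$ directly, which makes the two-alternative support count and the ``block group'' detour unnecessary. What your approach buys is that everything reduces to the two-factor Goursat dichotomy; what the standard approach buys is brevity and freedom from CFSG.
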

Using Proposition \ref{strips}, we obtain the following method for showing that a subgroup of a nonabelian characteristically simple group $N$ is equal to $N$. 

\begin{Lemma}\label{stripmethod}
	Let $R, k, N$ be as in Proposition~$\ref{strips}$, and let $H \leq N$. Then $H = N$ if the following  two conditions hold:
	\begin{enumerate}[(a)]
		\item $\pi_i(H) = T_i$ for all $i \in R$; and
		%	\begin{enumerate}[(i)]
		%	\item 	for each $j \in R$ there exists an $h \in H$ with $[h]_j = a$, and 
		%	\item for each $j \in R$ there exists an $h' \in H$ with $[h']_j = b$, and
		%	\end{enumerate}
		\item no direct factor of $H$ is a nontrivial full strip of $N$.
		%\item no direct factor of $H$ is a full diagonal subgroup of a \textbf{nontrivial}	subproduct of $N$.
	\end{enumerate}
\end{Lemma}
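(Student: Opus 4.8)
The plan is to apply Scott's Lemma (Proposition \ref{strips}) and then use hypothesis (b) to force every strip in the resulting decomposition to be trivial. First I would observe that hypothesis (a) says precisely that $H$ is an $S$-subdirect subgroup of $N$ for the decomposition $S = \{T_1,\dots,T_k\}$. Since the $T_i$ are isomorphic finite nonabelian simple groups, Proposition \ref{strips} applies and gives $H = H_1 \times \cdots \times H_m$ where $H_1,\dots,H_m$ are pairwise disjoint full strips of $N$.

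Next I would analyse the supports. Each $\Supp(H_j)$ is a nonempty subset of $R$ (strips are nontrivial subgroups by definition, so have nonempty support), and disjointness of the strips means the sets $\Supp(H_1),\dots,\Supp(H_m)$ are pairwise disjoint. Moreover, for each $i \in R$ we have $\pi_i(H) = T_i \neq 1$, and since $\pi_i(H) = \prod_j \pi_i(H_j)$, the index $i$ must lie in $\Supp(H_j)$ for some $j$. Hence the supports $\Supp(H_1),\dots,\Supp(H_m)$ partition $R$.

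Now I would invoke hypothesis (b): none of the direct factors $H_j$ is a nontrivial full strip. Since each $H_j$ \emph{is} a full strip, this forces $|\Supp(H_j)| = 1$ for every $j$; write $\Supp(H_j) = \{i_j\}$. Being a full strip with this support means $\pi_{i_j}(H_j) = T_{i_j}$ while $\pi_\ell(H_j) = 1$ for all $\ell \neq i_j$, i.e.\ $H_j \leq T_{i_j}$ (identifying $T_{i_j}$ with the corresponding direct factor of $N$); combining the two gives $H_j = T_{i_j}$. Therefore $H = \prod_{j=1}^m T_{i_j}$, and since $\{i_1,\dots,i_m\}$ is a partition of $R$ (hence equals $R$ as a set, with $m = k$), we conclude $H = \prod_{i \in R} T_i = N$, as required.

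I do not expect a genuine obstacle here: the statement is essentially a clean corollary of Scott's Lemma, and the only point needing a little care is the bookkeeping that the strip supports actually cover all of $R$ (so that no $T_i$ is missed), which follows immediately from $\pi_i(H) = T_i$ in hypothesis (a).
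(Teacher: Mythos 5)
Your proof is correct and follows the same route as the paper: apply Scott's Lemma (Proposition \ref{strips}) via hypothesis (a) to write $H$ as a product of pairwise disjoint full strips, then use hypothesis (b) to force every strip to have support of size one, so that $H=N$. The paper's version is just terser; your extra bookkeeping that the supports partition $R$ is exactly the implicit content of its one-line argument.
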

\begin{proof}
	Condition (a) together with Proposition \ref{strips} implies that $H$ is a direct product of pairwise disjoint full strips of $N$. Condition (b) then implies that each of these strips is trivial and hence that $H = T_1\times \cdots \times T_{k} = N$.
	%Conditions (a)(i) and (a)(ii) imply that $H$ is a subdirect subgroup of $N$. Condition (b) combined with Proposition \ref{strips} ensures that each direct factor $\prod  M_j$ of of $H$ is a full diagonal subgroup of some  $\prod_{i \in I_j} T_i$ where  $|I_j|=1$. Hence $H = M$.
\end{proof}

Finally, we give the following lemma which follows from \cite[Lemma 4.10]{praeger2018permutation}. This is especially useful for verifying condition (b) of Lemma \ref{stripmethod}. 
\begin{Lemma}\label{stripAuto}
    Let $R, k, N$ be as in Proposition~$\ref{strips}$, and let $T$ be a finite nonabelian simple group such that each $T_i \cong T$. Let $F$ be a nontrivial full strip of $N$ and let $J:= \{j \in R : \pi_j(F) \neq 1\}$. Then for each $j \in J$ there is an automorphism $\psi_j \in \Aut(T)$ such that 
	
	$$F = \{(h^{\phi_1},\dots, h^{\phi_k}) : h \in T\}, \hbox{ where } h^{\phi_i} = \begin{cases} & h^{\psi_i}, \hbox{ if } i \in J \\ &1, \hspace{3.4mm}\hbox{ if } i \notin J.\end{cases}$$
	
	In particular, for any $x \in F$ and any $i,j\in J$, there is an automorphism $\psi \in \Aut(T)$ such that  $\pi_i(x)^\psi = \pi_j(x)$. 
\end{Lemma}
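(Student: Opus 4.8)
The plan is to deduce Lemma~\ref{stripAuto} directly from the definitions of strip, support and full strip, with \cite[Lemma 4.10]{praeger2018permutation} available as the ``official'' reference, and then to read off the ``in particular'' assertion. First I would record the trivial half: by definition $J=\{i\in R:\pi_i(F)\neq1\}$ is exactly the index set of $\Supp(F)$, so for every $i\notin J$ each element of $F$ has $i$-th coordinate equal to $1$. This is precisely the case $i\notin J$ of the displayed description.

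Next I would check that $\pi_i|_F$ is an isomorphism onto $T_i$ for each $i\in J$. Since $F$ is a strip and $\pi_i(F)\neq1$ we have $\pi_i(F)\cong F$, and since $F$ is full we have $\pi_i(F)=T_i$; hence $\pi_i|_F\colon F\to T_i$ is a surjection of finite groups with $|F|=|\pi_i(F)|=|T_i|$, and is therefore an isomorphism. Fixing an index $j_0\in J$ together with an identification of $T_{j_0}$ with $T$ (in every application of this lemma one has $N=T^k$ with all $T_i$ literally equal to $T$, so this costs nothing), the inverse $\rho:=(\pi_{j_0}|_F)^{-1}\colon T\to F$ parametrises $F$ by elements $h\in T$. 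For each $i\in J$, composing $\rho$ with $\pi_i|_F$ and the identification $T_i\cong T$ produces an automorphism $\psi_i\in\Aut(T)$ with the property that the $i$-th coordinate of $\rho(h)$ equals $h^{\psi_i}$, while for $i\notin J$ it is $1$. This yields $F=\{(h^{\phi_1},\dots,h^{\phi_k}):h\in T\}$ in the stated form, which is the content of \cite[Lemma 4.10]{praeger2018permutation}.

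Finally, for the ``in particular'' part, given $x\in F$ let $h\in T$ be the unique element with $\rho(h)=x$ (so $h$ corresponds to $\pi_{j_0}(x)$ under the chosen identification). Then for $i,j\in J$ we have $\pi_i(x)=h^{\psi_i}$ and $\pi_j(x)=h^{\psi_j}$, whence $\pi_i(x)^{\psi_i^{-1}\psi_j}=(h^{\psi_i})^{\psi_i^{-1}\psi_j}=h^{\psi_j}=\pi_j(x)$, so $\psi:=\psi_i^{-1}\psi_j\in\Aut(T)$ is as required.

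I do not expect a genuine obstacle here. The argument is essentially self-contained, and the only point needing care is the bookkeeping of the identifications $T_i\cong T$, which is what turns the isomorphisms $F\to T_i$ into honest automorphisms of the single group $T$; since the lemma is only ever applied with $N=T^k$ this is purely cosmetic, and one may alternatively just quote \cite[Lemma 4.10]{praeger2018permutation} in the right form.
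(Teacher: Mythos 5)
Your proof is correct. The paper itself gives no argument for Lemma \ref{stripAuto}, simply citing \cite[Lemma 4.10]{praeger2018permutation}, and your write-up is exactly the standard derivation behind that citation: each $\pi_i|_F$ for $i\in J$ is an isomorphism onto $T_i$ because $F$ is a full strip, inverting one fixed projection parametrises $F$ by $h\in T$, and the ``in particular'' claim follows by taking $\psi=\psi_i^{-1}\psi_j$.
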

For  $N= \prod_{i \in R} T_i$ as in Proposition~$\ref{strips}$, and $F$ a nontrivial full strip of $N$ with  $J:= \{j \in R : \pi_j(F) \neq 1\}$, we  say that $F$ is a \textit{diagonal subgroup} of the subproduct $\prod_{i \in J} T_i$.   

\smallskip
We are now ready to give our constructions. Due to some similarities between them, we will first provide our  constructions of basic pairs which have an abelian socle in Section \ref{secCycleConstructionAbelian}, before moving on to the nonabelian case in Section \ref{secCycleConstructionNonbelian}.

\subsection{Constructions with Abelian Socle}\label{secCycleConstructionAbelian}
Our first construction provides an infinite family of graph-group pairs $(\Gamma, G) \in \OG(4)$ of unoriented cycle-type with an abelian unique minimal normal subgroup. Pairs of this kind correspond to Case 2.ii. of Theorem \ref{CycleMainTheorem}.

\begin{Construction}\label{ConstAffUnoriented}
	Let $p, r$ be distinct odd primes such that $p <r$ and $p$ is a primitive root modulo $r$. %Let $k = r-1$ and % 
	Let $N = \mathbb{Z}_p^{r-1}$ and  let $f$ be the polynomial $f(X) = X^{r-1} + X^{r-2} + \dots X +1 \in \mathbb{Z}_p[X]$ of degree $r-1$.
	Define two $(r-1)\times (r-1)$ matrices $\sigma, \varphi \in \GL_{r-1}(p)$ as follows; $\sigma$ is the companion matrix of $f$, and $\varphi$ is the matrix whose only non-zero entries are $-1$ on the anti-diagonal, that is 
	
	$$ \sigma = \begin{bmatrix}
	0      & 0      & \dots      & 0 & -1 \\
	1     & 0      & \cdots    & 0 & -1 \\
	0      & 1     & \cdots   &  0& -1 \\
		\vdots & \vdots & \smash{\ddots} & \vdots  & \vdots \\
	0  & 0    & \cdots & 1  & -1
	\end{bmatrix} , 
	\hbox{ and }
	\varphi = \begin{bmatrix} 
	0 & &  &  & -1  \\
	&  &  &  -1 & \\
	& &   \reflectbox{$\ddots$} & & \\
	& -1  &  & & \\
	-1 &    &    &   & 0
	\end{bmatrix}.$$
	$$ $$
	
	Let $G := N\rtimes \langle \sigma, \varphi\rangle$ and note that $\langle \sigma , \varphi\rangle \cong D_r$ (and $|\varphi|=2$). Let $n:= (1, 0 ,\dots, 0) \in N$, and let $g := n \sigma$ and $s := \varphi$. Finally define $\Gamma:= \Cos(G, S, g)$ where $S = \langle s \rangle\cong \mathbb{Z}_2$.
\end{Construction}

\begin{Remark} \label{ArtinRemark}
	Artin's conjecture, which was communicated to Helmut Hasse by Emil Artin in 1927 \cite[p.4]{moree2012artin},   states that if $k\in \mathbb{Z}$ is not -1 or a perfect square, then $k$ is a primitive root modulo infinitely many primes. By \cite[Corollary 2]{heath1986artin}, there are at most two (positive) primes for which Artin's conjecture fails. In particular, at least one element of $\{3,5,7\}$ is a primitive root modulo infinitely many primes.  Hence there are infinitely many prime pairs $(p,r)$ fitting the criteria of Construction \ref{ConstAffUnoriented}.
\end{Remark}

\begin{Lemma}
	Let $(\Gamma, G)$ be as in Construction $\ref{ConstAffUnoriented}$. Then $(\Gamma, G) \in \OG(4)$ and is basic of unoriented-cycle type. Moreover, $N$ is the unique minimal normal subgroup of $G$, $N\cong \mathbb{Z}_p^{r-1}$, and $\Gamma_N \cong \mathbf{C}_r$. 
 \end{Lemma}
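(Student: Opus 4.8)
The plan is to verify the four conditions of Lemma~\ref{CosetOriented} to establish that $(\Gamma, G)\in\OG(4)$, then show that $N$ is the unique minimal normal subgroup using the irreducibility of $\langle\sigma,\varphi\rangle$ on $\mathbb{Z}_p^{r-1}$, and finally apply Lemma~\ref{CosetCycleType}(b) to conclude that $(\Gamma,G)$ is basic of unoriented-cycle type with $\Gamma_N\cong\mathbf{C}_r$. First I would record the basic structural facts: since $f(X)=X^{r-1}+\dots+1=(X^r-1)/(X-1)$ and $p$ is a primitive root modulo $r$, the polynomial $f$ is irreducible over $\mathbb{Z}_p$ (its roots are the primitive $r$-th roots of unity, which all lie in $\mathbb{F}_{p^{r-1}}$ and generate it), so by Lemma~\ref{irredcorr} the companion matrix $\sigma$ acts irreducibly on $V=\mathbb{Z}_p^{r-1}$ and has order $r$. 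One checks directly that $\varphi^2=I$ and $\varphi\sigma\varphi=\sigma^{-1}$ (this is a short matrix computation using the explicit anti-diagonal form of $\varphi$ and the companion form of $\sigma$, together with the relation $\sigma^{r}=I$, equivalently $f(\sigma)=0$), so $\langle\sigma,\varphi\rangle\cong D_r$ and $G=N\rtimes D_r$ has order $p^{r-1}\cdot 2r$.

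Next I would verify the Lemma~\ref{CosetOriented} conditions for $S=\langle\varphi\rangle$ and $g=n\sigma$ with $n=(1,0,\dots,0)$. \emph{Core-freeness of $S$:} the core of $S$ in $G$ is a normal subgroup of order dividing $2$; since $N$ acts on itself by translation and $\varphi$ does not centralise $N$ (indeed $\varphi$ acts as $-1$ on the anti-diagonal, not as the identity), no nontrivial element of $S$ is normal, giving (i). \emph{Condition (ii):} I must show $g^{-1}\notin SgS$, i.e.\ $(n\sigma)^{-1}\notin\langle\varphi\rangle\, n\sigma\,\langle\varphi\rangle$. The double coset $SgS$ has size at most $4$ and its elements are $n\sigma,\ \varphi n\sigma,\ n\sigma\varphi,\ \varphi n\sigma\varphi$; projecting to $G/N\cong D_r$ these map to $\sigma,\ \varphi\sigma,\ \sigma\varphi=\sigma^{-1}\varphi,\ \varphi\sigma\varphi=\sigma^{-1}$, whereas $g^{-1}$ projects to $\sigma^{-1}$. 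So the only candidate equality is $g^{-1}=\varphi n\sigma\varphi$; computing the $N$-parts shows $g^{-1}=\sigma^{-1}n^{-1}$ has $N$-part $(n^{-1})^{\sigma}$-type expression while $\varphi n\sigma\varphi$ has $N$-part $n^{\varphi}$, and since $r$ is odd these differ (one can pin this down: $g^{-1}\in SgS$ would force $g^2\in S$ or a similar contradiction with $\alpha^{g^2}\neq\alpha$), giving (ii). \emph{Condition (iii):} $S\cap S^g$ has order dividing $2$; since $|S|=2$, I need $S^g\neq S$, equivalently $\varphi^{g}\neq\varphi$, i.e.\ $g$ does not normalise $\langle\varphi\rangle$ — this holds because $\sigma$ does not normalise $\langle\varphi\rangle$ in $D_r$ for $r\geq 3$. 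So $|S:S\cap S^g|=2$. \emph{Condition (iv):} $\langle S,g\rangle=\langle\varphi,n\sigma\rangle$ contains $\varphi$ and $n\sigma$, hence $(n\sigma)^r\in\langle S,g\rangle$; since $\sigma$ has order $r$ and $\langle n\rangle$ together with its $\langle\sigma\rangle$-conjugates spans $N$ (by irreducibility of $\sigma$ — the $\sigma$-orbit of $n$ spans $V$ as $\mathbb{F}_p[\sigma](n)=V$ from the proof of Lemma~\ref{irredcorr}), one deduces $N\leq\langle S,g\rangle$, and then $\sigma,\varphi\in\langle S,g\rangle$, so $\langle S,g\rangle=G$. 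Thus $(\Gamma,G)\in\OG(4)$.

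For the uniqueness of $N$ as minimal normal subgroup: $N\cong\mathbb{Z}_p^{r-1}$ is abelian, and $G/N\cong D_r$ acts on $N$ via $\langle\sigma,\varphi\rangle\leq\GL_{r-1}(p)$; since $\sigma$ already acts irreducibly, this action is irreducible, so $N$ is a minimal normal subgroup. Any other minimal normal subgroup $M$ would intersect $N$ trivially and centralise it, but $C_G(N)$: an element $h\in C_G(N)$ projects into $D_r$ to something centralising the irreducible module $V$, which by Schur's lemma lies in the centre of $\langle\sigma,\varphi\rangle$; for $r\geq 3$ odd, $Z(D_r)=1$, so $h\in N$, giving $C_G(N)=N$. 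Hence no minimal normal subgroup other than $N$ exists. Finally, to invoke Lemma~\ref{CosetCycleType}(b) I must exhibit two $SN$-cosets each containing half of $SgS$ and half of $Sg^{-1}S$. Since $SgS$ projects in $G/N$ to $\{\sigma,\varphi\sigma,\sigma^{-1}\varphi,\sigma^{-1}\}$: the elements $n\sigma$ and $n\sigma\varphi$ (projecting to $\sigma$ and $\sigma\varphi=\sigma^{-1}\varphi$) lie in the same $SN$-coset $SN\sigma$? No — more carefully, $SN\sigma$ and $SN\varphi\sigma=SN\sigma^{-1}$ are the two relevant cosets (as $\langle S,N\rangle=N\langle\varphi\rangle$ and multiplying by $\sigma$ versus $\sigma^{-1}$ gives distinct cosets since $\sigma^2\notin N\langle\varphi\rangle$ for $r$ odd $\geq 3$), and one checks $SgS$ has two elements in $SN\sigma$ and two in $SN\sigma^{-1}$, and symmetrically for $Sg^{-1}S$; this uses that $\varphi$ conjugates $\sigma$ to $\sigma^{-1}$, so left/right multiplication by $\varphi$ swaps the two cosets. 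Then Lemma~\ref{CosetCycleType}(b) yields that $(\Gamma,G)$ is basic of unoriented-cycle type; and $\Gamma_N$ has $|G:SN|=|D_r:\langle\varphi\rangle|=r$ vertices, so $\Gamma_N\cong\mathbf{C}_r$.

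I expect the \textbf{main obstacle} to be the careful bookkeeping in condition (ii) of Lemma~\ref{CosetOriented} (that $g^{-1}\notin SgS$) and, relatedly, the precise identification of which $SN$-cosets contain the elements of $SgS$ and $Sg^{-1}S$ for Lemma~\ref{CosetCycleType}(b): both require tracking the $N$-components of products of the form $\varphi n\sigma\varphi$ through the semidirect product structure and using the parity of $r$ to rule out accidental coincidences. The verification that $f$ is irreducible and that $\varphi\sigma\varphi=\sigma^{-1}$ are routine, and the socle argument is a standard Schur's-lemma computation once irreducibility is in hand.
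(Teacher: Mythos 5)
Your overall architecture matches the paper's proof exactly (verify Lemma~\ref{CosetOriented}(i)--(iv), prove irreducibility of $f$ to get minimality and uniqueness of $N$, then apply Lemma~\ref{CosetCycleType}(b)), and most steps are sound. However, there is a genuine gap in your verification of condition (iv), i.e.\ $\langle S,g\rangle=G$. You propose to extract a nontrivial element of $N$ from $\langle S,g\rangle$ as $(n\sigma)^r$ and then use the $\langle\sigma\rangle$-conjugates of $n$. This fails for two reasons. First, $(n\sigma)^r = \bigl(n+n^{\sigma^{-1}}+\cdots+n^{\sigma^{-(r-1)}}\bigr)\sigma^r = n\cdot f(\sigma) = 0$, because $\sigma$ is the companion matrix of $f(X)=1+X+\cdots+X^{r-1}$ and hence $f(\sigma)=0$; so $g$ has order $r$ and $\langle g\rangle\cap N=1$. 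Second, the conjugates $n^{\sigma^i}$ are not known to lie in $\langle S,g\rangle$, since $\sigma$ itself is not an element of $\langle S,g\rangle$ a priori --- only the product $n\sigma$ is. Producing a nontrivial element of $N\cap\langle S,g\rangle$ genuinely requires words involving $\varphi$: the paper computes $X_i:=sg^isg^i=n_i^\varphi+n_i^{\sigma^i}=-2n_{r-i}$ for $i\in[1,r-1]$ and checks these span $N$; this computation is the real content of the connectedness proof and cannot be shortcut the way you suggest. (Once one nontrivial element of $N\cap\langle S,g\rangle$ is in hand, your implicit normality-plus-irreducibility argument does finish the job, so the fix is localized.)

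A smaller issue: in condition (ii), after reducing to the single candidate $g^{-1}=g^{\varphi}$, the computation gives $g^\varphi g=n^\varphi+n^\sigma=-2e_{r-1}$, which is nonzero because $p$ is odd, not because $r$ is odd as you state; since both primes are odd in the construction the conclusion survives, but the cited reason is the wrong one. The remaining steps (core-freeness, $|S:S\cap S^g|=2$, irreducibility of $f$ from $p$ being a primitive root modulo $r$, $C_G(N)=N$, and the identification of the two cosets $SN\sigma$ and $SN\sigma^{-1}$ for Lemma~\ref{CosetCycleType}(b)) agree in substance with the paper.
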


\begin{proof}
	We will use Lemmas \ref{CosetOriented} and \ref{CosetCycleType} to prove this claim. We begin by showing that $(\Gamma, G) \in \OG(4)$. 
	
	In what follows we will use multiplicative notation for multiplication in the group $G$. To make calculations more intuitive however, we will use additive notation when composing two elements of the subgroup $N = \mathbb{Z}_p^{r-1}$. Note first that 
    $$
    s^g = \sigma^{-1}(-n)\varphi n\sigma = \sigma^{-1}(-n + n^\varphi) \sigma^\varphi \varphi = \sigma^{-1}(-n + n^\varphi) \sigma^{-1} \varphi= (-n + n^\varphi)^\sigma \sigma^{-2} \varphi,
    $$ 
    and if this were equal to $s = \varphi$ then we would have $(-n + n^\varphi)^\sigma = \sigma^{2}\in N\cap \langle \sigma\rangle = 1$, whereas $|\sigma|\geq 3$. Thus $s^g\ne s$. Since $S \cong \mathbb{Z}_2$, this implies that $S \cap S^g = 1$, and hence $S$ is core-free in $G$ and $|S: S\cap S^g| = 2$. Next note that $SgS = \{g, g\varphi ,  \varphi g , g^\varphi\}$, and that  $g^2 = n\sigma n\sigma = (n+ n^{\sigma^{-1}}) \sigma^2$. Then $g^2 \neq 1$, so $g^{-1}\neq  g$; and $g^2 \neq \varphi$, so $g^{-1} \neq g\varphi$ and $g^{-1} \neq \varphi g$. Thus $g^{-1} \in SgS$ if and only if $g^{-1} = g^\varphi$. However, $g^\varphi = \varphi n\sigma \varphi = n^\varphi \sigma^{-1}$, and so $g^\varphi g = n^\varphi\sigma^{-1}n\sigma = n^\varphi + n^\sigma$. Now $n^\varphi + n^\sigma = (0,\dots,0,-1) + (0, 0,\dots, -1) \neq 0_N$  since $p \neq 2$, and we conclude that $g^\varphi \neq g^{-1}$. Hence $g^{-1} \notin SgS$. Thus properties (i) - (iii) of Lemma \ref{CosetOriented} hold. 
	
	In order to establish property (iv), that  $\langle g, s \rangle = G$, we first compute the powers of $g$. We have already seen that $g^2 =  (n+ n^{\sigma^{-1}}) \sigma^2$. Inductively, for $i \geq 2$, if $g^i = (n+n^{\sigma^{-1}}+\cdots+ n^{\sigma^{-(i-1)}})\sigma^{i}$, then 
	\begin{align*}
	g^{i+1} &= g^i(n\sigma)\\ &= (n+n^{\sigma^{-1}}+\cdots+ n^{\sigma^{-(i-1)}})\sigma^{i}n\sigma \\&= (n+n^{\sigma^{-1}}+\cdots+ n^{\sigma^{-(i-1)}}+n^{\sigma^{-i}})\sigma^{i+1}.
	\end{align*}
	Hence for each $i \in [1,r]$ we may write $g^i = n_i\sigma^{i}$, where $n_i:= n+n^{\sigma^{-1}}+\cdots+ n^{\sigma^{-(i-1)}}.$ Next for each $i \in [1,r-1]$ we will let $X_i$ denote the element $sg^isg^i \in \langle s,g\rangle$. More explicitly, 
	$$X_i := sg^isg^i = \varphi n_i\sigma^i \varphi n_i \sigma^i = \varphi n_i \varphi \sigma^{-i}n_i\sigma^{i} = n_i^\varphi + n_i^{\sigma^{i}}.$$
	We claim that the set $\{X_i: 1 \leq i \leq r-1\}$ generates $N$. We first the compute $n^{\sigma^i}$. Since $n = (1, 0,\dots, 0)$, $n^\sigma$ is the first row of the matrix $\sigma$, namely $(0,0,\dots, -1)$. This implies that $n^{\sigma^{2}}$ is $(0,\dots, -1,1)$, namely the last row of $\sigma$ multiplied by $-1.$ Now $n^{\sigma^{3}}$ is equal to the $(r-2)$nd row of $\sigma$ subtracted from the $(r-1)$st row, that is $n^{\sigma^3} = (0, \dots, -1,1,0)$. Continuing in this way we  see that, for $3 \leq i \leq r-1$, $n^{\sigma^i}$ is the vector with $-1$ in the $(r-i)$th position, $+1$ in the $(r-i+1)$th position, and $0$ in all other entries, that is 
    $$
    n^{\sigma^{i}}= (0,\dots, 0, \underbrace{-1}_{\text{\normalfont $(r-i)$th entry}},\underbrace{1}_{\text{\normalfont $(r-i+1)$th entry}}, 0, \dots, 0), \hbox{ \hspace{5mm}for } 2 \leq i \leq r-1.
    $$ 
    From this it easily follows that 
	\begin{equation}\label{ni}
	n^{\sigma^{-i}}= (0,\dots, 0, \underbrace{-1}_{\text{\normalfont $i$th entry}},\underbrace{1}_{\text{\normalfont $(i+1)$th entry}}, 0, \dots, 0), \hbox{ \hspace{5mm}for } 1 \leq i \leq r-2.
	\end{equation}
	
	Let us  consider the element $n_i \in N$. Now $n_i = n+n^{\sigma^{-1}}+\cdots+ n^{\sigma^{-(i-1)}}$ and so by (\ref{ni}) it is easy to see that for $1\leq i \leq r-2$, $n_i$ is a vector with a 1 in the $i$th entry and all other entries 0. So $$n_i = (0,\dots, 0, \underbrace{1}_{\text{\normalfont $i$th entry}},0, \dots, 0), \hbox{ \hspace{5mm}for } 1 \leq i \leq r-1.$$ It follows that $n_i^\varphi = -n_{r-i}$.
	
	On the other hand $n_i^{\sigma^i} =  (n+n^{\sigma^{-1}}+\cdots+ n^{\sigma^{-(i-1)}})^{\sigma^i } = n^{\sigma^{i}} + n^{\sigma^{i-1}}\cdots + n^\sigma$, so using the equations above we  check that $n_i^{\sigma^i}$ is the vector with $-1$ in the $(r-i)$th entry and all other entries 0. Hence  $n_i^{\sigma^i} = n_i^\varphi = -n_{r-i}$, for each $i \in [1,r-1]$.
	Thus $X_i = -2\cdot n_{r-i}$, and since the vectors $n_i$ are clearly linearly independent, so are the vectors $X_i$ for $i \in [1,r-1]$. Hence $N \leq \langle g, s\rangle$, and it follows that $\langle g, s \rangle = G$. Therefore $(\Gamma, G) \in \OG(4)$ by Lemma \ref{CosetOriented}.
	
	Next we claim that $\langle \sigma \rangle$  is irreducible on $N$. (From this it follows that $G$ is irreducible on $N$.) Since $\sigma$ is the companion matrix of $f$, by Lemma \ref{irredcorr} it suffices to show that $f$ is an irreducible polynomial over $\mathbb{Z}_p[X]$.
	To see that this is the case, note that $X^{r}-1 = (X-1)f(X)$. Thus the roots of $f(X)$ are the $r$th roots of unity in $\mathbb{Z}_p$. So let $\zeta$ be an $r$th root of unity and take an irreducible divisor $q(X)$ of $f(X)$ which has $\zeta$ as a root. Then all elements of $R:= \{\zeta, \zeta^p, \zeta^{p^2}, \dots \}$ are roots of $q(X)$. Of course $R$ must be finite since the degree of $q(X)$ is at most $r-1$.	So take the smallest integer $\ell \geq 1 $ such that $\zeta^{p^{\ell}} = \zeta$, and hence such that $\zeta^{p^{\ell-1}} = 1$. Since $|\zeta| = r$ it follows that $r \mid  p^\ell -1$, and so $\ell$ is the smallest integer such that $p^\ell \equiv 1$ mod $r$, that is to say, $\ell$ is the order of $p$  in $\mathbb{Z}_{r}^*$, which is $r-1$ (since $p$ is a primitive root modulo $r$). Thus $q(x)$ has degree $r-1$, and so $f(X) = q(X)$ is irreducible in $\mathbb{Z}_p[X]$. 	This proves the claim that  $\langle \sigma \rangle$, and hence also $G$, acts irreducibly on $N$. 
 
    It follows that $N$ is a minimal normal subgroup of $G$. Our computations above showed that $N\cong \mathbb{Z}_p^{r-1}$ and that $D_r=\langle \sigma, \varphi\rangle$ acts faithfully by conjugation on $N$. Thus $N$ is self-centralising in $G$, and hence is the unique minimal normal subgroup of $G$. 	
	Finally, computing 
    $$
    SgS = \{g, sg, gs, sgs\} = \{n\sigma, \varphi n\sigma, n\sigma\varphi, \varphi n \sigma \varphi\} = \allowdisplaybreaks \{n\sigma, \varphi n\sigma, \varphi n^\varphi\sigma^{-1}, n^\varphi\sigma^{-1}\},
    $$ 
    we see that half of these four elements are contained in $SN\sigma$ and the other half are contained in $SN\sigma^{-1}$. Moreover 
	\begin{align*}
	   Sg^{-1}S &= \{g^{-1}, sg^{-1}, g^{-1}s, sg^{-1}s\} =  \{\sigma^{-1}n^{-1}, \varphi \sigma^{-1}n^{-1}, \sigma^{-1}n^{-1}\varphi, \varphi \sigma^{-1}n^{-1}\varphi\} \\&= \{(n^{-1})^\sigma \sigma^{-1}, \varphi (n^{-1})^\sigma \sigma^{-1}, \varphi(n^{-1})^{\sigma\varphi} \sigma, (n^{-1})^{\varphi\sigma^{-1}}\sigma\},
	\end{align*}
	 and the same is true for $Sg^{-1}S$. Hence, by Lemma \ref{CosetCycleType} (b), $(\Gamma, G)$ is basic of unoriented-cycle type, and $\Gamma_N$ is the unoriented cycle $\mathbf{C}_r$. 
\end{proof}
Our next construction provides pairs of the kind described in Case 1(b)ii. of Theorem \ref{CycleMainTheorem}.

\begin{Construction}\label{ConstAffOriented}
	Let $r >2$ and let $p$ be an odd prime which does not divide $r-2$. Let $N = \mathbb{Z}_p^r$ and
	 define the matrices $\sigma, \varphi_1, \dots, \varphi_r \in \GL_r(p)$ as follows; $\sigma$ is the permutation matrix of order $r$ which cyclically permutes the $r$ standard basis vectors, and for $i \in [1,r]$, $\varphi_i$ is the diagonal matrix with $-1$ in the ($i,i)$ entry, and $1$ in all remaining diagonal entries. That is 

$$ \sigma = \begin{bmatrix}
0 &1     & 0      & \cdots    & 0   \\
0 &0      & 1     & \cdots   &  0 \\
0 &\vdots & \vdots & \smash{\ddots} & \vdots  \\
0 &0  & 0    & \cdots & 1  & \\
1 & 0  & 0&  \dots    & 0 \\
\end{bmatrix} , 
\hbox{ and }
\varphi_i = \begin{bmatrix} 
  1&0&\cdots&\cdots &0\\
\vdots&\ddots&&\cdots &0\\
0&&-1_{(i,i)}&\cdots &0\\
\vdots&&&\ddots&\vdots\\
0&0&0&\cdots &1
\end{bmatrix}, \hbox{ for $i \in [1,r].$ } $$

Notice that conjugation by $\sigma$ cyclically permutes the elements in the set $\{\varphi_1,\dots, \varphi_r\}$ according to the rule $\varphi_i^\sigma = \varphi_{i+1}$ where $\varphi_r^\sigma = \varphi_1$.

Let $S \leq \GL_r(p)$ be the group generated by $\{\varphi_1,\dots, \varphi_r\}$ and note that  \newline $S = \langle \varphi_1,\dots, \varphi_r \rangle \cong \mathbb{Z}_2^r$. Define $G := (N\rtimes S) \rtimes \langle \sigma \rangle$ with $\langle S, \sigma\rangle$ acting on $N$ by right multiplication. Note that $G/N \cong \langle S, \sigma \rangle \cong \mathbb{Z}_2^r \rtimes \mathbb{Z}_r$. Finally let $n:= (0, 0 ,\dots,0, 1) \in N$, let $g := n \sigma\in G$, and define $\Gamma:= \Cos(G, S, g)$.
\end{Construction}

\begin{Lemma}
	Let $(\Gamma, G)$ be as in Construction $\ref{ConstAffOriented}$. Then $(\Gamma, G) \in \OG(4)$, and is basic of oriented-cycle type. Moreover, $G$ has a unique minimal normal subgroup $N$ isomorphic to $\mathbb{Z}_p^{r}$ and  $\Gamma_N \cong \mathbf{C}_r$. 
 \end{Lemma}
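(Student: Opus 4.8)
The plan is to verify that $(\Gamma, G)\in\OG(4)$ by checking conditions (i)--(iv) of Lemma~\ref{CosetOriented}, then to establish the stated structural facts about $N$ and $\Gamma_N$, and finally to apply Lemma~\ref{CosetCycleType}(a) to conclude that the pair is basic of oriented-cycle type. Throughout, the computations should be done using multiplicative notation in $G$ but switching to additive notation within $N=\mathbb{Z}_p^r$, exactly as in the proof for Construction~\ref{ConstAffUnoriented}. The key point distinguishing this construction from the previous one is that here the ``rotation'' element $\sigma$ has order $r$ rather than being part of a dihedral action, so that multiplication by elements of $S$ will fix the $N$-orbits of $\Gamma_N$ setwise rather than swapping them --- this is precisely what forces the oriented conclusion via part (a) rather than part (b) of Lemma~\ref{CosetCycleType}.

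First I would check the four coset-graph conditions. For (i), core-freeness of $S$: since $S\cong\mathbb{Z}_2^r$ acts on $N=\mathbb{Z}_p^r$ as the group of all sign-changes on the standard basis, any nontrivial $G$-invariant subgroup of $S$ would have to be normalised by $\sigma$, which cyclically permutes the generators $\varphi_i$, so it would be all of $S$; but $S$ does not centralise $N$ (indeed $C_G(N)=1$, as will follow from irreducibility below), so $S$ contains no nontrivial normal subgroup of $G$. For (iii), that $|S:S\cap S^g|=2$: one computes $S^g = S^{n\sigma} = (S^n)^\sigma$; since $n=(0,\dots,0,1)$ and $\sigma$ cyclically shifts coordinates, conjugating the diagonal sign-change group by the translation $n$ and then by $\sigma$ produces a conjugate of $S$ that agrees with $S$ in all but one ``coordinate direction'', giving index $2$ --- this is the main bookkeeping computation and I expect it to be the most delicate of the four checks. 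For (ii), $g^{-1}\notin SgS$: since $g=n\sigma$ has image $\sigma$ of order $r\geq 3$ in $G/N$, while $SgS$ consists of elements whose image in $G/N$ lies in $S\sigma S$, and $g^{-1}$ has image $\sigma^{-1}$, one checks $\sigma^{-1}\notin S\sigma S$ in $\mathbb{Z}_2^r\rtimes\mathbb{Z}_r$ (as $r>2$ this is immediate: $S\sigma S$ has image $\sigma$ in $\mathbb{Z}_r$, not $\sigma^{-1}$). For (iv), $\langle S,g\rangle = G$: since $S\leq\langle S,g\rangle$ and $\langle S,g\rangle$ contains $g=n\sigma$, it suffices to produce all of $N$; computing $g^r = (n + n^{\sigma^{-1}}+\cdots+n^{\sigma^{-(r-1)}})\sigma^r$ and noting $\sigma^r=1$, one gets the all-ones vector, and combining with conjugates $\varphi_i g^r \varphi_i$ (which flip single coordinates) one generates $N$ provided the characteristic $p$ does not divide $r-2$ --- this is exactly where the hypothesis on $p$ enters, so I would track that arithmetic condition carefully.

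Next I would establish that $N$ is the unique minimal normal subgroup with $N\cong\mathbb{Z}_p^r$. Irreducibility of $G$ on $N$: the subgroup $\langle S,\sigma\rangle\cong\mathbb{Z}_2^r\rtimes\mathbb{Z}_r$ acts on $\mathbb{Z}_p^r$ with $\sigma$ permuting coordinates cyclically and $S$ giving all sign-changes; a nonzero $\langle S\rangle$-invariant subspace must be a coordinate subspace (being stable under all the commuting diagonalisable $\varphi_i$ with distinct eigenvalue patterns, using $p$ odd so $1\neq-1$), and the only $\sigma$-invariant coordinate subspaces are $0$ and $N$, so $G$ is irreducible. Hence $N$ is minimal normal, $C_G(N)\leq \GL_r(p)$ centralises an irreducible group so is scalar, but no nontrivial scalar lies in $\langle S,\sigma\rangle$ fixing $N$ pointwise other than $1$ (again using $p$ odd); thus $C_G(N)=1$ and $N$ is the unique minimal normal subgroup. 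Finally, for the cycle-type conclusion I would compute $SgS = \{g, sg, gs, sgs : s \text{ ranging over generators}\}$ explicitly and observe that every element has image $\sigma$ in $G/N$, so $SgS\subseteq SN\sigma$; similarly $Sg^{-1}S\subseteq SN\sigma^{-1}$; and since $\sigma$ has order $r\geq 3$, the cosets $SN\sigma$ and $SN\sigma^{-1}$ are distinct. Then Lemma~\ref{CosetCycleType}(a) applies directly to give that $(\Gamma,G)$ is basic of oriented-cycle type with $\Gamma_N\cong\mathbf{C}_r$, since the set of $SN$-cosets has size $r$ (the image of $\langle\sigma\rangle$ in $G/SN$). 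The main obstacle is the index-$2$ computation in (iii) together with keeping the $\bmod\ p$ divisibility condition straight in step (iv); everything else is a matter of carefully unwinding the semidirect-product arithmetic.
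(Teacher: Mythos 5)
Your overall plan is the same as the paper's: verify conditions (i)--(iv) of Lemma~\ref{CosetOriented}, prove irreducibility of $\langle S,\sigma\rangle$ on $N$ to get a unique minimal normal subgroup, and finish with Lemma~\ref{CosetCycleType}(a) via $SgS\subset SN\sigma$ and $Sg^{-1}S\subset SN\sigma^{-1}$. Your computations for (ii), (iii) and (iv) match the paper's (in particular $g^r=(1,1,\dots,1)$ and the nonsingularity of the matrix of vectors $\varphi_ig^r\varphi_i$, which is exactly where $p\nmid r-2$ enters). Your irreducibility argument is genuinely different and cleaner: you decompose an $S$-invariant subspace into the simultaneous eigenspaces of the commuting involutions $\varphi_i$ (which are the coordinate axes, with pairwise distinct characters since $p$ is odd) and observe that no proper nonempty coordinate subset is $\sigma$-invariant. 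The paper instead constructs $e_1$ explicitly from an arbitrary nonzero vector by the recursion $v_i=v_{i-1}+v_{i-1}^{\varphi_i}$, obtaining $v_r=2^{r-1}\alpha_1e_1$. Both work; yours is more conceptual, the paper's is more self-contained.

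There is one step that fails as stated: in your check of core-freeness you claim that a nontrivial $\sigma$-invariant subgroup of $S\cong\mathbb{Z}_2^r$ must be all of $S$. That is false --- for example $\langle\varphi_1\varphi_2\cdots\varphi_r\rangle$ is a $\sigma$-invariant subgroup of order $2$, as is the index-$2$ ``even weight'' subgroup. (Also, $C_G(N)=1$ cannot hold since $N$ is abelian, so $N\leq C_G(N)$; what you need is $C_S(N)=1$, equivalently that $\langle S,\sigma\rangle$ acts faithfully on $N$.) The repair is immediate from the ingredient you already mention: if $K\leq S$ is normal in $G$ then it is normalised by $N$, and for $s\in S$, $n\in N$ one has $s^n=(-n+n^s)s$, which lies in $S$ only if $n^s=n$; hence $K\leq C_S(N)=1$ because every nontrivial sign-change moves some vector of $\mathbb{Z}_p^r$ when $p$ is odd. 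The paper implements exactly this by conjugating by the single element $h=(1,1,\dots,1)$. With that correction your proposal goes through.
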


\begin{proof}
	First we use Lemma \ref{CosetOriented} to prove that $(\Gamma, G) \in \OG(4)$. For $1 \leq i \leq r$,  let $e_i$ denote the $ith$ standard basis vector of $\mathbb{F}_p^r$.  We identify the elements of the subgroup $N$ of $G$ with the vectors from $\mathbb{F}_p^r$ in the natural way, and we use additive notation for composing elements within the subgroup $N$. 
	To check that $S$ is core-free in $G$, consider $h = (1,1,\dots, 1) \in N$. For $s \in S$ we have $s^h = (h^{-1})sh = h^{-1}h^s \cdot s = (-h + h^s) \cdot s$, and this element is contained in $S$ if and only if $h^s = h$ which, in turn, holds only if $s = 1$. Therefore $S^h \cap S = \{1\}$ and so $S$ is core-free in $G$.
	
	To see that $g^{-1} \notin SgS$ we note that $g^{-1} = \sigma^{-1}n^{-1} = (n^{-1})^\sigma \sigma^{-1}\in NS \sigma^{-1}$, while every element $x \in SgS$ is of the form $x = s_1 n\sigma s_2 = s_1ns_2^{\sigma^{-1}}\sigma\in NS\sigma$ for some $s_1, s_2 \in S$. Since $r\geq3$ the two cosets $NS \sigma^{-1}$ and $NS \sigma$ are distinct, and hence $g^{-1} \notin SgS$. 
	
	Next we show that $|S: S\cap S^g| = 2$.  Note that $n = e_r$ and $e_i^\sigma = e_{i+1}$ with $e_r^\sigma = e_1$, and further 
    $$
    e_j^{\varphi_i} = \begin{cases}
	-e_j, \hbox{ if $i = j$}\\
	e_j, \hspace{3mm}\hbox{ if $i\neq j$}
	\end{cases} .
    $$ 
    We determine $S^g$ by computing $\varphi_i^g$ for each $i \in [1,r]$.  Here, interpreting $\varphi_{r+1}$ as $\varphi_1$,  we have 
	$$
    \varphi_i^g = \sigma^{-1}n^{-1}\varphi_i n \sigma = (n^{-1})^\sigma \varphi_i^\sigma n^\sigma = (-e_1) \varphi_{i+1} e_1 = (-e_1 + e_1^{\varphi_{i+1}}) \varphi_{i+1}.
        $$
	For $1\leq i <r$, we have $e_1^{\varphi_{i+1}} = e_1$ and hence $\varphi_i^g = \varphi_{i+1}$, while  $\varphi_r^g = (-e_1 + e_1^{\varphi_1})\varphi_1 = -2e_1 \varphi_1 \notin S$. It follows that $S \cap S^g=\langle \varphi_2, \dots \varphi_r \rangle$ and $|S: S\cap S^g| = 2$.
	
	Finally to show that $\langle S, g \rangle = G$ it is sufficient to show that $N \leq \langle S, g\rangle$ from which the result easily follows since $\langle S, g \rangle = \langle S, \sigma \rangle$ modulo $N$. First we show that $\langle S, g \rangle$ contains a nontrivial element of $N$, namely $g^r$, which  we  compute  as follows (noting that $\sigma^r = 1$): 
    \[
g^r =\underbrace{(n\sigma) (n\sigma)\dots (n\sigma)}_{\text{\normalfont $r$ times}} = n n^{\sigma^{-1}}n^{\sigma^{-2}}\cdots n^{\sigma^{-(r-1)}}\sigma^r = e_r + e_{r-1} + e_{r-2} + \cdots+ e_{1} = (1,1,\dots,1)\in N.
    \]
	% \begin{align*}
	% g^r &=\underbrace{(n\sigma) (n\sigma)\dots (n\sigma)}_{\text{\normalfont $r$ times}} \\&= n n^{\sigma^{-1}}n^{\sigma^{-2}}\cdots n^{\sigma^{-(r-1)}}\sigma^r \\&= e_r + e_{r-1} + e_{r-2} + \cdots+ e_{1}\\ &= (1,1,\dots,1).  
	% 	\end{align*}
	% In the third equality we use the fact that  and so $g^r \in N$. Thus $g^r$ is the element of $N$ corresponding to the sum of the $r$ standard basis vectors of $\mathbb{F}_p^r$ as shown above.	
	Since $N\unlhd G$,  $\varphi_ig^r \varphi_i\in N$ for each $i \in [1,r]$. We claim that the set $X = \{\varphi_ig^r \varphi_i: i \in [1,r]\}$ generates $N = \mathbb{Z}_p^r$. Note that, for $i\in [1,2]$, 
	$$\varphi_ig^r\varphi_i =  (1,1,\dots, 1)^{\varphi_i} = (1,1,\dots,1, \underbrace{-1}_{\text{\normalfont $i$th entry}},1,\dots, 1),
        $$ 
	so the $\mathbb{F}_p$-span of $X$ is the row space of the following $r\times r$ matrices over $\mathbb{F}_p$: 
    $$
    \begin{bmatrix}
-1 &1     & 1      & \cdots  &1  & 1   \\
1 &-1      & 1     & \cdots  &1 &  1 \\
\vdots &\vdots & \vdots & \smash{\ddots}&\vdots & \vdots  \\
1 &1  & 1    & \cdots & -1  &1 \\
1 & 1  & 1&  \dots    &1& -1 \\
\end{bmatrix} \ \equiv \  
    \begin{bmatrix}
-1 &1     & 1      & \cdots  &1  & 1   \\
2 &-2     & 0     & \cdots   &0 &  0 \\
\vdots &\vdots & \vdots & \smash{\ddots}&\vdots & \vdots  \\
2 &0  & 0    & \cdots & -2  &0 \\
2 & 0  & 0&  \dots    &0& -2 \\
\end{bmatrix} \ \equiv \  
    \begin{bmatrix}
r-2 &0     & 0      & \cdots  &0  & 0   \\
2 &-2     & 0     & \cdots   &0 &  0 \\
\vdots &\vdots & \vdots & \smash{\ddots}&\vdots & \vdots  \\
2 &0  & 0    & \cdots & -2  &0 \\
2 & 0  & 0&  \dots    &0& -2 \\
\end{bmatrix}    
$$
where $\equiv$ denotes row equivalence. All three of these matrices have the same row space, and it is clear that the third matrix is nonsingular since $p$ does not divide $r-2$. It follows that the set $X$ spans $N= \mathbb{Z}_p^r$, and hence $N\leq \langle S, g \rangle = G$, so $\langle S, g \rangle = G$ and   $(\Gamma, G)\in \OG(4)$ by Lemma \ref{CosetOriented}.
	
	% This task is relatively straightforward since $K = (J_r - 2I_r)$ where $I_r$ is the $r\times r$ identity matrix and $J_r$ is the $r\times r$ matrix which has a 1 as every entry. If the determinant of $K$ is equal to zero  modulo $p$, then the determinant of $(J_r - 2I_r)$ is zero  modulo $p$ which implies that there is a nonzero vector $v\in N$ which solves the equation $(J_r - 2I_r)v = 0$. This then implies that $J_rv = 2v$ meaning that $v$ is an eigenvector for $J_r$ with eigenvalue 2. However, it is relatively easy to see that the eigenvalues of $J_r$ are 0 (with multiplicity $r-1$) and $r$ (with multiplicity 1). So since $p$ is an odd prime we know that $0 \not\equiv 2$ mod $p$, and moreover, by assumption $r \not\equiv 2$ mod $p$. It follows that 2 is not an eigenvalue for $J_r$ and so we get det($K) \not\equiv 0$ modulo $p$. Thus 	
	
	Our next goal is to show that $N$ is the unique minimal normal subgroup of $G$ so that we can apply Lemma \ref{CosetCycleType}. We again treat $N$ as a vector space $V = \mathbb{F}_p^r$, and show that $G/N \cong \langle S, \sigma \rangle$ acts irreducibly on $V$. Let $G^+ = \langle S, \sigma \rangle$. It is sufficient to show that the subspace $\langle v^{G^+} \rangle$ spanned by the $G^+$-orbit of any nonzero vector $v \in V$ is equal to $V$. Such a vector has the form $v = \alpha_1 e_1 + \cdots + \alpha_r e_r$, where the $\alpha_i \in \mathbb{F}_p$ and at least one $\alpha_i\ne 0$. Without loss of generality we may assume that $\alpha_1 \neq 0$. (For if $\alpha_1=0$ and $\alpha_j \neq 0$ for some other $j$, then repeated applications of $\sigma \in G^+$ to $v$ produces another vector, say $w = \beta_1e_1 +\cdots \beta_re_r$, which lies in $v^{G^+}$ and which has $\beta_1 = \alpha_j \neq 0$, and we may proceed with this $w$.) We will show that each $e_i$ is contained in $\langle v^{G^+} \rangle$.  Notice that 
    $$
    v^{\varphi_2} = \alpha_1e_1 -\alpha_2e_2 +\cdots + \alpha_r e_r,\ \hbox{so that}\ v+v^{\varphi_2} = 2\alpha_1 e_1 + 2\alpha_3 e_3 +\cdots +2\alpha_r e_r\in \langle v^{G^+} \rangle.
    $$
	Continuing in this way, we recursively define $v_1:= v$ and $v_i := v_{i-1}+ v_{i-1}^{\varphi_i}$ for $i \in [2,r]$; note that all these vectors lie in $\langle v^{G^+} \rangle$. It is easily checked that $v_r = 2^{r-1}\alpha_1e_1$, which is non-zero since $p$ is  odd.  Hence $e_1 \in \langle v^{G^+} \rangle$; and also  $e_1^{\sigma^{i-1}} = e_i \in \langle v^{G^+} \rangle$ for each $i \in [1,r]$. Thus $\{e_1, e_2, \dots, e_r\}\subset \langle v^{G^+} \rangle$, and hence $\langle v^{G^+} \rangle=V$. Therefore $G^+$ acts irreducibly on $V$, and so $N$ is a minimal normal subgroup of $G$. 
    It follows from Construction~\ref{ConstAffOriented} that  $G^+ = \langle S, \sigma \rangle$ acts faithfully by conjugation on $N$, and hence $N$ is self-centralising in $G$. This implies that $N$ is the unique minimal normal subgroup of $G$.
    	
	Finally we show that (i) $SgS \subset SN\sigma$ and (ii) $Sg^{-1}S\subset SN\sigma^{-1}$, which together imply that $(\Gamma, G) \in \OG(4)$ is basic of oriented-cycle type, by Lemma \ref{CosetCycleType}. First, each element $a\in SgS$ has the form $a = s_1n\sigma s_2$ for some $s_1, s_2 \in S$, so 
    $$
    a = s_1n\sigma s_2 = s_1ns_2^{\sigma^{-1}}\sigma = s_1s_2^{\sigma^{-1}}n^{s_2^{\sigma^{-1}}}\sigma \in SN\sigma
    $$
    and condition (i) holds.	Similarly each $b \in Sg^{-1}S$ has the form $b = s_1\sigma^{-1}n^{-1} s_2$ with $s_1, s_2 \in S$, so 
    $$
    b =s_1\sigma^{-1}n^{-1} s_2 = s_1(n^{-1})^\sigma \sigma^{-1} s_2 = s_1(n^{-1})^\sigma  s_2^\sigma \sigma^{-1} =  s_1 s_2^\sigma (n^{-1})^{\sigma s_2^\sigma}  \sigma^{-1}  \in SN\sigma^{-1}.
    $$
	By Lemma \ref{CosetCycleType} $(\Gamma, G) \in \OG(4)$ is basic of oriented-cycle type, and $\Gamma_N$ is the oriented cycle $\mathbf{C}_r$. 
\end{proof}

\subsection{Constructions with Nonabelian Socle}\label{secCycleConstructionNonbelian}

Our final two constructions both give basic pairs $(\Gamma, G)\in \OG(4)$ where $\soc(G)$ is nonabelian. In both cases $G$ has a unique nonabelian minimal normal subgroup $N \cong T^k$ where $T$ is a nonabelian simple group. Here we will make heavy use of the theory discussed in Section \ref{secMinNormal}.

\subsubsection{$(\Gamma, G)$ is basic of unoriented-cycle type}

Our third construction produces basic pairs $(\Gamma, G)$ which are basic of unoriented-cycle type.

\begin{Construction}\label{NonabelianUnorientedConst2r}
	Take any integer $r \geq 3$ and let $T$ be a nonabelian simple group generated by two elements $a$ and $b$, such that $|a|=2, |b|\geq 3$, and the orders $|ab|$ and $|ba|$ are different from both $|a|$ and $|b|$.
	Consider the group $T \wr S_{2r}$ with $S_{2r}$ acting by permuting the ${2r}$ simple direct factors of $T^{2r}$.
	Let $\varphi$ and $\sigma $ be two elements of $S_{2r}$ defined as follows
	$$\varphi := (1,2r)(2,2r-1)(3,2r-2)\dots(r, 2r-(r-1)) \hbox{, and }$$
	$$ \sigma := (1,2,3,\dots,r)(r+1, r+2, r+3,\dots,2r).$$
	Note that $\varphi^2 = \sigma ^r = 1$ and $\sigma^\varphi = \sigma^{-1}$. Let $n:= (a,b,1,\dots,1,b,a)\in T^{2r}$, that is, every entry of $n$ is the identity element except for entries $1$, $2$, $2r-1$, and $2r$ which are $a$, $b$, $b$ and $a$ respectively.
	
	Now let $N:= T^{2r}$ and define the group $G := N \rtimes \langle \sigma, \varphi \rangle \leq T \wr S_{2r}$, so that $G \cong T^{2r} \rtimes D_r$. Finally, define two elements $s$ and $g$ of $G$ as $s := \varphi$, and $g := n\sigma$ and let $S : =\langle s \rangle$ and   $\Gamma := $ \rm{Cos}$(G, S, g)$.
\end{Construction}

\begin{Remark}\label{rem:simple}
 There are infinitely many simple groups $T$ fitting the requirements of Construction \ref{NonabelianUnorientedConst2r}. In particular, any nonabelian finite simple group can be generated by an involution $a$ and an element $b$ of prime order $p \geq 3$, see \cite{king2017generation}. In any such group the element $ab$ cannot be an involution since this would imply that $b^a=(ab)^2b = b$ and hence that $T=\langle b\rangle\rtimes \langle a\rangle$, contradicting the fact that $T$ is a nonabelian simple group. Since $ba=a(ab)a=(ab)^a$, the elements $ab$ and $ba$ have the same order. Thus the only extra requirement that needs checking  is that $|b| \neq |ab|$.  For an explicit example of such a group and generating set we may take $T = \PSL_{2}(p)$ for a prime $p \geq 5,$ and generators 
$$
a:=\begin{pmatrix} 0 & 1\\-1 & 0  \end{pmatrix} \hbox{ and } b:= \begin{pmatrix} 0 & 1\\-1 & 1  \end{pmatrix} \hbox{ modulo scalars. }
$$
In this case $a$ and $b$ have orders 2 and 3 respectively, while $ab$ has order $p$ (see \cite[Section 7.5]{coxeter2013generators}).   
\end{Remark}

Our aim is to show that each pair $(\Gamma, G)$ in Construction \ref{NonabelianUnorientedConst2r} lies in $\OG(4)$ and is basic of unoriented-cycle type, such that $G$ has a nonabelian socle with the largest possible number of simple direct factors. Since $r \geq 3$ can be chosen arbitrarily, this construction provides an infinite family of nonisomorphic  4-valent basic $G$-oriented graphs with unoriented cyclic normal quotients.
The most difficult part of the proof that $(\Gamma, G)\in\OG(4)$ is showing that $G=\langle g, s\rangle$, and hence that $\Gamma$ is connected, and we address this part separately in  the following lemma. 

\begin{Lemma}\label{gen}
	Let $G, s$ and $g$  (and hence also $r, n, N$, $\varphi$ and $ \sigma$) be as in Construction $\ref{NonabelianUnorientedConst2r}$. Then $\langle g, s \rangle = G$.
\end{Lemma}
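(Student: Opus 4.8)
The plan is to work modulo $N$ first, then recover the full group $N$ from suitable commutators of elements of $\langle g,s\rangle$. Set $H:=\langle g,s\rangle$. Reducing modulo $N$, the images of $g$ and $s$ are $\sigma$ and $\varphi$, which generate $\langle\sigma,\varphi\rangle\cong D_r$; hence $HN=G$, i.e. $H$ is a supplement to $N$ in $G$, and it suffices to prove $N\le H$. Since $N\cong T^{2r}$ and each $T_i\cong T$ is simple, a natural strategy is to apply Lemma \ref{stripmethod}: show that the subgroup $K:=H\cap N$ projects onto $T_i$ in each coordinate $i$, and that $K$ contains no nontrivial full strip of $N$. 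Because $H$ contains $\sigma$ and $\varphi$ (modulo $N$), $K$ is normalised by the $D_r$-action that acts transitively on the $2r$ coordinates, so it is enough to verify both conditions in one coordinate and then transport by $\langle\sigma,\varphi\rangle$.

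The concrete mechanism is to manufacture elements of $K=H\cap N$ by forming words in $g$ and $s$ whose image in $D_r$ is trivial. The obvious candidates are conjugates and commutators built from $g$ and $g^s$: for instance $g\,(g^{-1})^{s}$, or $g^r$ (which has image $\sigma^r=1$), or $[g,g^{\sigma^j}]$-type expressions after writing $g^{\sigma^j}=s^{?}g s^{?}\cdots$. Writing $g=n\sigma$ with $n=(a,b,1,\dots,1,b,a)$, one computes these words explicitly as elements of $T^{2r}$; the entries involve products like $a,b,ab,ba$ and their conjugates. The key point that the hypotheses on $T$ are designed to give is that these entries generate $T$ in each coordinate: since $\langle a,b\rangle=T$ and the orders $|ab|,|ba|$ differ from $|a|,|b|$, one can separate the contributions coordinate-by-coordinate and conclude $\pi_i(K)=T_i$ for all $i$. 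This handles condition (a) of Lemma \ref{stripmethod}.

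For condition (b) — that no direct factor of $K$ is a nontrivial full strip — I would argue by contradiction using Lemma \ref{stripAuto}: if $K$ contained a nontrivial full strip $F$ supported on a coordinate set $J$ with $|J|>1$, then for any $x\in F$ and any $i,j\in J$ there is $\psi\in\Aut(T)$ with $\pi_i(x)^\psi=\pi_j(x)$, so corresponding entries of every element of $F$ have equal order. Evaluating this on the explicit generators of $K$ produced above, one finds entries whose orders are forced to be among $\{|a|,|b|,|ab|\}$ in prescribed positions; the hypothesis that $|a|=2$, $|b|\ge 3$ and $|ab|,|ba|\notin\{|a|,|b|\}$ yields a pair of coordinates in the support with genuinely different entry-orders, contradicting the strip condition. (One must also take care that the $D_r$-action cannot conjugate a would-be strip into compatibility — but transitivity of $\langle\sigma,\varphi\rangle$ on coordinates, combined with the fact that $n$ is not invariant under the relevant coordinate permutations, rules this out.) Once both conditions of Lemma \ref{stripmethod} hold, $K=N$, so $N\le H$ and therefore $H=HN=G$.

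\textbf{Main obstacle.} I expect the genuinely delicate part to be the bookkeeping in the commutator computations: choosing words in $g,s$ that land in $N$ \emph{and} whose coordinate entries are clean enough to read off the subgroup they generate and the orders of their entries. The support of $n$ touches only coordinates $1,2,2r-1,2r$, and conjugating by powers of $\sigma$ rotates these within the two $r$-cycles while $\varphi$ mixes the two blocks; so getting every coordinate covered, and getting the order-incompatibility needed to kill full strips, requires a careful choice of exponents. Everything else — the reduction mod $N$, verifying $HN=G$, and the two applications of the Scott/strip machinery — is routine once the explicit elements of $H\cap N$ are in hand.
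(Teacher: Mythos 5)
Your proposal follows essentially the same route as the paper's proof: the paper sets $C=N\cap\langle g,s\rangle$, exhibits the explicit elements $X_1=g^sg=nn^{\sigma}$ and $X_2=(g^2)^sg^2=nn^{\sigma}n^{\sigma^2}n^{\sigma}$, deduces subdirectness from $\pi_{r+1}(\langle X_1,X_2\rangle)\supseteq\langle a,aba\rangle=T$ together with transitivity of $\langle g,s\rangle$ on the $2r$ coordinates, and rules out nontrivial full strips by comparing the orders of the entries of $X_1$ and $X_2$ via Lemma~\ref{stripAuto}, exactly as you outline. The only slip is that your candidate $g(g^{-1})^{s}$ has image $\sigma\cdot\sigma=\sigma^{2}\neq 1$ in $D_r$ and so does not lie in $N$; the words that do the job are of the form $(g^{i})^{s}g^{i}$.
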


%  A large part of the proof is devoted to giving a detailed description of the elements of a subset set of $N\cap \langle g,s\rangle$ that generates $N$, so that we can easily apply Lemma \ref{stripmethod}. Note that $g^s = n^s\sigma^s = n^s\sigma^{-1}$ so $g^sg = n^s(\sigma^{-1}n\sigma) \in N$, and in general   $(g^i)^s g^i \in N$ for each $i$. To illustrate the key idea of the proof we give here an example of these elements  in the case when $r = 7$ and note that this example essentially illustrates what these elements look like in general. 

% \begin{Example}\label{diagonalex}
% 	Let  $G, s$ and $g$ be as in Construction $\ref{NonabelianUnorientedConst2r}$ with $r = 7$, and for $1\leq i \leq r-1$, let $X_i := (g^i)^s(g^i)\in N\cap \langle g,s\rangle$. 
%  %let \newline $C = \{g^sg, (g^2)^sg^2, \dots, (g^r)^sg^r\}\subseteq \langle g, s\rangle $.  
%  Then writing each $X_i \in N$ as a $2r$-tuple of elements of $T$ we have 
% 	\begin{align*}
% 	X_1 &= (a, ba, b, 1, 1, 1, 1,  a, 1, 1, 1, 1, b, ab), \\
% 	X_2 &= (a, b, bab, b, 1, 1, 1, aba, a, 1, 1, 1, b, ab^2), \\
% 	X_3 &= (*, *, *, bab, b, 1, 1, *, aba, a, 1, 1, *, *), \\
% 	X_4 &= (*, *, *, *, bab, b, 1, *, *, aba, a, 1, *, *), \\
% 	X_5 &= (*, *, *, *, *, bab, b, *, *, *, aba, a, *, *), \\
% 	X_6 &= (ab, *, *, *, *, *, bab, *, *, *, *, aba, ba, *),
% 	\end{align*}
% 	where  $*$ denotes some element of $T$.
% \end{Example}

\begin{proof} %[Proof of Lemma \ref{gen}]
    We will show that $N \leq \langle g, s \rangle$ from which the result follows since $\langle g, s \rangle $ modulo $N$ is equal to $\langle \sigma, \varphi\rangle$.  We do this by identifying two particular nontrivial elements of the subgroup $C:= N\cap \langle g,s\rangle$ and proving that $C = N$.
    We represent the elements of $N = T_1 \times\dots \times T_{2r}$ as $2r$-tuples in the canonical way, and for  $m \in N$  and an integer $\ell \in [1,2r]$, we let $[m]_\ell$ denote the $\ell$th entry of $m$. In particular,  since $n = (a,b,1,\dots, 1, b, a) \in N$, we have $[n]_\ell = [n]_{2r+1-\ell}$ for each $\ell \in [1,2r]$,  and $[n]_1 = a, [n]_2 = b$, and $[n]_\ell = 1$ for $\ell \in [3, r]$; also $n^s=n^\varphi=n$. The elements we consider are
        $$
        X_1 = g^sg\quad \hbox{and}\quad X_2 =  (g^2)^sg^2, \quad\hbox{both in $\langle g,s\rangle$.}
        $$
        % $$
        % C_0 = \{g^sg, (g^2)^sg^2, \dots, (g^{r-1})^sg^{r-1}\} %\subseteq C= N\cap \langle g, s\rangle. 
        % $$ 
        We  show these elements lie in $N$ and hence lie in $C$. %, and that they essentially look like the elements  given in Example \ref{diagonalex}. 
        Then we deduce that $C$ projects onto each simple  direct factor of $N = T^{2r}$, and finally, a careful case analysis is given to show that $C$ cannot be a product of diagonal subgroups of  nontrivial  subproducts of $T^{2r}$, and hence $C =N$, by Lemma \ref{stripmethod}. 

        Note that $g^2 = n\sigma n\sigma =  n n^{\sigma^{-1}}\sigma^2$. Then, recalling that $n^s=n$ and $\sigma^s=\sigma^{-1}$, we have $X_1=g^sg=(n\sigma)^s(n\sigma)n\sigma^{-1}n\sigma = nn^{\sigma}\in N$, and
        $X_2 =  (g^2)^sg^2=(n n^{\sigma^{-1}}\sigma^2)^s ( n n^{\sigma^{-1}}\sigma^2)= nn^\sigma n^{\sigma^2} n^\sigma \in N$.
        As $2r$-tuples, the elements $X_1$ and $X_2$ have a slightly different form when $r = 3$ from larger values of $r >3$.
 %    Consider first the powers of the element $g$. Notice that $g^2 = n\sigma n\sigma =  n n^{\sigma^{-1}}\sigma^2$ and, for $i \geq 2$, if $g^i = nn^{\sigma^{-1}}\cdots n^{\sigma^{-(i-1)}}\sigma^{i}$, then 
	% $$
	% g^{i+1} = g^i(n\sigma) = nn^{\sigma^{-1}}\cdots n^{\sigma^{-(i-1)}}\sigma^{i}(n\sigma) 
 %        = nn^{\sigma^{-1}}\cdots n^{\sigma^{-(i-1)}}n^{\sigma^{-i}}\sigma^{i+1}.
	% $$
	% Hence, for each $i \geq 1$, we have $g^i =  nn^{\sigma^{-1}}n^{\sigma^{-2}}\cdots 
 %        n^{\sigma^{-(i-1)}}\sigma^i\in N\sigma^{i}$; and since $N\unlhd G$ and $\sigma^s=\sigma^\varphi=\sigma^{-1}$, we also have $(g^i)^s\in N(\sigma^i)^s=N\sigma^{-i}$, and it follows that $X_i := (g^i)^sg^i\in N\cap \langle g,s\rangle$. Thus $C_0 \subset C=N\cap \langle g,s\rangle$. The elements 
        If $r = 3$ then, recalling that $a^2 = 1$ and $n^s=n$, we have  
	\begin{align*}
	X_1 &=  nn^\sigma = (a,b,1,1,b,a)\cdot (1,a,b,a,1,b)= (a,ba,b,a,b,ab), \hbox{ and }\\
 %\\ &= (x_0x_{-1}, x_1x_0, x_2x_1, x_{-1}x_0, x_{-2}x_{-1},x_{-3}x_{-2}) \\&
	X_2 &=  nn^\sigma \cdot n^{\sigma^2}\cdot n^\sigma = (a,ba,b,a,b,ab) \cdot (b,1,a,b,a,1)\cdot (1,a,b,a,1,b)\\
 %&= (x_0x_{-1}x_{-2}x_{-1}, x_1x_0x_{-1}x_0, x_2x_1x_0x_1, x_{-1}x_0x_1x_{0}, x_{-2}x_{-1}x_{0}x_{-1},x_{-3}x_{-2}x_{-1}x_{-2}) \\ 
 %&= (ab, ba^2, bab, aba,ba,ab^2) \\
        &= (ab, b, bab, aba,ba,ab^2),
	\end{align*}
    while if $r\geq 4$, then
        \begin{align*}
	X_1 &=  nn^\sigma = (a,b,1,\dots,1,b,a)\cdot (1,a,b, 1,\dots,1,a,1,\dots,1, b)\\
        &=  (a, ba, b, 1, \dots, 1,  a, 1, \dots,  1, b, ab),
 	\end{align*}
    where in the last expression each of  the two strings of $1$'s has length $r-3$, and 
    \begin{align*}
	X_2 &= nn^\sigma \cdot n^{\sigma^2}\cdot n^\sigma \\
        &=  (a, ba, b, 1, \dots, 1,  a, 1, \dots,  1, b, ab) \cdot (1, 1, a, b,1,\dots,1, b,a,1\dots,1)\cdot (1,a,b, 1,\dots,1,a,1,\dots,1, b)\\
 %&= (x_0x_{-1}x_{-2}x_{-1}, x_1x_0x_{-1}x_0, x_2x_1x_0x_1, x_{-1}x_0x_1x_{0}, x_{-2}x_{-1}x_{0}x_{-1},x_{-3}x_{-2}x_{-1}x_{-2}) \\ 
 %&= (ab, ba^2, bab, aba,ba,ab^2) \\
        &= (a, ba^2, bab,b,1,\dots,1, aba, a, 1,\dots,1,b,ab^2),
	\end{align*}
    and here  in the last expression each of  the two strings of $1$'s has length $r-4\geq0$. For any $r\geq3$, we see that $\pi_{r+1}(C)$ contains  $\pi_{r+1}(\langle X_1, X_2\rangle)
    = \langle a,aba\rangle = \langle a,b\rangle \cong T$, and hence $\pi_{r+1}(C)= T_{r+1}$. Further $\langle g,s\rangle$ acts transitively by conjugation on the $2r$ simple direct factors of $N$, and 
    normalises $C=N\cap \langle g,s\rangle$. This implies that $\pi_i(C)= T_i$ for all $i$. Thus $C$ is a subdirect subgroup of  $N = T^{2r}$ and property (a) of Lemma \ref{stripmethod} holds. To check that property (b) of Lemma \ref{stripmethod} holds for $C$, recall that, by Proposition \ref{strips}, $C = \prod_{k 
		\in K}  A_k$, where each $A_k$ is a full strip, and if $S_k := \Supp(A_k)$, then $S_k \cap S_j = \emptyset$ whenever $k \neq j$. Our task is to show that each $S_k$ has size 1, implying that Lemma \ref{stripmethod}(b) holds. Further, the group $\langle g,s\rangle$ acts transitively by conjugation on the set of strips $\{ A_k \mid k\in K\}$ in $C$, and hence the supports $S_k$ all have the same size.  To verify property (b) of Lemma \ref{stripmethod}, suppose to the contrary that some direct factor $F$ of $C$ is a nontrivial full strip of $N$. Then each of the strips $A_k$ is nontrivial, and we may assume that $T_1$ lies in the support of $F$. By considering the orders of the elements in the entries of $X_1$ in light of Lemma \ref{stripAuto}, we conclude that $F$ is a full diagonal subgroup of the subproduct $T_1\times T_{r+1}$. (This is because if $T_y$ and $T_z$ lie in a single $S_k$ then the $y$th and $z$th entries of any element of $C$ must be equivalent modulo $\Aut(T)$.)
        It follows from Lemma  \ref{stripAuto} that, for each element of $C$, the entries in positions $1$ and $r+1$ must have the same order.  In particular, the orders of $[X_2]_1 = a$ and $[X_2]_{r+1}=aba=b^a$ should be equal, which is not the case. This contradiction implies that each $S_k$ has size $1$ and hence $C=N$ and $\langle g,s\rangle = G$, completing the proof.
        \end{proof}

Now we show that the pairs defined in Construction \ref{NonabelianUnorientedConst2r} are basic of unoriented-cycle type. 

\begin{Lemma}
	Let $(\Gamma, G)$ be as in Construction $\ref{NonabelianUnorientedConst2r}$. Then the following hold \begin{enumerate}[(a)]
		\item $(\Gamma, G) \in \OG(4)$,
		\item $(\Gamma, G)$ is basic of unoriented-cycle type.
	\end{enumerate}
\end{Lemma}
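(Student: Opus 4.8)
The plan is to apply Lemma~\ref{CosetOriented} and Lemma~\ref{CosetCycleType} directly, just as in the proofs for Constructions~\ref{ConstAffUnoriented} and~\ref{ConstAffOriented}, since Lemma~\ref{gen} has already done the hard work of verifying condition (iv) of Lemma~\ref{CosetOriented}. For part (a), I need to check the remaining conditions (i)--(iii). Condition (i), that $S=\langle\varphi\rangle$ is core-free in $G$: since $S\cong\mathbb{Z}_2$, it suffices to show $S^g\cap S=1$, equivalently that $\varphi^g\neq\varphi$. Compute $\varphi^g = \sigma^{-1}n^{-1}\varphi n\sigma = (n^{-1})^\sigma\,(\varphi^\sigma)\cdot\varphi^{-1}\varphi\cdot n^\sigma\varphi$-style manipulation; more cleanly, $s^g = g^{-1}sg = \sigma^{-1}n^{-1}\varphi n\sigma$, and since $\varphi n\varphi = n^\varphi = n$, we get $s^g = \sigma^{-1}n^{-1}n\varphi\sigma = \sigma^{-1}\varphi\sigma = \varphi\sigma^{\varphi}\sigma = \varphi\sigma^{-1}\sigma$? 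I must be careful here: $\sigma^{-1}\varphi\sigma = \varphi(\varphi^{-1}\sigma^{-1}\varphi)\sigma = \varphi\,(\sigma^{\varphi})^{-1}\sigma = \varphi\,\sigma\,\sigma = \varphi\sigma^2$, using $\sigma^\varphi=\sigma^{-1}$. So $s^g = \varphi\sigma^2 \neq \varphi$ since $\sigma^2\neq 1$ (as $r\geq 3$). Hence $S$ is core-free and simultaneously $|S:S\cap S^g|=|S|=2$, giving condition (iii). For condition (ii), $g^{-1}\notin SgS$: note $g = n\sigma\in N\sigma$ while $g^{-1}=\sigma^{-1}n^{-1}=(n^{-1})^\sigma\sigma^{-1}\in N\sigma^{-1}$, and every element of $SgS$ has the form $s_1 n\sigma s_2 = s_1 n s_2^{\sigma^{-1}}\sigma \in N\langle\varphi\rangle\sigma$; since the cosets $N\langle\varphi\rangle\sigma$ and $N\langle\varphi\rangle\sigma^{-1}$ of $N\langle\varphi\rangle = N\rtimes S$ in $G$ are distinct (as $\sigma^2\notin N\langle\varphi\rangle$ because $\langle\sigma,\varphi\rangle=D_r$ with $r\geq 3$), we conclude $g^{-1}\notin SgS$. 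Together with Lemma~\ref{gen} this gives $(\Gamma,G)\in\OG(4)$ via Lemma~\ref{CosetOriented}.

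For part (b), I first establish that $N$ is the unique minimal normal subgroup of $G$. Since $G = N\rtimes D_r$ with $N = T^{2r}$ and $D_r$ acting transitively by conjugation on the $2r$ simple direct factors (indeed $\langle\sigma,\varphi\rangle$ acts transitively on $\{1,\dots,2r\}$), it follows from the theory in Section~\ref{secMinNormal} (e.g.\ \cite[Corollary 4.16]{praeger2018permutation}) that $N$ is a minimal normal subgroup of $G$. Uniqueness follows because $C_G(N)$ is a normal subgroup of $G$ meeting $N$ trivially; since $D_r$ acts faithfully on $N$ (no nontrivial permutation of the coordinates together with automorphisms can centralise $T^{2r}$ when $T$ is nonabelian) we get $C_G(N)=1$, so any minimal normal subgroup distinct from $N$ would lie in $C_G(N)=1$, a contradiction. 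Hence $N$ is the unique minimal normal subgroup, and Lemma~\ref{CosetCycleType} applies.

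It then remains to verify condition (b) of Lemma~\ref{CosetCycleType}: that the two $SN$-cosets $SN\sigma$ and $SN\sigma^{-1}$ each contain exactly half of $SgS$ and half of $Sg^{-1}S$. Writing $SgS = \{g, sg, gs, sgs\}$ and using $g = n\sigma$, $s=\varphi$, I compute $g = n\sigma\in SN\sigma$, $sg = \varphi n\sigma = n^\varphi\varphi\sigma\in SN\sigma$ (since $\varphi\sigma$ and $\sigma$ differ by the element $\varphi\in S$), while $gs = n\sigma\varphi = n\varphi\sigma^{\varphi} = n\varphi\sigma^{-1}\in SN\sigma^{-1}$ and $sgs = \varphi n\sigma\varphi = n^\varphi\varphi\sigma\varphi = n\varphi\sigma^{-1}\varphi\cdot\varphi$-type reduction lands in $SN\sigma^{-1}$. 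So $SgS$ splits as two elements in $SN\sigma$ and two in $SN\sigma^{-1}$; an identical computation (or applying the inverse map, which swaps $\sigma\leftrightarrow\sigma^{-1}$ and preserves the coset structure since $N\unlhd G$) shows $Sg^{-1}S$ splits the same way. Therefore condition (b) of Lemma~\ref{CosetCycleType} holds and $(\Gamma,G)$ is basic of unoriented-cycle type with $\Gamma_N\cong\mathbf{C}_r$.

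The main obstacle in this argument is not the coset-graph bookkeeping --- which is routine given Lemmas~\ref{CosetOriented} and~\ref{CosetCycleType} --- but rather ensuring that the specific conjugation computations ($s^g$, the images of $SgS$ under the coset map, and the faithfulness of the $D_r$-action) are carried out without sign or order errors in the semidirect product $T\wr S_{2r}$; in particular one must keep straight that conjugating $n$ by $\varphi$ fixes $n$ (because of the palindromic choice of $n$ and the symmetry of $\varphi$), which is exactly the feature of Construction~\ref{NonabelianUnorientedConst2r} that makes these computations collapse cleanly. The generation statement, which is genuinely the delicate point, has already been handled in Lemma~\ref{gen}.
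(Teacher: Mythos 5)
Your overall strategy matches the paper's (verify Lemma \ref{CosetOriented}(i)--(iii) directly, cite Lemma \ref{gen} for (iv), then establish uniqueness of $N$ and apply Lemma \ref{CosetCycleType}(b)), and your treatment of (i), (iii), the minimal normal subgroup, and the coset-splitting in part (b) is correct. However, your verification of condition (ii), that $g^{-1}\notin SgS$, contains a genuine error. You claim that every element of $SgS$ lies in $N\langle\varphi\rangle\sigma = SN\sigma$, writing $s_1 n\sigma s_2 = s_1 n s_2^{\sigma^{-1}}\sigma$. But $\sigma$ does \emph{not} normalise $S=\langle\varphi\rangle$ here: $\varphi^{\sigma^{-1}} = \sigma\varphi\sigma^{-1} = \varphi\sigma^{-2}\notin S$, so for $s_2=\varphi$ one gets $s_1 n\sigma\varphi = s_1 n\varphi\sigma^{-1}\in SN\sigma^{-1}$. (This move is valid in Construction \ref{ConstAffOriented}, where $\sigma$ permutes the generators $\varphi_i$ of $S$, but not in Construction \ref{NonabelianUnorientedConst2r}.) Indeed, your own part (b) computation correctly places $gs$ and $sgs$ in $SN\sigma^{-1}$ --- the very coset containing $g^{-1}$ --- which contradicts the coset argument you give for (ii). In fact $sgs\equiv \varphi\sigma\varphi\equiv\sigma^{-1}\equiv g^{-1}\pmod{N}$, so no quotient argument can separate $g^{-1}$ from $sgs$; this half-and-half splitting of $SgS$ is precisely what makes the quotient unoriented, and it forces an honest check that $sgs\neq g^{-1}$.

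The missing computation is where the specific choice of $n$ enters. The paper disposes of $g,sg,gs$ modulo $N$ exactly as you could, and then computes $sgsg = \varphi n\sigma\varphi n\sigma = n n^{\sigma}$ (using $n^\varphi=n$ and $\varphi\sigma\varphi=\sigma^{-1}$), which is nontrivial because $n^{\sigma}\neq n^{-1}$ for the chosen $n=(a,b,1,\dots,1,b,a)$. Without this step the construction could, for a bad choice of $n$ (e.g.\ one with $n^\sigma=n^{-1}$), produce an arc-transitive rather than $G$-oriented graph, so the gap is substantive and must be filled by this explicit calculation.
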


\begin{proof}
	To prove part (a), we verify properties (i) - (iv) of Lemma \ref{CosetOriented}. 
	Recall that $s = \varphi$, $g = n \sigma$, $n^\varphi = n$ and $\sigma ^\varphi = \sigma^{-1}$. Since $s^g = \sigma^{-1}n^{-1}\varphi n \sigma$, which is equal to $\sigma^{-1}\varphi\sigma  = \varphi \sigma^2$ modulo $N$, while $s$ is equal to $\varphi$ modulo $N$, it follows that $s^g \neq s$ and hence that properties (i) and (iii) hold.
	Next we  show  that property (ii) holds, that is, that $g^{-1}\not\in SgS = \{g, sg, gs, sgs\}$. It is easy to see that $g^{-1} \neq g, sg, gs$ by considering these elements modulo $N$, so we only need to check that $g^{-1} \neq sgs$.
	For this, notice that if $sgs = g^{-1}$ then $sgsg = 1$. But $sgsg = \varphi n\sigma \varphi n \sigma = n n^{\sigma} \neq 1 $ since $n^{\sigma} \neq n^{-1}$. Hence $sgs  \neq g^{-1}$ and (ii) holds. Finally, property (iv) holds by Lemma \ref{gen}, and hence $(\Gamma, G) \in \OG(4)$ by Lemma \ref{CosetOriented}, proving part (a).
	
	For part (b) we argue as follows. Firstly, $N$ is a minimal normal subgroup of $G$ since $G$ acts transitively by conjugation on the simple direct factors of  $N$. Moreover, $C_G(N) =1$ as $\langle g,s\rangle$ acts faithfully by conjugation on $N$, and hence $N$ is the unique minimal normal subgroup of $G$. We show that $(\Gamma, G)$ is basic of unoriented-cycle type with $\Gamma_N$ a $G$-unoriented cycle by applying Lemma \ref{CosetCycleType}(b).
	
%	Take the graph $\Gamma$  and consider the vertex $S \in V\Gamma$. Notice that the $N$-orbit of this vertex is the set of all $S$-cosets contained in $SN$. Here $SN$ is a subgroup of $G$ and in fact $SN =  N\langle \varphi \rangle$, thus the index of $SN$ in $G$ is $r$.
%	Moreover, the right cosets of the subgroup $SN$ in $G$ are cyclically permuted by right multiplication by $\langle\sigma \rangle  \leq G$.  Since the order of $\Gamma$ is equal to $|G:S|$, and there are $|SN: S|$ vertices contained in each $N$-orbit, it follows that the number of $N$-orbits in $V\Gamma$ is exactly $|G:S| / |SN: S| = |G:SN| = r$, and these $N$-orbits are cyclically permuted by right multiplication by $\langle\sigma\rangle$.
	
	Consider the two distinct $SN$-cosets, $SN\sigma$ and $SN\sigma^{-1}$. Since $g = n\sigma \in N\sigma$, we have $Sg = \{g, sg\} \subset (SN)\sigma$. On the other hand, since
	$\sigma ^\varphi = \sigma^{-1}$ and $SN =  N\langle \varphi \rangle$, it follows that  $gs =n\sigma \varphi = \varphi n\sigma^{-1} \in  (SN)\sigma^{-1}$, and so $Sgs = \{gs, sgs\} \subset SN\sigma^{-1}$. In particular half of the elements of $SgS$ are contained in $SN\sigma$ and the other half are contained in $SN\sigma^{-1}$. 	
	Similarly, $g^{-1}$ = $\sigma^{-1}n^{-1} \in N\sigma^{-1}$ so $Sg^{-1} = \{g^{-1}, sg^{-1}\} \subset (SN)\sigma^{-1}$, and $g^{-1}s = \sigma^{-1}n^{-1}\varphi = \varphi (n^{-1})^{\sigma\varphi} \sigma \in  (SN)\sigma$.  So $Sg^{-1}s = \{g^{-1}s, sg^{-1}s\} \subset SN\sigma$. So again half of the elements of $Sg^{-1}S$ are contained in $SN\sigma$ and the other half are contained in $SN\sigma^{-1}$.	
	So the conditions of Lemma \ref{CosetCycleType}(b) hold, and $(\Gamma, G)\in \OG(4)$ is basic of unoriented-cycle type.
\end{proof}

At this point we can state and prove the following theorem, which gives an affirmative answer to \cite[Problem 1]{al2017cycle}.

\begin{theorem}\label{thm:problemsolved}
The number of unoriented cyclic normal quotients that a basic pair $(\Gamma, G)\in \OG(4)$ can have can be unboundedly large.
\end{theorem}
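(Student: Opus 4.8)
The plan is to read Theorem~\ref{thm:problemsolved} off from Construction~\ref{NonabelianUnorientedConst2r}. For each integer $r\geq 3$ that construction yields a pair $(\Gamma,G)\in\OG(4)$ which is basic of unoriented-cycle type, with $G=N\rtimes D_r$, $N=\soc(G)=T^{2r}$ the unique minimal normal subgroup of $G$, and $\Gamma_N\cong\mathbf{C}_r$ a $G$-unoriented cycle; all of this has been established in the lemmas accompanying the construction. Since $r$ is arbitrary, it is enough to show that, within one such pair, the number of distinct $G$-unoriented cyclic normal quotients is at least the number of divisors of $r$ that are at least $3$, and then to choose $r$ with many divisors.

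To exhibit these quotients I would quotient out by the subgroups of the rotation group. For each divisor $d$ of $r$ put $M_d:=\langle N,\sigma^d\rangle$. As $\langle\sigma^d N\rangle$ is a subgroup of the cyclic normal subgroup $\langle\sigma N\rangle\cong C_r$ of $G/N\cong D_r$, it is itself normal in $G/N$, so $M_d\trianglelefteq G$. Using the bijection between $N$-orbits and $SN$-cosets from the proof of Lemma~\ref{CosetCycleType}, the $N$-orbits on $V\Gamma$ are the $r$ vertices of $\Gamma_N\cong\mathbf{C}_r$, and $\sigma$ permutes them as the rotation of $\mathbf{C}_r$ by one step; hence $M_d/N=\langle\sigma^d N\rangle$ acts on them as the rotation by $d$ steps, and so has $\gcd(d,r)=d$ orbits. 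Therefore $\Gamma_{M_d}=(\Gamma_N)_{M_d/N}$ is a connected $2$-valent graph on $d$ vertices, that is, $\Gamma_{M_d}\cong\mathbf{C}_d$ for every divisor $d\geq 3$ of $r$. Since $(\Gamma,G)$ is basic of unoriented-cycle type, all of its cyclic normal quotients are $G$-unoriented, so each such $\Gamma_{M_d}$ is a $G$-unoriented cyclic normal quotient; and the graphs $\Gamma_{M_d}$ are pairwise non-isomorphic because they have pairwise distinct orders.

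Consequently $(\Gamma,G)$ has at least $\lvert\{\,d:d\mid r,\ d\geq 3\,\}\rvert$ distinct $G$-unoriented cyclic normal quotients. Taking $r=2^m$ already gives at least $m-1$ of them, which is unbounded as $m\to\infty$, so there can be no universal bound, and the theorem follows. I do not anticipate a serious obstacle here: the only point requiring care is the orbit computation for $M_d$ together with the observation that $\Gamma_{M_d}$ is a genuine cycle (rather than $K_1$ or $K_2$) precisely when $d\geq 3$, and this is immediate from the fact that $M_d/N$ acts on the cycle $\Gamma_N$ by pure rotations of order $r/d$ (no reflection, since $\varphi\notin M_d$). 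The substantial work — that the pair lies in $\OG(4)$, is basic, is of unoriented-cycle type, and has $\Gamma_N\cong\mathbf{C}_r$ — has already been carried out for Construction~\ref{NonabelianUnorientedConst2r}.
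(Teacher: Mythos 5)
Your proof is correct and follows essentially the same route as the paper: both quotient the pair from Construction~\ref{NonabelianUnorientedConst2r} by the normal subgroups $M_d = N\rtimes\langle\sigma^d\rangle$ for divisors $d$ of $r$ and then take $r$ to have many divisors. If anything your orbit count is the more careful one --- $\Gamma_{M_d}$ has $\gcd(d,r)=d$ vertices, hence is $\mathbf{C}_d$, whereas the paper records the length as $r/d$; since $d\mapsto r/d$ permutes the divisors of $r$, this discrepancy affects neither argument's conclusion.
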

\begin{proof}
    First notice that if $(\Gamma, G)$ is as in Construction \ref{NonabelianUnorientedConst2r} and we take a divisor $d$ of $r$ then we can take a subgroup $M_d \leq G$ where $M_d = N \rtimes \langle \sigma^d \rangle$. It is easy to see that $M_d$ is normal in $G$ since $N$ is normal and $\langle \sigma^d \rangle$ is normalised by $\langle \varphi, \sigma \rangle$. Since $\Gamma_N$ is an unoriented cycle of length $r$, $\Gamma_{M_d}$ will be an unoriented cycle of length $r/d$ (provided $d \neq r/2)$. Hence if we apply Construction \ref{NonabelianUnorientedConst2r}, with $r = p^m$ where $p$ is an odd prime, and $m$ arbitrarily large, we can construct a basic pair $(\Gamma, G)$ with arbitrarily many unoriented cyclic normal quotients.
\end{proof}

\subsubsection{The Final Construction}
Our final construction provides an infinite family  of graph-group pairs $(\Gamma, G) \in \OG(4)$ of the kind described in Theorem \ref{CycleMainTheorem} Case 1(b)i. with $k = r2^r$. In this case, our construction only produces examples having $r = 3$ and $k = 24$.

Similarly to the groups and generating pairs used in Construction $\ref{NonabelianUnorientedConst2r}$, there are infinitely many simple groups $T$ having generating pairs $(a,b)$ of the kind described in our following construction (see the discussion in Remark~\ref{rem:simple} for examples of such groups and generating pairs).

\begin{Construction}\label{NonabelianOrientedConst24}
	Let $T$ be a nonabelian simple group generated by two elements $a$ and $b$, where $a$ is an involution, $b$ has order $3$.
	Consider the group $T \wr S_{24}$ with $S_{24}$ acting by permuting the ${24}$ simple direct factors of $T^{24}$. Let $\Omega := \{1,\dots,24\}$ and let $I_1 := \{1,\dots,8\}$, $I_2 := \{9,\dots,16\}$ and $I_3 := \{17,\dots,24\}$ partition $\Omega$ into three subsets of size $8$.
	Let $\varphi_0$ and $\rho $ be elements of $S_{24}$ defined as follows
	$$\varphi_0 := (1,2)(3,4)(5,6)(7,8)\cdot(9,10)(11,12)(13,14)(15,16)\cdot(17,18)(19,20)(21,22)(23,24),$$
	$$\rho := (2,4,8)(3,5,7)\cdot(10,12,16)(11,13,15) \cdot(18,20,24)(19,21,23).$$
	Next define $\varphi_1$ and $\varphi_2$ as 
	\begin{align*}
	    \varphi_1 &:= \varphi_0^\rho \\&= (1,4)(2,3)(5,8)(6,7)\cdot(9,12)(10,11)(13,16)(14,15) \cdot(17,20)(18,19)(21,24)(22,23)\\
    \varphi_2 &:= {\varphi_0}^{\rho^2} \\& \hspace{-1mm}= (1,8)(2,7)(3,6)(4,5)\cdot(9,16)(10,15)(11,14)(12,13) \cdot(17,24)(18,23)(19,22)(20,21).
\end{align*}
	Notice that $\varphi_0$, $\varphi_1$ and $\varphi_2$ are pairwise commuting involutions which are cyclically permuted by conjugation by $\rho$, and that $\langle \varphi_0, \varphi_1, \varphi_2\rangle$ acts semiregularly with $3$ obits on $\Omega$, namely $I_1, I_2$, and $I_3$.
	Now define two more elements of $S_{24}$, $\theta$ and $\sigma$ by
	$$\theta: i \mapsto i+8 \hbox{ mod } 24 \hbox{, and}$$
	$$
    \sigma: i \mapsto i^{\rho\theta}, \hbox{ that is } \sigma = \rho\theta.
    $$
	Note that $[\rho, \theta] =1$ which implies that $\sigma$ has order $3$; moreover $[\theta, \varphi_i] =1$ for each $i \in \{0,1,2\}$, which implies that $\varphi_i^\sigma = \varphi_{i+1 \emph{ mod } 3}$. 
	
	Let  $n$ be an element of $T^{24}$ defined as 
	$$n:= (a,a,a,a,1,1,1,1,b,b,b,b,1,1,1,1,b,b,b,b,1,1,1,1).$$
	
	Further, let $N:= T^{24}$ and define the group $G := N \rtimes \langle  \varphi_0, \varphi_1, \varphi_2 ,\sigma\rangle \leq T \wr S_{24}$, so that $G \cong (T^{24} \rtimes C_2^3) \rtimes C_3$. 
	Finally, let $g= n\sigma \in G$, let $S : =\langle \varphi_0, \varphi_1, \varphi_2 \rangle$ and let $\Gamma := $ \rm{Cos}$(G, S, g)$.
\end{Construction}

 Once again,  the most difficult aspect of showing that our construction is valid is showing that the graph $\Gamma$ is connected. We therefore begin this process with the following lemma.
 
\begin{Lemma}\label{gen24}
	Let $G, S$ and $g$ be as in Construction $\ref{NonabelianOrientedConst24}$. Then $\langle S, g \rangle = G$.
\end{Lemma}
\begin{proof}
	Notice that $\langle S, g \rangle$ mod $N$ is equal to $\langle \varphi_0, \varphi_1, \varphi_2, \sigma \rangle$ hence we only need to show that $N \leq \langle S, g \rangle$. 
	
	To this end take an element $X_0$ of $\langle S, g \rangle$ defined as $X_0 := \varphi_0 \cdot \varphi_0^{g^3}$. In order to explicitly compute this element it is important to note that the element $n$ commutes with both $\varphi_0$ and $\varphi_1$. In particular $\varphi_0^g = \varphi_0^{n\sigma}  = \varphi_0^{\sigma} = \varphi_1$, and hence $\varphi_0^{g^2} = \varphi_1^g = \varphi_1^{n\sigma} = \varphi_1^\sigma = \varphi_2$. We will also use the fact that  $$n^{-1}:= (a,a,a,a,1,1,1,1,b^{-1},b^{-1},b^{-1},b^{-1},1,1,1,1,b^{-1},b^{-1},b^{-1},b^{-1},1,1,1,1) \hbox{, while }$$
	$$n^{\varphi_2} = (1,1,1,1,a,a,a,a, \hspace{4mm}1,1,1,1,b,b,b,b, \hspace{4mm} 1,1,1,1,b,b,b,b). $$
	Now we may compute $X_0$ as follows
	\begin{align*}
	X_0 &= \varphi_0 \cdot \varphi_0^{g^3} = \varphi_0\cdot \varphi_2^g = \varphi_0\cdot \varphi_2^{n\sigma}\\
	&= \varphi_0\cdot (\sigma^{-1}n^{-1}\varphi_2n\sigma) \\
	&= \varphi_0\cdot (\sigma^{-1}n^{-1}n^{\varphi_2}\varphi_2\sigma)\\
	& =  \varphi_0\cdot((n^{-1})^\sigma n^{\varphi_2\sigma}\varphi_2^\sigma)  = \varphi_0 ((n^{-1})^\sigma n^{\varphi_2\sigma}) \varphi_0\\
	& = (n^{-1}n^{\varphi_2})^{\sigma\varphi_0}\\
	& = (a,a,a,a,a,a,a,a, \hspace{4mm} b^{-1},b^{-1},b^{-1},b^{-1},b,b,b,b, \hspace{4mm}b^{-1},b^{-1},b^{-1},b^{-1},b,b,b,b)^{\sigma\varphi_0}\\
	& = (b^{-1}, b, b, b^{-1},b^{-1} , b, b, b^{-1}, \hspace{4mm} a,a,a,a,a,a,a,a, \hspace{4mm} b^{-1}, b, b, b^{-1},b^{-1} b, b, b^{-1})^{\varphi_0}\\
	& = (b, b^{-1}, b^{-1}, b,b , b^{-1}, b^{-1}, b,\hspace{4mm} a,a,a,a,a,a,a,a,\hspace{4mm} b, b^{-1}, b^{-1}, b,b,  b^{-1}, b^{-1}, b).
	\end{align*}
	
	Now define a subset $C$ of $\langle S, g \rangle$ as $C := \{X_0^{g^i} : i \in [0,5]\}$, and for each $i \in [0,5]$, set $X_i := X_0^{g^i}$. To compute $X_i$ given $X_{i-1}$ we simply conjugate $X_i$ by $n$ and then permute the direct factors according to $\sigma$. Note that conjugation by $n$ only affects entries in the range $[1, 4] \cup [9,12] \cup [16,20]$. 
	
	Thus for instance we can compute $X_1$ as follows 
	\begin{align*}
	X_1 &= (b, b^{-1}, b^{-1}, b,b , b^{-1}, b^{-1}, b,\hspace{4mm} a,a,a,a,a,a,a,a,\hspace{4mm} b, b^{-1}, b^{-1}, b,b,  b^{-1}, b^{-1}, b)^{n\sigma}\\
	& \hspace{-8mm}= (b^a, (b^{-1})^a, (b^{-1})^a, b^a,b , b^{-1}, b^{-1}, b,\hspace{3mm} a^b,a^b,a^b,a^b,a,a,a,a,\hspace{3mm} b, b^{-1}, b^{-1}, b,b,  b^{-1}, b^{-1}, b)^\sigma\\
	& \hspace{-8mm}= (b,b,b^{-1},b^{-1},b^{-1},b^{-1},b,b,\hspace{3mm} b^a,b,b^{-1},(b^{-1})^a,(b^{-1})^a,b^{-1},b,b^a,\hspace{3mm}a^b, a,a,a^b,a^b,a,a,a^b).
	\end{align*}
	Proceeding in this way we may compute the rest of the elements in $C$. In Table \ref{C24table} we give an entrywise description of the elements of $C$. The entries in each row can be computed from the row appearing immediately before it by first conjugating entries in $\{1,2,3,4\}$ by $a$, and entries in $\{9,10,11,12\}\cup\{17,18,19,20\}$ by $b$, and then applying the permutation $\sigma$ on the entries, as was done for computing $X_1$ from $X_0$.

	\begin{comment}
	\begin{align*}
	X_2 &= (b,b,b^{-1},b^{-1},b^{-1},b^{-1},b,b,\hspace{4mm} b^a,b,b^{-1},(b^{-1})^a,(b^{-1})^a,b^{-1},b,b^a,\hspace{4mm}a^b, a,a,a^b,a^b,a,a,a^b)^{n\sigma}\\
	&=  (b^a,b^a,(b^{-1})^a,(b^{-1})^a,b^{-1},b^{-1},b,b,\hspace{4mm} b^{ab},b,b^{-1},(b^{-1})^{ab},(b^{-1})^a,b^{-1},b,b^a,\hspace{4mm}a^{b^2}, a^b,a^b,a^{b^2},a^b,a,a,a^b)^\sigma\\
	& = (a^{b^2},a^b,a,a^b,a^b,a,a^b,a^{b^2},\hspace{4mm} b^a,b,b,b^a,(b^{-1})^a,(b^{-1}),(b^{-1}),(b^{-1})^a\hspace{4mm},b^{ab},b^a,b,b,b^{-1},b^{-1},(b^{-1})^a,(b^{-1})^{ab}),
	\end{align*}
	
	\begin{align*}
	X_3 &= (a^{b^2},a^b,a,a^b,a^b,a,a^b,a^{b^2},\hspace{4mm} b^a,b,b,b^a,(b^{-1})^a,(b^{-1}),(b^{-1}),(b^{-1})^a\hspace{4mm},b^{ab},b^a,b,b,b^{-1},b^{-1},(b^{-1})^a,(b^{-1})^{ab})^{n\sigma}\\
	&=  (a^{b^2a},a^{ba},a,a^{ba},a^b,a,a^b,a^{b^2},\hspace{4mm} b^{ab},b,b,b^{ab},(b^{-1})^a,(b^{-1}),(b^{-1}),(b^{-1})^a\hspace{4mm},b^{ab^2},b^{ab},b,b,b^{-1},b^{-1},(b^{-1})^a,(b^{-1})^{ab})^\sigma\\
	&= (b^{ab^2},(b^{-1})^{ab},(b^{-1})^a,b^{ab},b,(b^{-1}),(b^{-1}),b, \hspace{4mm} a^{b^2a},a^{b^2},a^b,a^{ba},a,a,a^b,a^{ba},\hspace{4mm} b^{bab},(b^{-1})^a,(b^{-1}),b,b,(b^{-1}),(b^{-1})^a,b^{ab})
	\end{align*}
	\end{comment}
 
	    \begin{table}[H]
		\renewcommand{\arraystretch}{2}
		\centering
		\footnotesize\setlength{\tabcolsep}{2pt}
		\resizebox{\linewidth}{!}{
		\begin{tabular}{|l@{\hspace{11pt}} |*{8}{c}|*{8}{c}|*{8}{c}|}
					\hline

				$i$ & 1 & 2 & 3 & 4 & 5 & 6 & 7 & 8 & 9 & 10 & 11 & 12 & 13 & 14 & 15 & 16 & 17 & 18 & 19 & 20 & 21 & 22 & 23 & 24 \\
				\hline
				$X_0$
				&$b$&$ b^{-1}$&$ b^{-1}$&$ b$&$b $&$ b^{-1}$&$ b^{-1}$&$ b$&$ a$&$a$&$a$&$a$&$a$&$a$&$a$&$a$&$b$&$ b^{-1}$&$ b^{-1}$&$ b$&$b$&$  b^{-1}$&$ b^{-1}$&$ b$\\
			     $X_1$
				&$b$&$b$&$b^{-1}$&$b^{-1}$&$b^{-1}$&$b^{-1}$&$b$&$b$&$ b^a$&$b$&$b^{-1}$&$(b^{-1})^a$&$(b^{-1})^a$&$b^{-1}$&$b$&$b^a$&$a^b$&$ a$&$a$&$a^b$&$a^b$&$a$&$a$&$a^b$ \\
				
				 $X_2$
				&$a^{b^2}$&$a^b$&$a$&$a^b$&$a^b$&$a$&$a^b$&$a^{b^2}$&$ b^a$&$b$&$b$&$b^a$&$(b^{-1})^a$&$b^{-1}$&$b^{-1}$&$(b^{-1})^a$&$b^{ab}$&$b^a$&$b$&$b$&$b^{-1}$&$b^{-1}$&$(b^{-1})^a$&$(b^{-1})^{ab}$ \\
				
			 $X_3$
				&$b^{ab^2}$&$(b^{-1})^{ab}$&$(b^{-1})^a$&$b^{ab}$&$b$&$b^{-1}$&$b^{-1}$&$b$&$  a^{b^2a}$&$a^{b^2}$&$a^b$&$a^{ba}$&$a$&$a$&$a^b$&$a^{ba}$&$ b^{ab}$&$(b^{-1})^a$&$b^{-1}$&$b$&$b$&$b^{-1}$&$(b^{-1})^a$&$b^{ab}$\\ 
				
				 $X_4$
				&$	b^{ab^2}$&$b^{ab}$&$(b^{-1})^a$&$(b^{-1})^{ab}$&$b^{-1}$&$b^{-1}$&$b$&$b$&$b^{ab^2a}$&$b$&$b^{-1}$&$(b^{-1})^{aba}$&$b^{-1}$&$b^{-1}$&$b$&$b^{aba}$&$a^{b^2ab}$&$a^{ba}$&$a^b$&$a^{b^3}$&$a^{b^2}$&$a$&$a$&$a^{bab}$\\
				 $X_5$                                                 
				&$a^{b^2ab^2}$&$a^{bab}$&$a$&$a^{bab}$&$a^{b^2}$&$a$&$a^{b^2}$&$a^{b^4}$&$b^{ab^2a}$&$b$&$b$&$b^{aba}$&$b^{-1}$&$b^{-1}$&$b^{-1}$&$(b^{-1})^{aba}$&$b^{ab^2ab}$&$b^{aba}$&$b$&$b$&$b^{-1}$&$b^{-1}$&$b^{-1}$&$(b^{-1})^{abab}$\\	\hline		
			\end{tabular}
   }
    \caption{Entrywise description of elements in the set $C$.}\label{C24table}
	\end{table}

		\renewcommand{\arraystretch}{1.5}

Now define the group $K := N \cap \langle S , g \rangle$. Our aim is to show that $K = N$, from which our main claim that $N \leq \langle S, g \rangle$ follows immediately. We have already established that $X_0 \in N$ and hence $X_0 \in K$. Note that $K$ is normalised by $\langle S , g \rangle$ since $N\unlhd G$.  Hence $X_0^{g^i}\in K$ for each $i$, so that $C \subseteq K$, whence $\langle C \rangle \leq K$. Our plan is to show that $K = N$ by appealing to Lemma \ref{stripmethod}.

	We first claim that $K$ is a subdirect subgroup of $N$. To see this it suffices to consider only the projections of the elements $X_0$, $X_1$, and $X_2$ onto the direct factors of $N$.  For each integer $i \in [1, 24]$ let $\pi_i$ denote the projection map $\pi_i: N \rightarrow T_i$ given by $(g_1, \dots, g_{24}) \mapsto g_i$. In fact it is easy to see that 
	$$\pi_i(\langle X_1,X_2\rangle) \supseteq \langle a, b \rangle  = T_i \hbox{ for each } i \in [1,8], \hbox{ while }$$
	$$\pi_i(\langle X_0,X_1\rangle) \supseteq \langle a, b \rangle  = T_i \hbox{ for each } i \in [9,24].
        $$
	Hence $K$ projects onto each simple direct factor of $N$. Thus property (a) of Lemma \ref{stripmethod} holds for $K$.
    To check that property (b) of Lemma \ref{stripmethod} holds for $K$, recall that, by Proposition \ref{strips}, $K = \prod_{k 
	\in \mathcal{K}}  A_k$, where each $A_k$ is a full strip, and if $S_k := \Supp(A_k)$, then $S_k \cap S_j = \emptyset$ whenever $k \neq j$. Our task is to show that each $S_k$ has size 1, implying that Lemma \ref{stripmethod}(b) holds. 
  
    Note that $\langle S, g\rangle$ acts transitively by conjugation on the $24$ simple direct factors of $N$. 
    Then, since the group $\langle S, g\rangle$ normalises $K$, it also acts transitively by conjugation on the set of strips $\{ A_k \mid k\in \mathcal{K}\}$ occurring in the direct decomposition of $K$. It follows that the supports $S_k$ all have the same size. 
	Now suppose for a contradiction that some simple direct factor $F$ of $K$ is a nontrivial full strip of $N$. Then each of the strips $A_k$ is nontrivial, and we may assume that $T_1$ lies in the support of $F$. In other words, $F$ is a diagonal subgroup of a subproduct  $\prod_{j \in J}^{} T_j$  where $1 \in J$ and $|J|\geq 2$. 
 
    This means in particular that, for each element $X\in K$, the order of the projection $\pi_j(X)$ is independent of the choice of $j\in J$. 
     We will let $I_1 := [1,8], I_2:= [9,16], $ and $I_3:= [17,24]$, so that these three subsets partition $ [1,24]$. Now notice that for the elements $X_0, X_1, X_2 \in K$, the orders of the elements in the entries are as follows:
	\begin{align*}
	|\pi_i(X_0)|  &= 3, \hbox{ if } i \in I_1\cup I_3 \hbox{, and } |\pi_i(X_0)|  = 2, \hbox{ if } i \in I_2,\\
	|\pi_i(X_1)|  &= 3, \hbox{ if } i \in I_1\cup I_2 \hbox{, and } |\pi_i(X_1)|  = 2, \hbox{ if } i \in I_3,\\
	|\pi_i(X_2)|  &= 3, \hbox{ if } i \in I_2 \cup I_3 \hbox{, and } |\pi_i(X_2)|  = 2, \hbox{ if } i \in I_1.
	\end{align*}
	Lemma \ref{stripAuto} then implies that since $1\in J$ we must have $J \subseteq I_1$.

% that $F$ can only be a diagonal subgroup of a subproduct  $\prod_{j \in J}^{} T_j$  where $J \subseteq I_i$ for some $i\in \{1,2,3\}$. 

%	Suppose for contradiction that $|J| >1$. We will show that this cannot happen and hence that $F$ cannot be a nontrivial full strip implying that $\langle C \rangle = N$. We do this by repeatedly applying Lemma \ref{stripAuto} for each possibility $J \subseteq  I_i$ for $i \in \{1,2,3\}$.  Cases 1-3 below correspond to these three possibilities. 
	
%	\textbf{Case 1:} $J \subseteq I_1$. Suppose first that $J \subseteq I_1$ 
 Next consider the actual $I_1$-entries in the elements $X_0$ and $X_1$. It is clear by inspection that $J \subseteq \{1,3,6,8\}$, for otherwise there would be an automorphism $\psi \in \Aut(T)$ which maps $b$ to both $b$ and $b^{-1}$, which is a contradiction since $|b| = 3$.
 We will now show that none of the 2-subsets of $\{1,3\}$, $\{1,6\}$, or $\{1,8\}$ is contained in $J$, contradicting that assumption that $|J|\geq 2$.
		
%	First off, it is easy to see that $\{3,6\} \nsubseteq J$ as in this case the 
%	$\pi_j$ images of $X_0$ and $X_3$  for $j \in \{3,6\}$, imply the existence of an automorphism $\psi \in \Aut(T)$ which maps $b^{-1}$ to $b^{-1}$, but also maps $(b^{-1})^a$ to $b^{-1}$, implying that $b^{-1} = (b^{-1})^a$. However, this is impossible since $a$ and $b$ do not commute. Thus no such automorphism exists and $\{3,6\} \nsubseteq J$. We will now deal with the remaining five $2$-subsets of $\{1,3,6,8\}$. 
	
	If $\{1,8\}\subseteq J$, then since $(\pi_1(X_0), \pi_8(X_0)) = (b,b)$ while $(\pi_1(X_3), \pi_8(X_3)) = (b^{ab^2},b)$, we would have an automorphism $\psi \in \Aut(T)$ which maps both $b$ and $b^{ab^2}$ to $b$, implying that $b = b^{ab^2}$, which is a contradiction since $a$ and $b$ do not commute. A nearly identical argument shows that $\{1,6\} \nsubseteq J$ since $(\pi_1(X_0), \pi_6(X_0)) = (b,b^{-1})$ while $(\pi_1(X_3), \pi_6(X_3)) = (b^{ab^2},b^{-1})$, so  $\{1,6\} \subseteq J$ would also imply that $b = b^{ab^2}$, and we would have a contradiction.
	
	Finally, if $\{1,3\} \subseteq J$, then since $(\pi_1(X_2), \pi_3(X_2)) = (a^{b^2},a)$ while $(\pi_1(X_5), \pi_3(X_5)) = (a^{b^2ab^2},a)$, there would be an automorphism of $T$ which maps both $a^{b^2}$ and $a^{b^2ab^2}$ to $a$, which implies that $a^{b^2} = a^{b^2ab^2}$, or equivalently, $a = a^{b^2a}$. This implies that $a = a^{b^2}$ and hence that $a$ commutes with $b^2$ and hence with $b$, which is a contradiction. Thus $\{1,3\} \nsubseteq J$. 
    So $J$ cannot contain more than one element and hence no direct factor of $K$ is a nontrivial full strip of $N$. Therefore $K  = N$ and so $\langle S, g \rangle = G$ as claimed. 
\end{proof}

We are now able to show that the pair given in Construction \ref{NonabelianOrientedConst24} is a basic pair of oriented-cycle type. 
\begin{Lemma}
	Let $(\Gamma, G)$ be as in Construction $\ref{NonabelianOrientedConst24}$. Then the following hold \begin{enumerate}[(a)]
		\item $(\Gamma, G) \in \OG(4)$,
		\item $(\Gamma, G)$ is basic of oriented-cycle type.
	\end{enumerate}
\end{Lemma}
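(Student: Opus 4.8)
The plan is to verify Construction~\ref{NonabelianOrientedConst24} by checking the four properties of Lemma~\ref{CosetOriented} for part (a), and then applying Lemma~\ref{CosetCycleType}(a) for part (b), exactly as was done for Construction~\ref{NonabelianUnorientedConst2r}. For part (a), the hard property, $\langle S,g\rangle=G$, is already isolated as Lemma~\ref{gen24}, so it remains to dispatch (i)--(iii). Property (i), that $S$ is core-free, follows since $C_G(N)=1$ (because $\langle S,g\rangle$ acts faithfully by conjugation on $N$) and $S\cap N=1$: any nontrivial normal subgroup of $G$ contained in $S$ would centralise $N$, forcing it to be trivial; alternatively one exhibits an element $h\in N$ with $S^h\cap S=1$, mimicking the argument in the proof for Construction~\ref{ConstAffOriented}. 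For property (iii), one computes $S^g$ by evaluating $\varphi_i^{g}=\sigma^{-1}n^{-1}\varphi_i n\sigma$; since $n$ commutes with $\varphi_0$ and $\varphi_1$ but not with $\varphi_2$, we get $\varphi_0^g=\varphi_1$ and $\varphi_1^g=\varphi_2$, while $\varphi_2^g=(n^{-1})^\sigma n^{\varphi_2\sigma}\varphi_0$ is a coset representative lying outside $S$ (this is where the nontrivial entries of $n$ enter, precisely as in the computation of $X_0$ in Lemma~\ref{gen24}); hence $S\cap S^g=\langle\varphi_1,\varphi_2\rangle$ has index $2$ in $S$.

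For property (ii), $g^{-1}\notin SgS$, I would argue as follows. Working modulo $N$ we have $g\equiv\sigma$ with $\sigma$ of order $3$, so $g\equiv\sigma$, $sg\equiv s\sigma$, $gs\equiv\sigma s$ all project into the coset $S\langle\sigma\rangle\sigma$ of $SN$ in $G$, whereas $g^{-1}\equiv\sigma^{-1}=\sigma^2$; since $r=3$ and $SN\sigma\ne SN\sigma^{-1}$ (as $\sigma\notin SN$ has order $3>2$), the only candidate equal to $g^{-1}$ modulo $N$ is $sgs\equiv s\sigma s=\sigma^{-1}$ — wait, $s\in S$ so $s\sigma s$ need not equal $\sigma^{-1}$ unless the $\varphi_i$ invert $\sigma$, which they need not. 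More carefully: the three elements $g, sg, gs$ of $SgS$ project to $S\sigma$ while $sgs$ projects to $Ss\sigma s=S\sigma$ as well since the $\varphi_i$ normalise $\langle\sigma\rangle$ only up to... actually $[\theta,\varphi_i]=1$ and $\varphi_i^\sigma=\varphi_{i+1}$ shows $\langle\varphi_0,\varphi_1,\varphi_2\rangle$ is normalised by $\sigma$, so $S\sigma S\subseteq SN\sigma$ modulo $N$ lies in a single $SN$-coset distinct from $SN\sigma^{-1}=SN\sigma^2$; since $g^{-1}\in SN\sigma^{-1}$, property (ii) holds immediately. This coset computation simultaneously gives the input for Lemma~\ref{CosetCycleType}(a).

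For part (b), I would first note $N$ is a minimal normal subgroup of $G$ since $G$ permutes the $24$ simple direct factors transitively (the subgroup $\langle\sigma,\varphi_0,\varphi_1,\varphi_2\rangle$ realises the wreath action block structure, and connectivity of the permutation action on $\{1,\dots,24\}$ is visible from $\sigma=\rho\theta$ and the $\varphi_i$), and $N$ is the unique minimal normal subgroup since $C_G(N)=1$. Then, by the coset computation above, $SgS\subset SN\sigma$ and $Sg^{-1}S\subset SN\sigma^{-1}$, with $SN\sigma\ne SN\sigma^{-1}$, so Lemma~\ref{CosetCycleType}(a) applies and $(\Gamma,G)$ is basic of oriented-cycle type with $\Gamma_N\cong\mathbf{C}_3$. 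I expect the main obstacle to be purely bookkeeping: carefully tracking which $\varphi_i$ commute with $n$ so that the conjugates $\varphi_i^g$ and the double coset $SgS$ land in the claimed $SN$-cosets; all the genuinely delicate work (connectivity, i.e.\ $\langle S,g\rangle=G$) has already been absorbed into Lemma~\ref{gen24}.

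\begin{proof}
By Lemma~\ref{gen24}, property (iv) of Lemma~\ref{CosetOriented} holds. We verify the remaining three properties and then apply Lemma~\ref{CosetCycleType}(a).

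Since $\langle S,g\rangle=G$ acts faithfully by conjugation on $N=T^{24}$ (as $\langle\varphi_0,\varphi_1,\varphi_2,\sigma\rangle\le S_{24}$ acts faithfully on the $24$ direct factors), we have $C_G(N)=1$. If $1\ne M\unlhd G$ with $M\le S$, then $M\le S\le C_G(N)\cdot(\text{permutation part})$; concretely $M$ centralises $N$ (each $\varphi_i$ and hence each element of $S$ acts on $N$ by permuting coordinates, so $S\cap C_G(N)$... ) — more directly, take $h=(a,\dots)\in N$ chosen so that $h^{\varphi_i}\ne h$ for every $1\ne s\in S$; then $s^h=(-h+h^s)s\in S$ forces $h^s=h$, so $s=1$, whence $S^h\cap S=1$ and $S$ is core-free. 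Thus property (i) holds.

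For property (iii), recall $n$ commutes with $\varphi_0$ and $\varphi_1$ but not with $\varphi_2$, and $\varphi_i^\sigma=\varphi_{i+1\bmod 3}$. Hence $\varphi_0^g=\sigma^{-1}n^{-1}\varphi_0 n\sigma=\sigma^{-1}\varphi_0\sigma=\varphi_1$ and likewise $\varphi_1^g=\varphi_2$, while $\varphi_2^g=\sigma^{-1}n^{-1}\varphi_2 n\sigma=\sigma^{-1}n^{-1}n^{\varphi_2}\varphi_2\sigma=(n^{-1}n^{\varphi_2})^\sigma\varphi_0$, which does not lie in $S$ since $n^{-1}n^{\varphi_2}\ne1$ (by the proof of Lemma~\ref{gen24} its $\sigma\varphi_0$-conjugate is the nontrivial element $X_0\varphi_0^{-1}\cdots$, in any case a nonidentity element of $N$). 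Therefore $S\cap S^g=\langle\varphi_1,\varphi_2\rangle$ and $|S:S\cap S^g|=2$.

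For property (ii) and part (b) simultaneously, we locate the relevant double cosets inside $SN$-cosets. Modulo $N$ we have $g\equiv\sigma$ and $S\equiv\langle\varphi_0,\varphi_1,\varphi_2\rangle$, and since $\sigma$ normalises $\langle\varphi_0,\varphi_1,\varphi_2\rangle$, the double coset $SgS$ satisfies $SgS\subseteq S\sigma SN=SN\sigma$. Similarly $Sg^{-1}S\subseteq SN\sigma^{-1}$. As $\sigma\notin SN$ has order $3>2$, the cosets $SN\sigma$ and $SN\sigma^{-1}$ are distinct, so $g^{-1}\in SN\sigma^{-1}$ is not in $SgS\subseteq SN\sigma$, giving property (ii). By Lemma~\ref{CosetOriented}, $(\Gamma,G)\in\OG(4)$, proving (a).

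Finally, $G$ permutes the $24$ simple direct factors of $N$ transitively (the action of $\langle\sigma,\varphi_0,\varphi_1,\varphi_2\rangle$ on $\{1,\dots,24\}$ is transitive, since $\langle\varphi_0,\varphi_1,\varphi_2\rangle$ has orbits $I_1,I_2,I_3$ and $\sigma=\rho\theta$ with $\theta$ mapping $i\mapsto i+8$ fuses them), so $N$ is a minimal normal subgroup of $G$; and $C_G(N)=1$ makes it the unique one. By the previous paragraph, $SgS\subset SN\sigma$ and $Sg^{-1}S\subset SN\sigma^{-1}$ are contained in two distinct $SN$-cosets, so Lemma~\ref{CosetCycleType}(a) gives that $(\Gamma,G)$ is basic of oriented-cycle type, with $\Gamma_N\cong\mathbf{C}_3$. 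This proves (b).
\end{proof}
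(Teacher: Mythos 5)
Your proof is correct and follows essentially the same route as the paper's: the same witness $h=(a,1,\dots,1)$ for core-freeness via the regular action of $S$ on $I_1$, the same computation of $\varphi_0^g=\varphi_1$, $\varphi_1^g=\varphi_2$, $\varphi_2^g\notin S$ for condition (iii), the same reduction modulo the normal subgroup $NS$ for condition (ii), and the same application of Lemma~\ref{CosetCycleType}(a) after noting $N$ is the unique minimal normal subgroup. The only blemishes are cosmetic: the expression $(-h+h^s)s$ should read $h^{-1}h^s s$ since $N=T^{24}$ is nonabelian, and the abandoned first attempt at core-freeness should simply be deleted in favour of the direct argument you then give.
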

\begin{proof}
	For (a) we check that conditions (i)-(iv) of Lemma \ref{CosetOriented} hold. For (i), take the element $h:= (a,1,\dots, 1) \in N$.  Now taking any $s\in S$ we see that $h^{-1}sh = h^{-1}h^s s$. Since $S$ acts regularly on the first 8 entries of $h$, it follows that for $s \neq 1$, $h^s \neq h$ and so $h^{-1}h^s$ is a nontrivial element of $N$. In particular $s^h = h^{-1}h^s s \notin S$ and so $S^h \cap S = 1$. Thus  $S$ is core-free in $G$.
	
	For (ii), note that $g^{-1} = \sigma^{-1}n^{-1} = (n^{-1})^\sigma \sigma^{-1}$, but every element in $SgS$ is of the form $s_1n\sigma s_2 = s_1n s_2^{\sigma^{-1}}\sigma$, for some $s_1, s_2 \in S$. Therefore, since $g^{-1}$ is not equal to any such element modulo the normal subgroup $\tilde{N} = N \rtimes S$, it follows that $g^{-1} \notin SgS$. 
	
	For (iii) we compute $S^g$. We have already seen that $\varphi_0^g = \varphi_1$ and that $\varphi_1^g = \varphi_2$. On the other hand $$\varphi_2^g = \sigma^{-1}n^{-1}\varphi_2n\sigma =
	\sigma^{-1}n^{-1}n^{\varphi_2}\varphi_2\sigma = (n^{-1})^\sigma  n^{\varphi_2\sigma} \varphi_2^\sigma =(n^{-1}  n^{\varphi_2})^{\sigma} \varphi_0.$$
	Now since $n^{\varphi_2} \neq n$ it follows that $n^{-1}  n^{\varphi_2}$ is a nontrivial element of $N$, which implies that $(n^{-1}  n^{\varphi_2})^{\sigma}$ is also a nontrivial element of $N$. Thus $\varphi_2^g \notin S$. Since $\langle \varphi_0, \varphi_1 \rangle \leq S\cap S^g$ it follows that $\langle \varphi_0, \varphi_1 \rangle =  S\cap S^g$, so  $|S: S \cap S^g| =2$   and condition (iii) holds. Condition (iv) holds by Lemma \ref{gen24}, so $(\Gamma, G) \in \OG(4)$. 
	
	 Next, since $G$ acts transitively by conjugation on the simple direct factors of  $N$, we know that $N$ is minimal normal in $G$, and since $C_G(N) =1$, $N$ is the unique minimal normal subgroup of $G$.
	
	Finally note, that $$SgS = Sn\sigma S = Sn S^{\sigma^{-1}} \sigma = SnS\sigma \subset SNS\sigma = SN\sigma, \hbox{ and }$$
	$$Sg^{-1}S = S\sigma^{-1}n^{-1}S = S(n^{-1})^\sigma S^{\sigma} \sigma^{-1} \subset SNS \sigma^{-1} = SN \sigma^{-1}.$$
	
	Hence $SgS$ and $Sg^{-1}S$ lie in two distinct $SN$ cosets, namely $SN\sigma$ and $SN\sigma^{-1}$, so $(\Gamma, G)$ is basic of oriented-cycle type by Lemma \ref{CosetCycleType}.
\end{proof}

\section*{Acknowledgements}
The authors were grateful for the opportunity to participate in
the Tutte Memorial MATRIX retreat in 2017 where this work began. The first author was supported by an Australian Government Research Training Program (RTP) Scholarship, and was also supported by the University of Western Australia which funded his visit to Perth where some of this work was completed. The second author acknowledges Australian Research Council Funding
of DP160102323.  

We would also like  to thank Alex Ghitza for leading us to a reference which appears in Remark \ref{ArtinRemark}, and  Primo{\v{z}} Poto{\v{c}}nik for a very useful conversation in which he outlined the proof of Lemma \ref{Abelianstabiliser}. We also acknowledge the use of \textsc{Magma} \cite{MR1484478} for testing theories and constructing examples.

%\bibliographystyle{plain}  % Use the "unsrtnat" BibTeX style for formatting the Bibliography
%\bibliography{Bibliography}  % The references (bibliography) information are stored in the file named "Bibliography.bib"

\end{document}